\newtheorem{thm}{Theorem}[section]
\newtheorem{defn}[thm]{Definition}
\newtheorem{lemma}[thm]{Lemma}
\newtheorem{cor}[thm]{Corollary}
\newtheorem{remark}[thm]{Remark}
\newtheorem{example}[thm]{Example}
\newcommand{\Z}{{\mathbb Z}}
\newcommand{\N}{{\mathbb N}}
\newcommand{\ba}{{\mathbf a}}
\newcommand{\bm}{{\mathbf m}}
\newcommand{\bu}{{\mathbf u}}
\newcommand{\bk}{{\mathbf k}}
\newcommand{\bt}{{\mathbf t}}
\numberwithin{equation}{section}
\begin{document}
\title{$T$-systems with boundaries from network solutions}
\author{Philippe Di Francesco} 
\address{
Institut de Physique Th\'eorique du Commissariat \`a l'Energie Atomique, 
Unit\'e de Recherche associ\'ee du CNRS,
CEA Saclay/IPhT/Bat 774, F-91191 Gif sur Yvette Cedex, 
FRANCE. e-mail: philippe.di-francesco@cea.fr}
\author{Rinat Kedem}
\address{Department of Mathematics, University of Illinois MC-382, 
Urbana, IL 61821, U.S.A. e-mail: rinat@illinois.edu}

\date{\today}

\begin{abstract}
In this paper, we use the network solution of the $A_r$
$T$-system to derive that of the unrestricted $A_\infty$ $T$-system, equivalent to the octahedron relation. 
We then present a method for implementing various boundary conditions on this system,
which consists of picking initial data with suitable symmetries. The corresponding restricted
$T$-systems are solved exactly in terms of networks. This gives a simple explanation for 
phenomena such as the Zamolodchikov periodicity property 
for $T$-systems (corresponding to the case $A_\ell\times A_r$) and a combinatorial
interpretation for the positive Laurent property 
for the variables of the associated cluster algebra. 
We also explain the relation between the $T$-system wrapped on a torus
and the higher pentagram maps of Gekhtman et al.
\end{abstract}

\maketitle
\tableofcontents
%AMS Subject Classification (2000): Primary 05A19; Secondary 82B20
%\draft
%\Date{08/2012}
%
%%%%%%%%%%%%%%%%%%%%%%%%%%%%%%%%%%%%%%%%%%%%%%%%%%%%%%%%%%%%%%%%%%%%%
%

%\input{section1}
\section{Introduction}
\subsection{Discrete Integrable systems, positivity and periodicity}

Discrete integrable systems are evolution equations in a discrete time variable $k\in \Z$ that
admit a sufficient number of conservation laws or integrals of motion, in the Liouville sense.
In this note we concentrate essentially on the so-called $T$-system, which first arose in the context
of integrable quantum spin chains, as a system of equations satisfied by the eigenvalues of transfer matrices
of generalized Heisenberg magnets, with the symmetry of a given Lie algebra \cite{KNS}. In the case of type A, the
$T$-system equation is also often referred to as the octahedron recurrence, and appears to be central
in a number of combinatorial objects, such as: the $\lambda$-deformed determinant introduced by Robbins and Rumsey
and its interplay with Alternating Sign Matrices \cite{RR}; the puzzles leading to the proof of positivity of 
Littlewood-Richardson coefficients for $sl_n$ \cite{KTW}; the partition function of domino tilings of 
the Aztec diamond \cite{EKLP,SPY}. Finally the $T$-system
plays a central role as discrete integrable system, where it is 
referred to as the discrete Hirota equation \cite{KLWZ}. Note that an interesting deformation of the T-system was considered
by Nakajima \cite{Nakajima}; the corresponding system is obeyed by the so-called q,t-characters of quantum affine
algebras.

A new interpretation for the $T$-system arose from realizing that the corresponding discrete
evolution could be viewed as a particular mutation in a suitably defined cluster algebra \cite{DFK08}. 
As such, it must satisfy the Laurent property, namely that any solution is a Laurent polynomial
of any set of admissible initial data \cite{FZI}. Moreover, the general positivity conjecture for cluster
algebras would also imply that these Laurent polynomials have {\it non-negative} integer coefficients.
Positivity for the unrestricted $T$-system expressed in terms of ``flat" initial data
follows from the interpretation of the solution as a positively weighted
partition function for domino tilings of the Aztec diamond \cite{SPY}. In the present paper, we first generalize this 
result to an explicit network solution for arbitrary initial data which we then adapt to include various types of
boundary conditions. The network solutions display in particular the positive Laurent phenomenon.

Another fundamental property of $T$-systems was conjectured by Zamolodchikov \cite{ZAMO}
in the form
of periodicity properties of the so-called $Y$-systems in the presence of special boundary conditions, which
is a result of similar periodicity properties for the $T$-system. 
In a more general setting, the $Y$-system is attached to the Dynkin diagram $G$ of a Lie algebra, 
and the special boundaries are coded by another Dynkin diagram $G'$, while the period of the system
is given by $2(h_G+h_{G'})$, where $h_G$ is the Coxeter number of the corresponding algebra.
This periodicity has been proved by many authors \cite{Volkov,Szenes,Henriques,IIKNS,IIKKN} 
for the case when either or both $G,G'$ are of $A$-type, culminating in the general proof of Keller \cite{Keller} 
using category theory, for the case of any pair of Dynkin diagrams
$G,G'$. Note that the various methods of proof used in these works
do not imply the positive Laurent phenomenon. 
The method presented in this paper for $(A_r,A_\ell)$, based on the explicit network solution, 
provides a simple combinatorial explanation for this property. 

By analogy with the solutions of the so-called $Q$-systems \cite{DFK3}, based on an 
explicit construction of conserved quantities,
a first solution of $A_r$ $T$-systems for particular periodic initial data surface
was produced in terms of partition functions of paths with time- and space-dependent weights 
on some target graphs \cite{DFK09a}. 
Finally the $A_r$ $T$-system was 
explicitly solved \cite{DF} for arbitrary admissible initial conditions
in terms of weighted path models on specific networks, coded by the geometry of the initial data surface.

The aim of this paper is to use the network solution of the $A_r$ case to derive properties of solutions of 
$T$-systems with different kinds of boundary conditions. We start with the unrestricted $T$-system:
by using the formulas for $A_r$ for $r$ large enough, we show that
any fixed unrestricted $T$-system solution can be expressed in a compact form, as a principal
minor of a positive network matrix coded by the geometry of the initial conditions. 

To address other boundary conditions, our strategy consists in identifying 
suitable initial data for the $A_r$ system, that {\it imply} the presence of the desired boundaries,
such as walls along which the values of the $T$-system solution must be equal to $1$. 
Once these are identified, we must plug them into the network solution of the $A_r$ system.
The network solution happens to behave nicely under these symmetries, and can be reduced to 
explicit positive expressions in all cases. As a result, we obtain closed formulas for the solutions of the
$T$-system with various boundary conditions.

\subsection{$T$-system: definitions}

Let us now give a few definitions regarding the $T$-system and its various boundary conditions.

\subsubsection{The unrestricted $A_\infty$ $T$-system}

The unrestricted $A_\infty$ $T$-system, also called octahedron recurrence, 
is the following system for formal variables $T_{i,j,k}$, $i,j,k\in \Z$:
\begin{equation}\label{tsys}
T_{i,j,k+1}T_{i,j,k-1}=T_{i,j+1,k}T_{i,j-1,k}+T_{i+1,j,k}T_{i-1,j,k} \qquad (i,j,k\in \Z)\, .
\end{equation}
The system splits into two independent systems 
corresponding to a fixed parity of $i+j+k$. From now on we restrict ourselves to $T_{i,j,k}$ with $i+j+k=0$ mod 2.

This system can be considered as a three-term recursion relation in $k$.
As such it has the following sets of admissible initial conditions, attached to a {\it stepped surface} defined as follows.

\begin{defn}\label{steppeddef}
A stepped surface in the variables $(i,j,k)\in \Z^3$ is a set:
\begin{equation}\label{stepsurf}
\bk = \{(i,j,k_{i,j})\in\Z^3:i+j+k_{i,j}=0\, {\rm mod}\, 2\,  {\rm and}\,  |k_{i,j}-k_{i',j'}|=1 \, {\rm if}\, |i-i'| + |j-j'|=1\}\, .
\end{equation}
\end{defn}

To any such stepped surface, we attach the initial condition:
\begin{equation}\label{initcond} X_\bk(\bt) : \left\{ T_{i,j,k_{i,j}}=t_{i,j}\quad (i,j\in \Z) \right\} \end{equation}
for some formal variables $\bt=\{t_{i,j}\}_{i,j\in \Z}$, which we refer to as initial data/values along the surface $\bk$.

The interplay between these admissible initial conditions
is best understood if we interpret the relation \eqref{tsys} as a {\it mutation} relation
for the cluster algebra related to the $T$-system \cite{DFK08}. In this setting, the admissible initial data
are cluster variables $x_\bk=(t_{i,j})_{i,j\in\Z}$ in a seed of the cluster algebra, and a mutation $\mu_{i,j}$
is simply one application of the relation \eqref{tsys} where $k=k_{i,j+1}=k_{i,j-1}=k_{i+1,j}=k_{i-1,j}$
and either $k_{i,j}=k-1$ (forward mutation) or $k_{i,j}=k+1$ (backward mutation). The mutation $\mu_{i,j}$
sends the surface $\bk$ to a new surface $\bk'$ such that $k_{a,b}'=k_{a,b} +2\epsilon \delta_{a,i}\delta_{b,j}$
with $\epsilon=1$ for a forward mutation, and $\epsilon=-1$ for a backward mutation. Accordingly, the initial
data along the surface $\bk$ is transformed into initial data along $\bk'$ 
by keeping the same values $t_{a,b}'=t_{a,b}$
except for $a=i$ and $b=j$, where 
\begin{equation}\label{mutat} t_{i,j}'={t_{i,j-1}t_{i,j+1}+t_{i-1,j}t_{i+1,j}\over t_{i,j} }\, .\end{equation}
The following is a pictorial representation of a forward mutation:
\begin{equation}\label{octa} \raisebox{-1.cm}{\hbox{\epsfxsize=15.cm \epsfbox{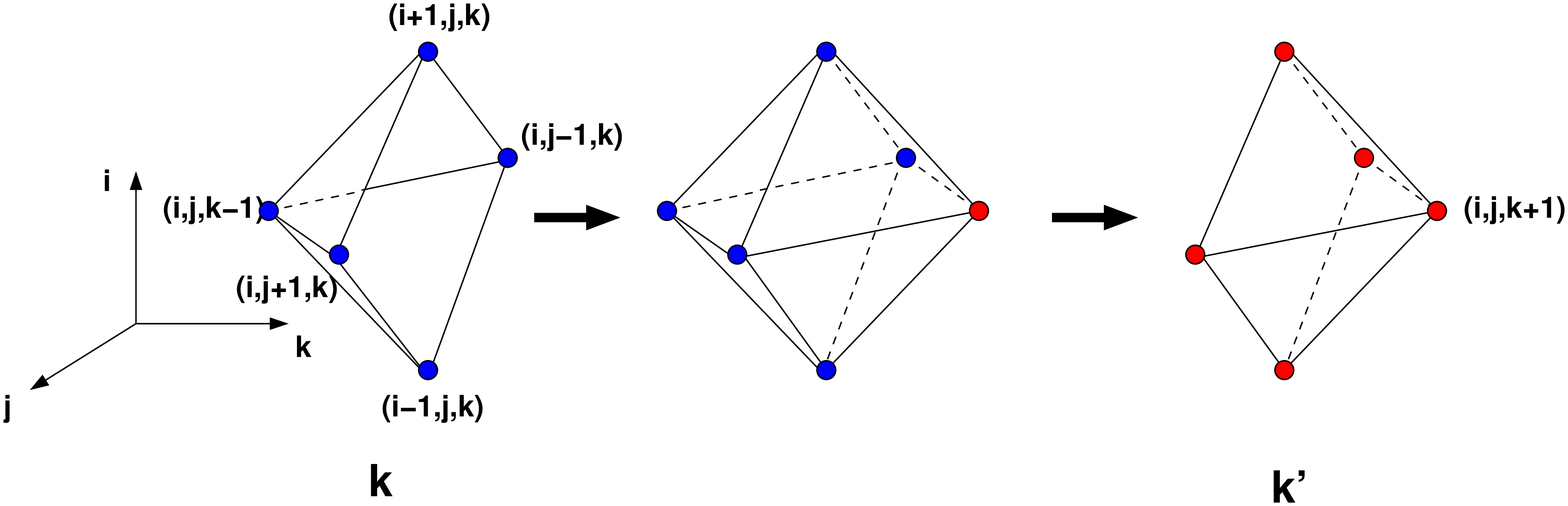}}}  
\end{equation}
It shows how the mutated surface $\bk'$ differs from $\bk$ by one point, which is the sixth point $(i,j,k+1)$
of the incomplete octahedron $(i,j,k-1),(i,j+1,k),(i,j-1,k),(i+1,j,k),(i-1,j,k)$, hence the name ``octahedron" 
equation often used for \eqref{tsys}. Iterating mutations on a given stepped surface $\bk$, we may 
attain any other stepped surface $\bk'$.

In the following, unless otherwise stated, we will refer to the fundamental stepped surface as ``flat" stepped 
surface $\bk_0$ with $k_{i,j}^{(0)}=i+j$ mod 2.

\subsubsection{The $T$-system for $A_r$}

\begin{figure}
\centering
\includegraphics[width=16.cm]{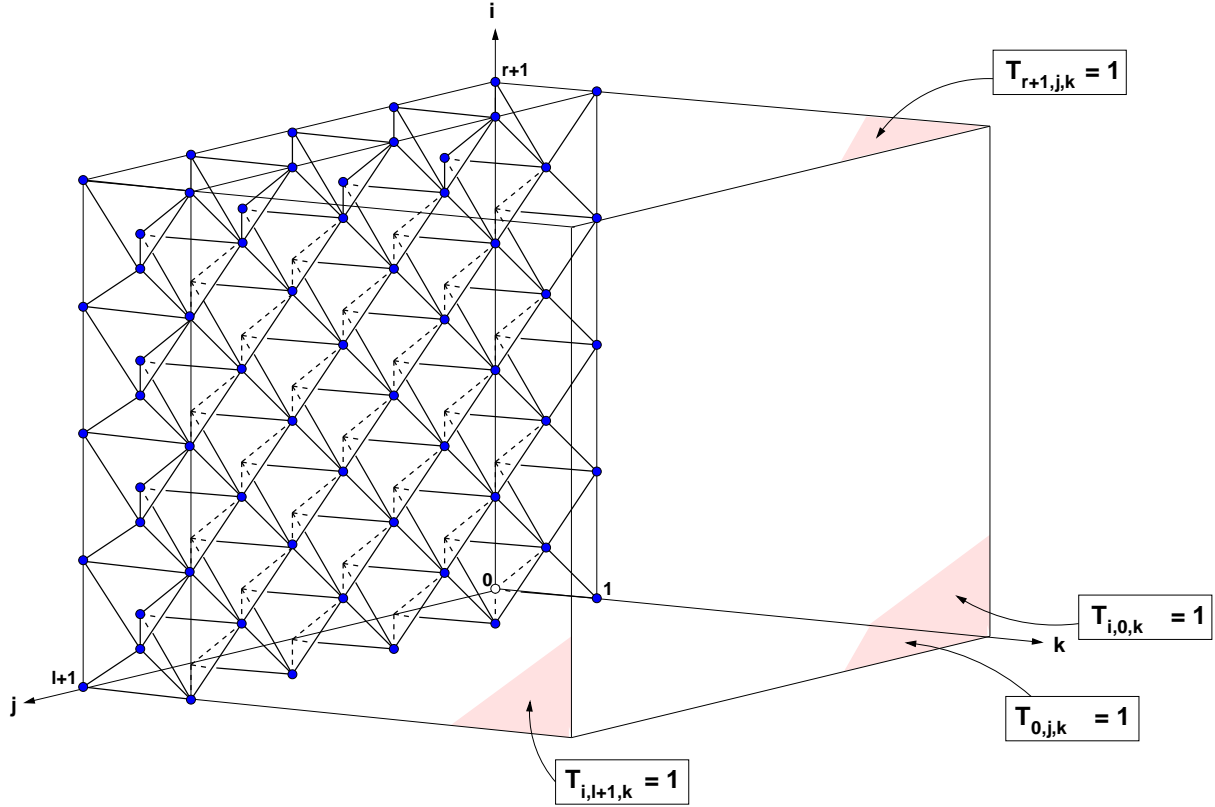}
\caption{\small The $\ell$-restricted $T$-system geometry. We have represented in $(i,j,k)$-space
the four walls along which we set $T_{i,j,k}=1$, as well as the flat initial data stepped surface $\bk_0$
with vertices in the planes $k=0$ and $k=1$.}
\label{fig:boundat}
\end{figure}

In the case of the $A_r$ Lie algebra, the $T$-system \eqref{tsys}
is restricted to values of $i\in [1,r]$ and is subject to the
boundary condition 
\begin{equation}\label{arboundary}
T_{0,j,k}=T_{r+1,j,k}=1\qquad  {\rm for}\, \,  {\rm all}\, \,  j,k\in \Z\, .
\end{equation} 

The system (\ref{tsys}-\ref{arboundary}) can still be considered as a three-term recursion relation in $k$.
The corresponding admissible initial data are attached to infinite strip-like stepped surfaces
$\bk=(i,j,k_{i,j})_{i\in [1,r];j\in \Z}$ such that $k_{i,j}\in \Z$, $i+j+k_{i,j}=0$ mod 2,  
and $|k_{i+1,j}-k_{i,j}|=|k_{i,j+1}-k_{i,j}|=1$
for all $i,j$. To each such $\bk$, we associate the initial conditions:
\begin{equation}\label{initcondar}X_\bk(\bt): \left\{  T_{i,j,k_{i,j}}=t_{i,j}\quad (i\in [1,r];j\in \Z) \right\} \end{equation}
for some formal variables $\bt=\{t_{i,j}\}_{i\in[1,r];j\in\Z}$.

The fundamental ``flat" stepped 
surface, still denoted by $\bk_0$ now has $k_{i,j}^{(0)}=i+j$ mod 2, for $i\in [1,r],j\in \Z$.

The following useful lemma allows to eliminate the $T_{i,j,k}$ for $i>1$ in terms of the $T_{1,j',k'}$'s.

\begin{lemma}{\cite{BR}}\label{elimdet}
The solutions $T_{i,j,k}$ to the $A_r$ $T$-system may be expressed for $i=1,...,r$ as the following 
``discrete Wronskian" determinants involving only $T_{1,j',k'}$'s:
\begin{equation}\label{wronsk}
T_{i,j,k}=\det_{1\leq a,b\leq i} \left( T_{1,j+a-b,k+a+b-i-1} \right)\, .
\end{equation}
\end{lemma}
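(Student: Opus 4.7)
The plan is to prove the formula by induction on $i$, with the key tool being the Desnanot--Jacobi (Lewis Carroll / Dodgson condensation) determinant identity applied to the $i\times i$ Wronskian matrix. The base cases are $i=0$, where the empty determinant equals $1=T_{0,j,k}$ by the $A_r$ boundary condition \eqref{arboundary}, and $i=1$, which is immediate.

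For the inductive step, denote the right-hand side of \eqref{wronsk} by $D_{i,j,k}$ and write $A$ for the $i\times i$ matrix with entries $A_{a,b}=T_{1,j+a-b,k+a+b-i-1}$. The Desnanot--Jacobi identity expresses $\det(A)$ times the central $(i-2)\times(i-2)$ minor in terms of the four corner $(i-1)\times(i-1)$ minors obtained by deleting row/column $1$ and/or row/column $i$. The heart of the proof is a routine reindexing exercise which identifies each of these five minors with a $D$-determinant: deleting row $1$ and column $1$ yields $D_{i-1,j,k+1}$; deleting row $i$ and column $i$ yields $D_{i-1,j,k-1}$; deleting row $1$ and column $i$ yields $D_{i-1,j+1,k}$; deleting row $i$ and column $1$ yields $D_{i-1,j-1,k}$; and deleting rows and columns $1$ and $i$ yields $D_{i-2,j,k}$. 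Substituting these into Desnanot--Jacobi gives
\begin{equation*}
D_{i,j,k}\,D_{i-2,j,k}=D_{i-1,j,k+1}D_{i-1,j,k-1}-D_{i-1,j+1,k}D_{i-1,j-1,k}.
\end{equation*}

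To finish, apply the inductive hypothesis $D_{i'}=T_{i'}$ for $i'\in\{i-1,i-2\}$ to rewrite the right-hand side as $T_{i-1,j,k+1}T_{i-1,j,k-1}-T_{i-1,j+1,k}T_{i-1,j-1,k}$. The $T$-system relation \eqref{tsys} at level $i-1$ says exactly that this difference equals $T_{i,j,k}T_{i-2,j,k}$. Cancelling the common factor $T_{i-2,j,k}=D_{i-2,j,k}$, which is nonzero since it is a cluster variable (a Laurent polynomial in admissible initial data), yields $D_{i,j,k}=T_{i,j,k}$ and closes the induction.

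The only real obstacle is the bookkeeping of index shifts when matching the four corner minors and the central minor of $A$ to the appropriate $D$-determinants: one must check carefully that the shifts of $(j,k)$ by $(0,\pm 1)$ and $(\pm 1,0)$ line up correctly with the reduction of matrix size from $i$ to $i-1$ or $i-2$. Once this is verified, Desnanot--Jacobi and the octahedron relation \eqref{tsys} match term-by-term, and the argument is essentially mechanical.
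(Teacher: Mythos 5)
Your proof is correct and follows essentially the same route as the paper: the paper likewise proves the identity by applying the Desnanot--Jacobi identity to the discrete Wronskian matrix (taken there of size $(i+1)\times(i+1)$ with entries $T_{1,j+a-b,k+a+b-i-2}$), so that the determinants satisfy the same three-term recursion in $i$ as the $T$-system. Your index bookkeeping for the five minors checks out, and the explicit attention to the base cases and the cancellation of the nonzero factor $T_{i-2,j,k}$ only makes explicit what the paper leaves implicit.
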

\begin{proof}
This is a direct consequence of the Desnanot-Jacobi identity relating the determinant $\vert M\vert$ of any
$N\times N$ matrix $M$ to its minors $\vert M\vert _{i_1}^{j_1}$ and $\vert M\vert _{i_1,i_2}^{j_1,j_2}$
obtained respectively by erasing row $i_1$ and column $j_1$ of $M$ or rows $i_1,i_2$ and
columns $j_1,j_2$ of $M$:
\begin{equation}\label{desnajac} \vert M \vert \times \vert M\vert _{1,N}^{1,N}
= \vert M\vert _{1}^{1} \times  \vert M\vert _{N}^{N}-
\vert M\vert _{N}^{1} \times  \vert M\vert _{1}^{N} \, ,\end{equation}
with the convention that the determinant of a $0\times 0$ matrix is $1$. The lemma follows by taking
the $(i+1)\times (i+1)$  matrix $M$ with entries $M_{a,b}=T_{1,j+a-b,k+a+b-i-2}$, $a,b=1,2,...,i+1$.
\end{proof}

We will also consider the $A_r$ $T$-system with so-called
{\it $\ell$-restricted boundary conditions}, in which we restrict the range of $j\in [0,\ell+1]$ and we impose 
\begin{equation}\label{restrictell}
T_{i,0,k}=T_{i,\ell+1,k}=1\qquad  (i\in[1,r];k\in \Z)\, .
\end{equation}
In this case the initial data is also restricted
to a finite sequence $\bt=\{t_{i,j}\}_{i\in [1,r];j\in [1,\ell]}$, and the associated initial conditions read:
\begin{equation}\label{initcondseg}
X_\bk(\bt) : \left\{ T_{i,j,k_{i,j}}=t_{i,j} \quad (i\in[1,r];j\in[1,\ell]) \right\} \, .
\end{equation}
The boundary conditions and flat surface initial data for the $\ell$-restricted $T$-system 
are sketched in Fig.\ref{fig:boundat}.

\subsection{Main results}

In this paper we explore the effect of imposing various boundary conditions of the $T$-system.

For the unrestricted $T$-system, we derive a compact explicit expression for the solution, first  in terms of
the initial data $X_{\bk_0}$ \eqref{initcond} along the flat stepped surface $\bk_0$ 
(Theorem \ref{kok} and Corollary \ref{corTex}). This is then generalized to arbitrary initial
data $X_k$ (Theorem \ref{arbibound}). As a consequence, we have:

\begin{thm}\label{unposiT}
The solution $T_{i,j,k}$ of the unrestricted $T$-system \eqref{tsys} with arbitrary initial conditions $X_\bk$ 
\eqref{initcond} is a Laurent polynomial of the initial values $\{t_{i,j}\}$ with non-negative integer coefficients.
\end{thm}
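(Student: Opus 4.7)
The plan is to bootstrap positivity for the unrestricted $A_\infty$ $T$-system from the known network solution of the finite $A_r$ $T$-system. The key preliminary observation is localization: for any fixed $(i,j,k)$ and any admissible stepped surface $\bk$, the value $T_{i,j,k}$ depends on only finitely many initial values $t_{i',j'}$, so we may freely embed the computation inside an $A_r$ $T$-system with $r$ large enough that the walls $T_{0,\cdot,\cdot}=T_{r+1,\cdot,\cdot}=1$ lie outside the dependence region, and hence do not affect the answer.

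For the flat initial data case, I would invoke the network solution of the $A_r$ $T$-system from \cite{DF}, which expresses $T_{1,j,k}$ as a partition function of weighted paths on a network whose edge weights are Laurent monomials in the $t_{i',j'}$ with coefficient $+1$. Applying Lemma \ref{elimdet} then expresses $T_{i,j,k}$ as an $i\times i$ Wronskian-style determinant in the $T_{1,\cdot,\cdot}$. The essential step is to recognize this determinant, via the Lindström--Gessel--Viennot lemma, as the sum over non-intersecting $i$-tuples of paths between suitably chosen source/sink pairs in the network: the sign-reversing cancellation in the LGV expansion is automatic once the sources and sinks are compatibly ordered, so the determinant is in fact a manifestly positive sum. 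This yields the ``principal minor of a positive network matrix'' formulation announced as Theorem \ref{kok} and Corollary \ref{corTex}, and simultaneously gives the positive Laurent property in the flat case $\bk=\bk_0$.

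To pass from flat initial data to an arbitrary stepped surface $\bk$, I would use the mutation description \eqref{mutat}: any two stepped surfaces differing on a finite region are connected by a finite sequence of elementary mutations $\mu_{i,j}$, and these mutations preserve the $T$-system solution. One can therefore either (i) track how the network of Theorem \ref{kok} deforms under a single mutation, showing inductively that after each mutation the solution is again expressible as a positive partition function in the new initial data, or (ii) construct the network directly from the geometry of $\bk$ — essentially gluing elementary tiles that encode the local ``up/down'' steps of $\bk$ — and verify both that it satisfies the octahedron recurrence \eqref{tsys} and that it reproduces the prescribed initial values $t_{i,j}$ along $\bk$. Approach (ii) is what Theorem \ref{arbibound} asserts, and given it, the corollary is immediate: all weights are positive Laurent monomials in $\bt$, hence so is the minor that computes $T_{i,j,k}$.

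The main obstacle is the second stage: establishing that the network machinery genuinely transports through the mutation \eqref{mutat} without introducing spurious denominators or sign cancellations. Concretely, one must show that the edge weights in the new network remain Laurent monomials in the new initial data with coefficient $+1$, and that the LGV non-intersection condition continues to match the Desnanot--Jacobi expansion used in Lemma \ref{elimdet}. Once these compatibilities are checked, the positive Laurent property of Theorem \ref{unposiT} follows at once, since the solution $T_{i,j,k}$ is expressed as a finite sum of products of the form $\prod_{(i',j')} t_{i',j'}^{e_{i',j'}}$ with $e_{i',j'}\in\Z$ and nonnegative integer multiplicities.
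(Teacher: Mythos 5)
Your proposal is correct and follows essentially the same route as the paper: localize the dependence of $T_{i,j,k}$ on finitely many initial values so the computation embeds in an $A_r$ system for large $r$, use the network solution plus the Lindstr\"om--Gessel--Viennot lemma to handle the flat surface $\bk_0$ (Theorem \ref{kok}, Corollary \ref{corTex}), and transport to arbitrary $\bk$ by implementing mutations as the exact $UV\leftrightarrow VU$ exchange identity \eqref{muta} on the network matrices (Theorem \ref{arbibound}). The ``main obstacle'' you flag is resolved in the paper precisely by that identity, which guarantees the mutated network's entries stay positive Laurent monomials in the new initial data, together with a telescoping of the boundary-corner weights that converts the flat-case prefactors into the products over $t_{L_a}^{-1}$ and $t_{R_b}$.
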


This extends the result of \cite{SPY}, corresponding to $\bk=\bk_0$ in our language.

Next we consider the $A_r$ $T$-system in different geometries, first in a right or left half-plane
bordered by a ``wall" $j=$constant, along which the value of $T_{i,j,k}$ is fixed to $1$. We show that 
the solutions of such systems coincide with that of the one without a wall, provided we pick initial data 
obeying certain symmetry relations (Theorems \ref{plusimple} and \ref{corolla}). We also show that 
the solutions of the two-wall $\ell$-restricted $A_r$ $T$-system coincide with that of the system without walls
but with initial data obeying multiple reflection symmetries inherited from the two half-plane cases
(Theorem \ref{inithm}). In all cases, we have an explicit formula for the solution
in terms of the initial data.
This will allow us in particular to establish the following two results on the solutions 
of the $\ell$-restricted $A_r$ $T$-system.

\begin{thm}\label{priociT}
The solution of the $\ell$-restricted $A_r$ $T$ system with arbitrary initial 
conditions $X_\bk$ satisfies the following periodicity condition:
$$ T_{i,j,k+N}=T_{i,j,k} \qquad (i\in [1,r];j,k\in \Z)$$
with period $N=2(\ell+r+2)$.
\end{thm}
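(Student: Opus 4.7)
Plan: I would prove the periodicity by lifting the $\ell$-restricted $A_r$ $T$-system to the unrestricted octahedron recurrence with reflection-symmetric, doubly periodic initial data, and then extract the $k$-period directly from the network solution.

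First, by Theorem \ref{inithm}, the solution of the $\ell$-restricted $A_r$ $T$-system with initial data $X_\bk(\bt)$ agrees, on the restricted domain, with the unrestricted solution $\widetilde T_{i,j,k}$ of the $A_\infty$ $T$-system fed with the extension $\widetilde\bt=\{\widetilde t_{i,j}\}_{(i,j)\in\Z^2}$ of $\bt$ obtained by iterated reflections across the four walls $\{i=0\}$, $\{i=r+1\}$, $\{j=0\}$, $\{j=\ell+1\}$. The composition of the two reflections in $i$ is the pure translation $i\mapsto i+2(r+1)$, and similarly in $j$ one gets $j\mapsto j+2(\ell+1)$, so $\widetilde\bt$ is doubly periodic with these periods. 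The octahedron evolution transports these symmetries from the initial data to the full solution, yielding $\widetilde T_{i+2(r+1),j,k}=\widetilde T_{i,j+2(\ell+1),k}=\widetilde T_{i,j,k}$.

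Next, I apply the explicit network formula of Theorem \ref{arbibound} to $\widetilde T_{i,j,k+N}$ for $N=2(r+\ell+2)$. This formula expresses $\widetilde T_{i,j,k+N}$ as a positively weighted partition function on a network whose graph is built from the relative geometry of the target $(i,j,k+N)$ and the initial-data stepped surface, and whose edge weights are monomials in $\widetilde\bt$. By the manifest translation invariance of the octahedron recurrence, shifting $k\mapsto k+N$ in the network is equivalent, up to relabeling of vertices, to a translation of the target by $(2(r+1),2(\ell+1))$ in $(i,j)$: the numerical identity $N=2(r+1)+2(\ell+1)$ is the content of this geometric matching. The double periodicity of $\widetilde\bt$ then identifies the resulting weighted partition function with the one computing $\widetilde T_{i,j,k}$, giving $\widetilde T_{i,j,k+N}=\widetilde T_{i,j,k}$, which restricts back to the desired identity on $i\in[1,r]$, $j\in[1,\ell]$.

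The main obstacle lies in the second step: making the ``network shift'' argument precise. One must unpack how the graph and the edge weights of the network depend on the target $(i,j,k)$ for the flat stepped surface $\bk_0$, and verify that the $k$-advance by $N=2(h_{A_r}+h_{A_\ell})$ is exactly absorbed by the combined translation of one full $i$-period and one full $j$-period of the extended initial data. Everything else is formal: the reduction to the unrestricted system is handed down by Theorem \ref{inithm}, and the periodicity of $\widetilde\bt$ is immediate from elementary reflection group considerations; the combinatorial heart of the proof is precisely this matching between the $k$-shift in the network and the $(i,j)$-shift by a full period of the reflection group acting on the initial data.
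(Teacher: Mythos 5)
There is a genuine gap, and it sits exactly where you flag your ``main obstacle'': the mechanism you propose for absorbing the $k$-shift does not exist, and the reduction you start from is not the one Theorem \ref{inithm} provides. First, Theorem \ref{inithm} lifts the $\ell$-restricted system to the unrestricted \emph{$A_r$} system (keeping $T_{0,j,k}=T_{r+1,j,k}=1$), not to the $A_\infty$ octahedron recurrence; there is no reflection extension in the $i$-direction anywhere in the construction, and hence no $i$-period $2(r+1)$. Second, the extension in the $j$-direction is not a naive reflection across $j=0$ and $j=\ell+1$: the data $\bt^{[1,\ell]}$ carries signs $(-1)^{ri}$ and, crucially, bands of \emph{zeros} of width $r$ between consecutive copies (eqs.~\eqref{symone}--\eqref{vani}), so the composition of the two reflections is a $j$-translation by $2(\ell+r+2)=N$ itself (eq.~\eqref{symtwo}), not by $2(\ell+1)$. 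Your arithmetic $N=2(r+1)+2(\ell+1)$ is numerically true but does not correspond to any decomposition of the symmetry group of the data.

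More seriously, the claim that ``shifting $k\mapsto k+N$ in the network is equivalent, up to relabeling, to translating the target by a period of the initial data'' is not a symmetry of the octahedron recurrence: Lemma \ref{transla} only permits simultaneous shifts of the target, the data labels, \emph{and} the stepped surface, so trading a pure $k$-shift for an $(i,j)$-shift at fixed surface is circular (it presupposes knowing the solution on the translated surface, i.e.\ the periodicity itself). What actually happens in the paper is qualitatively different: advancing $k$ by $N/2$ makes the projection cone of $(1,j,k+N/2)$ reflect off both $j$-walls, and the reflected segments of the network matrix are annihilated by the collapse relations $V\,P\,U=P$, $U\,P\,V=P$ (Lemma \ref{collapse}), after the zero bands are handled by the regularized matrices $P_j(\{a\})\to P$ (Theorem \ref{Prplus}, Corollary \ref{corP}). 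This yields the half-period \emph{point reflection} $T_{1,j,k+N/2}=T_{r,\ell+1-j,k}$, extended to $T_{i,j,k+N/2}=T_{r+1-i,\ell+1-j,k}$ via the Wronskian formula, and the full period is obtained by applying this involution twice. None of this is a relabeling of a translated network, and your sketch provides no substitute for the collapse identities that do the real work; as written, the proof cannot be completed along the proposed lines.
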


\begin{thm} \label {positiT}
The solution $T_{i,j,k}$ of the $\ell$-restricted $A_r$ $T$ system with initial conditions $X_{\bk_0}$
\eqref{initcondseg} along the ``flat" stepped surface $\bk_0$
is a Laurent polynomial of the initial values $\{t_{i,j}\}$
with non-negative integer coefficients. 
\end{thm}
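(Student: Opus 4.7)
The plan is to derive Theorem \ref{positiT} as a direct consequence of the unrestricted positivity result (Theorem \ref{unposiT}) combined with the reflection embedding of Theorem \ref{inithm}. The conceptual picture set up earlier in the paper is that the $\ell$-restricted $A_r$ system with its four walls $i=0$, $i=r+1$, $j=0$, $j=\ell+1$ is not a genuinely new problem: it arises as the specialization of the unrestricted $A_\infty$ $T$-system to initial data on an extended flat stepped surface that is invariant under the group generated by reflections across these four walls. The value $1$ along each wall is then automatically propagated by the octahedron recursion.

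Concretely, I would argue as follows. By Theorem \ref{inithm}, for every $(i,j,k)$ with $i\in[1,r]$, $j\in[1,\ell]$, the value $T_{i,j,k}$ of the $\ell$-restricted $A_r$ $T$-system with flat initial data $X_{\bk_0}(\bt)$ coincides with the value at the same point of the unrestricted $T$-system whose initial data on $\bk_0$ is the reflection-symmetric extension of $\{t_{i,j}\}_{i\in[1,r],j\in[1,\ell]}$ across the four walls. Along the extended flat surface this extension takes only two types of values: $1$ on lattice points that lie on a wall, and $t_{i,j}^{\pm 1}$ on the image of a unique point $(i,j)$ of the fundamental rectangle under the reflection group. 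Then Theorem \ref{unposiT} (or more precisely its network refinement, Theorem \ref{arbibound}) asserts that the unrestricted solution is a Laurent polynomial with non-negative integer coefficients in the extended initial values. Substituting each extended initial value back in terms of the original data keeps the expression in the positive Laurent class, because substitution of $1$ or of Laurent monomials $t_{i,j}^{\pm 1}$ into a Laurent polynomial with non-negative integer coefficients yields another Laurent polynomial of the same kind. This gives Theorem \ref{positiT}.

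The main technical obstacle is verifying the precise form of the symmetric extension supplied by Theorem \ref{inithm}, and in particular checking that the iterated reflections across the four walls are globally consistent: every lattice point of the extended $\bk_0$ must be identified either with a point of the fundamental rectangle $[1,r]\times[1,\ell]$ or with a wall, and the periodic structure induced by composing pairs of reflections (of period $2(r+\ell+2)$, matching Theorem \ref{priociT}) must not assign conflicting values to any extended initial variable. Once this bookkeeping is in place, no further combinatorial argument is needed: the positive Laurent phenomenon is inherited verbatim from the network solution for the unrestricted system.
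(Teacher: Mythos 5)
Your overall skeleton --- embed the $\ell$-restricted system into the unrestricted one via Theorem \ref{inithm} and then try to import positivity from Theorem \ref{unposiT} --- is indeed the paper's starting point, but the step where you claim positivity is ``inherited verbatim'' has a genuine gap. The symmetric extension $\bt^{[1,\ell]}$ supplied by Theorem \ref{inithm} is \emph{not} of the form you describe (``$1$ on walls and $t_{i,j}^{\pm 1}$ elsewhere''). By \eqref{symone}--\eqref{vani} it contains, first, an entire $r\times r$ block of \emph{zeros}, $t_{i,-j}=0$ for $i,j\in[1,r]$ (repeated periodically), and second, \emph{signed} copies $t_{r+1-i,-r-1-j}=(-1)^{ri}t_{i,j}$ of the fundamental data; see Example \ref{a3ex}, where the extended array explicitly contains $0$'s and $-t_{i,j}$'s. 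This defeats your substitution argument on both counts. Substituting $0$ into a Laurent polynomial (which has genuine negative powers of the initial values --- the network weights are ratios like $a/b$) is not even defined; this is exactly why the paper devotes Section \ref{artwo} to a regularization, replacing the zeros by a compatible array $(a_{i,j})$ with $a_{i-1,-j}a_{i+1,-j}+a_{i,-j-1}a_{i,-j+1}=0$, proving the network matrices $P_j(\{a\})$ have polynomial entries (Theorem \ref{polth}), and only then taking $a_i\to 0$. And substituting $-t_{i,j}$ into a Laurent polynomial with non-negative coefficients does not yield one with non-negative coefficients.

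What actually rescues positivity in the paper is not substitution but cancellation at the level of the network: the collapse relations (Lemma \ref{collapse}) show that the portions of the transfer-matrix product lying in the reflected and zero regions degenerate to signed permutation matrices $P$ (Corollary \ref{corP}, Lemmas \ref{pj} and \ref{otherPlim}), whose net effect is to relocate the starting and ending points of the Lindstr\"om--Gessel--Viennot paths onto the boundary of the ``positive'' sub-network $N(0,\ell+1)$, with all signs cancelling pairwise. Only after this reduction is $T_{i,j,k}$ exhibited as a partition function of non-intersecting paths with manifestly non-negative weights (Section \ref{arfive}). Your proposal, as written, skips precisely this mechanism, and the consistency check you do flag (well-definedness of the reflection orbit) is not where the difficulty lies.
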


In all cases, the positivity of the coefficients will arise from a combinatorial interpretation,
as counting families of non-intersecting paths on 
suitable network graphs.

\subsection{Outline}

The paper is organized as follows. 

In Sect.\ref{netsolsec}, we recall the solution of the 
$A_r$ $T$-system for an arbitrary initial data stepped surface $\bk$. The solution is
expressed in terms of paths on networks. The latter are made of elementary
``chips" associated to $2\times 2$ matrices $U,V$ whose arrangement is coded by the initial data stepped
surface $\bk$, and whose entries are Laurent monomials of the initial data values along $\bk$.

This solution is exploited in Sect.\ref{unsec} to derive the solution of the unrestricted $T$-system for
an arbitrary initial data stepped surface $\bk$ (Theorem \ref{arbibound}). We find that $T_{i,j,k}$ is equal, up to simple
factors of the initial data, to a principal minor of a network matrix corresponding to the {\it shadow}
of the point $(i,j,k)$ onto the stepped surface $\bk$, namely the intersection of $\bk$ and the pyramid
$\{ (x,y,z)\ {\rm such}\ {\rm that}\ |i-x|+|j-y|\leq |k-z|\}$. Theorem \ref{unposiT} follows from this expression.

Sects.\ref{warmupsec} and \ref{ellsec} are devoted to the study of the $\ell$-restricted $A_r$ $T$ system solutions. 
For pedagogical reasons, we first treat the case $r=1$ completely in Sect.\ref{warmupsec}, where we derive
network formulas for the general solution of the $A_1$ $T$-system.  We first treat the case
of the right (resp. left) half-plane $A_1$ $T$-system, which correspond to
imposing a wall-type boundary condition on $T_{1,j,k}$ along the ``wall" $j=0$ (resp. $j=\ell+1$) 
and restricting the range of $j$ to the half-plane $j>0$ (resp. $j<\ell+1$). 
The general strategy is to consider a full plane $A_1$ $T$-system, and to engineer both its initial data stepped surface
and initial values to ensure that the solution coincides with that of the half-plane in the relevant range of $j$. 
This allows to use the general full plane network solution to derive results
in the half-plane geometry, in particular to establish the positive Laurent property of the solution, 
first for the ``flat" initial data stepped surface $\bk_0$ (Theorems \ref{rightbounone} and \ref{leftbounone}),
and then for general $\bk$ (Theorem \ref{arbihalf}). 
Superimposing both half-plane conditions leads to the $\ell$-restricted $A_1$ $T$-system,
whose network solution leads to the $A_1$ version of the periodicity property of Theorem \ref{priociT}
(Theorem \ref{periodone}). This solution allows to prove the $A_1$ version of the Laurent positivity
of Theorem \ref{positiT} (Theorem \ref{ones} for the flat stepped surface $\bk_0$ and Theorem \ref{twos}
for the general stepped surface $\bk$).

The same strategy is then applied to the case of general $r$ in Sect. \ref{ellsec}, namely we impose special restrictions
to the initial data stepped surface and values of the full space $A_r$ $T$-system so as to mimic
wall-type boundaries (left or right half-space) geometries (Theorem \ref{plusimple} and Corollary \ref{corolla}
for the stepped surface $\bk_0$). Finally,
by superimposing the two, we obtain the $\ell$-restricted two-wall boundary geometry (Theorem \ref{inithm} for $\bk_0$). 
These special restrictions however impose the vanishing of the initial values $t_{i,j}$ within square domains of the form
$(i,j)\in [1,r]\times( [-r,-1]\, {\rm mod}\, \ell+r+2)$, which create potential singularities in the 
corresponding network matrices. To repair this, we use a regularization procedure
detailed in Sect.\ref{artwo}, by assigning special non-zero values
within these squares, to be sent to zero in the end. With this trick, all formulas are well-defined
and the relevant limits yield the solutions in half-space (Sect.\ref{arthree}) and $\ell$-restricted geometries
(Sects.\ref{arfour} and \ref{arfive}) .

In the concluding Section 6 we consider other types of boundary conditions
on the $T$-system related to
Frieze patterns of the plane \cite{FRISES}, pentagram \cite{GLICK} and higher pentagram maps \cite{GEKH}. 
We show that the latter are connected to $T$-systems wrapped on a torus, namely with doubly-periodic initial data.
Finally we discuss generalized cut-like boundary conditions and formulate some further positivity conjectures.

\section{Networks and the $A_r$ $T$-system solution}\label{netsolsec}
In this section we recall the network solution \cite{DF} of the infinite $A_r$ $T$-system, not subject to the $\ell$-restriction. The basic building blocks are
matrices $U$ and $V$,  which form the elementary ``chips" of a network. 

\subsection{Definitions and properties of the matrices $U$ and $V$}\label{UVsec}

Define the  $2\times 2$ matrices
\begin{equation}
U(a,b,c)=\begin{pmatrix} 1 & 0 \\ {c \over b} & {a\over b} \end{pmatrix},\qquad 
V(a,b,c)=\begin{pmatrix} {b \over c} & {a\over c}  \\0 & 1 \end{pmatrix}.
\end{equation}
These are embedded in $GL_{r+1}$ in the standard way:
Given $i\in [1,r]$, define $U_i(a,b,c)$ as the $(r+1)\times (r+1)$ matrix with entries
\begin{equation}
\left( U_i(a,b,c) \right)_{k,\ell}=\left\{ \begin{array}{ll}\left( U(a,b,c) \right)_{k-i+1,\ell-i+1} ,& {\rm if} \ k,\ell\in\{i,i+1\} ;\\
\delta_{k,\ell} & {\rm otherwise}, \end{array} \right. 
\end{equation}
and similarly for $V_i(a,b,c)$.

These elementary matrices have the following important 
properties:
\begin{eqnarray}
\label{propuv} U_i(a,b,c)V_{i+1}(b,c,d) &=&V_{i+1}(a,c,d)U_i(a,b,d)\\
\label{propvu} V_i(a,b,c) U_{i+1}(d,e,f)&=&U_{i+1}(d,e,f)V_i(a,b,c)\\
\label{muta} U(a,b,u)V(v,b,c)&=&V(v,a,b') U(b',c,u) \qquad {\rm iff}\qquad b b'=uv+a c
\end{eqnarray}
The third relation is crucial: It gives a representation of the mutation relation of the $T$-system 
(\ref{mutat}-\ref{octa}) via a matrix exchange identity.

\subsection{Pictorial representations}
In this paper we will use two pictorial representations of the elementary network matrices.
\subsubsection{Pictorial representation I}
The matrices $U_i(a,b,c)$ and $V_i(a,b,c)$ are represented as bicolored lozenges:
\begin{equation}\label{repUV}
U_i(a,b,c)=
\raisebox{-1.cm}{\hbox{\epsfxsize=2.5cm \epsfbox{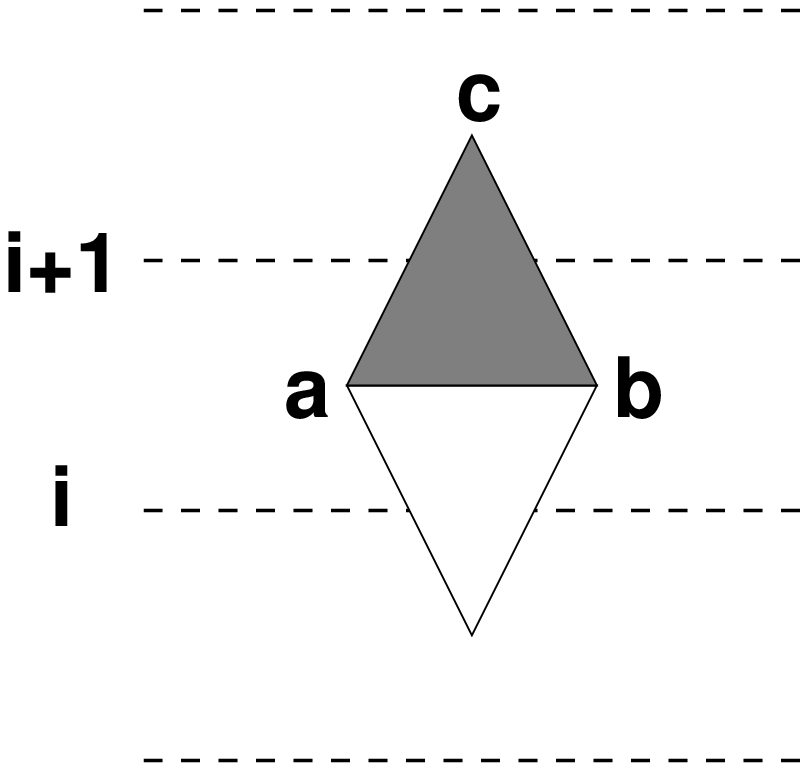}}}\qquad V_i(a,b,c)=
\raisebox{-1.cm}{\hbox{\epsfxsize=2.5cm \epsfbox{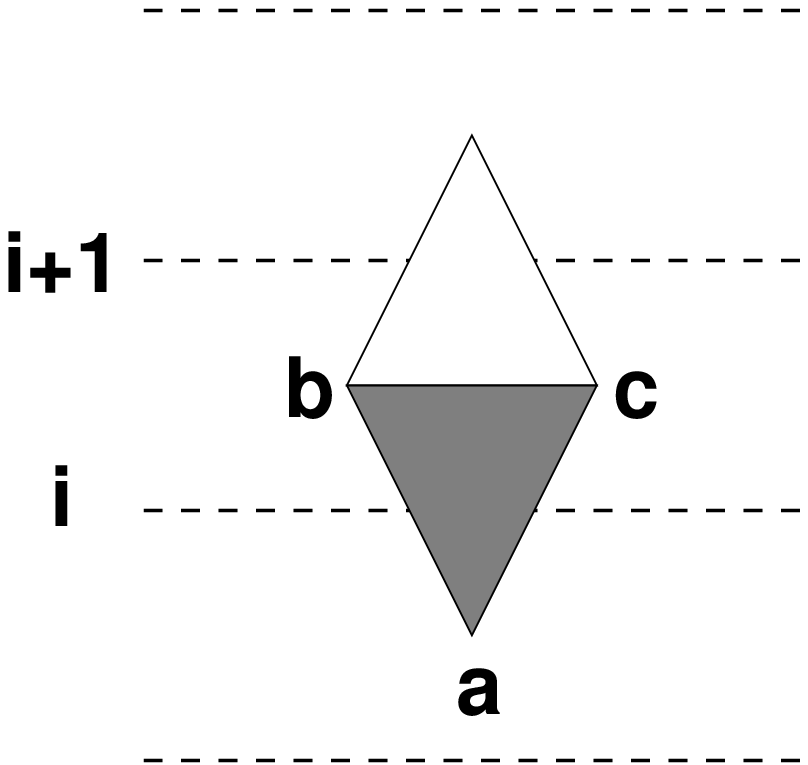}}}
\end{equation}
A product of matrices of $U$ and $V$ type is represented by 
drawing the corresponding lozenges in the same order from left to right, and identifying the edges of the triangles
whenever no other object sits inbetween. This forms a triangulation of some region in the plane.

The relations satisfied by the elementary network matrices can be represented pictorially as follows. 
Property \eqref{propuv} is (we allow the triangles to be slightly deformed):
\begin{equation}\label{repUVVU}
\raisebox{-1.cm}{\hbox{\epsfxsize=3.cm \epsfbox{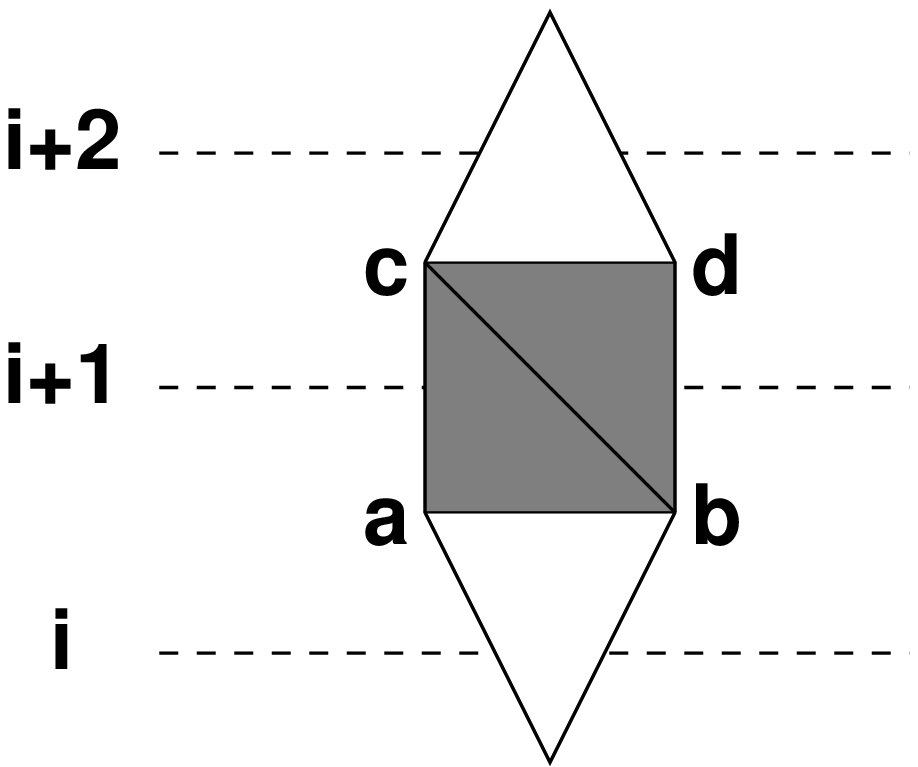}}}\qquad =\qquad 
\raisebox{-1.cm}{\hbox{\epsfxsize=3.cm \epsfbox{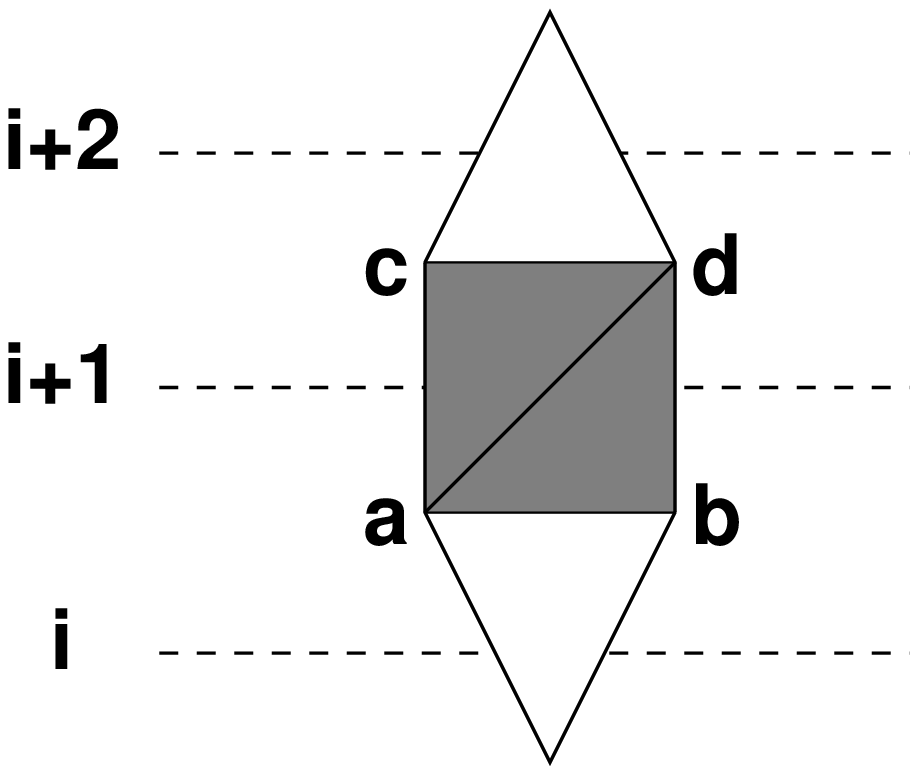}}}
\end{equation}
Property \eqref{propvu} is
\begin{equation}\label{repVUUV}
\raisebox{-1.cm}{\hbox{\epsfxsize=3.cm \epsfbox{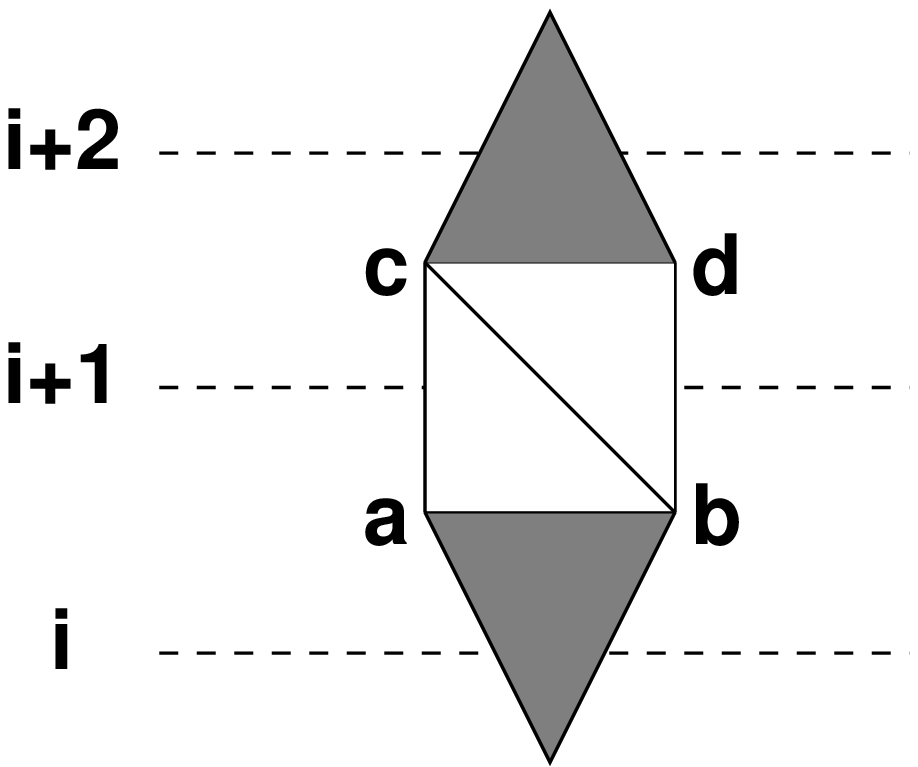}}}\qquad =\qquad 
\raisebox{-1.cm}{\hbox{\epsfxsize=3.cm \epsfbox{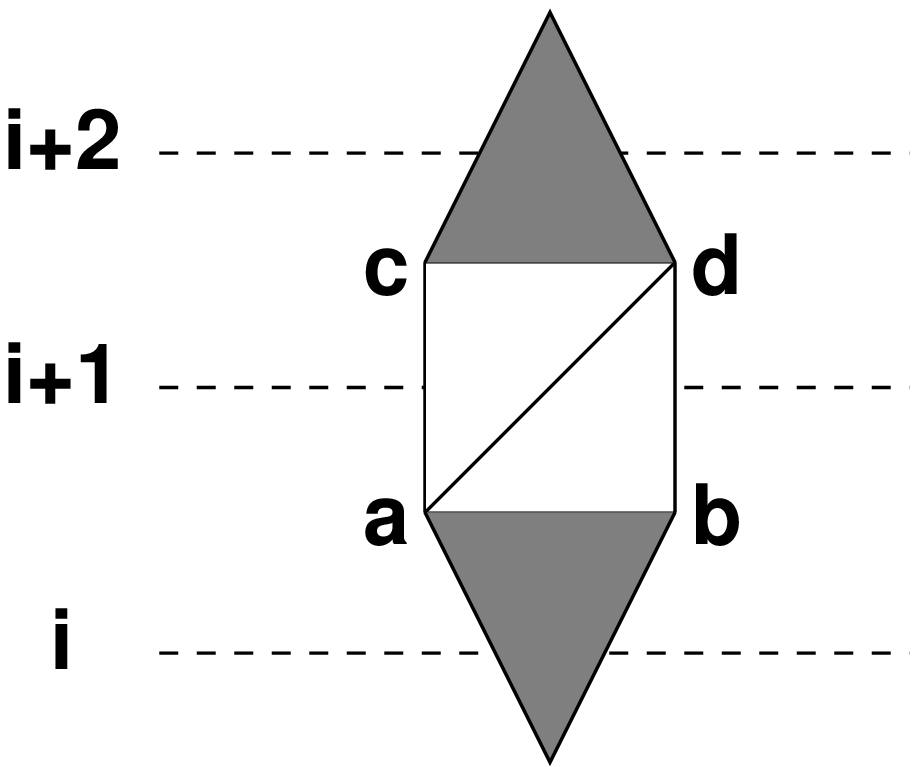}}}
\end{equation}
and the mutation \eqref{muta} is
\begin{equation}\label{repmuta}
\raisebox{-1.1cm}{\hbox{\epsfxsize=2.5cm \epsfbox{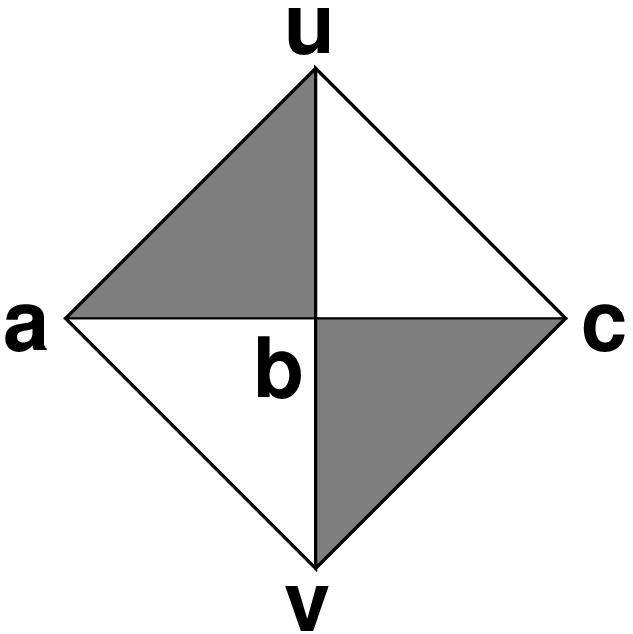}}}\qquad =\qquad 
\raisebox{-1.1cm}{\hbox{\epsfxsize=2.5cm \epsfbox{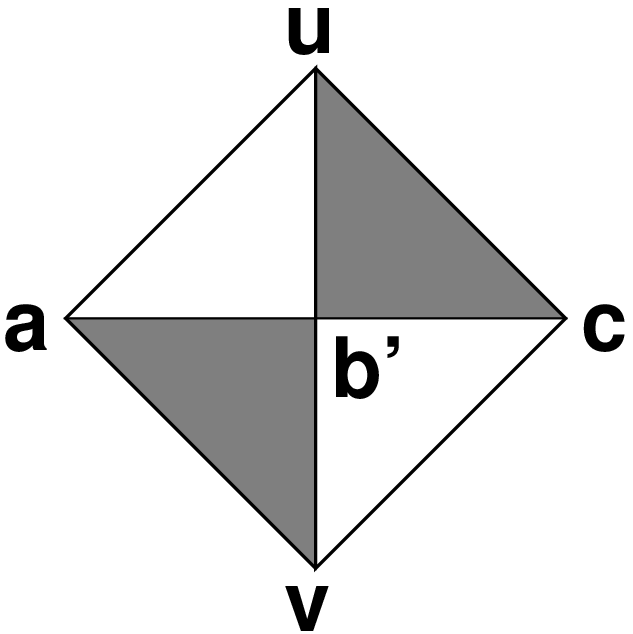}}}
\end{equation}

\subsubsection{Pictorial representation II}
The second useful pictorial representation is as network chips. The picture is as follows:
\begin{equation}
\begin{array}{rll}
U_i(a,b,c)&=
\raisebox{-1.6cm}{\hbox{\epsfxsize=2.cm \epsfbox{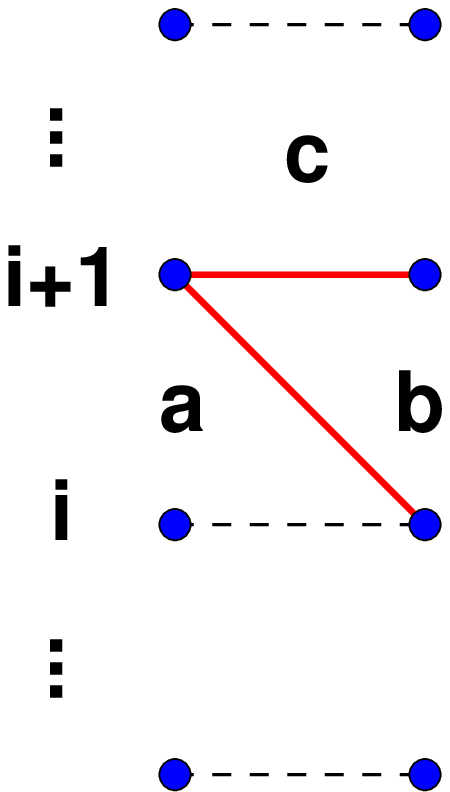}}}&=\raisebox{-1.6cm}{\hbox{\epsfxsize=2.cm \epsfbox{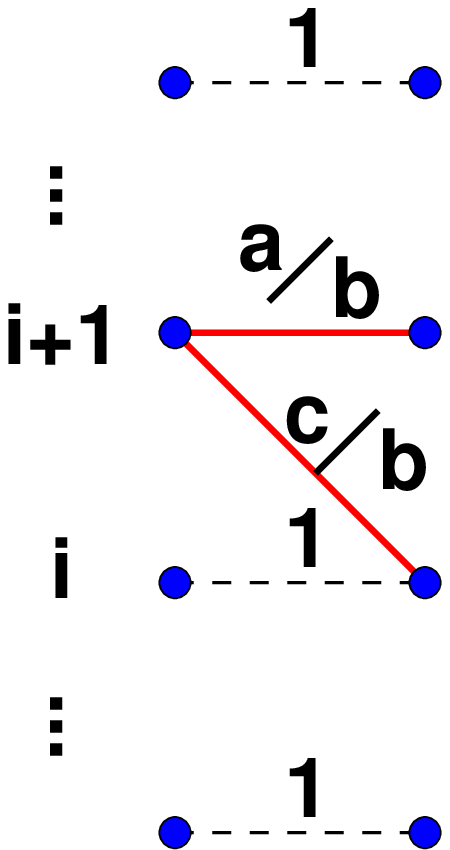}}} \\ \\ \\
\qquad V_i(a,b,c)&=
\raisebox{-1.6cm}{\hbox{\epsfxsize=2cm \epsfbox{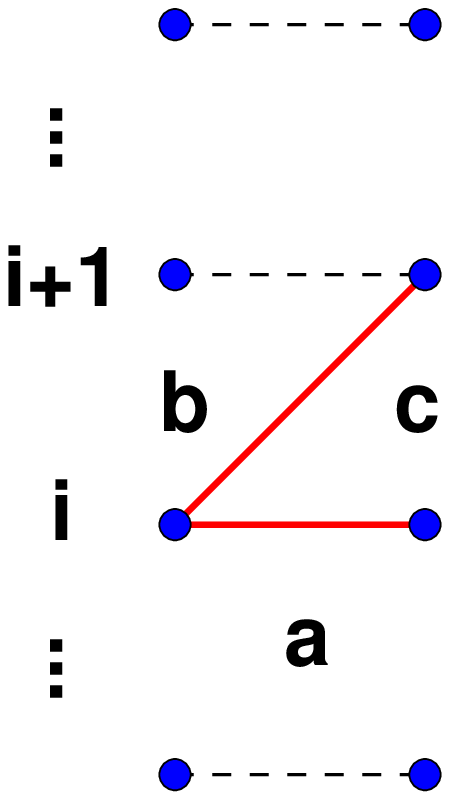}}}&=\raisebox{-1.9cm}{\hbox{\epsfxsize=2.2cm \epsfbox{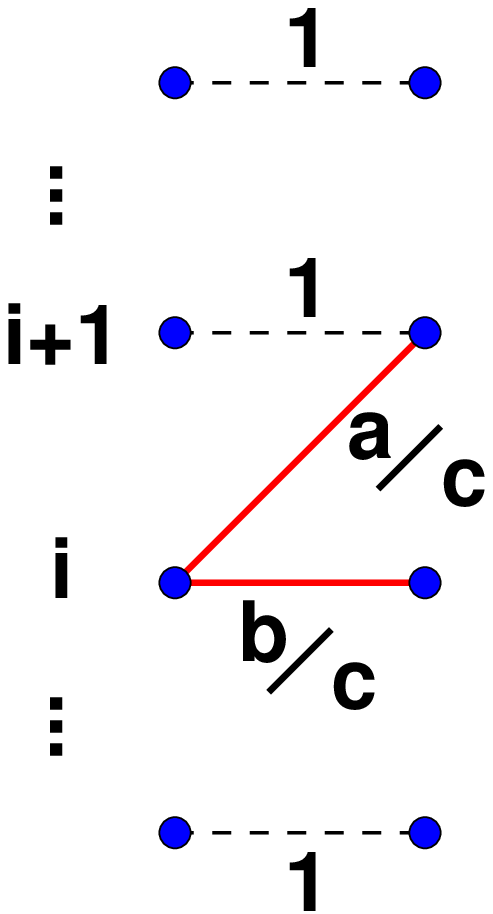}}}\end{array}\label{rep2UV}
\end{equation}
where $a,b,c$ are  represented on the faces of a graph in the left picture. The edges, which are all oriented from left to right (for simplicity, this orientation is omitted in the pictures)
are weighted by the  matrix element $(j,k)$ for an edge from $j$ to $k$.
By convention, dashed edges carry the weight $1$. We have indicated the weights of the edges in the right picture.

In terms of the network chips, property \eqref{propuv} can be illustrated as 
\begin{equation}\label{repnetUVVU}
\raisebox{-1.cm}{\hbox{\epsfxsize=3.cm \epsfbox{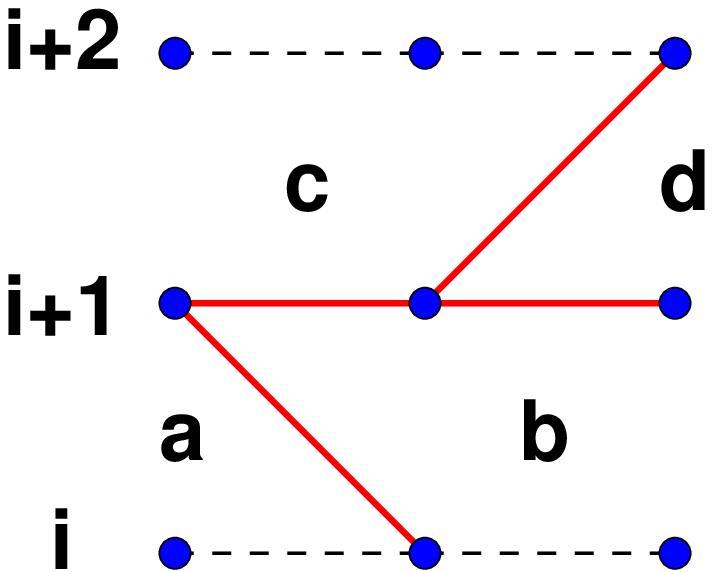}}}\qquad =\qquad 
\raisebox{-1.cm}{\hbox{\epsfxsize=3.cm \epsfbox{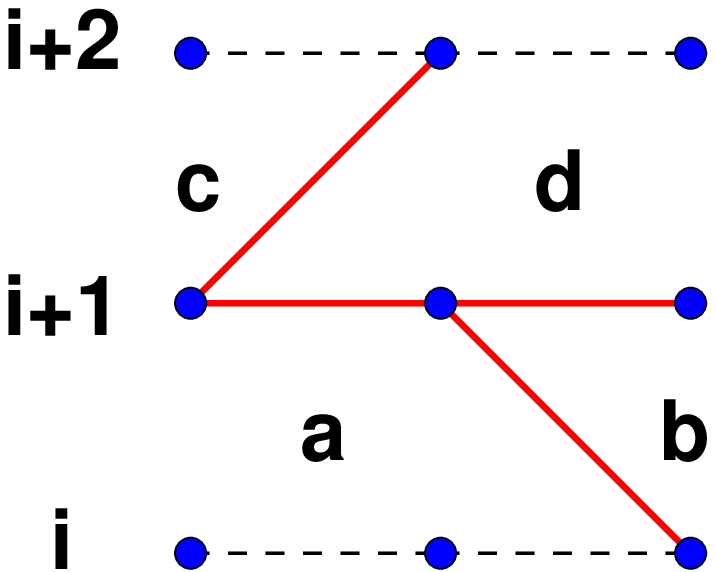}}}
\end{equation}
Property \eqref{propvu} is
\begin{equation}\label{repnetVUUV}
\raisebox{-1.cm}{\hbox{\epsfxsize=3.cm \epsfbox{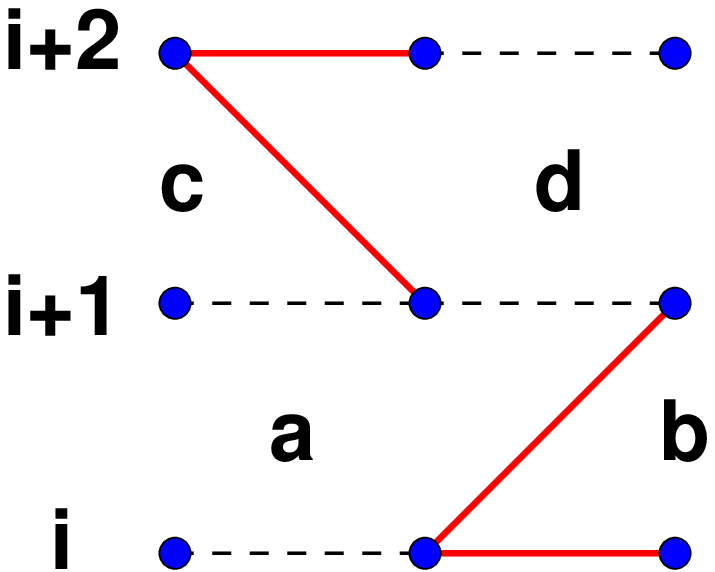}}}\qquad =\qquad 
\raisebox{-1.cm}{\hbox{\epsfxsize=3.cm \epsfbox{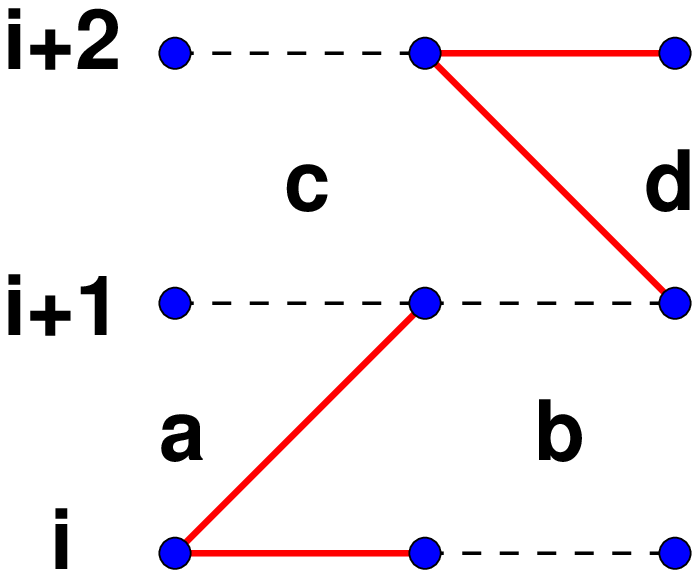}}}
\end{equation}
The mutation \eqref{muta} is illustrated as
\begin{equation}\label{mutnetVUUV}
\raisebox{-1.3cm}{\hbox{\epsfxsize=2.5cm \epsfbox{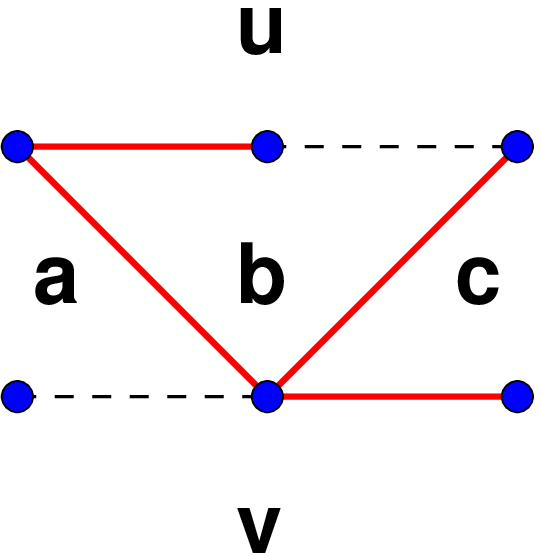}}}\qquad =\qquad 
\raisebox{-1.3cm}{\hbox{\epsfxsize=2.5cm \epsfbox{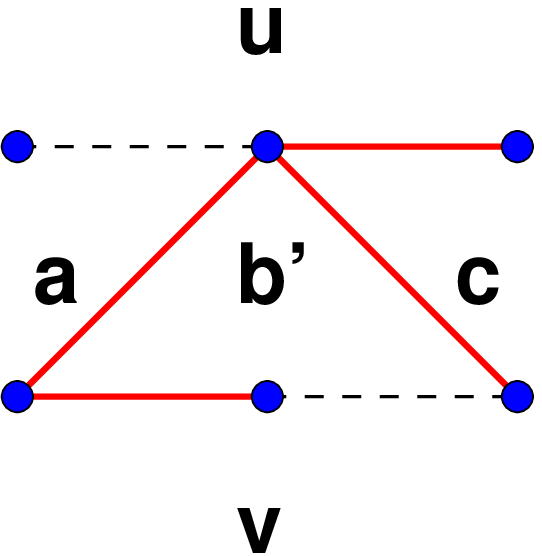}}}
\end{equation}

\subsection{Network Matrix}

\subsubsection{Definition}

Given some initial condition $X_{\bf k}$ as in \eqref{initcond}, define the $(r+1)\times(r+1)$ matrices 
\begin{eqnarray*}
N_{i,j}&=&\left\{ \begin{array}{ll} U_i(t_{i,j-1},t_{i,j},t_{i+1,j-1}), & {\rm if} \ k_{i,j-1}=k_{i,j}-1;\\
V_i(t_{i-1,j},t_{i,j-1},t_{i,j}) & {\rm otherwise}. \end{array}\right. \qquad (i\in [1,r];j\in \Z) \nonumber  \\
M_{0,j}&=& \mathbb I \nonumber \\
M_{i,j}&=& \left\{ \begin{array}{ll} N_{i,j} M_{i-1,j},&{\rm if}\ k_{i,j}=k_{i-1,j-1}, \ k_{i-1,j}\neq k_{i,j-1} ;\\
M_{i-1,j} N_{i,j} & {\rm otherwise.}
\end{array}\right.  \qquad (i\in [1,r];j\in \Z) \nonumber \\
N_j&=& M_{r,j}   
\end{eqnarray*}
The network matrix corresponding to the initial condition $X_{\bk}$ is
\begin{equation}\label{networkmatrix}
N(j_0,j_1)= \prod_{j=j_0+1}^{j_1} N_j \qquad (j_0 \leq j_1)
\end{equation}
with the convention that $t_{0,j}=t_{r+1,j}=1$ and 
$N(j,j)={\mathbb I}$. The order of multiplication in \eqref{networkmatrix} is according to increasing values of $j$.
We may think of $N(j_0,j_1)$ as a network matrix corresponding
to a slice of the initial data surface, containing the points $(i,j_0,k_{i,j_0}),(i,j_0+1,k_{i,j_0+1}),\cdots ,(i,j_1,k_{i,j_1})$ 
for $i\in [1,r]$. 

To make the definition more transparent, let us translate it in the language of the pictorial representation I above.
Each matrix $N_{i,j}$ corresponds to a lozenge made of two triangles (one grey, one white)
sharing the horizontal edge $(i,j-1,k_{i,j-1})-(i,j,k_{i,j})$. The grey triangle is above ($U$ matrix) if $k_{i,j}=k_{i,j-1}+1$,
below ($V$ matrix) if $k_{i,j}=k_{i,j-1}-1$. 
Moreover, the order in which the $\{N_{i,j}\}_{i\in[1,r]}$ are multiplied to form the ``slice" network matrix $N(j-1,j)$
exactly corresponds to a choice of diagonal in each square $(i-1,j-1,k_{i-1,j-1})-(i-1,j,k_{i-1,j})-(i,j-1,k_{i,j-1})-(i,j,k_{i,j})$,
for $i=2,3,...,r$,
with the rule that the diagonal should connect two opposite vertices with the {\it same} value of $k$. This gives rise to six possible vertical configurations of two lozenges:
\begin{equation}\label{rules}
\raisebox{-1.1cm}{\hbox{\epsfxsize=14.cm \epsfbox{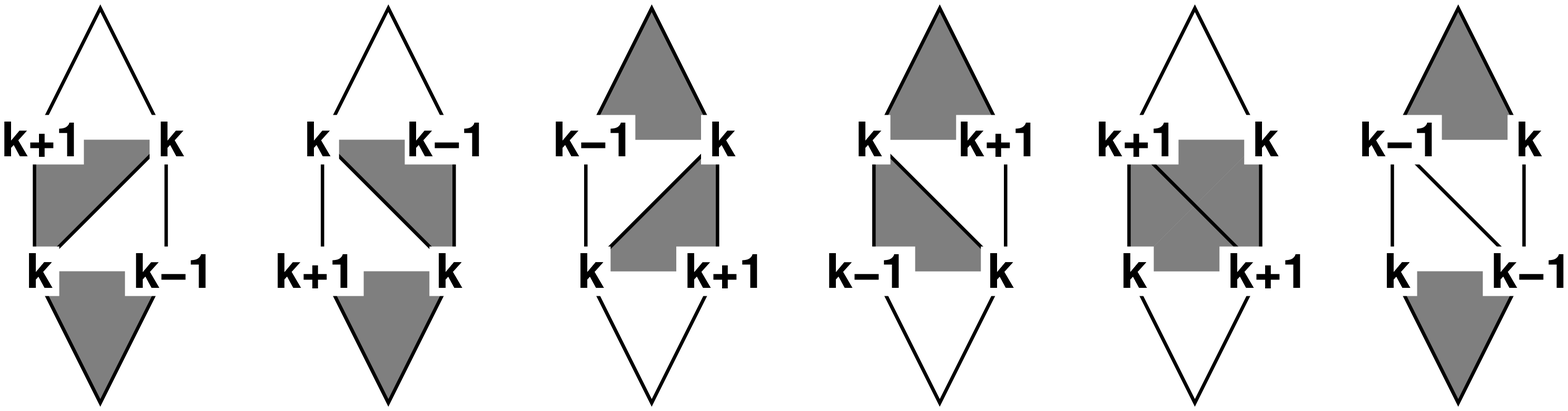}}} 
\end{equation}

Note that when both pairs
of diagonally opposite vertices have the same value of $k$, the choice of diagonal is not fixed. This ambiguity is immaterial, due to
the identities \eqref{repUVVU} and \eqref{repVUUV}. We have chosen the NW-SE diagonal by convention.
We call this construction the {\it $U,V$ decomposition} of the stepped surface $\bk$.

\begin{remark}
There is a direct bijection between the $U,V$ decomposition of the stepped surfaces $\bk$ and the 
quiver representing the $B$-matrix of the cluster algebra associated to the $T$-system \cite{DFK08}. 
Let $Q_\bk$ be
the (infinite) quiver encoded by the exchange matrix $B_\bk$ at the node labeled by 
$\bk$ in the cluster graph. We may represent $Q_\bk$ with its vertices
$(i,j)$ at the nodes of a square lattice $\Z\times \Z$ as a planar oriented graph with only square 
and triangular faces. Shading in grey the faces whose
edges are oriented couterclockwise yields a tessellation with white and grey squares and triangles
and corresponds to the $U,V$ decomposition of $\bk$ described above. In particular, the six face
configurations \eqref{rules} correspond to the six following local quiver configurations:
\begin{equation}\label{quivrules}
\raisebox{-1.5cm}{\hbox{\epsfxsize=14.cm \epsfbox{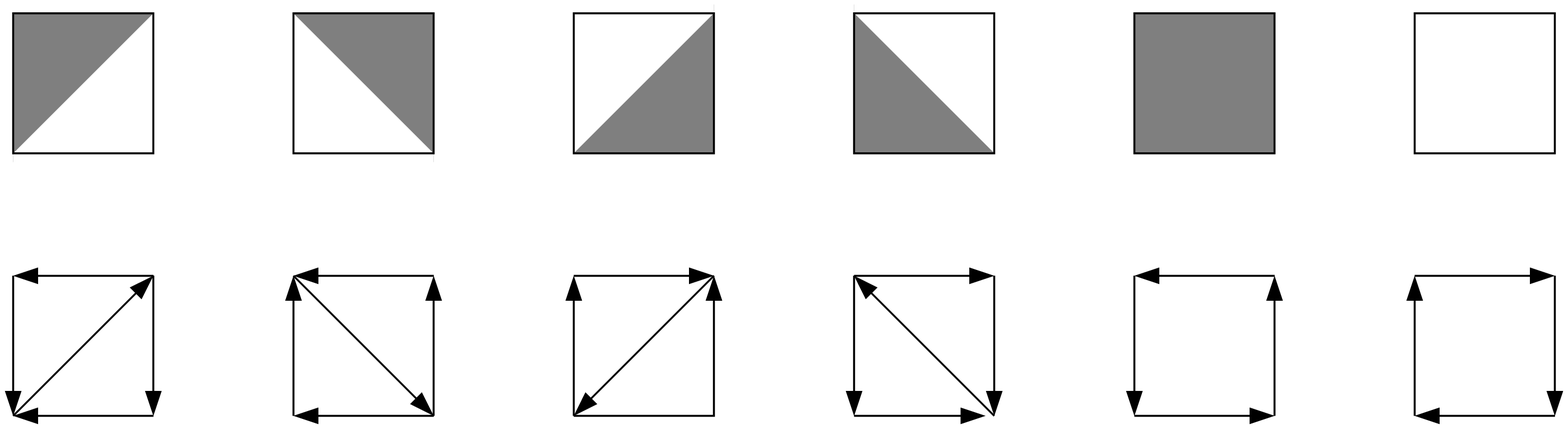}}} 
\end{equation}
\end{remark}

\begin{remark}\label{remrules}
For notational simplicity the rules \eqref{rules} are to be understood as seen from
{\it behind} the initial data surface (i.e. from the opposite side of the surface to where the point $(i,j,k)$ lies), 
namely from an observer sitting at a point $(i,j,k')$ with $k'<k_{i,j}$. This allows to read
expressions such as products of $U,V$ matrices from left to right.
\end{remark}

In the particular case of the fundamental stepped surface $\bk_0$, as $k_{i,j}^{(0)}\in \{0,1\}$, the network matrices simplify to
\begin{eqnarray}
N_{i,j}&=&\left\{ \begin{array}{ll} U_i(t_{i,j-1},t_{i,j},t_{i+1,j-1}), & {\rm if} \ i+j=0\ {\rm mod}\ 2; \\ 
V_i(t_{i-1,j},t_{i,j-1},t_{i,j}) & {\rm otherwise.} \end{array}\right.\, (i\in [1,r];j\in \Z) \nonumber  \\
N_j&=& \prod_{i=1}^r N_{i,j} ,\nonumber \\
N(j_0,j_1)&=& \prod_{j=j_0+1}^{j_1} N_j, \qquad (j_0 \leq j_1), \label{flatnetmat}
\end{eqnarray}
still with the convention that $t_{0,j}=t_{r+1,j}=1$ and 
$N(j,j)={\mathbb I}$.

The matrix $N(j,j')$ for the fundamental stepped surface $\bk_0$
(we choose odd $j<j'\in \Z$ in this example) is represented as follows:
$$\raisebox{-1.1cm}{\hbox{\epsfxsize=9.cm \epsfbox{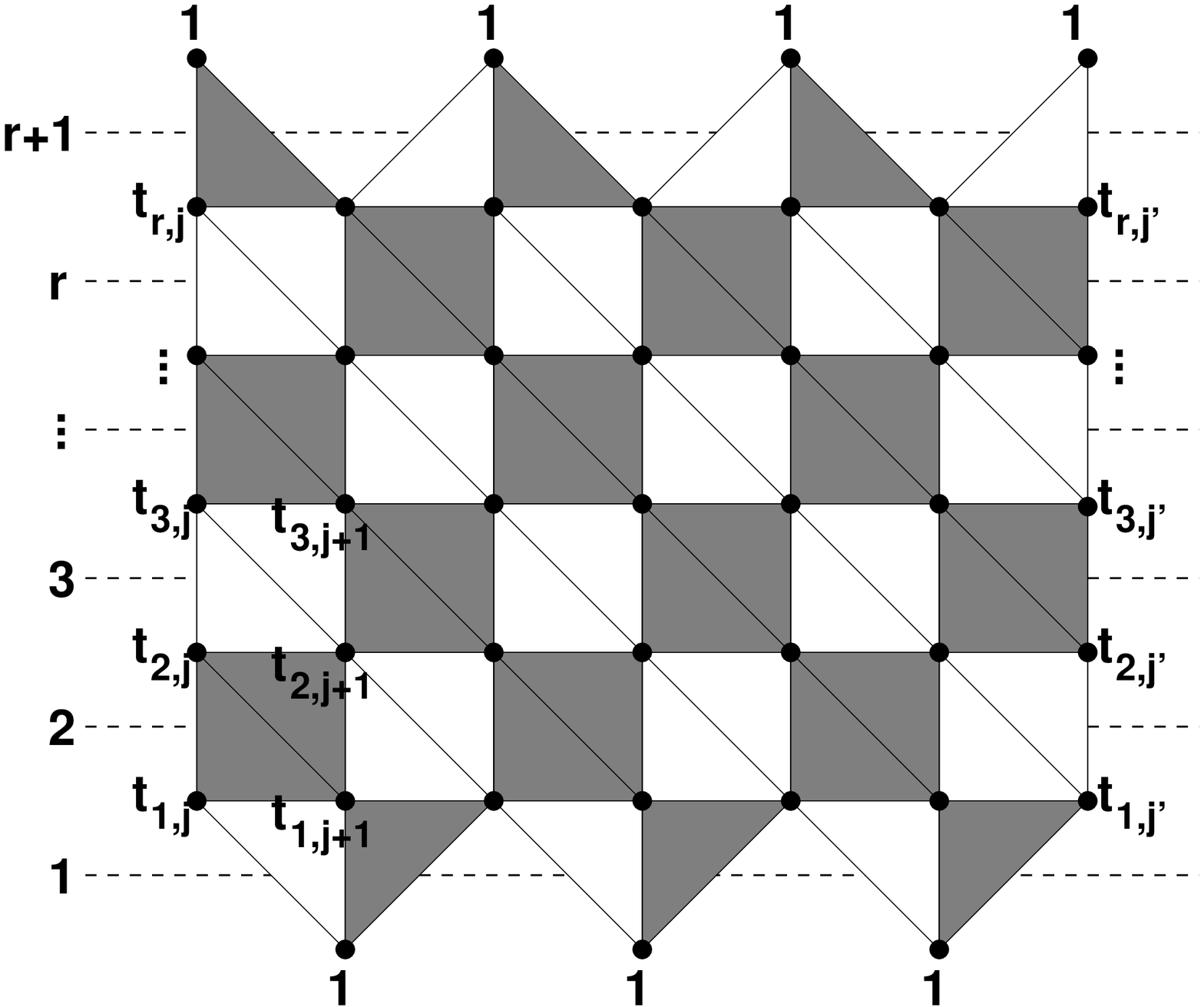}}} $$

This corresponds to the following portion of the cluster algebra quiver (strictly speaking the bottom and top row of 
fixed values $T_{0,j,k}=T_{r+1,j,k}=1$ are not part of the cluster, and the corresponding nodes are not
vertices of the quiver):
$$\raisebox{-1.1cm}{\hbox{\epsfxsize=8.cm \epsfbox{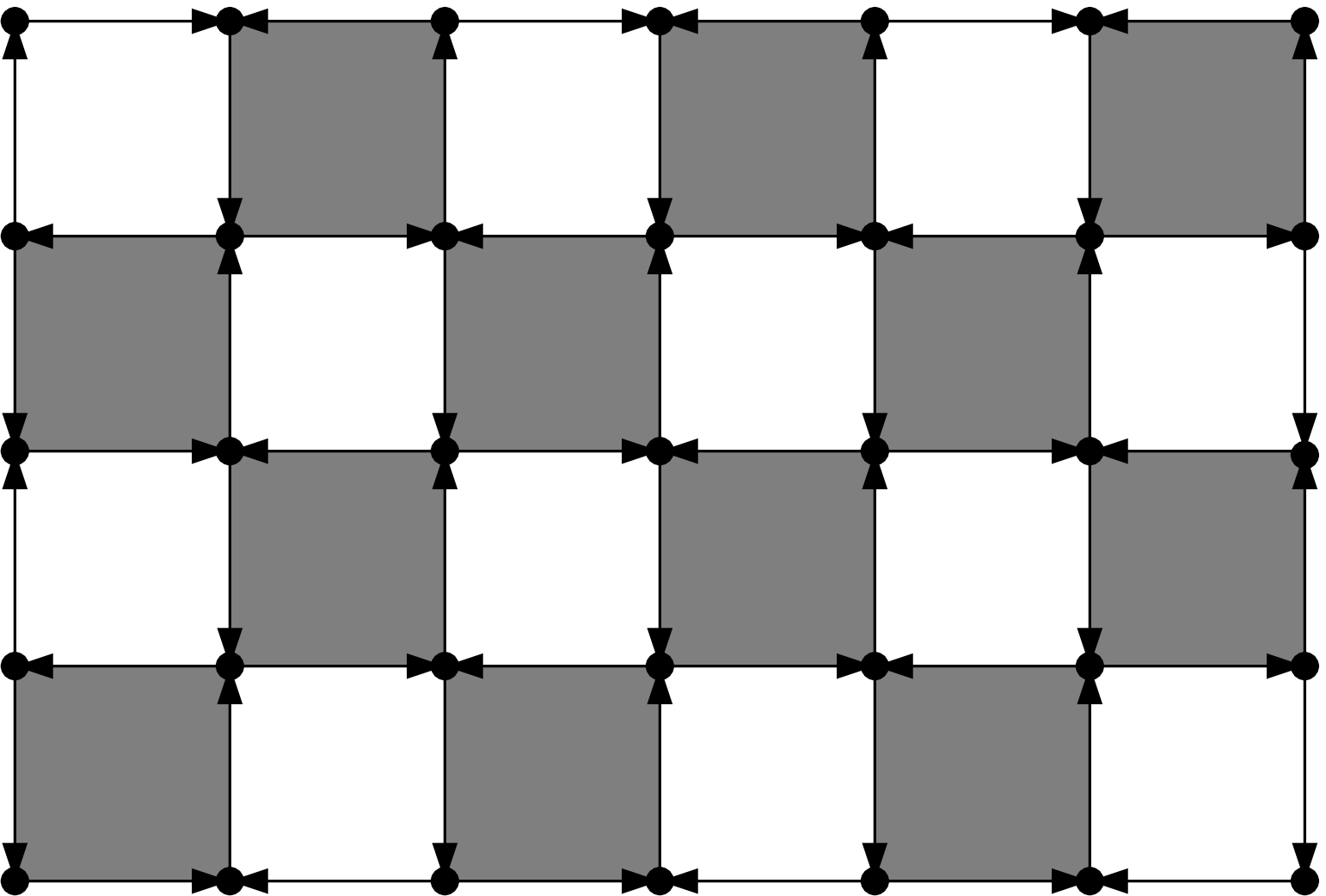}}} $$
In the network picture, the matrix   $N(j,j')$ is
$$\raisebox{-1.1cm}{\hbox{\epsfxsize=14.cm \epsfbox{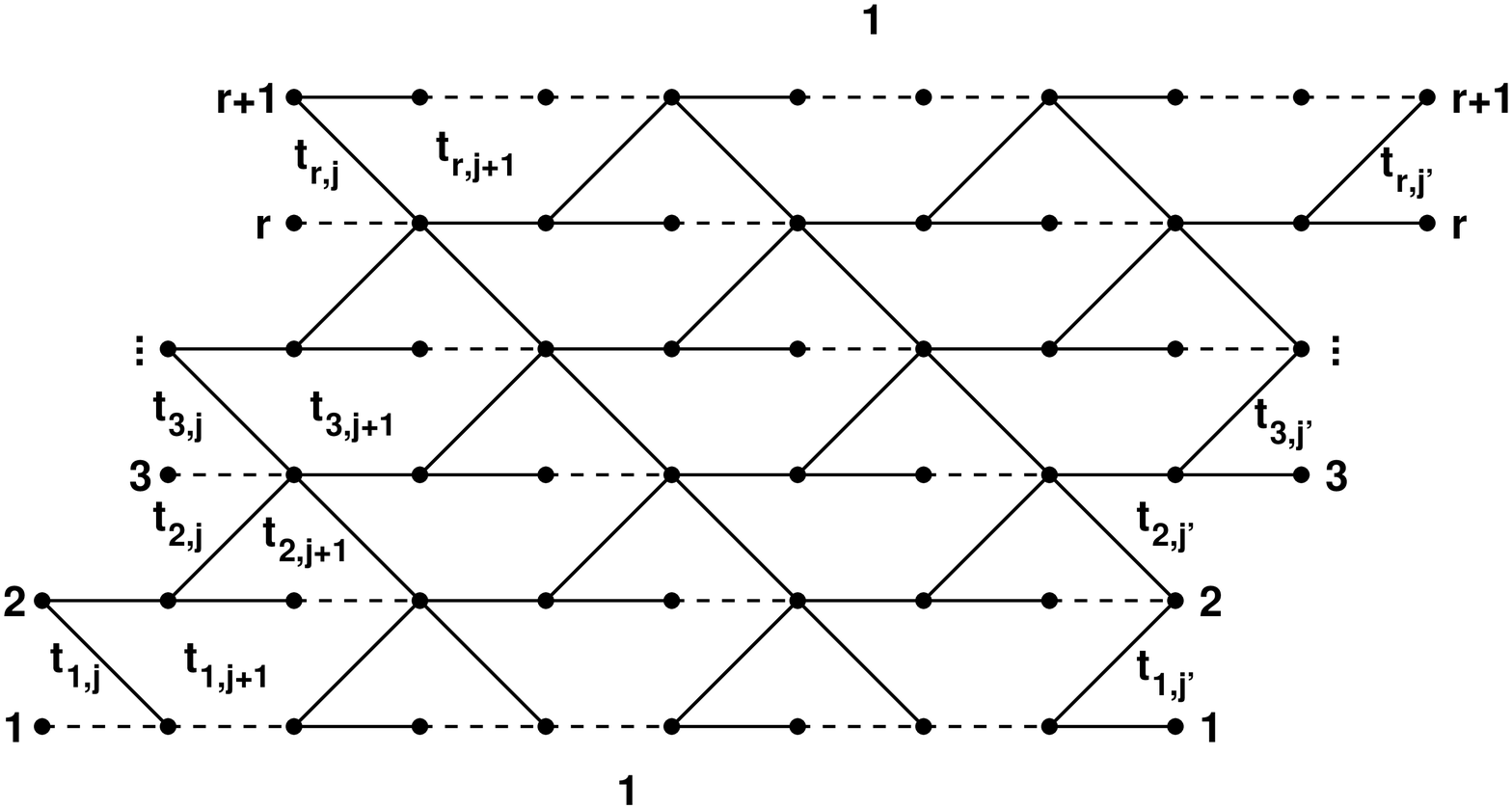}}} $$

An example of a non-flat stepped surface $\bk$, together with a pictorial
representation of type I of its network matrix, is
$$\raisebox{-1.1cm}{\hbox{\epsfxsize=16.cm \epsfbox{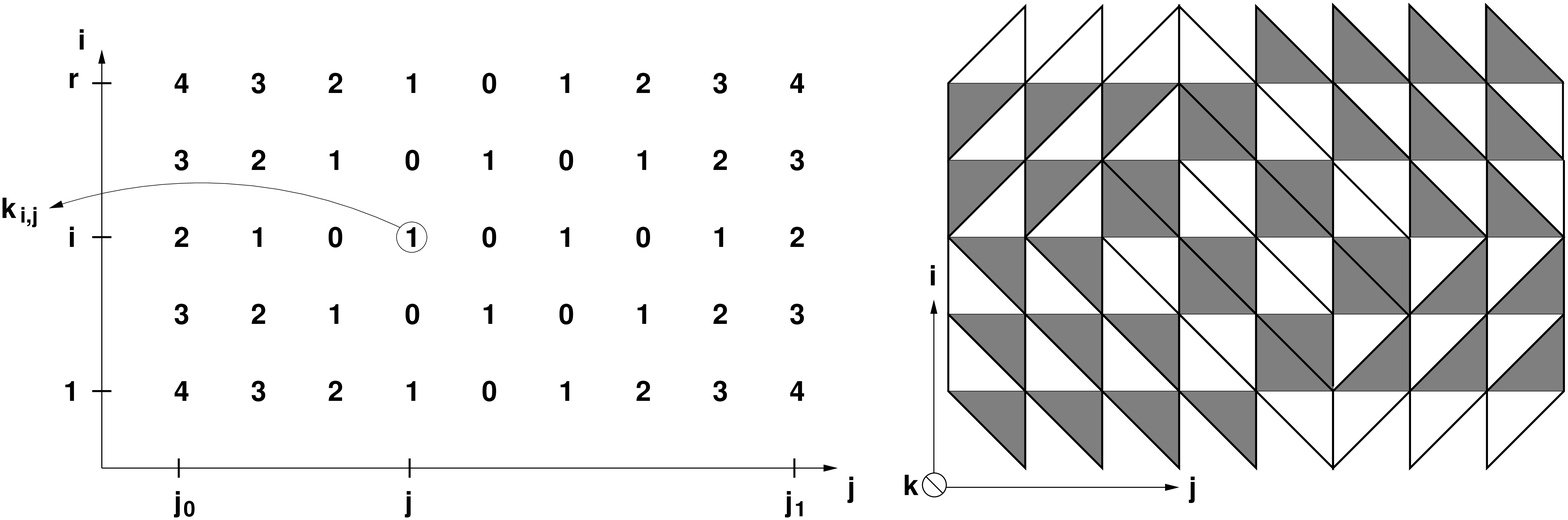}}} $$

\subsection{$A_r$ $T$-system solution}

Following \cite{DF}, we can now write an explicit expression for the variable $T_{1,j,k}$ 
in terms of the initial conditions $X_\bk$ for any stepped surface $\bk$. Without loss of generality,
we may assume that the point $(1,j,k)$ is above $\bk$, namely that $k\geq k_{1,j}$ (otherwise we simply
reflect $k\to -k$ and $\bk\to -\bk$).

\begin{defn}\label{projdef}
The projection of $(1,j,k)$
onto a given stepped surface $\bk=\{(i,j,k_{i,j})\}$ is the finite subset $\{(i,j,k_{i,j})_{i\in[1,r];j\in [j_0,j_1]}\}\subset \bk$. Here,  
$j_0$ is defined as largest integer such that  $$k-j=k_{1,j_0}-j_0,$$ and  $j_1$ defined as the smallest integer such that
 $$k+j=k_{1,j_1}+j_1.$$ We call $j_0$ the minimum of the projection, and $j_1$ its maximum.
\end{defn}

\begin{thm}{\cite{DF}}\label{soluT}
The solution $T_{1,j,k}$ of the $A_r$ $T$-system \eqref{tsys} in terms of the initial conditions $X_\bk$
on a given stepped surface $\bk$ is
\begin{equation}\label{Tsolu}
T_{1,j,k}=\left[N(j_0,j_1)\right]_{1,1}\, T_{1,j_1,k_{1,j_1}} 
\end{equation}
\end{thm}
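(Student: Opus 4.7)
The plan is to prove the formula by a mutation-invariance argument: the left-hand side $T_{1,j,k}$ depends only on the abstract $T$-system and not on the choice of admissible surface recording the initial data, while the right-hand side depends manifestly on $\bk$. Since any two stepped surfaces are connected by a finite sequence of elementary mutations $\mu_{i',j'}$, it suffices to verify (i) a base case where the formula is immediate, and (ii) invariance of the right-hand side under a single mutation.

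For the base case, I would pick a stepped surface $\bk^{*}$ that passes through the target point itself, i.e.\ with $k^{*}_{1,j}=k$. Then the defining conditions of Definition \ref{projdef} force $j_0=j_1=j$, the projection collapses to the single point $(1,j,k)$, the product $N(j,j)=\mathbb I$ by convention gives $[N(j,j)]_{1,1}=1$, and $t_{1,j}=T_{1,j,k}$ by \eqref{initcondar}. For the inductive step at an interior mutation $\mu_{i',j'}$, with $j_0<j'<j_1$, the key observation is that the mutation identity \eqref{muta} is precisely a local rewriting of a product of two adjacent chips of $N(j_0,j_1)$, combined (when necessary) with the commutation identities \eqref{propuv} and \eqref{propvu} used to bring the affected chips into standard $UV$ or $VU$ adjacency. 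Because this is a genuine $2\times 2$ matrix identity, the full product $N(j_0,j_1)$, and in particular its $(1,1)$ entry, is literally unchanged; the boundary factor $t_{1,j_1}$ is also untouched by an interior mutation.

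The main obstacle is the case of a boundary mutation at $(1,j_1)$ (symmetrically at $(1,j_0)$), where one simultaneously changes the normalization factor $t_{1,j_1}$, alters the rightmost slice of chips in $N(j_0,j_1)$, and shifts the very value of $j_1$—so that the shape of the network product itself changes. Here the plan is to identify the appearing or disappearing boundary slice $N_{j_1'}$ explicitly in terms of $U$ and $V$ chips built from the old and new boundary values, and to extract the $(1,1)$ entry from the factorization $N(j_0,j_1')=N(j_0,j_1)\cdot N_{j_1'}$ (or its inverse, for a backward boundary mutation). Because only the top row $i=1$ contributes through the $(1,1)$ entry and the elementary matrices $U,V$ have a simple block-triangular shape, this step should reduce to the single scalar mutation relation \eqref{mutat} applied at the boundary point. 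Combined with the standard connectivity of the mutation graph on admissible stepped surfaces, this closes the induction.
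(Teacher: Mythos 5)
The paper does not actually prove Theorem \ref{soluT}: it quotes the result and refers to \cite{DF} for the proof, so there is no internal argument to compare against. Your mutation--invariance skeleton is nevertheless the natural strategy and is consistent with how the network matrices are manipulated elsewhere in the paper (the proof of Theorem \ref{arbibound} implements changes of stepped surface precisely as $UV\leftrightarrow VU$ substitutions \eqref{muta} inside the matrix product, supplemented by \eqref{propuv}--\eqref{propvu}). Your base case is fine provided you choose the surface through $(1,j,k)$ to have a local maximum there in the $j$-direction; otherwise Definition \ref{projdef} does not literally give $j_0=j_1=j$. Note also that an ``interior'' mutation changes more than the two central chips: the neighbouring factors $N_{i'\pm 1,j'}$, $N_{i'-1,j'+1}$ can carry the mutated value $t_{i',j'}$ as an argument, and one needs the exchange identities \eqref{propuv}--\eqref{propvu} (which rewrite arguments, not just positions) to see that the total product is nonetheless preserved; your conclusion is correct but the bookkeeping is heavier than ``two chips are rewritten.''

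The genuine gap is in your case analysis. You assume a mutation either leaves $j_0$, $j_1$ and the product untouched (``interior'') or occurs at $(1,j_0)$ or $(1,j_1)$ (``boundary''). This misses the case of a \emph{forward} mutation at a row-$1$ site $(1,j')$ with $j_0<j'<j_1$ and $k_{1,j'}=k-|j-j'|-2$: the mutation lifts the surface onto the projection cone of $(1,j,k)$, and the corresponding endpoint of the projection of Definition \ref{projdef} jumps to $j'$, so the matrix the theorem asks for is taken over a strictly smaller interval than the (indeed unchanged) product you control. Conversely, a \emph{forward} mutation at $(1,j_0)$ or $(1,j_1)$ is never admissible --- by the extremality defining $j_0$ and $j_1$, those sites cannot be local minima of the surface along row $1$ --- so your ``boundary'' case concerns only backward mutations, and the missing interior case is exactly the inverse of it. Closing the induction therefore requires one explicit verification: when an endpoint of the projection moves by one slice, the created or destroyed slice contributes trivially to $\left[N(j_0,j_1)\right]_{1,1}T_{1,j_1,k_{1,j_1}}$. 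On the left this is easy (the extra slice begins with a $U_1$ chip, whose first row is $(1,0,\dots,0)$, so prepending it does not affect the $(1,1)$ entry); on the right it is a genuine computation mixing the change of the chip $N_{1,j_1}$ from $U_1$ to $V_1$, the appended slice $N_{j_1+1}$, and the change of normalization from $t_{1,j_1}$ to $t_{1,j_1+1}$, which is where the scalar relation \eqref{mutat} actually enters. You correctly identify this as the hard step but leave it as ``should reduce to''; without it, and without recognizing that interior forward mutations can trigger it, the induction does not close.
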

For the proof, we refer the reader to \cite{DF}.

The expression for $T_{i,j,k}$ with $i>1$ is obtained from the Wronskian expressions \eqref{wronsk}. 
Combinatorially, this determinant is easily interpreted via the Lindstr\"om-Gessel-Viennot theorem \cite{LGV1,LGV2}
as the partition function of a family
of $i$  non-intersecting paths on the weighted network corresponding to $N(j_0,j_1)$, where $j_0$ and $j_1$
are respectively the smallest minimum and largest maximum of the projections of the $T_{1,j',k'}$ involved 
in the discrete Wronskian expression \eqref{wronsk},
namely with $(j',k')=(j+a-b,k+a+b-i-1)$, $a,b\in[1,i]$. 
\begin{thm}{\cite{DF}}\label{finsol}
As a function of $X_\bk$, the solution $T_{i,j,k}$ of the $A_r$ $T$-system is
$$T_{i,j,k}=Z_{j_0(1),...,j_0(i)}^{j_1(1),...,j_1(i)}(j_0,j_1)\, \prod_{a=1}^i T_{1,j_1(a),k_{1,j_1(a)}}, $$
where $Z_{j_0(1),...,j_0(i)}^{j_1(1),...,j_1(i)}(j_0,j_1)$ is the partition function  of $i$
non-intersecting weighted paths on the network corresponding to $N(j_0,j_1)$, starting at the points
$(1,j_0(1)),(1,j_0(2),...,(1,j_0(i))$ and ending at the points $(1,j_1(1)),(1,j_1(2),...,(1,j_1(i))$. These points are
respectively the minima
of the projections of the points $(1,j+b-i,k-b)$, $b=1,2,...,i$, with coordinates $(1,j_0(b),k_{1,j_0(b)})$, 
onto $\bk$, and the maxima
of the projections of the points $(1,j+a-1,k+a-i)$, $a=1,2,...,i$ onto $\bk$, 
with coordinates $(1,j_1(a),k_{1,j_1(a)})$.
\end{thm}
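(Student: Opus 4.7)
My plan is to combine the Desnanot--Jacobi identity of Lemma~\ref{elimdet} with the scalar network solution of Theorem~\ref{soluT}, and then invoke the Lindström--Gessel--Viennot lemma to recast the resulting determinant as a partition function of non-intersecting paths.

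The first step will be to expand
\begin{equation*}
T_{i,j,k}=\det_{1\le a,b\le i}\bigl(T_{1,\,j+a-b,\,k+a+b-i-1}\bigr)
\end{equation*}
using Lemma~\ref{elimdet}, and then apply Theorem~\ref{soluT} to each entry. Writing $(j',k')=(j+a-b,k+a+b-i-1)$ for the $(a,b)$ entry, the key observation is that $k'-j'=k-j+2b-i-1$ depends only on $b$ while $k'+j'=k+j+2a-i-1$ depends only on $a$. By Definition~\ref{projdef} the projection minimum of $(1,j',k')$ onto $\bk$ is therefore a function $j_0(b)$ of $b$ alone, and the projection maximum a function $j_1(a)$ of $a$ alone. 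The step condition on $\bk$ forces $k_{1,j}-j$ to be non-increasing and $k_{1,j}+j$ to be non-decreasing in $j$, so the sequences satisfy $j_0(1)>j_0(2)>\cdots>j_0(i)$ and $j_1(1)<j_1(2)<\cdots<j_1(i)$.

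Theorem~\ref{soluT} then gives, for each entry,
\begin{equation*}
T_{1,\,j+a-b,\,k+a+b-i-1}=\bigl[N(j_0(b),j_1(a))\bigr]_{1,1}\,T_{1,j_1(a),k_{1,j_1(a)}},
\end{equation*}
and since the rightmost factor depends only on the row index $a$, it may be extracted from each row of the determinant, producing
\begin{equation*}
T_{i,j,k}=\Bigl(\prod_{a=1}^{i}T_{1,j_1(a),k_{1,j_1(a)}}\Bigr)\,\det_{1\le a,b\le i}\bigl[N(j_0(b),j_1(a))\bigr]_{1,1}.
\end{equation*}
Setting $j_0=j_0(i)$ and $j_1=j_1(i)$, any directed path in the network of $N(j_0,j_1)$ between the top-row vertices $S_b=(1,j_0(b))$ and $T_a=(1,j_1(a))$ is automatically confined to columns between $j_0(b)$ and $j_1(a)$, because every edge is oriented left-to-right; hence $[N(j_0(b),j_1(a))]_{1,1}$ equals the weighted path sum $w(S_b\to T_a)$ in this common planar DAG.

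To finish I will apply the Lindström--Gessel--Viennot lemma. The strict monotonicities above place the sources along the top row in left-to-right order $S_i,S_{i-1},\ldots,S_1$ and the sinks in order $T_1,T_2,\ldots,T_i$, with all sources lying to the left of all sinks. For an assignment $a\mapsto (S_{\sigma(a)}\to T_a)$, any pair $(a,a')$ with $a<a'$ and $\sigma(a)>\sigma(a')$ makes the corresponding two chords on the top row interleave and forces the paths to share a vertex; hence non-intersecting tuples correspond only to the identity permutation. LGV sign-cancellation then collapses the determinant to $Z^{j_1(1),\ldots,j_1(i)}_{j_0(1),\ldots,j_0(i)}(j_0,j_1)$, yielding the claimed formula. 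The main obstacle I anticipate is precisely this last LGV step: one must verify the planarity of the network produced from an arbitrary $U,V$-decomposition of $\bk$ and check that chord interleaving genuinely translates into forced vertex-sharing of paths --- a fact that is intuitive from the lozenge tiling but demands a careful local graph-theoretic check case by case among the six configurations in \eqref{rules}.
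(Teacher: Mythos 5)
Your proposal is correct and follows essentially the same route the paper takes (and attributes to \cite{DF}): expand $T_{i,j,k}$ by the discrete Wronskian of Lemma \ref{elimdet}, evaluate each entry by Theorem \ref{soluT}, factor the $T_{1,j_1(a),k_{1,j_1(a)}}$ out of the rows, and apply Lindstr\"om--Gessel--Viennot to the resulting minor of transfer matrices, with the identity permutation being the only one admitting non-intersecting families. Your extra care in checking that $j_0$ depends only on the column index and $j_1$ only on the row index, and that the sources/sinks are correctly interlaced on the boundary of the (planar, left-to-right oriented) network, fills in exactly the details the paper leaves implicit.
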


\section{Unrestricted $A_\infty$ $T$-system}\label{unsec}
In this section, we study the solutions $T_{i,j,k}$ of the octahedron equation or the unrestricted 
$A_\infty$ $T$-system \eqref{tsys}, not subject to the restriction \eqref{arboundary}. The idea is that for given $i,j,k$, the solutions of the $A_\infty$ system are given by those of some $A_r$ system for sufficiently large $r$.

We wish to write the solution explicitly in terms of initial conditions 
$X_\bk$ \eqref{initcond} specified along some 
stepped surface $\bk$ \eqref{stepsurf}.

As a preliminary remark, we note that the substitutions $k\to -k$ as well as $(i,j,k)\to (i+a,j+b,k+c)$
for any $a,b,c\in\Z$ with $a+b+c$ even leave the $T$-system equation \eqref{tsys} invariant:

\begin{lemma}\label{refk}
The solution $T_{i,j,k}$ of the unrestricted 
$A_\infty$ $T$-system \eqref{tsys} with initial conditions $X_\bk$ is the same function of the initial
values $\{t_{x,y}\}$ as $T_{i,j,-k}$ with initial condition $X_{-\bk}$, where by $-\bk$ we
mean the stepped surface $\bk'$ with $k_{i,j}'=-k_{i,j}$ for all $i,j$.
\end{lemma}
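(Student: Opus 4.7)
The plan is to verify directly that the function $\tilde T_{i,j,k} := T_{i,j,-k}$ satisfies the $A_\infty$ $T$-system \eqref{tsys} with initial data $X_{-\bk}$, and then invoke uniqueness of the solution.

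First I would check that $-\bk$ is a legitimate stepped surface in the sense of Definition \ref{steppeddef}. The parity condition is preserved, since $i+j+k_{i,j}\equiv 0 \pmod 2$ forces $i+j+(-k_{i,j})\equiv 0 \pmod 2$; and the step condition $|k'_{i,j}-k'_{i',j'}|=|k_{i,j}-k_{i',j'}|=1$ when $|i-i'|+|j-j'|=1$ is manifestly stable under negation. Hence $X_{-\bk}$ is a well-defined admissible initial condition for the $T$-system.

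Next, writing out \eqref{tsys} for $\tilde T$, the left-hand side becomes $\tilde T_{i,j,k+1}\tilde T_{i,j,k-1}=T_{i,j,-k-1}T_{i,j,-k+1}$, which is exactly $T_{i,j,(-k)+1}T_{i,j,(-k)-1}$ since the product is symmetric in the two shifts. The right-hand side becomes $T_{i,j+1,-k}T_{i,j-1,-k}+T_{i+1,j,-k}T_{i-1,j,-k}$, which is the right-hand side of \eqref{tsys} evaluated at $(i,j,-k)$. As $T$ satisfies \eqref{tsys} at every point, $\tilde T$ satisfies it as well. Moreover $\tilde T$ takes the prescribed values on $-\bk$, since $\tilde T_{i,j,-k_{i,j}}=T_{i,j,k_{i,j}}=t_{i,j}$.

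Finally, because \eqref{tsys} is a three-term recursion in $k$ with no denominators vanishing on the generic data $\bt$, any stepped-surface initial condition determines $T_{i,j,k}$ uniquely at every point $(i,j,k)$ with $i+j+k$ even, via a sequence of forward and backward mutations \eqref{mutat} transporting the surface. Therefore $\tilde T$ must coincide, as a function of $\{t_{x,y}\}$, with the unique solution of the $T$-system with initial data $X_{-\bk}$, establishing the lemma. There is no serious obstacle here; the only point to verify carefully is that the negation preserves the defining properties of a stepped surface and that \eqref{tsys} is manifestly symmetric under $k\pm 1\leftrightarrow k\mp 1$.
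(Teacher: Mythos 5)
Your proof is correct and takes essentially the same route as the paper, which simply observes that the substitution $k\to -k$ leaves equation \eqref{tsys} invariant and leaves the verification (well-definedness of $-\bk$, matching of initial values, uniqueness of the solution as a three-term recursion) implicit. Your write-up just makes those implicit steps explicit.
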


\begin{lemma}\label{transla}
The unrestricted $A_\infty$ $T$-system solution
$T_{i,j,k}$ with initial conditions $X_\bk$ is the same function of the initial values $t_{x,y}$ 
as $T_{i+a,j+b,k+c}$ is of the initial values $u_{x,y}=t_{x-a,y-b}$ for the initial conditions 
$X_{c+\bk}$, for any $a,b,c\in\Z$, such that $a+b+c=0$ mod 2, and where by $c+\bk$ we
mean the stepped surface $\bk'$ with $k_{i,j}'=k_{i,j}+c$ for all $i,j$.
\end{lemma}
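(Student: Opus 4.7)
The plan is to derive the lemma from two ingredients: (i) the translation invariance of the octahedron equation \eqref{tsys}, and (ii) the uniqueness of the $T$-system solution once initial data is prescribed on an admissible stepped surface. First I would observe that \eqref{tsys} depends only on differences of the indices $(i,j,k)$, so it is manifestly invariant under the shift $(i,j,k)\mapsto(i+a,j+b,k+c)$ for any $a,b,c\in\Z$. The parity restriction $i+j+k\equiv 0\pmod 2$ to a single sublattice is preserved under this shift precisely when $a+b+c\equiv 0\pmod 2$, which is exactly the hypothesis of the lemma. This already exhibits the translation as a symmetry of the full (parity-restricted) system.

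Next I would define the shifted sequence $\widetilde T_{i,j,k}:=T_{i-a,j-b,k-c}$. By the previous step $\widetilde T$ satisfies the same $T$-system \eqref{tsys}. To identify its initial data, I would evaluate it on the stepped surface $c+\bk$ (with the understanding that the spatial translation by $(a,b)$ is absorbed into the reindexing of initial values): at the point on the shifted surface corresponding to $(x,y)$ one computes
$\widetilde T_{x,y,\,k_{x-a,y-b}+c}=T_{x-a,y-b,\,k_{x-a,y-b}}=t_{x-a,y-b}=u_{x,y}$,
which matches exactly the prescribed initial conditions $X_{c+\bk}(\bu)$ of the lemma.

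Finally I would invoke uniqueness. Given any admissible stepped surface, every other $T_{i,j,k}$ is reached from the initial data by a finite sequence of forward/backward octahedron mutations \eqref{mutat}, each of which determines a unique new variable as a rational function of the neighboring ones. Hence the $T$-system solution with initial data $X_{c+\bk}(\bu)$ is uniquely determined, so $\widetilde T$ coincides with it as a rational function of $\bu$. Evaluating at $(i+a,j+b,k+c)$ yields $\widetilde T_{i+a,j+b,k+c}=T_{i,j,k}$, which, rewritten as an equality of rational functions in the initial values, is the statement of the lemma.

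There is no genuine obstacle; the content is a straightforward translation-invariance argument paralleling Lemma \ref{refk}, with reflections replaced by shifts. The only point requiring attention is the parity constraint $a+b+c\equiv 0\pmod 2$, needed to keep the shifted indices inside the sublattice on which the $T$-system is posed.
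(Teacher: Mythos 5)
Your proof is correct and is essentially the argument the paper intends: the lemma is stated there without an explicit proof, as an immediate consequence of the preliminary observation that the substitution $(i,j,k)\mapsto(i+a,j+b,k+c)$ with $a+b+c$ even preserves equation \eqref{tsys} and the parity sublattice, combined with uniqueness of the solution given data on a stepped surface. Your parenthetical care in reading $c+\bk$ as the surface whose height at $(x,y)$ is $k_{x-a,y-b}+c$ (i.e.\ translated in $(i,j)$ as well as in $k$) is exactly the right interpretation of the paper's slightly loose notation, since otherwise the shifted surface would not even lie on the correct parity sublattice when $c$ is odd.
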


As an immediate consequence of Lemmas \ref{refk} and \ref{transla},
we may assume without loss of generality
that the point $(i,j,k)$ is ``above" $\bk$, that is,  $k\geq k_{i,j}$, as all the results for 
$k\geq k_{i,j}$ may be transferred to the case $k<k_{i,j}$ as well. 

\begin{defn}
Let ${\mathcal D}_{\bk}(i,j,k)=\{(x,y,k_{x,y})\in \Z^3\, :\,  |x-i|+|y-j|\leq {|k-k_{x,y}|}\}\subset \bk$.
We call ${\mathcal D}_{\bk}(i,j,k)$ the {\it shadow} of the point $(i,j,k)$ on the initial data stepped surface $\bk$.
\end{defn}

Note that for $\bk=\bk_0$,
the boundary points $\partial{\mathcal D}_{\bk_0}(i,j,k)=\{(x,y,k_{x,y}^{(0)})\,: \, 
|x-i|+|y-j|={k-1}\}$ all have $k_{x,y}^{(0)}=1$.
For later purposes, we also define the interior domain
${\mathcal D'}_{\bk}(i,j,k)=\{(x,y,k_{x,y})\, :\,  |x-i|+|y-j|< |k-k_{x,y}|\}$. 

\subsection{Solution for the fundamental stepped surface $\bk_0$}

We start with the case when $\bk=\bk_0$. Let us consider a point $(i,j,k)$
with $i+j+k=0$ mod 2, so that  $k$ and $k_{i,j}^{(0)}\in\{0,1\}$ have the same parity. 
The following statement is clear from the form of the octahedron equation:
\begin{lemma}\label{redinit}
The solution $T_{i,j,k}$ of the unrestricted $A_\infty$ $T$-system \eqref{tsys} with initial 
conditions $X_{\bk_0}$
depends only on the initial values $t_{x,y}$ associated with points 
$(x,y,k_{x,y}^{(0)})\in \mathcal D_{\bk_0}(i,j,k)$.
\end{lemma}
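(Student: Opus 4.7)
The plan is to induct on the signed height $n := k - k_{i,j}^{(0)}$ above the fundamental surface. By Lemma \ref{refk} I may assume $n \geq 0$, and the parity condition $i+j+k \equiv 0 \pmod{2}$ forces $n$ to be a non-negative even integer. The base case $n=0$ is immediate: $T_{i,j,k_{i,j}^{(0)}} = t_{i,j}$ depends only on the single value $t_{i,j}$, and the point $(i,j,k_{i,j}^{(0)})$ does lie in $\mathcal{D}_{\bk_0}(i,j,k_{i,j}^{(0)})$ since $0 = |i-i|+|j-j| \leq |k_{i,j}^{(0)} - k_{i,j}^{(0)}| = 0$.

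For the inductive step at height $n+2$, I would rewrite the octahedron relation \eqref{tsys} centered at $(i,j,k-1)$ as
\[
T_{i,j,k} \;=\; \frac{T_{i,j+1,k-1}\,T_{i,j-1,k-1} + T_{i+1,j,k-1}\,T_{i-1,j,k-1}}{T_{i,j,k-2}}.
\]
A short parity check shows that each of the five variables on the right sits at height at most $n$ above $\bk_0$, so by the inductive hypothesis each of them depends only on initial values $t_{x,y}$ indexed by points in its respective shadow. The proof then reduces to the set-theoretic inclusion
\[
\mathcal{D}_{\bk_0}(i,j,k-2) \;\cup\; \bigcup_{\epsilon = \pm 1} \bigl[\mathcal{D}_{\bk_0}(i+\epsilon,j,k-1) \cup \mathcal{D}_{\bk_0}(i,j+\epsilon,k-1)\bigr] \;\subseteq\; \mathcal{D}_{\bk_0}(i,j,k).
\]

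Each of these inclusions is a one-line application of the triangle inequality. For example, if $(x,y,k_{x,y}^{(0)})\in \mathcal{D}_{\bk_0}(i+1,j,k-1)$, then $|x-i-1|+|y-j|\leq k-1-k_{x,y}^{(0)}$ (the absolute value collapses because $k-1\geq 1 \geq k_{x,y}^{(0)}$), whence
\[
|x-i|+|y-j| \;\leq\; |x-i-1|+|y-j|+1 \;\leq\; k-k_{x,y}^{(0)} \;=\; |k-k_{x,y}^{(0)}|,
\]
placing $(x,y,k_{x,y}^{(0)})$ inside $\mathcal{D}_{\bk_0}(i,j,k)$. The three other neighbor-shadow inclusions are identical after permuting signs, while the inclusion $\mathcal{D}_{\bk_0}(i,j,k-2) \subseteq \mathcal{D}_{\bk_0}(i,j,k)$ follows from $|k-2-a| \leq k-a$ for $k \geq a+1$ and $a\in\{0,1\}$. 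The whole argument is thus parity bookkeeping plus triangle inequalities; the only point requiring any care is verifying that the heights in play remain non-negative so that the absolute values reduce to ordinary subtractions, and no property of the network solution of Sect.\ref{netsolsec} enters the argument.
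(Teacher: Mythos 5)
The paper offers no argument for this lemma (it is declared ``clear from the form of the octahedron equation''), so writing out a genuine induction is reasonable --- but your choice of induction variable breaks the inductive step for half of the points. You claim that in the relation centered at $(i,j,k-1)$ each of the five variables on the right sits at height at most $n$ above $\bk_0$. This is true only when $k_{i,j}^{(0)}=0$, i.e.\ when $i+j$ is even. When $k_{i,j}^{(0)}=1$, the four neighbours $(i\pm1,j,k-1)$ and $(i,j\pm1,k-1)$ have $k^{(0)}=0$, so their height is $(k-1)-0=k-1=n+2$: exactly the height of the point you are trying to control, not ``at most $n$''. Concretely, $(1,0,3)$ has height $2$, and the recursion expresses $T_{1,0,3}$ in terms of $T_{2,0,2}$, which also has height $2$; the inductive hypothesis at height $\leq n=0$ does not cover it, and the induction is circular. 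The source of the error is that passing to a horizontal neighbour changes $k^{(0)}_{x,y}$ by $1$, so ``height above $\bk_0$'' does not decrease monotonically along the recursion.

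The repair is straightforward and leaves the substance of your argument intact: induct on the third coordinate $k$ itself (strong induction), not on the height. For a point at or above $\bk_0$ with $k\geq 2$, all five points on the right-hand side have third coordinate $k-1$ or $k-2$, hence strictly smaller, and a one-line parity check shows each of them still lies at or above $\bk_0$; the base cases $k\in\{0,1\}$ are the points of $\bk_0$ themselves. Your five shadow inclusions into $\mathcal{D}_{\bk_0}(i,j,k)$ via the triangle inequality are correct and constitute the actual content of the lemma; once the induction is rebased on $k$, they complete the proof.
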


Lemma \ref{transla} has the following immediate consequence:

\begin{lemma}\label{translatwo}
The unrestricted $A_\infty$ $T$-system solution
$T_{i,j,k}$ is the same function of the initial values $t_{x,y}$ on ${\mathcal D}_{\bk_0}(i,j,k)$
as $T_{i+a,j+b,k+c}$ is of $u_{x,y}=t_{x-a,y-b}$ on ${\mathcal D}_{c+\bk_0}(i+a,j+b,c+k)$, 
for any $a,b,c\in\Z$, such that $a+b+c=0$ mod 2.
\end{lemma}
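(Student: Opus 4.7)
The plan is to derive this as a straightforward combination of the two preceding lemmas: Lemma \ref{transla} provides the translation equivalence of the full solutions, while Lemma \ref{redinit} provides the shadow-locality. The only genuine content to verify is that the shadow itself transforms equivariantly under the translation, so that restricting the dependence on initial data on both sides matches up correctly.

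Concretely, I would first observe that the defining inequality $|x-i|+|y-j|\leq |k-k_{x,y}^{(0)}|$ for $\mathcal{D}_{\bk_0}(i,j,k)$ is invariant under the simultaneous shift $(x,y,z)\mapsto (x+a,y+b,z+c)$ and $(i,j,k)\mapsto(i+a,j+b,k+c)$, since all differences appearing are unchanged. Since the surface $c+\bk_0$ is exactly the image of $\bk_0$ under this shift (the heights become $k_{x,y}^{(0)}+c$), the map $(x,y,k_{x,y}^{(0)})\mapsto (x+a,y+b,k_{x,y}^{(0)}+c)$ is a bijection between $\mathcal{D}_{\bk_0}(i,j,k)$ and $\mathcal{D}_{c+\bk_0}(i+a,j+b,k+c)$, or equivalently the relabeling $(x,y)\mapsto(x+a,y+b)$ identifies the planar projections of these two shadow sets.

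Next, Lemma \ref{transla} asserts that $T_{i+a,j+b,k+c}$ viewed as a function of the variables $u_{x,y}=t_{x-a,y-b}$ indexed by the shifted surface $c+\bk_0$ is the \emph{same} function as $T_{i,j,k}$ viewed as a function of the $t_{x,y}$ indexed by $\bk_0$. By Lemma \ref{redinit}, the latter function depends only on those $t_{x,y}$ with $(x,y,k_{x,y}^{(0)})\in \mathcal{D}_{\bk_0}(i,j,k)$. Pushing this statement of dependence through the translation of variables just established, it becomes the statement that $T_{i+a,j+b,k+c}$ depends only on those $u_{x,y}$ whose index lies in $\mathcal{D}_{c+\bk_0}(i+a,j+b,k+c)$, which is exactly the claim.

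There is no real obstacle: the result is essentially a bookkeeping combination of translation invariance of the octahedron relation with the locality of the shadow. The only point that requires a line of verification is the bijection between the two shadow sets, and this is immediate from the shift-invariance of the inequality defining $\mathcal{D}_\bk$. One should however briefly check that the parity constraint $a+b+c=0 \bmod 2$ is exactly what is needed so that the translated surface $c+\bk_0$ still consists of points $(x,y,k_{x,y}^{(0)}+c)$ with $x+y+(k_{x,y}^{(0)}+c)=0 \bmod 2$, matching Definition \ref{steppeddef}.
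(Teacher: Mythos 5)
Your proposal is correct and matches the paper's intent: the paper states this lemma as an immediate consequence of Lemma \ref{transla} (combined implicitly with the locality of Lemma \ref{redinit}), and your argument simply fills in the routine verification that the shadow transforms equivariantly under the shift. Nothing is missing.
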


In view of the above Lemmas, we may immerse the domain ${\mathcal D}_{\bk_0}(i,j,k)$ 
of initial data surface into a different initial data surface, pertaining to the $A_r$ case with
sufficiently large $r$, so that the domain does not feel the $A_r$ boundary. 
More precisely, using the above-mentioned translational invariance, we have
the following.

\begin{lemma}\label{chgsurf}
The solution $T_{i,j,k}(\{t_{x,y}\})$ of the $A_\infty$ $T$-system in terms of the initial values $t_{x,y}$
on ${\mathcal D}_{\bk_0}(i,j,k)$
coincides with the solution $T_{k,0,k}(\{u_{x,y}\})$ of the $A_r$ $T$-system, with $r=2k-1$, 
and with initial data $u_{x,y}$ on any stepped surface $\bk$ such that $k_{x,y}=x+y$ mod 2
for $(x,y,k_{x,y})\in {\mathcal D}_{\bk_0}(k,0,k)$,
on which $u_{x,y}=t_{x+i-k,y+j}$.
\end{lemma}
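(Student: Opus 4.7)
My plan proceeds in two stages.

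\emph{Stage 1: Translation.} I would first apply Lemma~\ref{translatwo} with the shift $(a,b,c)=(k-i,-j,0)$. The required parity condition $a+b+c=k-i-j\equiv 0\bmod 2$ follows from $i+j+k\equiv 0\bmod 2$, so the lemma applies. It carries $(i,j,k)\mapsto (k,0,k)$, preserves the flat stepped surface $\bk_0$ (since $c=0$), and relabels the initial data as $u_{x,y}:=t_{x+i-k,y+j}$, sending the shadow to ${\mathcal D}_{\bk_0}(k,0,k)$. The problem is thereby reduced to comparing $T_{k,0,k}$ computed in the $A_\infty$ system on $\bk_0$ with $T_{k,0,k}$ computed in an $A_{2k-1}$ system on a stepped surface $\bk$ that agrees with $\bk_0$ on the shadow.

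\emph{Stage 2: Embedding into $A_{2k-1}$.} For this stage I would establish two sub-claims. The first is an $A_r$ analogue of Lemma~\ref{redinit}: the $A_{2k-1}$ solution $T_{k,0,k}$ depends only on $u_{x,y}$ with $(x,y,k_{x,y})\in {\mathcal D}_{\bk_0}(k,0,k)$ and $x\in[1,2k-1]$. This uses the same iterated application of the octahedron relation as Lemma~\ref{redinit} and justifies the freedom to choose $\bk$ (and the corresponding $u_{x,y}$) arbitrarily outside the shadow. The second, and substantive, claim is that the full $A_\infty$ solution $T_{k,0,k}$ on $\bk_0$ is independent of the corner values $u_{0,0}$ and $u_{2k,0}$, which are precisely the only points of ${\mathcal D}_{\bk_0}(k,0,k)$ lying outside the strip $x\in[1,2k-1]$. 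Granted both sub-claims, the $A_{2k-1}$ boundary values $T_{0,\cdot,\cdot}=T_{2k,\cdot,\cdot}=1$ act as harmless specializations of those two corner variables, and the $A_\infty$ and $A_{2k-1}$ solutions coincide.

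\emph{Main obstacle.} The crux is the corner-independence claim. I plan to prove it by tracking the cone of dependency of the octahedron recurrence \eqref{tsys} downward from the apex $(k,0,k)$ to the initial data on $\bk_0$: each recurrence step widens the $(x,y)$-footprint by at most one unit in $\ell^1$-distance, and because $\bk_0$ interleaves levels $0$ and $1$, an induction on $k$ shows that the recurrence always terminates against $\bk_0$ strictly before it can access $u_{0,0}$ or $u_{2k,0}$ (these two corner values lie on the boundary of the shadow, where the defining inequality $|x-k|+|y|\leq k-k_{x,y}^{(0)}$ is saturated, precisely the regime that the recurrence never probes). A cleaner structural alternative is to perform the computation first inside an auxiliary $A_{r'}$ system with $r'\geq 2k+1$, where the shadow is strictly interior to the strip and the $A_{r'}$ boundaries play no role; Theorem~\ref{soluT} combined with the Wronskian of Lemma~\ref{elimdet} then yields an explicit network expression for $T_{k,0,k}$ involving only $u_{x,y}$ with $x\in[1,2k-1]$, which must also give the $A_{2k-1}$ solution.
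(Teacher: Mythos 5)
Your proposal is correct and follows essentially the same route as the paper, whose entire proof consists of invoking Lemma \ref{redinit} (dependence only on the shadow) together with the translation of Lemma \ref{translatwo} with $(a,b,c)=(k-i,-j,0)$; your Stage 1 and the first sub-claim of Stage 2 reproduce exactly this. Your second sub-claim -- independence of the corner values $u_{0,0}$ and $u_{2k,0}$ -- is a point the paper's two-line proof passes over in silence, and your dependency-cone argument for it is sound, with one small caveat: saturation of $|x-k|+|y|\leq k-k^{(0)}_{x,y}$ is not by itself what makes a shadow point invisible to the recurrence (the saturated points at height $k^{(0)}_{x,y}=1$ do contribute, e.g.\ $t_{2,\pm 1}$ in $T_{2,0,2}=(t_{2,1}t_{2,-1}+t_{3,0}t_{1,0})/t_{2,0}$); what protects the two corners is that they sit at height $0$ with $|x-k|+|y|=k$, so every instance of \eqref{tsys} that would take them as input produces as output either a point outside the dependency cone of $(k,0,k)$ or a point $(x',y',1)$ with $x'+y'$ odd that already lies on $\bk_0$ and is therefore never computed.
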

\begin{proof}
We use Lemma \ref{redinit} to compare the solution $T_{i,j,k}(\{t_{x,y}\})$ of the $A_\infty$ $T$-system
to that, $T_{k,0,k}(\{u_{x,y}\})$ of the $A_r$ $T$-system with $r=2k-1$. The latter only depends on
the values $u_{x,y}$ on the shadow of $(k,0,k)$ onto the stepped surface $\bk$, which was engineered to be 
${\mathcal D}_{\bk_0}(k,0,k)$. The lemma then follows from the 
translational invariance of Lemma \ref{translatwo},
with $a=k-i$, $b=-j$ and $c=0$.
\end{proof}

To compute the solution $T_{i,j,k}$ of the unrestricted $A_\infty$ $T$-system,
we simply have to compute the solution 
$T_{k,0,k}(\{u_{x,y}\})$ of the $A_{2k-1}$ $T$-system. 

\begin{defn}
The network matrix associated to the domain ${\mathcal D}_{\bk_0}(k,0,k)$, denoted by
$N\left({\mathcal D}_{\bk_0}(k,0,k)\right)$ is the product of the $2k-2\times 2k-2$ 
$U$ and $V$ matrices corresponding
to the $U,V$ decomposition of the domain ${\mathcal D}_{\bk_0}(k,0,k)$, according to the rules
of eq.\eqref{rules}.
\end{defn}

\begin{example}\label{troisex}
For $k=3$, we have in the pictorial representation I: 
\begin{eqnarray*}N\left({\mathcal D}_{\bk_0}(3,0,3)\right)&=& \quad 
\raisebox{-2.cm}{\hbox{\epsfxsize=4.cm \epsfbox{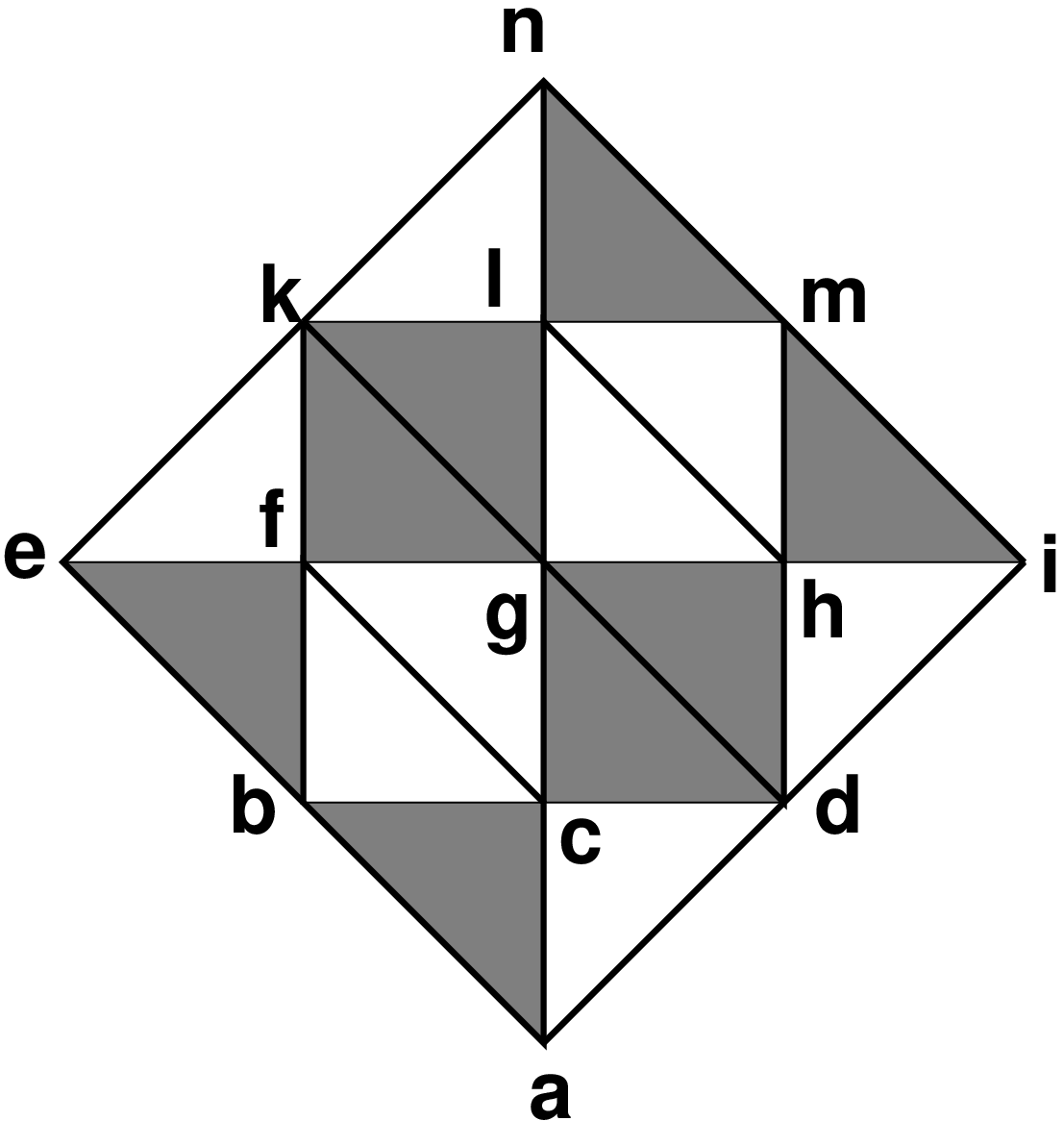}}}
\\
&=& V_{2}(b,e,f) V_{1}(a,b,c)U_{2}(f,g,j)V_{3}(g,k,l)\times \\
&&\qquad  \times U_{1}(c,d,g)V_{2}(d,g,h)U_{3}(l,m,n)U_{2}(h,i,m)
\end{eqnarray*}
where we have used shorthands for the variables
\begin{eqnarray*}
 u_{5,0}&=&n\\
u_{4,-1}=k \quad u_{4,0}&=&l \quad u_{4,1}=m\\
u_{3,-2}=e \quad u_{3,-1}=f\quad u_{3,0}&=&g 
\quad u_{3,1}=h\quad u_{3,2}=i\\
u_{2,-1}=b \quad u_{2,0}&=&c \quad u_{2,1}=d\\
u_{1,0}&=&a 
\end{eqnarray*}
\end{example}

We have the following.

\begin{thm}\label{kok}
The solution $T_{k,0,k}(\{u_{x,y}\})$ of the $A_{2k-1}$ $T$-system is given by:
$$ T_{k,0,k} =\left\vert N\left({\mathcal D}_{\bk_0}(k,0,k)\right)_{1,2,...,k-1}^{1,2,...,k-1} \right\vert
\ \prod_{a=1}^{k-1}u_{a,1-a}^{-1} \ \prod_{b=1}^{k}u_{b,b-1}$$
where for any matrix $M$ the notation $|M_{1,2...,m}^{1,2,...,m}|$ stands for the $m\times m$ principal minor of $M$.
\end{thm}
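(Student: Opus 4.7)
The plan is to combine Lemma~\ref{chgsurf}, the Wronskian formula, and the $A_r$ network solution (Theorem~\ref{soluT}), then reorganize the resulting determinant to match the stated principal minor.

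First, by Lemma~\ref{chgsurf}, I realize $T_{k,0,k}$ as the solution of the $A_{2k-1}$ $T$-system on a stepped surface that agrees with $\bk_0$ on the shadow $\mathcal{D}_{\bk_0}(k,0,k)$; by Lemma~\ref{redinit}, only the shadow values $u_{x,y}$ enter the answer. Next, I would apply the discrete Wronskian formula (Lemma~\ref{elimdet}):
\begin{equation*}
T_{k,0,k}=\det_{1\le \alpha,\beta\le k}\!\bigl(T_{1,\alpha-\beta,\alpha+\beta-1}\bigr).
\end{equation*}
A direct parity computation on $\bk_0$ shows that the projection of $(1,\alpha-\beta,\alpha+\beta-1)$ onto $\bk_0$ has extremal values $j_0=2-2\beta$ and $j_1=2\alpha-2$, so Theorem~\ref{soluT} gives $T_{1,\alpha-\beta,\alpha+\beta-1}=[N(2-2\beta,2\alpha-2)]_{1,1}\cdot u_{1,2\alpha-2}$. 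Factoring $u_{1,2\alpha-2}$ out of row $\alpha$ reduces the problem to computing $\det\bigl([N(2-2\beta,2\alpha-2)]_{1,1}\bigr)_{\alpha,\beta}$.

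The core step is then to match this $k\times k$ determinant of $(1,1)$-entries with the principal $(k-1)\times(k-1)$ minor of $N(\mathcal{D}_{\bk_0}(k,0,k))$, up to the stated monomial prefactor. I would do this by first writing each entry as an inner product via the concatenation $N(2-2\beta,2\alpha-2)=N(2-2\beta,0)\cdot N(0,2\alpha-2)$, so that the full $k\times k$ matrix factors as a rectangular product $V^T W$ where $V$ encodes the left half and $W$ the right half of the shadow. Applying the Cauchy--Binet formula then expresses $\det(V^T W)$ as a sum of products of $k\times k$ minors of $V$ and $W$, which by LGV is the partition function of $k$ non-intersecting paths on the combined shadow network, with sources $(1,2-2\beta)$ and sinks $(1,2\alpha-2)$. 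These sources and sinks together exhaust the NW and NE edges of the shadow diamond, so each non-intersecting configuration separates into a pair of rigid boundary arcs---one along each edge, contributing only monomial factors---plus a $(k-1)$-tuple of interior paths, which by LGV in reverse are counted exactly by the principal $(k-1)\times(k-1)$ minor of $N(\mathcal{D}_{\bk_0}(k,0,k))$.

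The main obstacle is precisely this last bijection together with its monomial bookkeeping: one must show that the rigid boundary arcs along the NW and NE edges contribute exactly $\prod_{a=1}^{k-1}u_{a,1-a}^{-1}$ and $\prod_{b=1}^{k}u_{b,b-1}$ respectively (after absorbing the extracted sink factors $\prod_\alpha u_{1,2\alpha-2}$), and that the interior configurations are in bijection with the LGV expansion of the principal minor. Concretely this reduces to inspecting the specific $U_i$ and $V_i$ chips along the boundary diagonals of the diamond, where the values $u_{a,1-a}$ and $u_{b,b-1}$ appear as denominators or numerators in the $2\times 2$ blocks, and recognizing a telescoping product along each edge. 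Equivalently, one can carry this out purely algebraically by iteratively applying the Desnanot--Jacobi identity~\eqref{desnajac} to the $k\times k$ minor, peeling off one row and column at each step until only the principal $(k-1)\times(k-1)$ minor of $N(\mathcal{D}_{\bk_0}(k,0,k))$ remains, with the shed rows and columns accounting for the boundary monomials.
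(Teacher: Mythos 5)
Your proposal is correct and follows essentially the same route as the paper's proof: the paper simply cites Theorem~\ref{finsol} (which already packages the Wronskian, Cauchy--Binet and Lindstr\"om--Gessel--Viennot steps you unpack) on an auxiliary surface agreeing with $\bk_0$ on the shadow, and then performs exactly the telescoping computation you describe for the rigid arcs outside the lozenge, obtaining the two monomial prefactors and leaving the principal $(k-1)\times(k-1)$ minor for the interior paths. The only cosmetic differences are that the paper chooses the surface to be steep outside the shadow (so the forced arcs climb single columns rather than travelling along the flat part), and that it carries out the boundary bookkeeping explicitly where you leave it as a plan.
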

\begin{proof}
We apply Theorem \ref{finsol} to the case of the following particular stepped surface $\bk$.
We assume that $\bk$ satisfies the conditions of Lemma \ref{chgsurf}, and that, moreover,
outside of ${\mathcal D'}_{\bk_0}(k,0,k)$,
$k_{x,y}$ is a strictly increasing function of $y$ for $y\geq 0$ and strictly decreasing for $y\leq 0$,
while $u_{x,y}$ is arbitrary outside of ${\mathcal D}_{\bk_0}(k,0,k)$.
As $\bk$ and $\bk_0$
coincide along ${\mathcal D}_{\bk_0}(k,0,k)$ we have: 
\begin{equation}\label{formuT}
T_{k,0,k}(\{u_{x,y}\})=Z_{j_0(1),...,j_0(k)}^{j_1(1),...,j_1(k)}(j_0,j_1)\,
 \prod_{a=1}^{k}u_{1,j_1(a)}\end{equation}
where we have used the initial data $T_{1,j_1(a),k_{1,j_1(a)}}=u_{1,j_1(a)}$, and
$$\left\{ \begin{matrix}  j_0(a)=a-k & j_1(a)=a-1 \\
k_{1,j_0(a)}=a-k+1 & k_{1,j_1(a)}=a  \end{matrix}\right. \qquad (a=1,2,...,k) $$
with $j_0=1-k$ and $j_1=k-1$.
The relevant part of the network involved in the quantity 
$Z_{j_0(1),...,j_0(k)}^{j_1(1),...,j_1(k)}(j_0,j_1)$ is the rectangle corresponding to $N(j_0,j_1)$,
which reads in pictorial representations I and II:
$$\raisebox{-2.cm}{\hbox{\epsfxsize=14.cm \epsfbox{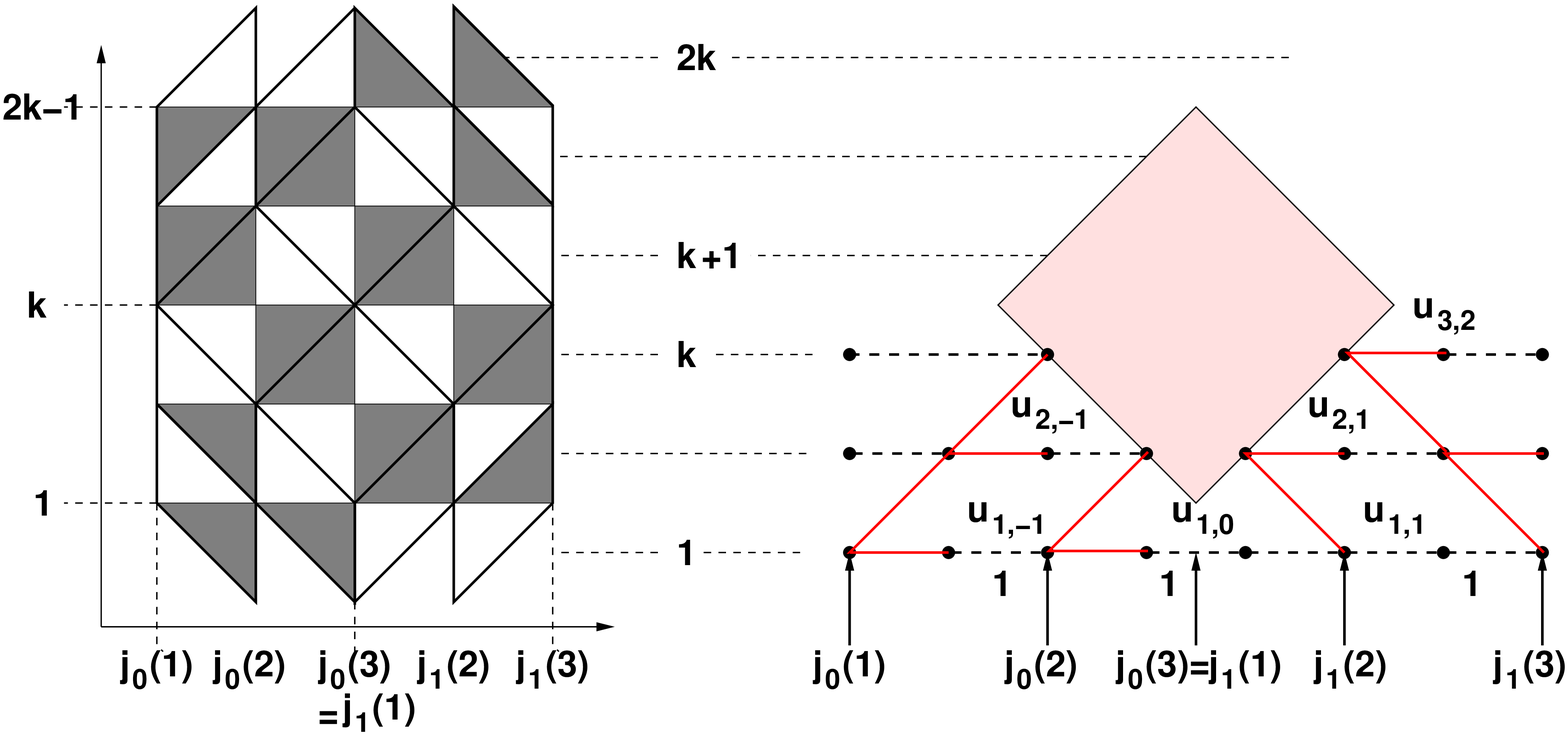}}}$$
where in the second picture we have represented the network for 
$N\left({\mathcal D}_{\bk_0}(k,0,k)\right)$ as a big lozenge with
entry and exit points determined by the unique configuration of non-intersecting paths outside of that domain,
that start at $j_0(a)$ and end at $j_1(k+1-a)$, $a=1,2,...,k$.
The steps of these paths outside of $N\left({\mathcal D}_{\bk_0}(k,0,k)\right)$
are all diagonal steps of the network (going up on the left side, and down on the right side). 
The contribution from the SW$\to$NE steps (on the left side) reads:
$$ \prod_{a=2}^{k} \prod_{m=1}^{k+1-a} {u_{m-1,j_0(a)}\over u_{m,j_0(a)} } =
\prod_{a=2}^{k} {1\over u_{k+1-a,j_0(a)}}
$$
while the NW$\to$SE steps (right side) contribute:
$$
 \prod_{a=1}^{k} \prod_{m=1}^{a} {u_{m+1,j_1(a)}\over u_{m,j_1(a)} } =
\prod_{a=1}^{k} {u_{a,j_1(a)}\over u_{1,j_1(a)}}
$$
Collecting all the weights and substituting them into \eqref{formuT}, the theorem follows, as, by
virtue of the Lindstr\"om-Gessel-Viennot theorem \cite{LGV1,LGV2}, the
quantity $\left\vert N\left({\mathcal D}_{\bk_0}(k,0,k)\right)_{1,2,...,k-1}^{1,2,...,k-1} \right\vert$
is the partition function for families of $k-1$ non-intersecting weighted
paths on the network for the above big lozenge,
starting at all $k-1$ points on the SW border, and ending at all $k-1$ points on the SE border.
\end{proof}

\begin{example}
We continue with the example $k=3$ of Ex.\ref{troisex}. 
We have
$$\vert D_{\bk_0}(3,0,3)\vert_{1,2}^{1,2}=\left \vert 
\begin{matrix}
{b d+a g\over c d} & {a(g i + d m)\over d h i} \\
{e g + b k\over d f} & {c(g i + d m) (e g + b k )\over d f g h i}+ {b(g n + m k)\over g il}
\end{matrix}\right\vert $$
The formula of Theorem \ref{kok} gives:
\begin{eqnarray*}T_{3,0,3}&=& \vert D_{\bk_0}(3,0,3)\vert_{1,2}^{1,2} \, {a d i\over a b} \\
&=& 
{b e g\over d f h} + {b e m\over f h i} + {b^2 k\over d f h} + {b^2 mk\over f g h i} + {b^2 n\over c i l} + 
{a b g n\over c d i l} +{a b m k\over c d il}+ {b^2 m k\over c g i l}
 \end{eqnarray*}
\end{example}

Theorem \ref{kok} has the following immediate consequence.
\begin{cor}\label{corTex}
The solution $T_{i,j,k}$ of the unrestricted $A_\infty$ $T$-system with initial conditions $X_{\bk_0}$ is given by:
\begin{equation}\label{Tsolut}
T_{i,j,k}=\left\vert N\left({\mathcal D}_{\bk_0}(i,j,k)\right)_{1,2,...,k-1}^{1,2,...,k-1} \right\vert
\ \prod_{a=1}^{k-2}t_{i+a-k+1,j-a}^{-1} \ \prod_{b=1}^{k-1}t_{i+b-k+1,j+b}
\end{equation}
\end{cor}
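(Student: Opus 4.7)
The plan is to reduce the general case to Theorem \ref{kok} by pure translation. By Lemma \ref{chgsurf}, the value $T_{i,j,k}(\{t_{x,y}\})$ of the unrestricted $A_\infty$ $T$-system on initial data $t_{x,y}$ over $\mathcal{D}_{\bk_0}(i,j,k)$ coincides with the value $T_{k,0,k}(\{u_{x,y}\})$ of the $A_{2k-1}$ $T$-system on initial data $u_{x,y}=t_{x+i-k,\,y+j}$ over $\mathcal{D}_{\bk_0}(k,0,k)$. Since the shadow $\mathcal{D}_{\bk_0}(i,j,k)$ is the translate of $\mathcal{D}_{\bk_0}(k,0,k)$ by the vector $(i-k,j,0)$ (parity-compatible because $i+j+k$ is even), the $U,V$-decompositions of the two regions are combinatorially identical, and the two network matrices $N(\mathcal{D}_{\bk_0}(i,j,k))$ and $N(\mathcal{D}_{\bk_0}(k,0,k))$ have the same entries under the identification $u_{x,y}=t_{x+i-k,\,y+j}$. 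Consequently their principal minors $|\cdot|_{1,\ldots,k-1}^{1,\ldots,k-1}$ agree as functions of the $t$-variables.

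Applying Theorem \ref{kok} to the translated problem gives
\begin{equation*}
T_{k,0,k} \;=\; \left|N(\mathcal{D}_{\bk_0}(k,0,k))_{1,\ldots,k-1}^{1,\ldots,k-1}\right|\,\prod_{a=1}^{k-1} u_{a,\,1-a}^{-1}\,\prod_{b=1}^{k} u_{b,\,b-1}.
\end{equation*}
Observe that the factor $u_{1,0}$ occurs with exponent $-1$ (the $a=1$ term) and exponent $+1$ (the $b=1$ term), hence cancels. What remains is $\prod_{a=2}^{k-1} u_{a,1-a}^{-1}\,\prod_{b=2}^{k} u_{b,b-1}$. Substituting $u_{x,y}=t_{x+i-k,\,y+j}$ and shifting indices by $a\mapsto a+1$ and $b\mapsto b+1$ converts these products to exactly $\prod_{a=1}^{k-2} t_{i+a-k+1,\,j-a}^{-1}$ and $\prod_{b=1}^{k-1} t_{i+b-k+1,\,j+b}$, which is the claimed formula \eqref{Tsolut}.

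There is no real obstacle here — the content of the corollary is entirely contained in Theorem \ref{kok} once one recognizes translational invariance (Lemma \ref{translatwo}) and the passage to a sufficiently large $A_r$ system (Lemma \ref{chgsurf}). The only point that needs care is the bookkeeping of index shifts, in particular noticing the $u_{1,0}$ cancellation, which explains why the product ranges in the corollary $(a\in[1,k-2],\ b\in[1,k-1])$ differ by one from those in the theorem $(a\in[1,k-1],\ b\in[1,k])$.
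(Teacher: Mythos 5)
Your proof is correct and follows exactly the route the paper intends: the paper presents the corollary as an ``immediate consequence'' of Theorem \ref{kok}, the content being precisely the translation of Lemma \ref{chgsurf} plus the index bookkeeping you carry out. Your observation that the $u_{1,0}$ factor cancels between the two products correctly accounts for the shift in the product ranges from $(a\in[1,k-1],\,b\in[1,k])$ to $(a\in[1,k-2],\,b\in[1,k-1])$.
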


\subsection{Solution for an arbitrary stepped surface $\bk$}

Recall that ${\mathcal D}_{\bk}(i,j,k)$ denotes the shadow of $(i,j,k)$ on $\bk$, 
defined as the intersection of $\bk$ with the pyramid
$\Pi(i,j,k)=\{ (x,y,z) | |x-i|+|y-j|\leq |z-k| \}$. Using Lemma \ref{translatwo}, we may assume 
without loss of generality that ${\mathcal D}_{\bk}(i,j,k)$ is entirely above $\bk_0$. Indeed, 
$T_{i,j,m+k}$ is the same function of the initial data on $m+\bk$ as $T_{i,j,k}$ on $\bk$,
so we may pick $m$ large enough to ensure that $m+k_{i,j}\geq k_{i,j}^{(0)}$ on 
${\mathcal D}_{m+\bk}(i,j,m+k)$.

As explained before, any finite domain of $\bk$ above $\bk_0$
may be obtained by applying a finite number of forward
mutations $\mu_{i,j}$ to $\bk_0$. These correspond to a local transformation
of the surface, in which a vertex $(i,j,k_{i,j}=m-1)$ such that its four neighbors have 
$k_{i-1,j}=k_{i+1,j}=k_{i,j-1}=k_{i,j+1}=m$ is sent to the 6th vertex of the octahedron, 
$(i,j,k_{i,j}')$, with $k_{i,j}'=m+1$, as illustrated in \eqref{octa}.
If we complete ${\mathcal D}_{\bk}(i,j,k)$ with the faces of 
$\Pi(i,j,k)$ until they intersect $\bk_0$,
we obtain a domain $\Delta_\bk(i,j,k)$ that touches $\bk_0$ along the square 
$|x-i|+|y-j|=k-1$. The domain $\Delta_\bk(i,j,k)$
is obtained from ${\mathcal D}_{\bk_0}(i,j,k)$ by a finite number of forward 
mutations of the form $\mu_{x,y}$ with
$(x,y,k_{x,y}^{(0)})$ strictly inside ${\mathcal D}_{\bk_0}(i,j,k)$.

Starting from the expression of Corollary \ref{corTex}, we may implement these mutations by the 
corresponding $VU\leftrightarrow UV$ substitutions according to \eqref{muta}, 
as depicted in \eqref{repmuta}. These mutations are
directly applied on the matrix $N\left({\mathcal D}_{\bk_0}(i,j,k)\right)$, until the matrix is expressed as
$N\left(\Delta_\bk(i,j,k)\right)$. We have consequently:
\begin{equation}\label{debut}
T_{i,j,k}=\left\vert N\left(\Delta_\bk(i,j,k)\right)_{1,2,...,k-1}^{1,2,...,k-1} \right\vert
\ \prod_{a=1}^{k-2}t_{i+a-k+1,j-a}^{-1} \ \prod_{b=1}^{k-1}t_{i+b-k+1,j+b}
\end{equation}
We are left with the simple task of comparing $N\left(\Delta_\bk(i,j,k)\right)$ with $N\left({\mathcal D}_{\bk}(i,j,k)\right)$. 
Let us denote by $L_a=(i_a,j_a)$ and $R_a=(i_a',j_a')$, $a=1,2,...,\kappa$, 
the $(i,j)$ coordinates of the 
vertices of $\partial{\mathcal D}_{\bk}(i,j,k)\cap \Pi(i,j,k)$ with $(i_a,j_a)$ in the bottom left corner $i_a\leq i,j_a\leq j$
and $(i_a',j_a')$ in the bottom right corner $i_a'\leq i,j_a'\geq j$, labeled from bottom to top.
We have:

\begin{thm}\label{arbibound}
The solution $T_{i,j,k}$ of the unrestricted $A_\infty$ $T$-system with initial conditions $X_\bk$ reads:
\begin{equation}\label{theformula}
T_{i,j,k}=\left\vert N\left({\mathcal D}_{\bk}(i,j,k)\right)_{1,2,...,\kappa-1}^{1,2,...,\kappa-1} \right\vert
\ \prod_{a=1}^{\kappa-1}t_{L_a}^{-1} \ \prod_{b=1}^{\kappa}t_{R_b}
\end{equation}
\end{thm}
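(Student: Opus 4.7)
The proof I would give takes \eqref{debut} as its starting point and then reduces it from the ``completed'' domain $\Delta_\bk(i,j,k)$ to the shadow ${\mathcal D}_\bk(i,j,k)$ itself. The plan is to show that the portion of $\Delta_\bk(i,j,k)$ lying \emph{outside} ${\mathcal D}_\bk(i,j,k)$ --- the two ``skirt'' regions joining the left and right boundary staircases of the shadow down to the plane $\bk_0$ --- contributes only forced single-strand paths in the Lindstr\"om--Gessel--Viennot expansion of the minor. The weights of these forced strands telescope into an explicit monomial in the initial values $\{t_{x,y}\}$, and once this monomial is pulled out of the minor it cancels the flat-surface prefactor $\prod t_{i+a-k+1,j-a}^{-1}\prod t_{i+b-k+1,j+b}$ appearing in \eqref{debut}, leaving exactly the prefactor $\prod_{a=1}^{\kappa-1}t_{L_a}^{-1}\prod_{b=1}^{\kappa} t_{R_b}$ advertised in the statement.

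More concretely, first I would describe the geometry of $\Delta_\bk(i,j,k)\setminus{\mathcal D}_\bk(i,j,k)$. Along each of the two lower faces of the pyramid $\Pi(i,j,k)$ the stepped surface $\bk$ agrees with a monotonically descending staircase, so the $U,V$-decomposition of each skirt is a single diagonal strip of triangles, with no square faces. In the pictorial representation II, such strips admit exactly one non-intersecting path configuration joining the SW/SE entry points of $\Delta_\bk$ to the SW/SE entry points of ${\mathcal D}_\bk$, namely the configuration whose strands travel along the single available diagonal direction at each step. I would then compute the weight of this unique strand configuration by the same telescoping used in the proof of Theorem~\ref{kok}: on the left skirt the product of edge weights telescopes to $\prod_{a}t_{L_a}^{-1}$ times $\prod_{a}t_{i+a-k+1,j-a}$, and on the right skirt it telescopes to $\prod_{b}t_{R_b}$ divided by $\prod_{b}t_{1,j_1(b)}$-type factors coming from the boundary of ${\mathcal D}_{\bk_0}(i,j,k)$.

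With these forced-strand weights factored out of the LGV expansion of $\bigl|N(\Delta_\bk(i,j,k))^{1,\ldots,k-1}_{1,\ldots,k-1}\bigr|$, the remaining sum is the LGV partition function on the reduced network $N({\mathcal D}_\bk(i,j,k))$, with $\kappa-1$ mutually non-intersecting paths running between the $L_a$'s and the $R_a$'s. By the Lindstr\"om--Gessel--Viennot theorem this partition function is precisely the principal minor $\bigl|N({\mathcal D}_\bk(i,j,k))^{1,\ldots,\kappa-1}_{1,\ldots,\kappa-1}\bigr|$. Multiplying out, the collected prefactors combine with the prefactor of \eqref{debut} to yield exactly $\prod_{a=1}^{\kappa-1}t_{L_a}^{-1}\prod_{b=1}^{\kappa}t_{R_b}$, and the theorem follows.

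The main obstacle I anticipate is the bookkeeping of the telescoping in the skirt regions: one must verify carefully that every initial value $t_{x,y}$ associated to a vertex strictly interior to a skirt cancels between numerator and denominator, leaving only the boundary $L_a$ and $R_b$ contributions, and that the entry/exit labels of the reduced paths on $N({\mathcal D}_\bk(i,j,k))$ are indeed the consecutive labels $1,2,\ldots,\kappa-1$ (so that one genuinely obtains a \emph{principal} minor, not a more general minor indexed by shifted sets). Both checks are purely combinatorial, and reduce to the monotonicity of $k_{x,y}$ along each skirt staircase and the way the $U,V$-decomposition matches the pyramid faces at the boundary $\partial{\mathcal D}_\bk(i,j,k)\cap\Pi(i,j,k)$.
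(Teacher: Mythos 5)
Your proposal follows essentially the same route as the paper: starting from \eqref{debut}, observing that the two corner regions of $\Delta_\bk(i,j,k)\setminus{\mathcal D}_{\bk}(i,j,k)$ are built from only $U$'s (W side) or only $V$'s (E side) so that the paths through them are forced, and telescoping the resulting weights so that the flat-surface prefactor of \eqref{debut} is traded for $\prod_a t_{L_a}^{-1}\prod_b t_{R_b}$. The only slip is that in these corner regions the forced steps are the \emph{horizontal} edges of the network chips (whose weights are ratios of adjacent face labels), rather than diagonal ones; the telescoping you write down is nevertheless the correct one, so the argument goes through as in the paper.
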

\begin{proof}
As $\Delta_\bk(i,j,k)$ is a completion of ${\mathcal D}_{\bk}(i,j,k)$ by use of the four faces 
of the pyramid $\Pi(i,j,k)$ until they reach $\bk_0$,
we have a simple pattern for the associated networks.
Here is an example, with $(i,j,k=4)$ and its shadow ${\mathcal D}_{\bk}(i,j,k)$ 
(shaded area) and domain $\Delta_\bk(i,j,k)$
(within the dashed square) for a typical stepped surface whose heights $k_{i,j}$ are displayed on the left diagram:
$$\raisebox{-2.cm}{\hbox{\epsfxsize=15.cm \epsfbox{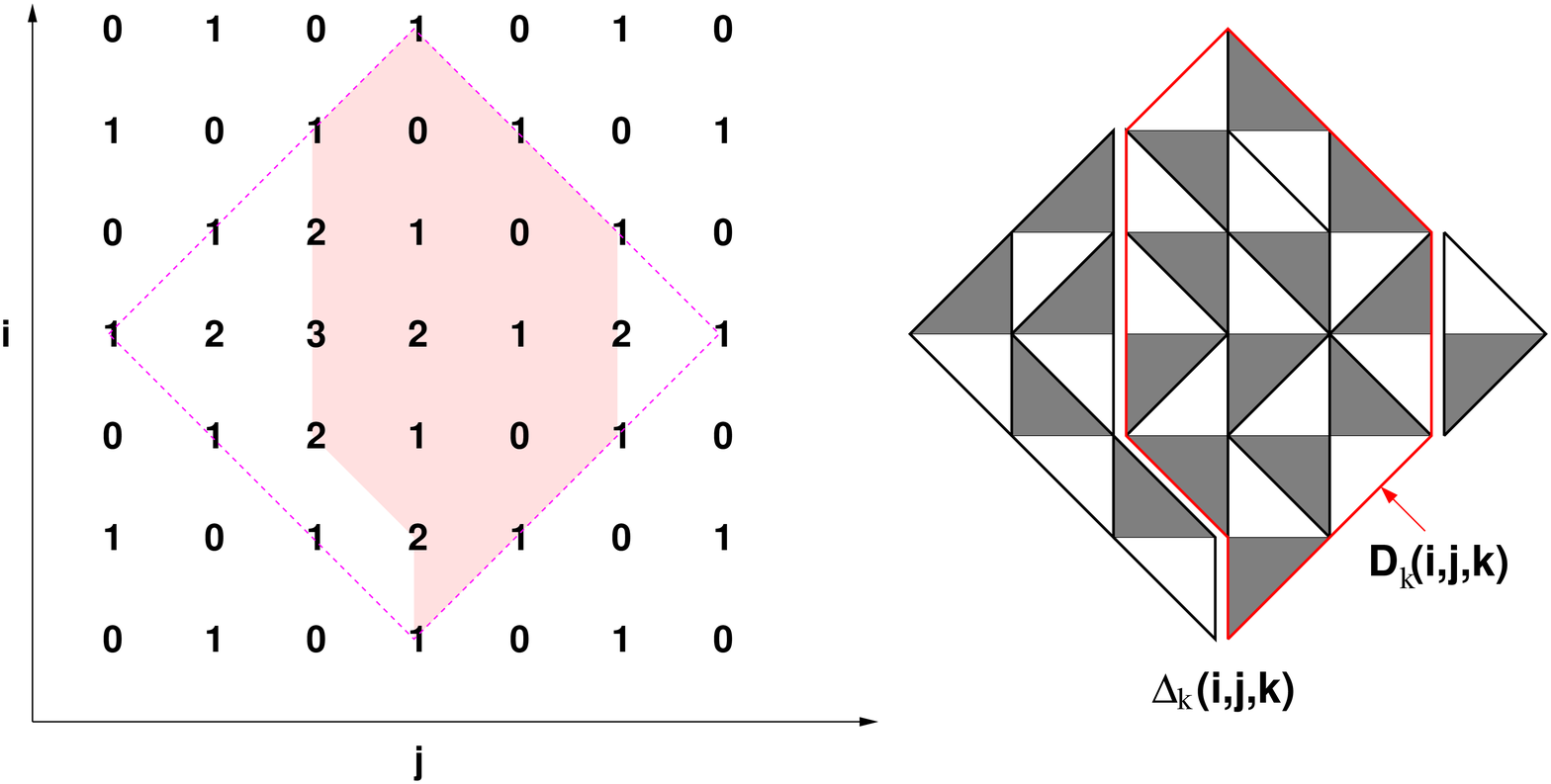}}}$$
We have depicted the corresponding matrix $N\left(\Delta_\bk(i,j,k)\right)$ on the right, while
$N\left({\mathcal D}_{\bk}(i,j,k)\right)$ corresponds to the smaller indicated domain, which matches the 
shaded domain on the left. Note that by construction the four corners between ${\mathcal D}_{\bk}(i,j,k)$ 
and $\Delta_\bk(i,j,k)$
are products of only $U$'s (W corners) or only $V$'s (E corners). The network pictorial representation is:
$$\raisebox{-2.cm}{\hbox{\epsfxsize=15.cm \epsfbox{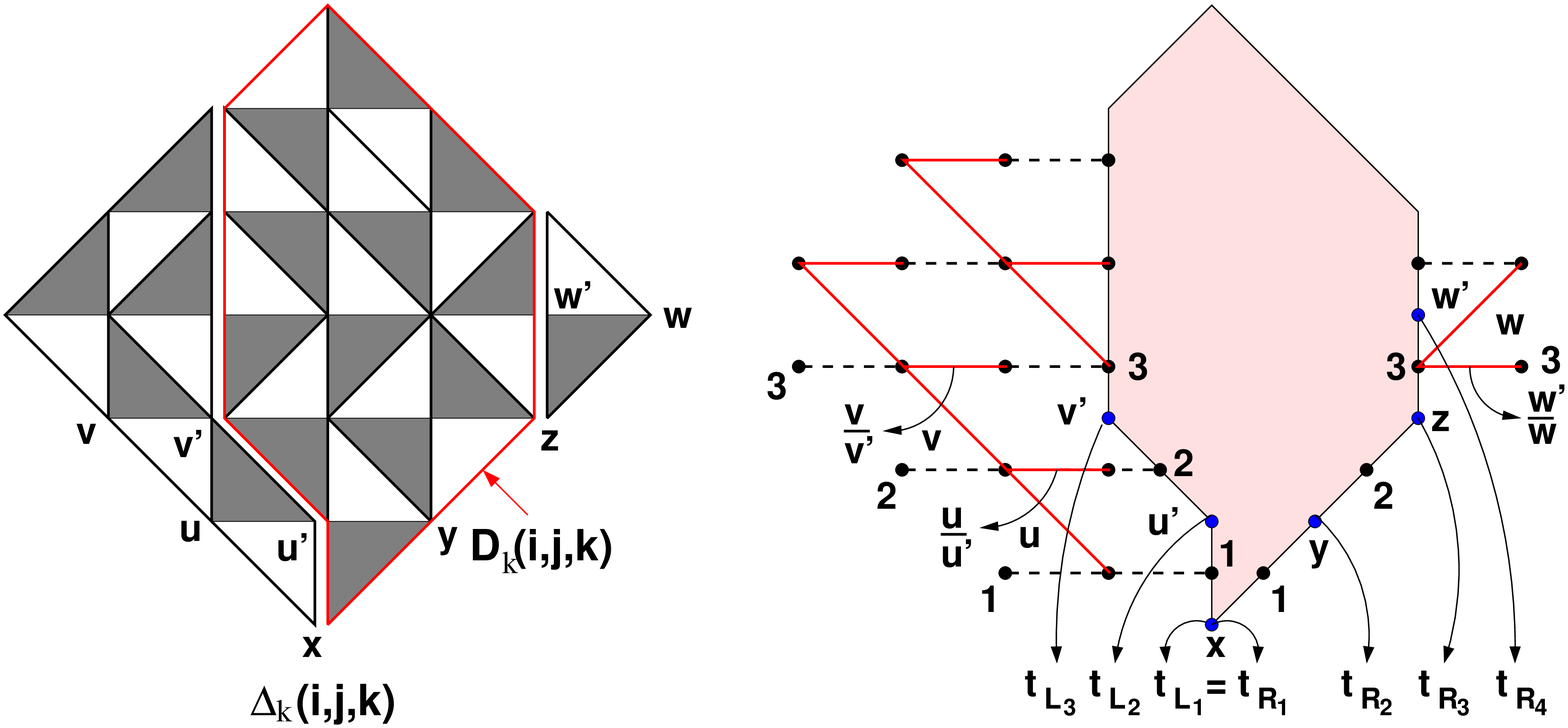}}}$$
The vertex labels correspond to the
actual initial data values, with $t_{L_1}=t_{R_1}=x$, $t_{L_2}=u'$, $t_{L_3}=v'$, $t_{R_2}=y$, $t_{R_3}=z$,
$t_{R_4}=w'$, while $\kappa=4$.
We see that the (non-intersecting) paths contributing to $N\left(\Delta_\bk(i,j,k)\right)$ must go along horizontal edges
throughout the domain $\Delta_\bk(i,j,k) \setminus {\mathcal D}_{\bk}(i,j,k)$, as they correspond to $U$ matrices (W side)
and $V$ matrices (E side). It is now easy to express 
$\left\vert N\left(\Delta_\bk(i,j,k)\right)_{1,2,...,k-1}^{1,2,...,k-1}\right\vert$ in terms
of $\left\vert N\left({\mathcal D}_{\bk}(i,j,k)\right)_{1,2,...,\kappa-1}^{1,2,...,\kappa-1}\right\vert$.
Collecting the contributions of the horizontal steps of these paths, in the form of ratios of face labels
along horizontals, all intermediate terms cancel out, leaving us with only the first and last one.
In the particular example above, the weights of the horizontal steps transform the prefactor for the W side:
${1\over u v x}$ into ${u\over u'}{v\over v'}{1\over u v x}={1\over u' v' x}$, while 
on the E side we have: $x y z w$ transformed into ${w'\over w} x y z w=x y z w'$. In general,
the net result is to replace the factors of $t^{-1}$'s and $t$'s in \eqref{debut} by the products of $t_{L_a}^{-1}$ 
and $t_{R_a}$, and the theorem follows.
\end{proof}

Theorem \ref{unposiT} is now an immediate corollary of Theorem \ref{arbibound}, as $N({\mathcal D}_{\bk}(i,j,k))$
is the matrix of a network with edge weights that are non-negative Laurent monomials of the initial data
$\{t_{i,j}\}$, and by the Lindstr\"om-Gessel-Viennot theorem \cite{LGV1,LGV2} the minor
$N\left({\mathcal D}_{\bk}(i,j,k)\right)_{1,2,...,\kappa-1}^{1,2,...,\kappa-1}$ is the partition function of families of
$\kappa-1$ weighted non-intersecting paths on the network graph, which is a polynomial of
the path weights with non-negative integer coefficients.

\begin{figure}
\centering
\includegraphics[width=14.cm]{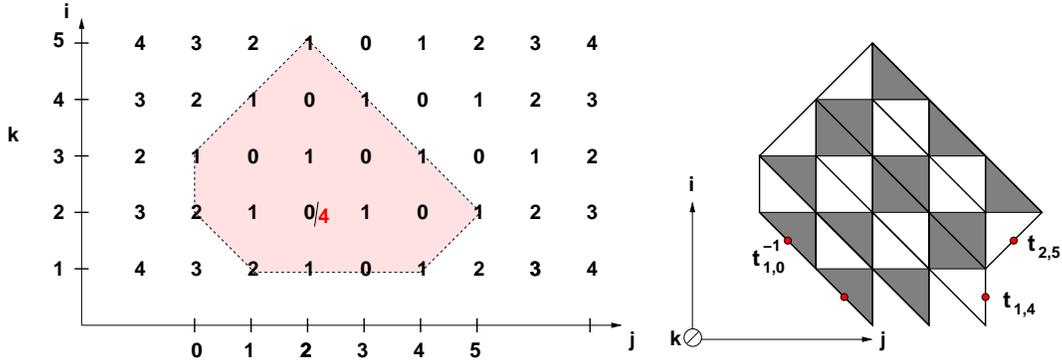}
\caption{A typical application of the truncation of formula \ref{theformula} in the $A_r$ case. We have
$r=5$, $(i,j,k)=(2,2,4)$, and the surface $\bk$ given on the left where we indicate the value of $k$ in the $(i,j)$
coordinate plane. The shaded area is the (truncated) shadow of $(i,j,k)$ on $\bk$. The formula expresses $T_{i,j,k}$
as the partition function for $i=2$ non-intersecting paths on the lattice $N({\mathcal D}_\bk(i,j,k))$
represented on the right, with the indicated prefactors.}
\label{fig:arshadow}
\end{figure}

\begin{remark}
A last remark is in order. In this section, we have used the known solution of the $A_r$ $T$-system
(Theorem \ref{finsol}) to derive the general  formula of Theorem \ref{arbibound} for the unrestricted
$A_\infty$ $T$-system solutions. We may now reverse the logic and extend the formula \eqref{theformula}
to the case of the $A_r$ $T$-system solutions, by viewing the $A_r$ $T$-system as a restriction
of the $A_\infty$ one obtained by impsing the extra $A_r$ boundary condition. 
This is easily done by noting that the $A_r$ boundary
simply truncates the space $(i,j,k)\Z^3$ to the domain $(i,j,k)\in [1,r]\times \Z^2$. Accordingly, the
initial data stepped surfaces $\bk$ are truncated to lie in the same domain, while the shadow
of any given point $(i,j,k)$ on $\bk$ is similarly truncated to a domain ${\mathcal D}_\bk(i,j,k)=\bk \cap \Pi(i,j,k)$.
The formula \eqref{theformula} remains valid with this new definition, while the left and right boundary points
range only over those within the truncated domain (see Fig.\ref{fig:arshadow} for an example). 
This gives a new direct formula for the solution of
the $A_r$ $T$-system which displays manifest Laurent positivity of the solution in terms of 
arbitrary initial data. 
\end{remark}

\section{$\ell$-restricted T-system: the $A_1$ case}\label{ellonesec}\label{warmupsec}

We study the solutions of the $T$-system with $\ell$-restricted boundaries.
For pedagogical reasons, this section is entirely devoted to the case of $A_1$, for which we will 
derive slightly more general results.

\subsection{The $\ell$-restricted $A_1$ $T$-system and its initial conditions}\label{boundasec}

The $A_1$ $T$-system is the $r=1$ version of \eqref{tsys}, with the simplified notation
$T_{1,j,k}=T_{j,k}$ for $j,k\in \Z$ and $j+k=1$ mod 2. Let $S$ be a subset of $\Z$.
We consider the $A_1$ $T$-system with the restriction that $j\in S$:
\begin{equation}\label{restazero}
T_{j,k+1}T_{j,k-1}=T_{j+1,k}T_{j-1,k}+1\qquad (j\in S;k\in \Z)\end{equation}
The general initial conditions for \eqref{restazero} are indexed by stepped surfaces $\bk$ \eqref{stepsurf},
which reduce here to paths 
$$\bk(S)=\{k_j(S)\in \Z,\ |k_{j+1}-k_j|=1\, {\rm and}\, k_j+j=1\,  {\rm mod} \, 2,\, j\in S\}.$$
We consider the system \eqref{restazero}
with possibly additional boundary conditions depending on $S$, 
and an initial condition $X^{S}(\bt)$, which is an assignment of  formal variables $\bt=(t_{j})_{j\in S}$ 
to the points on the surface $\bk(S)$.
We consider the four cases:

\begin{enumerate}[(i)]
\item
Unrestricted $A_1$ $T$-system: $S=\Z$.
The initial condition $X_\bk(\bt)$ is the assignment
\begin{equation}\label{boundari}
X_\bk(\bt): \Big\{ T_{j,k_j}=t_j\, (j\in \Z)\Big\}
\end{equation}
\item
Right half-plane $A_1$ $T$-system: $S=[1,\infty)$.
The additional
boundary conditions are
\begin{equation*}
T_{0,k}=1 \qquad  (k\in\Z) 
\end{equation*}
and the initial condition $X_\bk^+(\bt)$ is the assignment
\begin{equation}\label{boundarii}X_{\bk}^+(\bt): \Big\{ T_{j,k_j}=t_j\, (j\in [1,\infty))\Big\}
\end{equation}
\item
Left half-plane $A_1$ $T$-system: $S=(-\infty,\ell]$.
The additional
boundary conditions are
\begin{equation*}
T_{\ell+1,k}=1 \qquad  (k\in\Z) 
\end{equation*}
and the initial condition $X_{\bk}^-(\bt)$ is the assignment
\begin{equation}\label{boundariii}X_{\bk}^-(\bt): \Big\{ T_{j,k_j}=t_j\, (j\in (-\infty,\ell])\Big\}
\end{equation}
\item
$\ell$-restricted $A_1$ $T$-system: $S=[1,\ell]$.
The additional
boundary conditions are
\begin{equation*}
T_{0,k}=T_{\ell+1,k}=1 \qquad  (k\in\Z) 
\end{equation*}
and the initial condition $X_{\bk}^{[1,\ell]}(\bt)$ is the assignment
\begin{equation}\label{initgenone}X_{\bk}^{[1,\ell]}(\bt): \Big\{ T_{j,k_j}=t_j\, (j\in [1,\ell])\Big\}
\end{equation}
\end{enumerate}

In the following, we will also consider the unrestricted $A_1$ $T$-system (i)
with initial conditions $X_\bk(\bt)$ \eqref{initcondi}, further restricted by imposing extra conditions 
on the initial values $\bt=\{t_{j}\}$ as well as the path $\bk$.
The new initial values $\bt^{+},\bt^{-},\bt^{[1,\ell]}$ and
paths $\bk^{+},\bk^{-},\bk^{[1,\ell]}$ correspond respectively to the
following conditions:

\begin{equation}\label{otsymii}
\bt^{+}:\, \left\{ 
\begin{matrix}
&t_{-j-2}=-t_{i,j}\,  (j\in \Z_+)\\
&t_{0}=1,\,  t_{-1}=0 
\end{matrix} \right. \quad \bk^{+}:\, \left\{ 
\begin{matrix}
&k_{-j-2}=k_{j}\,  (j\in \Z_+)\\
&k_{-1}=k_0-1
\end{matrix} \right.
\end{equation}

\begin{equation}\label{otsymiii} 
\bt^{-}:\, \left\{ 
\begin{matrix}
&t_{j+\ell+3}=-t_{\ell+1-j}\, (j\in \Z_+)\\
&t_{i,\ell+1}=1,\,  t_{i,\ell+2}=0
\end{matrix} \right. \quad \bk^{-}:\, \left\{ 
\begin{matrix}
&k_{j+\ell+3}=k_{\ell+1-j}\,  (j\in \Z_+)\\
&k_{\ell+2}=k_{\ell+1}-1
\end{matrix} \right.
\end{equation}

\begin{equation} \label{otsymiv}
\bt^{[1,\ell]}: \, \left\{ \begin{matrix}
&t_{-j-2}=-t_{i,j}\, (j\in \Z_+)\\
&t_{2(\ell+3)+j}=t_{i,j}\,  (j\in \Z)\\
&t_{0}=t_{\ell+1}=1,\,  t_{-1}=0
\end{matrix} \right. \quad  \bk^{[1,\ell]}:\, \left\{ 
\begin{matrix}
&k_{-j-2}=k_j\, (j\in \Z_+)\\
&k_{j+2(\ell+3})=k_{j}\, (j\in \Z)\\
&k_{-1}=k_0-1,\  k_{\ell+2}=k_{\ell+1}-1
\end{matrix} \right.
\end{equation}

By convention, when $\bk=\bk_0$ we drop the requirements on $\bk$.
We note that the conditions \eqref{otsymiv} are equivalent to imposing
{\it simultaneously} the conditions \eqref{otsymii} and \eqref{otsymiii}.

We wish to study the solutions $T_{j,k}$ of the $A_1$ $T$-system in terms of initial conditions
in all of the above cases $(i-iv)$.
By virtue of Lemmas \ref{refk} and \ref{transla} we may without loss of generality restrict ourselves 
to points $(j,k)$ above the initial data surface $\bk$ in all these cases.

\subsection{Unrestricted system solution}
The unrestricted system subject to initial conditions $X_\bk(\bt)$
reads:
\begin{eqnarray}
T_{j,k+1}T_{j,k-1}&=&T_{j+1,k}T_{j-1,k}+1\qquad (j,k\in \Z;j+k=1\, {\rm mod}\, 2)\label{toneA}\\
T_{j,k_j}&=&t_{j} \qquad \qquad  (j\in \Z)\label{ttwoA}
\end{eqnarray}

Its solution is simply expressed in terms of the following simplified versions of
the $U,V$ matrices of Sect. \ref{UVsec} defined as:
\begin{equation} U(a,b)=U(a,b,1)=\begin{pmatrix} 1 & 0\\ b^{-1} & a b^{-1}\end{pmatrix} 
\qquad V(a,b)=V(1,a,b)=\begin{pmatrix}a b^{-1} & b^{-1}\\ 0 & 1\end{pmatrix}.
\end{equation}

Let us consider a point $(j,k)$ above the path $\bk$, i.e. with $k\geq k_j$. 
Def.\,\ref{projdef} for $r=1$ defines the projection
of the point $(j,k)$ onto $\bk$ as the portion of the path 
$(j,k_j)_{j\in[j_0,j_1]}$ with largest $j_0$ and smallest $j_1$
such that $ k-k_{j_0}=j-j_0, k-k_{j_1}=j_1-j $. Note that $j_0$ and $j_1$ are both even integers.
The {\it cone} of projection of $(j,k)$ is defined by the two lines $k=j+k_{j_0}-j_0$ and $k=k_{j_1}+j_1-j$.

We define the matrix
\begin{equation}\label{defM} 
M_{j}(t_j,t_{j+1})=\left\{ \begin{matrix} 
V(t_j,t_{j+1}) & {\rm if}\ \ k_j=k_{j+1}+1 \\
U(t_j,t_{j+1}) & {\rm if} \ \ k_j=k_{j+1}-1
\end{matrix} \right. 
\end{equation}
We have:
\begin{thm}{\cite{FRISES},\cite{DF}}\label{unrestaone}
The solution $T_{j,k}$ of the system (\ref{toneA}-\ref{ttwoA}) is:
\begin{equation}\label{soloneUV}
T_{j,k}=\Big( \prod_{j=j_0}^{j_1-1} M_{j,j+1}(t_j,t_{j+1})\Big)_{1,1} t_{j_1}
\end{equation}
\end{thm}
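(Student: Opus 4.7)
The plan is to prove \eqref{soloneUV} by combining a trivial base case on the surface with invariance of the right-hand side under mutations of $\bk$. Since any target point $(j,k)$ above $\bk$ can be reached from $\bk$ by a finite sequence of forward mutations, such invariance together with the base case will force the $(1,1)$ entry of the displayed product, rescaled by $t_{j_1}$, to coincide with $T_{j,k}$.

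The base case is immediate: when $(j,k)\in\bk$, the projection degenerates, $j_0=j_1=j$, so the matrix product is empty, its $(1,1)$ entry equals $1$, and the formula returns $t_{j_1}=t_j$, matching \eqref{ttwoA}. The key invariance step concerns a valley $j'\in(j_0,j_1)$ of $\bk$, i.e.\ $k_{j'-1}=k_{j'}+1=k_{j'+1}$, where the product factor at $j'$ reads $V(t_{j'-1},t_{j'})\,U(t_{j'},t_{j'+1})$. A forward mutation $\mu_{j'}$ converts this valley into a peak, with new value $t'_{j'}$ satisfying $t_{j'}t'_{j'}=t_{j'-1}t_{j'+1}+1$ by \eqref{toneA}, and changes the factor to $U(t_{j'-1},t'_{j'})\,V(t'_{j'},t_{j'+1})$. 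The $A_1$ specialization $u=v=1$ of the mutation identity \eqref{muta} gives
\[
V(a,b)\,U(b,c)=U(a,b')\,V(b',c)\quad\text{iff}\quad bb'=1+ac,
\]
so the full matrix product, its $(1,1)$ entry, and the trailing factor $t_{j_1}$ are preserved. This is exactly the claimed mutation invariance of the formula.

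To finish, I would iterate forward mutations supported strictly inside the cone of projection, pushing $\bk$ upward until the resulting path $\bk^{\ast}$ passes through $(j,k)$. Such a finite sequence exists because $(j,k)$ lies above $\bk$ and each forward mutation strictly raises one vertex; iteration can be terminated at the moment when $(j,k)\in\bk^{\ast}$. The base case then yields $T_{j,k}$ from the value at $(j,k)\in\bk^{\ast}$, and the previous step shows that the formula evaluated on $\bk$ returns this same value.

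The technical point I expect to be the main obstacle is the bookkeeping for mutations that occur at or near the boundary vertices $(j_0,k_{j_0})$ and $(j_1,k_{j_1})$ of the projection: at such a mutation the indices $j_0,j_1$ themselves may change and the trailing factor $t_{j_1}$ is replaced by a new initial value. One must verify that the accompanying changes to the endpoint matrices $M_{j_0}$ and $M_{j_1-1}$ combine with the replacement of $t_{j_1}$ so that the net product of the $(1,1)$ entry and $t_{j_1}$ is unchanged. This can be handled by a direct case analysis of the four possible mutation types at each boundary, or avoided altogether by specializing Theorem \ref{soluT} to $r=1$, of which the present statement is the $A_1$ case.
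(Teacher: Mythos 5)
Your argument is correct, and it is essentially the argument the paper relies on: the paper gives no proof of this theorem, citing \cite{FRISES},\cite{DF} and remarking only that it is the $r=1$ specialization of Theorem \ref{soluT} (itself quoted from \cite{DF} without proof), and the mutation-invariance mechanism you use is exactly the content of the exchange identity \eqref{muta}/\eqref{repmuta} that the paper highlights as ``a representation of the mutation relation of the $T$-system via a matrix exchange identity.'' The one point you flag as a potential obstacle does resolve cleanly, and it is worth recording why: a mutation strictly inside the cone can only change the endpoints $j_0,j_1$ by making the mutated vertex land \emph{on} a cone boundary line, and in that event the segment of the path from the old $j_0$ to the new one consists entirely of up-steps (so the dropped prefix is a product of $U$'s, which is lower triangular with $(1,1)$ entry $1$ and hence does not affect the $(1,1)$ entry of the product), while the segment from the new $j_1$ to the old one consists entirely of down-steps (so the dropped suffix is a product of $V$'s, upper triangular with $(1,1)$ entry $t_{j_1'}/t_{j_1}$, whose removal is exactly compensated by replacing the trailing factor $t_{j_1}$ by $t_{j_1'}$). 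Mutations \emph{at} $j_0$ or $j_1$ never occur, since a valley at an endpoint of the projection would contradict the extremality of $j_0$ or $j_1$. With these observations your induction closes, and termination follows since each interior forward mutation strictly decreases the area between the path and the tent with apex $(j,k)$.
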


Note that this is the $A_1$ version of \eqref{Tsolu}, in which the $2\times 2$ network matrix
$N(j_0,j_1)$ is identified with the $2\times 2$ matrix
product $\prod_{j=j_0}^{j_1-1} M_{j,j+1}(t_j,t_{j+1})$.

The exact solution of Theorem \ref{unrestaone}
was used previously to derive the positive Laurent property for the solution
of the $T$-system, namely that $T_{j,k}$ is a Laurent polynomial of the initial data, with non-negative
integer coefficients. (This is clear from Theorem \ref{unrestaone}, as the entries of $U,V$ are 
themselves Laurent monomials of the initial data with non-negative integer coefficients.).

\subsection{$\ell$-restricted case: equivalent initial data and main theorems}

We now turn to solutions of the $\ell$-restricted system.
The main idea is to realize the $\ell$-restricted boundaries 
within the framework of the unrestricted 
$T$-system, by suitably engineering the initial data $t_k$, $k\in \Z$. 
The following three theorems will be proved in next section.

\begin{thm}\label{periodone}
The solution $T_{j,k}$ of the $\ell$-restricted $A_1$ $T$-system (iv) is periodic in 
the direction $k$:
$$ T_{j,k+N}=T_{j,k} $$
with period $N=2(\ell+3)$.
\end{thm}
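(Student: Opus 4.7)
The plan is to exploit the characterization of the $\ell$-restricted $A_1$ $T$-system as an unrestricted $A_1$ $T$-system with initial data $(\bt^{[1,\ell]},\bk^{[1,\ell]})$ satisfying the reflection-plus-periodicity conditions \eqref{otsymiv}. I would first verify that these symmetries \emph{force} the wall boundary values $T_{0,k}=T_{\ell+1,k}=1$ to propagate via the recursion \eqref{toneA} (the reflection data $t_{-j-2}=-t_j,\, t_0=1,\, t_{-1}=0$ is precisely what one gets by running the $A_1$ relation outward from the wall $T_{0,k}=1$, and similarly at $j=\ell+1$). Hence within $j\in[1,\ell]$ the solutions of the unrestricted system (i) with these special data agree with those of system (iv).

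Once the reduction is in place, Theorem~\ref{unrestaone} gives
$$T_{j,k} = \Big(\prod_{j'=j_0}^{j_1-1} M_{j'}(t_{j'},t_{j'+1})\Big)_{\!1,1}\, t_{j_1},$$
where $[j_0,j_1]$ is the cone of projection of $(j,k)$ onto $\bk^{[1,\ell]}$. Since $\bt^{[1,\ell]}$ and $\bk^{[1,\ell]}$ have period $N=2(\ell+3)$ in the $j$-direction, the matrices satisfy $M_{j+N}=M_j$, and $t_{j_1+N}=t_{j_1}$. Replacing $k$ by $k+N$ widens the cone so that $j_0\mapsto j_0-N$ and $j_1\mapsto j_1+N$, and the extended matrix product factors as $P_L\cdot Q\cdot P_R$, where each of $P_L,P_R$ is a full-period block (equal up to a cyclic shift of the starting index) and $Q$ is the original product. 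The theorem therefore reduces to the matrix identity
$$(P_L\, Q\, P_R)_{1,1}\;=\;Q_{1,1}.$$

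The heart of the argument is this identity, which is the matrix-level incarnation of Zamolodchikov periodicity. The full-period product straddles both reflection walls, at which the degenerate initial values $t_{-1}=0$ and $t_{\ell+2}=0$, together with the signs in $t_{-j-2}=-t_j$, conspire to render the full-period matrix of a special rank-one form whose multiplication preserves the $(1,1)$ entry. I would carry this out most transparently via the network picture: by the Lindström–Gessel–Viennot interpretation, the non-intersecting path configurations on the two added full-period slabs that can possibly contribute to the $(1,1)$ matrix entry are forced (by the vanishing weights at the walls) to be trivial horizontal walks along the top boundary, so these slabs factor out with weight $1$. Verifying this combinatorial cancellation — equivalently, computing the period product via a regularization $t_{-1},t_{\ell+2}\to 0$ and tracking the resulting rank-one limit — is the main obstacle; the periodicity $T_{j,k+N}=T_{j,k}$ then follows by direct substitution.
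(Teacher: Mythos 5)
Your overall architecture is sound and close in spirit to the paper's: reduce to the unrestricted system with the symmetric periodic data \eqref{otsymiv}, then use the network formula \eqref{soloneUV} together with the $N$-periodicity of the $M_j$'s to compare $T_{j,k+N}$ with $T_{j,k}$. The reduction step, the factorization $P_L\, Q\, P_R$, and the identity $(P_L Q P_R)_{1,1}=Q_{1,1}$ that you isolate are all correct. The gap is in how you propose to prove that identity. The full-period block is neither rank one nor ``trivial along the top boundary'': using the collapse relations \eqref{colla} together with the regularized wall limits $V(-1,\epsilon)U(\epsilon,1)\to P$ and $V(1,\epsilon)U(\epsilon,-1)\to -P$, the product of the $M_{j'}$ over \emph{any} window of length $N$ telescopes to $(P)(-P)=-{\mathbb I}$, a full-rank scalar matrix whose $(1,1)$ entry is $-1$, not $+1$. (For instance, with $\ell=1$ and $\bk_0$ one finds $M_0\cdots M_7=M_0M_1(-P)M_4M_5P=M_0(-P)M_5P=(-P)P=-{\mathbb I}$.) Correspondingly your Lindstr\"om-Gessel-Viennot picture fails: a path traversing a full-period slab is forced by the two wall crossings to switch levels twice, since each $\pm P$ is the transposition of the two network lines, so it is not a horizontal walk along the top row, and its net weight is $-1$; positivity is unavailable here in any case because the reflected data $t_{-j-2}=-t_j$ carries negative signs. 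A single added slab would give $(P_L Q)_{1,1}=-Q_{1,1}$.

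The theorem survives only because there are \emph{two} slabs and $(-{\mathbb I})Q(-{\mathbb I})=Q$; once you replace the rank-one/weight-one claim by the computation of the full-period monodromy as $-{\mathbb I}$ (via \eqref{colla}, after regularizing $t_{-1},t_{\ell+2}\to 0$, and noting that the monodromy is central so its starting index is immaterial), your proof closes. For comparison, the paper sidesteps the sign subtlety by proving the sharper twisted half-periodicity $T_{j,k+\frac{N}{2}}=T_{\ell+1-j,k}$ (Theorem \ref{halfperio}): there the half-period widening collapses to a single factor $P(-P)=-{\mathbb I}$ whose $-1$ is absorbed by the sign of the reflected endpoint value $t_{j_1}=-t_{\ell+1-j}$, and the full periodicity follows by applying this involution twice.
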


\begin{thm}\label{ones}
The solution of the unrestricted $A_1$ $T$-system (i)
with initial conditions $X_{\bk_0}(\bt^{[1,\ell]})$ (\ref{boundari},\ref{otsymiv})
restricts to the solution of the $\ell$-restricted $A_1$ $T$-system (iv)
with the initial conditions $X_{\bm}^{[1,\ell]}(\bu)$ \eqref{initgenone}, 
where $\bu=\bt([1,\ell]),\bm=\bk_0([1,\ell])$ are
the restrictions of $\bt,\bk_0$ to the interval $j\in [1,\ell]$. 
As such, the solution of the $\ell$-restricted $A_1$ $T$-system with initial conditions 
$X_{\bk_0}^{[1,\ell]}(\bt)$
is a positive Laurent polynomial of the initial values $t_1,t_2,...,t_\ell$.
\end{thm}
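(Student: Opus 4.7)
The plan is to view the $\ell$-restricted solution as the restriction (to $j \in [1,\ell]$) of an unrestricted $A_1$ $T$-system solution whose initial data has been engineered, via \eqref{otsymiv}, so that the walls $T_{0,k} = T_{\ell+1,k} = 1$ are automatically enforced. Positivity then follows by applying the explicit network formula of Theorem \ref{unrestaone} to this engineered data and folding the resulting matrix product back into the fundamental strip using the same symmetries.

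First I would verify that both symmetries of \eqref{otsymiv}---the sign-flipping reflection $(j,k) \mapsto (-j-2, k)$ with $T \mapsto -T$ and the translation $(j,k) \mapsto (j + 2(\ell+3), k)$---leave the $A_1$ recursion \eqref{toneA} invariant. A direct substitution shows that $T'_{j,k} := -T_{-j-2,k}$ again solves \eqref{toneA}, and the path $\bk_0$ together with the data $\bt^{[1,\ell]}$ are invariant by construction, so $T' \equiv T$ on the whole lattice; the same argument works for the translation. Setting $j = -1$ then forces $T_{-1,k} = -T_{-1,k} = 0$; combining the two symmetries yields a sign-flipping reflection around $j = \ell+2$, whose fixed point forces $T_{\ell+2,k} = 0$, for every admissible $k$.

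Plugging $T_{-1,k} = 0$ into \eqref{toneA} at $j = 0$ gives $T_{0,k+1} T_{0,k-1} = 1$; together with the seed $T_{0,k_0} = t_0 = 1$, a two-step induction in $k$ forces $T_{0,k} = 1$ identically, and symmetrically $T_{\ell+1,k} = 1$. Hence the restriction of this unrestricted solution to $j \in [1,\ell]$ satisfies the $\ell$-restricted system \eqref{restazero} with the required wall conditions and the prescribed initial values $\bu = \bt([1,\ell])$ on $\bm = \bk_0([1,\ell])$; by uniqueness of the $k$-recursion from initial data on $\bm$, it \emph{is} the $\ell$-restricted solution $X_{\bm}^{[1,\ell]}(\bu)$.

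For positivity I would apply Theorem \ref{unrestaone} to the unrestricted problem with data $\bt^{[1,\ell]}$. For $(j,k)$ with $j \in [1,\ell]$, the projection interval $[j_0,j_1]$ typically overflows $[1,\ell]$, but the periodicity lets me reduce the product to a single fundamental period, while the reflection rewrites each overflowing factor $M_{a,a+1}(\pm t_\ast,\pm t_\ast)$ in terms of the original variables $t_1,\ldots,t_\ell$. The \textbf{main obstacle} is that the forced zeros $t_{-1} = t_{\ell+2} = 0$ make the entries of $U(a,0)$ and $V(a,0)$ singular; I would handle this by a regularization, replacing the offending $t$'s by a parameter $\epsilon$ and checking that the $1/\epsilon$ divergences cancel in the $(1,1)$-entry of the relevant matrix product as $\epsilon \to 0$. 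Combinatorially, any path that would cross a wall is matched in sign with its reflected counterpart and drops out, leaving only paths confined to the effective restricted network, each of which contributes a positive Laurent monomial in $t_1,\ldots,t_\ell$; this yields the positive Laurent property.
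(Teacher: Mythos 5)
Your overall strategy is the same as the paper's: engineer reflection and translation symmetries in the initial data so that the walls $T_{0,k}=T_{\ell+1,k}=1$ are forced, then fold the network formula of Theorem \ref{unrestaone} back into the strip $j\in[1,\ell]$ and read off positivity. Your positivity sketch is essentially the paper's argument (half-periodicity reduces to at most one reflection per wall, and the reflected blocks of the matrix product collapse to $\pm P$ via \eqref{limP} and \eqref{colla}, the two signs cancelling). However, your derivation of the wall conditions has a genuine gap.

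You deduce $T'\equiv T$ (hence $T_{-1,k}=0$, hence $T_{0,k}=1$) from uniqueness of the unrestricted solution given its initial data on $\bk_0$. But uniqueness fails for the data $\bt^{[1,\ell]}$, precisely because the sign-flipping reflection forces $t_{-1}=0$ at its fixed point: the forward recursion at $j=-1$ reads $T_{-1,k+1}=(T_{0,k}T_{-2,k}+1)/T_{-1,k-1}$, and with $t_0=1$, $t_{-2}=-1$, $t_{-1}=0$ this is $0/0$ at the first step, so the recursion does not determine the solution from this data. The symmetry argument therefore presupposes what actually needs proving, namely that the Laurent-polynomial solution of the generic system specializes \emph{regularly} (stays finite) at $t_{-1}=t_{\ell+2}=0$ once the other reflection relations are imposed; that regularity is exactly the content of $\lim_{\epsilon\to 0}V(-1,\epsilon)U(\epsilon,1)=P$ and the collapse relations \eqref{colla}, which is how the paper proves $T_{0,k}=1$ and $T_{\ell+1,k}=1$ directly (Theorems \ref{leftbounone} and \ref{rightbounone}). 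Note also that the regularization $t_{-1}=\epsilon\neq 0$ you invoke for positivity destroys the exact sign-flip symmetry you use in the first step (the fixed point forces $t_{-1}=-t_{-1}$), so the two halves of your argument do not combine as written. The remedy is to run things in the paper's order: establish the collapsed network formula first, then both the wall conditions and positivity fall out of it; your remaining steps (the $j=0$ recursion giving $T_{0,k+1}T_{0,k-1}=1$ and the two-step induction, and uniqueness of the $\ell$-restricted system for generic $t_1,\dots,t_\ell$) are fine once that is in place.
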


\begin{thm}\label{twos}
The solution of the $\ell$-restricted $A_1$ $T$-system (iv)
with initial conditions $X_{\bk}(\bt)$ \eqref{initgenone}
along an arbitrary finite path $\bk$
is a positive Laurent polynomial of the initial values $t_1,t_2,...,t_\ell$.
\end{thm}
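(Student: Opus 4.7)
\medskip

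\noindent\textbf{Proof proposal.} The plan is to reduce the general-path case to the flat case treated in Theorem \ref{ones} by combining two ideas already developed in the paper: the symmetric extension trick encoded in \eqref{otsymiv}, and the network solution of the unrestricted $A_1$ $T$-system (Theorem \ref{unrestaone}). Concretely, given a finite path $\bk$ defined on $[1,\ell]$ with initial values $\bt=(t_1,\dots,t_\ell)$, I first extend $\bk$ to a doubly-infinite path $\widetilde\bk$ on $\Z$ by imposing the reflection and periodicity conditions of \eqref{otsymiv} at the two walls $j=0$ and $j=\ell+1$: $k_{-j-2}=k_j$, $k_{j+\ell+3}=k_{\ell+1-j}$, together with $k_{-1}=k_0-1$ and $k_{\ell+2}=k_{\ell+1}-1$ (these last conditions guarantee $\widetilde\bk$ is a valid stepped surface at the reflection points). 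The values $\bt$ are correspondingly extended to $\widetilde\bt$ by $t_0=t_{\ell+1}=1$, $t_{-1}=t_{\ell+2}=0$, $t_{-j-2}=-t_j$, $t_{j+\ell+3}=-t_{\ell+1-j}$, and $2(\ell+3)$-periodicity in $j$.

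Next, I would consider the unrestricted $A_1$ $T$-system with initial data $X_{\widetilde\bk}(\widetilde\bt)$ along $\widetilde\bk$. By the same sign and parity argument underlying Theorem \ref{ones}, the symmetry conditions force the solution to obey $T_{0,k}=T_{\ell+1,k}=1$ and to be periodic with period $N=2(\ell+3)$, so that its restriction to $j\in[1,\ell]$ coincides with the solution of the $\ell$-restricted system with initial conditions $X_\bk(\bt)$. By Theorem \ref{unrestaone}, this solution is
\[
T_{j,k}=\Bigl(\prod_{j'=j_0}^{j_1-1}M_{j'}\bigl(\widetilde t_{j'},\widetilde t_{j'+1}\bigr)\Bigr)_{1,1}\widetilde t_{j_1},
\]
where $[j_0,j_1]$ is the projection of $(j,k)$ onto $\widetilde\bk$. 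The projection length is controlled by the period, so only finitely many factors enter.

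To obtain positivity in $t_1,\dots,t_\ell$, I would argue as in the proof of Theorem \ref{ones}: the zero values $\widetilde t_{-1},\widetilde t_{\ell+2}$ create potential singularities in the entries of $U,V$, but these are resolved by the \emph{regularization} trick described in the introduction for Section \ref{ellsec} (assigning generic nonzero values and then taking a limit). After regularization, consecutive $U$ and $V$ factors straddling each wall can be combined using the matrix identities \eqref{propuv}--\eqref{muta}, and the reflection symmetry together with $t_0=t_{\ell+1}=1$ causes the intermediate factors (including all signs) to telescope exactly as in the flat case, yielding a product of $M$-matrices whose entries are honest Laurent monomials in the $t_i$ with non-negative integer coefficients. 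Applying the Lindstr\"om--Gessel--Viennot interpretation then rewrites the $(1,1)$-entry as a weighted count of non-intersecting paths on a network built from the extended path, and hence as a positive Laurent polynomial in $t_1,\dots,t_\ell$.

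The main obstacle I anticipate is verifying cleanly that the cancellation of the $-t_j$ signs and the $0$ factors on the regularized network produces precisely the product-of-chips network one would guess from the flat case, possibly wrapped on a cylinder. Once this bookkeeping is carried out, positivity is automatic from LGV, and the argument extends Theorem \ref{ones} from $\bk_0$ to any $\bk$ by the same mechanism that extends the unrestricted half-plane results of Section \ref{warmupsec} from $\bk_0$ to arbitrary stepped surfaces.
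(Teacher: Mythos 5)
Your strategy---extend $\bk$ and $\bt$ to a doubly-infinite antisymmetric, periodic configuration as in \eqref{otsymiv}, feed it into the unrestricted network formula \eqref{soloneUV}, regularize the zeros at $j=-1$ and $j=\ell+2$, and collapse---is exactly the paper's route, so the overall architecture is right. But the step you defer as an ``anticipated obstacle'' is precisely where the paper does its work, and it closes it with two devices you do not supply. First, the telescoping across a wall for a \emph{non-flat} path is not the flat-case computation: the paper proves (Theorem \ref{arbihalf}) the path-dependent collapse identities $M_{-j-3,-j-2}(-t_{j+1},-t_j)\,P\,M_{j,j+1}(t_j,t_{j+1})=P$ and its mirror with $-P$, which hold because the reflection symmetry of $\bk$ forces the chip on the reflected side to be the correct inverse type ($U$ versus $V$) conjugated by $P$; this is what guarantees the cancellation for an arbitrary up/down pattern, and it is a statement about the path, not just the values. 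Second, the paper first invokes half-periodicity ($T_{j,k+N/2}=T_{\ell+1-j,k}$, which it notes holds in general) to restrict to $0\le k-k_j\le N/2$, so the cone of projection reflects at most once off each wall; the final answer in the two-reflection case is then $T_{j,k}=\big(P\,N(\bar j_0,\bar j_1)\,(-P)\big)_{1,1}t_{j_1}=\big(N(\bar j_0,\bar j_1)\big)_{2,2}t_{\bar j_1}$, manifestly positive. Without that reduction your telescoping claim would have to handle arbitrarily many alternating $P$ and $-P$ insertions, which is not ``exactly as in the flat case.'' A minor slip: the identities you cite, \eqref{propuv}--\eqref{muta}, are the $A_r$ chip-commutation relations; the relevant $A_1$ tools are the limits $\lim_{\epsilon\to0}V(-1,\epsilon)U(\epsilon,1)=P$, $\lim_{\epsilon\to0}V(1,\epsilon)U(\epsilon,-1)=-P$ and the collapse relations \eqref{colla}, not the heavier $A_r$ regularization machinery of Section \ref{ellsec}.
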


\subsection{Half-plane solution}

To prove Theorem \ref{ones}, we must show that the $\ell$-restricted boundary is implemented
by the choice of symmetries of the initial data. Concretely, one must show that both
$T_{0,k}=1$ and $T_{\ell+1,k}=1$ as a consequence. It turns out to be instructive to
first consider the case of the $T$-system \eqref{restazero} in a half-plane.
We have:

\begin{thm}\label{leftbounone}
The solution of the unrestricted $A_1$ $T$-system (i)
with initial conditions $X_{\bk_0}(\bt^{+})$  (\ref{boundari},\ref{otsymii})
restricts to that of the right half-plane $A_1$ $T$-system (ii)
with initial condition $X_{\bm}^+(\bu)$ \eqref{boundarii}, where $\bu=\bt([1,\infty)),\bm=\bk_0([1,\infty))$ 
are the restrictions of $\bt,\bk_0$ to  the range $j\in [1,\infty)$.
As such the solutions of the latter are positive Laurent polynomials of the initial data $t_1,t_2,t_3,...$
\end{thm}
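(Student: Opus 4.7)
My plan is to show that the symmetric initial data $\bt^+$ forces the full-plane solution $T_{j,k}$ to satisfy $T_{0,k}=1$ on the wall $j=0$; its restriction to the range $j\ge 1$ then automatically solves system (ii) with initial data $X_{\bm}^+(\bu)$, and a folding of the network formula of Theorem~\ref{unrestaone} across the wall exhibits it as a positive Laurent polynomial in $u_j=t_j$, $j\ge 1$.

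First I would derive the antisymmetry $T_{j,k}=-T_{-j-2,k}$. The map $T_{j,k}\mapsto -T_{-j-2,k}$ sends solutions of the $T$-system~\eqref{toneA} to solutions, since both sides of~\eqref{toneA} are bilinear in $T$. Under the symmetries $t_{-j-2}=-t_j$ and $k_{-j-2}^{(0)}=k_j^{(0)}$ encoded in $\bt^+$ and $\bk_0$, this map preserves the initial data; working with $t_{-1}$ kept as a formal regularization parameter so as to avoid the degeneracy of the Cauchy problem at $t_{-1}=0$, uniqueness forces the two solutions to coincide. Setting $j=-1$ yields $T_{-1,k}=0$, and substituting this into~\eqref{toneA} at $j=0$ gives the recursion $T_{0,k+1}T_{0,k-1}=1$. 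With the initial value $T_{0,1}=t_0=1$, a simple induction (forward and backward in $k$) yields $T_{0,k}=1$ for all odd $k$.

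With the wall condition in hand, the full-plane equations for $j\ge 1$ reduce to the right half-plane $T$-system~(ii) and the initial data restricts to $X_{\bm}^+(\bu)$, so the two solutions agree on $j\ge 1$ by uniqueness. To see positivity, I would return to the explicit matrix formula $T_{j,k}=(\prod_{j'=j_0}^{j_1-1}M_{j'}(t_{j'},t_{j'+1}))_{1,1}\,t_{j_1}$ and track what happens to the factors with $j'\le -1$. Regularizing with $\varepsilon=t_{-1}$, one computes $V(-1,\varepsilon)\,U(\varepsilon,1)=\bigl(\begin{smallmatrix}0 & 1\\ 1 & \varepsilon\end{smallmatrix}\bigr)\to J:=\bigl(\begin{smallmatrix}0 & 1\\ 1 & 0\end{smallmatrix}\bigr)$ as $\varepsilon\to 0$, so the wall block collapses to the reflection $J$. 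The remaining left-hand factors, which involve $t_{-p-2}=-t_p$ for $p\ge 1$, can be recast as $U(t_p,t_{p+1})$ and $V(t_p,t_{p+1})$ via the conjugation identities $D\,U(-a,-b)\,D=U(a,b)$, $D\,V(-a,-b)\,D=V(a,b)$ with $D=\mathrm{diag}(1,-1)$, together with the swap $J\,U(a,b)\,J=V(a,b)$. The net effect is that the full matrix product folds onto a product of $U$'s and $V$'s built only from $t_p$ with $p\ge 1$, whose entries are non-negative Laurent monomials in those variables, so that the $(1,1)$-entry $T_{j,k}$ for $j\ge 1$ is a non-negative Laurent polynomial.

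The main obstacle I foresee is the combinatorial bookkeeping of the folding: one must verify that the $U$ vs $V$ types of the matrices on either side of the wall match correctly under the reflection, that the signs introduced by the conjugating $D$'s and those from $t_{-p-2}=-t_p$ exactly cancel, and that the projection endpoint $j_0$ transforms into the correct endpoint on the right of the wall after folding. Once the folded network is identified precisely — it will turn out to be built from the same building blocks $U,V$ but with a reflected sequence of arguments drawn from $\bu$ glued to the original one via $J$ — positivity of the Laurent coefficients is immediate from the Lindström--Gessel--Viennot path interpretation of the $(1,1)$-entry.
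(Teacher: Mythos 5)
Your positivity argument is essentially the paper's proof: the same regularization $\lim_{\epsilon\to 0}V(-1,\epsilon)U(\epsilon,1)=P$, and your conjugation identities $DU(-a,-b)D=U(a,b)$, $JU(a,b)J=V(a,b)$ are equivalent to the paper's collapse relations $U(-b,-a)PV(a,b)=P$, $V(-b,-a)PU(a,b)=P$, which fold the left part of the product $\prod M_{j'}$ onto the right of the wall and leave a manifestly non-negative matrix product. That half is fine.

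The genuinely different part is your derivation of the wall condition via the antisymmetry $T_{j,k}=-T_{-j-2,k}$, and this is where there is a gap. The reflection $T_{j,k}\mapsto -T_{-j-2,k}$ does map solutions to solutions, but it does \emph{not} preserve the regularized initial data: it fixes every $t_j$ with $j\neq -1$ and sends $t_{-1}=\varepsilon$ to $-\varepsilon$. So uniqueness only gives the identity $T_{j,k}(\varepsilon)=-T_{-j-2,k}(-\varepsilon)$ of Laurent polynomials in $\varepsilon$, i.e.\ $T_{-1,k}(\varepsilon)$ is \emph{odd} in $\varepsilon$. An odd Laurent polynomial can contain $\varepsilon^{-1}$, so you cannot conclude $T_{-1,k}\to 0$ as $\varepsilon\to 0$ — nor even that the limit exists — without separately establishing regularity at $\varepsilon=0$. (The Cauchy problem is genuinely degenerate there: the forward recursion for $T_{-1,k_{-1}+2}$ is $0/0$.) The regularity is true, but the only proof available is the one you already use later: in the network formula $\varepsilon$ enters solely through the block $V(-1,\varepsilon)U(\varepsilon,1)=\bigl(\begin{smallmatrix}0&1\\1&\varepsilon\end{smallmatrix}\bigr)$, which is regular. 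Once you invoke that, the abstract antisymmetry step becomes redundant, and you may as well compute $T_{0,2k+1}$ directly from the matrix product and the collapse relations, which is exactly what the paper does: the left factors telescope against $P$ to give $T_{0,2k+1}=\bigl(PV(t_{2k-2},t_{2k-1})U(t_{2k-1},t_{2k})\bigr)_{1,1}t_{2k}=1$. Either patch the antisymmetry argument by proving $\varepsilon$-regularity first (via the network formula), or drop it in favor of the direct computation.
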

\begin{proof} To prove the first statement of the theorem, it is sufficient to show that $T_{0,2k+1}=1$
for all $k\geq 0$ (the case $k<0$ follows from the general reflection 
symmetry argument of Lemmas \ref{refk} and \ref{transla}).  
Indeed, the half-plane solution is uniquely determined in terms of initial conditions of the type 
\eqref{boundarii}, so it must coincide with that of the unrestricted system in the range $j\geq 0$, 
once the boundary condition $T_{0,k}=1$ is guaranteed.

To compute $T_{0,2k+1}$, we wish to use Theorem \ref{unrestaone}, but we cannot plug directly the value $t_{-1}=0$
as some entries of the matrices $U,V$ may diverge. However, only combinations of the form 
$V(t_j,t_{j+1})U(t_{j+1}t_{j+2})$
for even $j$ enters the solution \eqref{soloneUV}. We simply note that
\begin{equation}\label{limP}
\lim_{\epsilon\to 0} V(-1,\epsilon)U(\epsilon,1) =P=\begin{pmatrix} 0 & 1 \\ 1 & 0 \end{pmatrix}
\end{equation}
Provided we take this limit, we may now safely use the formula \eqref{soloneUV} for $T_{0,2k+1}$, for $k\geq 0$
with $j_0=-2k+2$ and $j_1=2k$:
\begin{eqnarray*}T_{0,2k+1} &=& \\
&&\!\!\!\!\!\!\!\!\!\!\!\!\!\!\!\!\!\!\!\!\!\!\!\! \left(V(-t_{2k-2},-t_{2k-3}) \cdots V(-t_2,-t_1)U(-t_1,-1)
P V(1,t_1)U(t_1,t_2)\cdots U(t_{2k-1},t_{2k})\right)_{1,1} t_{2k} 
\end{eqnarray*}
(Here and in the following the $\cdots$ stand for alternating products of $UVUVU...$).
Next, we shall use the following ``collapse" properties of $U,V,P$ matrices, easily derived by direct calculation:
\begin{equation}\label{colla} U(-b,-a)PV(a,b)= P \qquad V(-b,-a)P U(a,b)=P \end{equation}
for all $a,b$.
Applying these iteratively to \eqref{soloneUV} implies:
$$ T_{0,2k+1} =\left(P V(t_{2k-2},t_{2k-1})U(t_{2k-1},t_{2k})\right)_{1,1}t_{2k}={1\over t_{2k}}\times t_{2k}=1$$

We now turn to the Laurent positivity of the solution. Let us compute $T_{j,k}$ for $k\geq 0$ via \eqref{soloneUV}. If
$j_0\geq 0$, this is the same as the solution of the unrestricted system, and the positivity is clear. 
Otherwise, let us denote by ${\bar j}_0=-j_0-2\geq 0$, and compute:
\begin{eqnarray*}
 T_{j,k}&=& \left( V(t_{j_0},t_{j_0+1}) \cdots U(t_{-3},-1)P V(1,t_1) \cdots U(t_{j_1-1},t_{j_1})\right)_{1,1}t_{j_1}\\
 &=&\left( V(-t_{{\bar j}_0},-t_{{\bar j}_0-1}) \cdots U(-t_1,-1)P V(1,t_1) \cdots U(t_{j_1-1},t_{j_1})\right)_{1,1}t_{j_1}\\
 &=&\left( P V(t_{{\bar j}_0},t_{{\bar j}_0+1}) \cdots U(t_{j_1-1},t_{j_1})\right)_{1,1}t_{j_1}
\end{eqnarray*}
where we have used \eqref{colla} repeatedly to eliminate the first ${\bar j}_0$ terms.
This is a product of matrices with entries that are all Laurent monomials of the initial data $(t_j)_{j\geq 1}$
with non-negative integer coefficients. The positive Laurent property follows.
 \end{proof}

\begin{figure}
\centering
\includegraphics[width=8.cm]{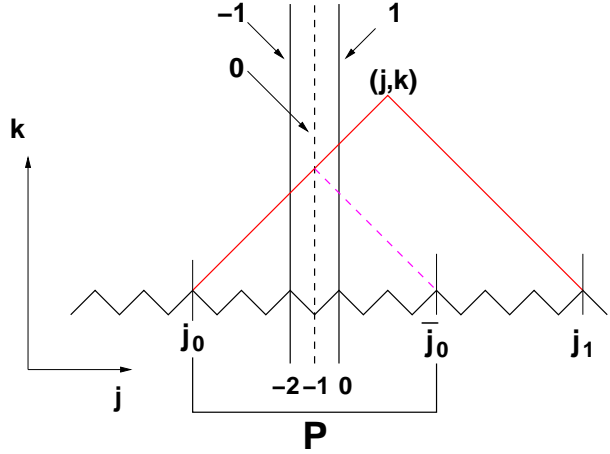}
\caption{The segment $[j_0,j_1]$ of values necessary to express $T_{j,k}$ has its left endpoint reflected by the line $j=-1$.
As a result, only the values of $t_j$ for $j$ between ${\bar j}_0=-j_0-2$ and $j_1$ enter the expression.}
\label{fig:refwall}
\end{figure}

\begin{remark}\label{reflerem}
There is a very simple pictorial interpretation of the computation of $T_{j,k}$. 
The left endpoint of the segment $j\in [j_0,j_1]$ 
of initial values $t_j$ necessary to express
the solution $T_{j,k}$ has been reflected by the line $j=-1$. This is depicted in Fig. \ref{fig:refwall}, along with the 
corresponding cone of projection of $(j,k)$ and its reflection.
\end{remark}

We have the following analogous result for the left half-plane $j\leq \ell$ solution:
 
\begin{thm} \label{rightbounone}
For fixed $\ell\in\Z_{>0}$, 
the solution of the unrestricted $A_1$ $T$-system (i)
with initial conditions $X_{\bk_0}(\bt^{-})$ (\ref{boundari},\ref{otsymiii}),
restricts to that of the left half-plane $A_1$ $T$-system (iii)
with initial condition $X_{\bm}^-(\bu)$ \eqref{boundariii}, where $\bu=\bt((-\infty,\ell]), \bm
=\bk_0((-\infty,\ell])$ 
are the restrictions of 
$\bt,\bk_0$ to the range $j\in (-\infty,\ell]$.
As such the solutions of the latter are positive Laurent polynomials of the 
initial data $t_\ell,t_{\ell-1},t_{\ell-2},...$
\end{thm}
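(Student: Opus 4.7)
The plan is to mirror the proof of Theorem \ref{leftbounone}, but with the reflection-negation symmetry about the line $j=\ell+2$ (implementing the wall at $j=\ell+1$) replacing the one about $j=-1$. The two main ingredients will be a new $\epsilon\to 0$ regularization at the right wall together with a matching pair of ``collapse'' identities that telescope the product of $M_j$'s surrounding the wall.

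First I would verify directly from the definition \eqref{defM} that the pair of $M$-matrices straddling the wall at $j=\ell+1$ has a finite limit:
\begin{equation*}
\lim_{\epsilon\to 0} M_{\ell+1}(1,\epsilon)\,M_{\ell+2}(\epsilon,-1) \;=\; -P,
\end{equation*}
where the concrete pair is $U(1,\epsilon)V(\epsilon,-1)$ or $V(1,\epsilon)U(\epsilon,-1)$ according to the parity of $\ell$ (both yield $-P$ by direct $2\times 2$ multiplication). Then by direct computation one checks the analogs of \eqref{colla}:
\begin{equation*}
U(a,b)(-P)V(-b,-a)=-P, \qquad V(a,b)(-P)U(-b,-a)=-P.
\end{equation*}
These are the right-wall collapse identities that eat away, one matched pair at a time, the outer $M_{\ell+1+m}$ (whose entries involve $t_{\ell+3+m-1}=-t_{\ell+1-(m-1)}$) together with its mirror $M_{\ell-m+1}$ on the left.

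Second, to check that the wall $T_{\ell+1,k}=1$ is implemented, I would compute $T_{\ell+1,k_{\ell+1}+2m+1}$ for $m\geq 0$ via Theorem \ref{unrestaone} on $\bk_0$ with the initial data $\bt^{-}$. The projection segment $[j_0,j_1]$ is symmetric about $j=\ell+2$, so the product $\prod M_j$ factors through the regularized $-P$; applying the collapse identities $m$ times telescopes all matrices on both sides of $-P$, leaving just $-P$ sandwiched between the two outermost factors $V(1,t_\ell)U(t_\ell,t_{\ell-1})\cdots$ or their $UV$ counterparts. A short calculation of the $(1,1)$-entry, multiplied by the prescribed $t_{j_1}$, then yields $1$. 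Since the left half-plane $T$-system (iii) is uniquely determined by the data $X_\bm^-(\bu)$ once the wall condition is satisfied, the unrestricted solution with data $X_{\bk_0}(\bt^-)$ coincides with it for all $j\leq \ell$.

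Third, to obtain the positive Laurent property on $j\leq \ell$, I would evaluate $T_{j,k}$ by Theorem \ref{unrestaone}. If the right endpoint $j_1$ of the projection satisfies $j_1\leq \ell$, the formula already exhibits positive Laurent monomials in $\{t_j\}_{j\leq \ell}$. Otherwise, set $\bar{j}_1=2(\ell+2)-j_1\leq \ell$; after inserting $-P$ at the wall via the regularization, the collapse identities eliminate the reflected tail $M_{\ell+3}\cdots M_{j_1-1}$ (whose entries involve the negated values $-t_{\ell-m}$), replacing the whole product by
\begin{equation*}
\Bigl(M_{j_0}(t_{j_0},t_{j_0+1})\cdots M_{\bar j_1-1}(t_{\bar j_1-1},t_{\bar j_1})(-P)\Bigr)_{1,1}\,t_{j_1}
\end{equation*}
(up to an overall sign absorbed by $t_{j_1}=-t_{\bar j_1}$). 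This product involves only matrices $U,V$ whose entries are positive Laurent monomials in the data $\{t_j\}_{j\leq \ell}$, yielding the positive Laurent property. Pictorially the right endpoint of the projection cone has been reflected across $j=\ell+2$ back into the left half-plane, in exact analogy with Figure \ref{fig:refwall}.

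The main obstacle, as in the proof of Theorem \ref{leftbounone}, is checking that the sign $-$ in front of $P$ and the alternating signs in $\bt^-$ combine compatibly with the collapse identities so that the overall sign is $+$; this forces a parity case split on $\ell$, which is routine but must be handled explicitly to verify that the final answer is indeed a non-negative, rather than alternating, Laurent polynomial.
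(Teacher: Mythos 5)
Your proposal follows essentially the same route as the paper's proof: regularize the zeros at $j=\ell+2,\dots$ via $\lim_{\epsilon\to 0} M_{\ell+1}(1,\epsilon)M_{\ell+2}(\epsilon,-1)=-P$, telescope the product with mirrored collapse identities, and read off both the wall condition $T_{\ell+1,k}=1$ and positivity from the surviving product of $U,V$ matrices with positive entries. Your right-wall collapse identities $U(a,b)(-P)V(-b,-a)=-P$ and $V(a,b)(-P)U(-b,-a)=-P$ are correct and are just the paper's \eqref{colla} read in the mirrored order. The one configuration your framing does not cover is the boundary case $j+k=\ell+3$, i.e. when the projection maximum lands exactly at $j_1=\ell+2$: there the product \eqref{soloneUV} ends with the single factor $M_{\ell+1}(1,\epsilon)$, its partner $M_{\ell+2}$ is absent from the product, so the pair limit $-P$ cannot be formed, and your prescription $\bar j_1=2(\ell+2)-j_1\leq\ell$ fails (it gives $\bar j_1=\ell+2$). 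In that case the divergence of $U(1,\epsilon)$ is cancelled not by the pairing but by the prefactor $t_{j_1}=\epsilon$ in \eqref{soloneUV}, yielding the manifestly positive expression \eqref{tricky}; this is precisely the separate computation the paper performs (and it also supplies the base case of the wall computation $T_{\ell+1,k}=1$). With that edge case handled explicitly, your argument coincides with the paper's.
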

\begin{proof} 
Let us first show the positivity statement.
Imitating the proof of Theorem \ref{leftbounone}, we must ``regularize" the singular value $0$ by introducing:
$$\lim_{\epsilon\to 0} V(1,\epsilon)U(\epsilon,-1)= -P$$
A new feature arises when $\ell$ is even: in that case, the boundary contribution is
$$\lim_{\epsilon\to 0} U(1,\epsilon)V(\epsilon,-1)= -P$$
as well, but if $j+k=\ell+3$, the formula \eqref{soloneUV} for $T_{j,k}$ contains a potential
singularity as $U(1,\epsilon)$
diverges when $\epsilon\to 0$. Fortunately the full formula also has an $\epsilon=T_{j_1,k_{j_1}}$ in factor, leading to
a finite limit:
\begin{eqnarray}
\ \ \ \ \ \ \ \  T_{j,k}&=&\lim_{\epsilon\to 0}\Big( V(t_{j-k},t_{j-k+1})\cdots V(t_{\ell},1)U(1,\epsilon) \Big)_{1,1}\epsilon\nonumber \\
&=&\Big( V(t_{j-k},t_{j-k+1})\cdots V(t_{\ell},1) \Big)_{1,2}=
\Big( V(t_{j-k},t_{j-k+1})\cdots V(t_{\ell},1)(-P )\Big)_{1,1}(-1)\label{tricky}
\end{eqnarray}
which is manifestly positive.
In general, we compute $T_{j,k}$ via the formula \eqref{soloneUV}. Again, if $j+k\leq \ell+1$, the solution is the same as
in the unrestricted case, and positivity follows. Otherwise, we have a reflection of the segment of initial values
on the right against the line $j=\ell+2$. 
More precisely, denoting by $M_\ell=U$ if $\ell$ is odd and $M_\ell=V$ if $\ell$ is even,
we get:
\begin{eqnarray}
\ \ \ \ \ \  T_{j,k}&=&\Big(V(t_{j-k},t_{j-k+1})\cdots M_\ell(t_{\ell},t_{\ell+1}) (-P) M_{\ell+1}(t_{\ell+2},t_{\ell+3})\cdots
U(t_{j+k-1},t_{j+k})\Big)_{1,1} t_{j+k}\nonumber \\
&=& \Big(V(t_{j-k},t_{j-k+1})\cdots U(t_{2(\ell+2)-j-k-1},t_{2(\ell+2)-j-k})  (-P) \Big)_{1,1} (-t_{2(\ell+2)-j-k})
\label{normal}
\end{eqnarray}
where we have used the symmetry $t_{2(\ell+2-j)}=-t_j$ and  \eqref{colla} to cancel out terms 
on both sides of the $(-P)$ factor. The two minus signs cancel, and we are left with 
a manifestly positive expression. 
Let us now turn to the first part of the theorem. By uniqueness of the solution in the left half-plane, we simply have
to show that $T_{\ell+1,k}=1$ for all $k\in \Z_{>0}$ such that $k+\ell$ is even.
For odd $\ell$, using \eqref{tricky} we first compute:
$T_{\ell+1,2}=\lim_{\epsilon\to 0}(V(t_\ell,1)U(1,\epsilon))_{1,1}\epsilon=1$. For all other cases, we use
\eqref{normal} to compute:
\begin{eqnarray*}T_{\ell+1,k}&=&  \Big(V(t_{\ell+1-k},t_{\ell+2-k})U(t_{\ell+2-k},t_{\ell+3-k})  P \Big)_{1,1} \, t_{\ell+3-k}\\
&=& \Big(V(t_{\ell+1-k},t_{\ell+2-k})U(t_{\ell+2-k},t_{\ell+3-k})  \Big)_{1,2} \, t_{\ell+3-k}
={1\over  t_{\ell+3-k}}\times  t_{\ell+3-k}=1
\end{eqnarray*}
This completes the proof of the theorem.
\end{proof}

\begin{figure}
\centering
\includegraphics[width=9.cm]{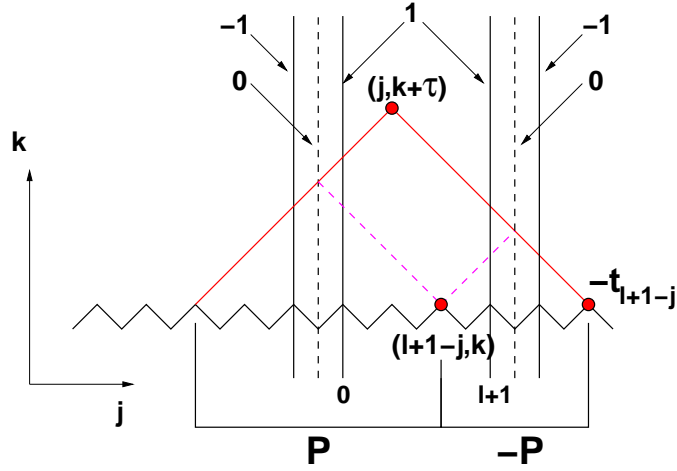}
\caption{For odd $k$, the segment of values necessary to express 
$T_{j,k+N/2}$ is reflected by both lines $j=-1$
and $j=\ell+2$ picking respectively a factor $P$ or $-P$ per reflection. After these two reflections,
the edges of the cone of projection of $(j,k+N/2)$
meet again at the point $(\ell+1-j,k)$. We have indicated the contributions from
the formula \eqref{soloneUV}.}
\label{fig:multirefone}
\end{figure}

\subsection{$\ell$-restricted boundaries and periodicity: proof of Theorem \ref{periodone}}

Combining Theorems \ref{leftbounone} and \ref{rightbounone}, we immediately deduce the following:

\begin{thm}\label{halfperio}
The solution of the $\ell$-restricted $A_1$ $T$-system (iv)
satisfies the following ``twisted half-periodicity"
relation:
$$ T_{j,k+\frac{N}{2}} =T_{\ell+1-j,k}$$
where $N=2(\ell+3)$.
\end{thm}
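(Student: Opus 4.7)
The plan is to combine the two half-plane arguments of Theorems \ref{leftbounone} and \ref{rightbounone}. Since the initial data $\bt^{[1,\ell]}$ carries both antisymmetries \eqref{otsymii} and \eqref{otsymiii} simultaneously, the telescoping proofs of both theorems apply at once. I would first realize the $\ell$-restricted solution as the unrestricted $T$-system solution on the flat initial data $X_{\bk_0}(\bt^{[1,\ell]})$: the $\bt^+$ part of the symmetry makes $T_{0,m}=1$ (by the argument of Theorem \ref{leftbounone}), while the $\bt^-$ part makes $T_{\ell+1,m}=1$ (by the argument of Theorem \ref{rightbounone}), so by uniqueness the unrestricted solution restricted to $j\in[1,\ell]$ coincides with the $\ell$-restricted one. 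The key equality $T_{j,k+N/2}=T_{\ell+1-j,k}$ is then computed in the unrestricted setting using the network formula \eqref{soloneUV}.

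For $j \in [1,\ell]$, the projection segment $[j_0, j_1]$ attached to the point $(j, k+N/2)$ has width $j_1-j_0\approx 2(k+N/2)$, and since $N/2=\ell+3$ strictly exceeds the strip width $\ell+2$, this segment always extends past both walls $j=-1$ and $j=\ell+2$. Next I would apply the collapse identities \eqref{colla} iteratively on both sides of the product. The antisymmetry $t_{-j-2}=-t_j$ lets the left tail telescope into a leading $P$ factor, bringing the left endpoint to $\bar{j}_0=-j_0-2$, exactly as in Theorem \ref{leftbounone}. Symmetrically, the antisymmetry $t_{j+\ell+3}=-t_{\ell+1-j}$ lets the right tail telescope into a trailing $-P$ factor, bringing the right endpoint to $\bar{j}_1=2(\ell+2)-j_1$, as in Theorem \ref{rightbounone}. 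For large $k$ the reduced endpoints $\bar{j}_0,\bar{j}_1$ may themselves lie outside the strip, and the collapses would be iterated alternately on each wall.

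Finally I would identify the surviving product with the network formula \eqref{soloneUV} for $T_{\ell+1-j,k}$. The geometric picture in Fig.\,\ref{fig:multirefone} gives the essential reason: after one bounce off each wall, the two edges of the cone of projection of $(j,k+N/2)$ meet precisely at $(\ell+1-j,k)$, so the reduced segment is exactly the projection segment of Def.\,\ref{projdef} attached to this new point; combined with the boundary $P,-P$ factors and the sign absorbed by the reflected prefactor $t_{j_1}\mapsto -t_{\bar{j}_1}$, this reproduces the expression \eqref{soloneUV} for $T_{\ell+1-j,k}$. The main obstacle is the careful bookkeeping of signs, $P$ matrices, and parities through the iterated collapses: while each individual step is mechanical via \eqref{colla} and the overall geometry is transparent from the reflection picture, one must verify that $\bar{j}_0,\bar{j}_1$ carry the correct parities to serve as endpoints of the projection segment of $(\ell+1-j,k)$, and that the signs picked up by the successive reflections combine into precisely the form predicted by the network formula.
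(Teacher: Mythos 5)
Your overall strategy (realize the $\ell$-restricted solution inside the unrestricted one via the symmetric data $\bt^{[1,\ell]}$, then use the network formula and the collapse identities \eqref{colla}) is the right one, but the central computation does not go through as you describe it, and the step you defer is actually the crux. Working on the flat surface $\bk_0$, the projection segment of $(j,k+\tfrac{N}{2})$ has width roughly $2k+N$, while the strip between the walls $j=-1$ and $j=\ell+2$ has width only $N/2$. Writing the matrix product as $A\cdot P\cdot B\cdot(-P)\cdot C$ (with $P$, $-P$ sitting at the two walls), one checks that $|A|+|C|=|B|+2k$: for every $k\geq 1$ the left collapse and the right collapse compete for the same factors in the middle block $B$, so they cannot be performed ``on both sides'' independently, and the naive reflected endpoints satisfy $\bar{j}_1<\bar{j}_0$ --- the surviving product is not $P\,N(\bar{j}_0,\bar{j}_1)(-P)$ and is not directly the network formula for $(\ell+1-j,k)$. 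One must instead collapse in sequence (left collapse eats all of $A$ and part of $B$, then the right collapse eats the rest of $B$ and part of $C$), and then use the $N$-periodicity and antisymmetry of $\bt^{[1,\ell]}$ once more to identify the leftover block of $2k$ matrices with the projection of $(\ell+1-j,k)$; for larger $k$ the segment crosses further walls and the whole process must be iterated. None of this is carried out in your proposal.

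The paper avoids all of this with one device you are missing: it does not compute on $\bk_0$, but on the unique even translate $\bk=2m+\bk_0$ of the flat surface that \emph{contains the point} $(\ell+1-j,k)$. With that choice the projection segment of $(j,k+\tfrac{N}{2})$ has width exactly $N$ (or $N-2$ for even $k$), meets each wall exactly once, and the two collapses exhaust the entire product, the reflected cone edges meeting precisely at $(\ell+1-j,k)$ on the surface. The computation then terminates immediately: $T_{j,k+\frac{N}{2}}=\bigl(P(-P)\bigr)_{1,1}(-t_{\ell+1-j})=t_{\ell+1-j}$ for odd $k$ (with a single surviving $VU$ pair for even $k$), and since $(\ell+1-j,k)$ lies on the chosen surface this \emph{is} $T_{\ell+1-j,k}$ by definition of the initial data --- no second network formula needs to be matched. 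I would encourage you to adopt this translation of the initial-data surface; without it, your argument as written stops short of a proof.
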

\begin{proof}
Let $\bk=2m+\bk_0$ denote the unique even translate of 
$\bk_0$ (with $k_j=k_j^{(0)}+2m$) which contains the point $(\ell+1-j,k)$. If $k$ is odd,
then $k_j= k-(j\, {\rm mod}\, 2)$, otherwise, $k_j=k+1-(j\, {\rm mod}\, 2)$ for all $j\in\Z$. 
The point $(\ell+1-j,k)$ is a local maximum if $k$ is odd, minimum otherwise. 
Let us compute $T_{j,k+{N\over 2}}$
via the formula \eqref{soloneUV}. The cone of projection of $(j,k+\frac{N}{2})$
onto the initial data segment $[j_0,j_1]$ 
is reflected once against each of the two
lines $j=-1$ and $j=\ell+2$, and the edges of the cone intersect in the point $(\ell-j,k)$ as
shown in Fig. \ref{fig:multirefone} for odd $k$. For odd $k$, we find that:
$$T_{j,k+{N\over 2}}= \Big( P (-P) \Big)_{1,1} (-t_{\ell+1-j})=T_{\ell+1-j,k} $$
If $k$ is even, the reflected cone edges meet the path $\bk$ respectively at points
$(\ell-j,k+1)$ and $(\ell+2-j,k+1)$, thus leading to:
$$T_{j,k+{N\over 2}}= \Big( P V(t_{\ell-j},t_{\ell+1-j})U(t_{\ell+1-j},t_{\ell+2-j}) 
(-P) \Big)_{1,1} (-t_{\ell+2-j})=t_{\ell+1-j}=T_{\ell+1-j,k} $$
as well. The theorem follows.
\end{proof}

We conclude that $T_{j,k+N}=T_{j,k}$ and the Theorem \ref{periodone} follows.

\subsection{Positivity for $\bk_0$: proof of Theorem \ref{ones}}

\begin{figure}
\centering
\includegraphics[width=10.cm]{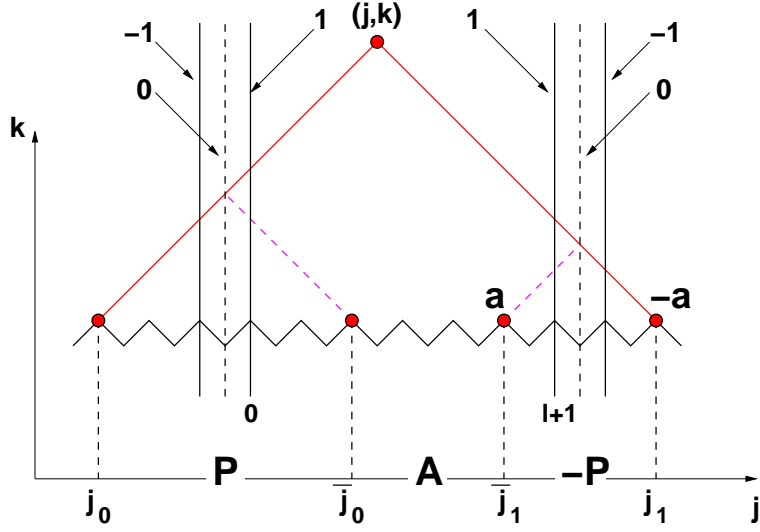}
\caption{The cone of the projection of $(j,k)$ is reflected by both lines $j=-1$
and $j=\ell+2$ picking respectively a factor $P$ or $-P$ per reflection. After these two reflections,
the edges of the cone meet the initial data path at $({\bar j}_0,1)$ and $({\bar j}_1,1)$
respectively. We have indicated the three contributions $P,A,-P$ from
the formula \eqref{soloneUV}, and the reflected boundary value $t_{{\bar j}_1}=a$.}
\label{fig:boxone}
\end{figure}

The first part of Theorem \ref{ones} follows by imposing simultaneously the symmetries of the initial values $t_j$ from 
both Theorems \ref{leftbounone} and \ref{rightbounone}. As these guarantee the $\ell$-restricted boundary conditions,
the result follows from uniqueness of the solution of the $\ell$-restricted system.

Let us now show that the solution $T_{j,k}$ of the $\ell$-restricted $A_1$ $T$-system (iv) is
a positive Laurent polynomial of the initial data along the path $\bk_0$.
Thanks to the half-periodicity property of Theorem \ref{halfperio}, we may restrict ourselves to 
values of $k$ such that $0\leq k\leq \frac{N}{2}$. In that case, the cone of projection of $(j,k)$
is reflected at most once against each line $j=-1$ and $j=\ell+2$.
If no reflection occurs, the positivity is clear, as the solution is identical to that of the 
unrestricted $A_1$ $T$-system. 
If only one reflection occurs, we are in the half-plane situation of Theorems
\ref{leftbounone} or \ref{rightbounone}, and positivity follows.
We are left with the case of two reflections, as illustrated in Fig. \ref{fig:boxone} (case $\ell$ odd). 
As usual we denote by $j_0,j_1$ the minimum and maximum 
of the projection of $(j,k)$ onto the initial data path, and by 
${\bar j}_0= -j_0-2$ and ${\bar j}_1=2(\ell+2)-j_1$ the reflected minimum and maximum of
the projection, such that $0\leq {\bar j}_0\leq {\bar j}_1\leq \ell+1$. 
Applying \eqref{soloneUV} and eliminating 
the left and right products involving $P$ and $-P$ leads to:
$$
T_{j,k}=(P A (-P))_{1,1} (-a)
$$
where 
$$ A=V(t_{{\bar j}_0},t_{{\bar j}_0+1})U(t_{{\bar j}_0+1},t_{{\bar j}_0+2}) \cdots U(t_{{\bar j}_1-1},t_{{\bar j}_1})
\quad {\rm and}\quad a=t_{{\bar j}_1}=-t_{j_1} $$
As usual, the two signs cancel and leave us with a manifestly positive answer, and
the second part of Theorem \ref{ones} follows.

\subsection{Positivity for $\bk$: proof of Theorem \ref{twos}}

In the case of an arbitrary path $\bk$ with associated initial conditions \eqref{initgenone}, we may repeat the
same arguments as in the case $\bk_0$. 
We first need to generalize the first part of Theorem \ref{ones} to the case of an arbitrary
path $\bk$. To this effect, Theorems \ref{leftbounone} and \ref{rightbounone} have 
the following counterparts for arbitrary $\bk$:

\begin{figure}
\centering
\includegraphics[width=9.cm]{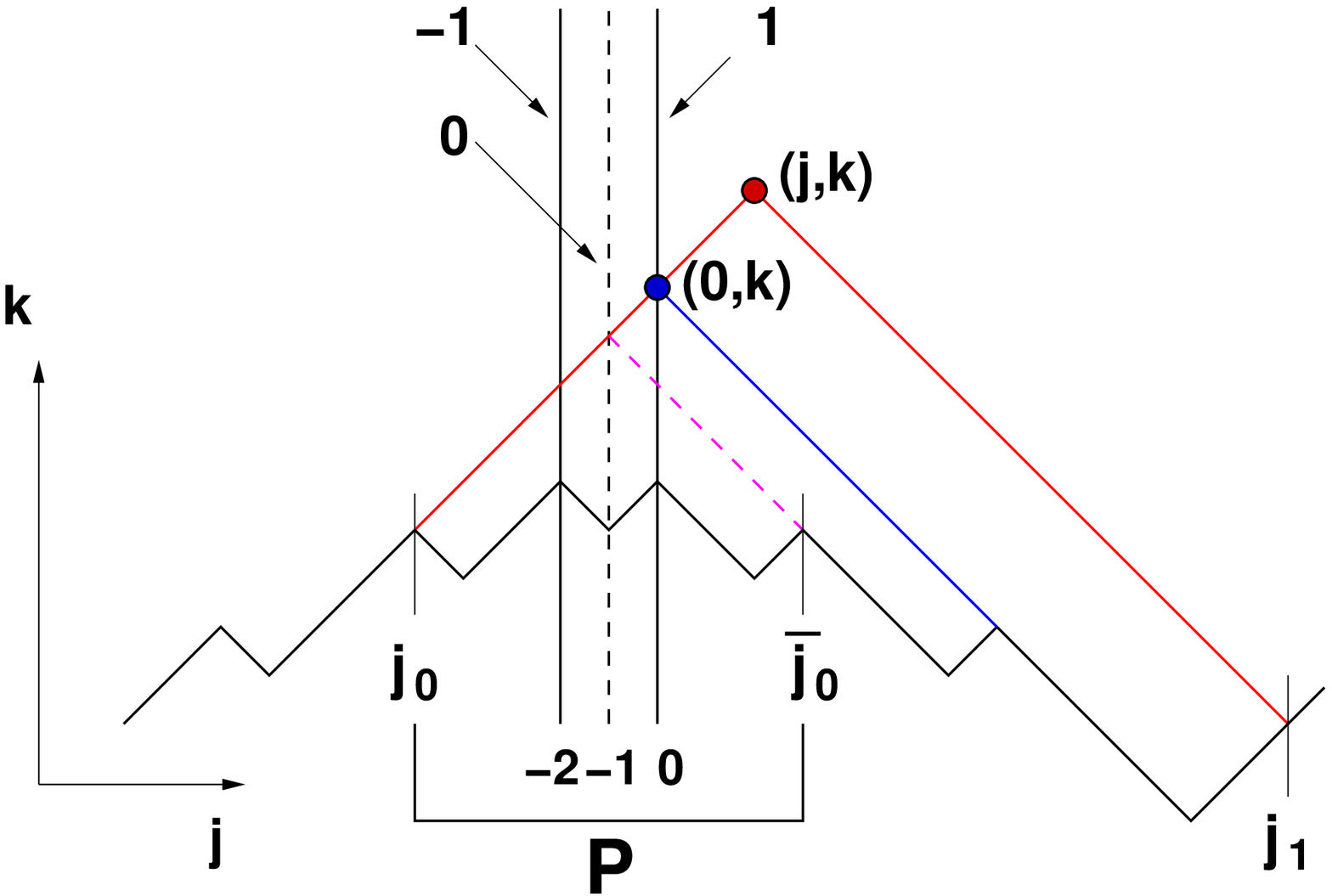}
\caption{The cone of projection of $(j,k)$ is reflected by the line $j=-1$.
As a result, only the values between ${\bar j}_0=-j_0-2$ and $j_1$ matter. We have also indicated the particular
case of the point $(j=0,k)$ (right projection line is blue):
the portion of boundary path for $j\geq {\bar j}_0$ within the projection
is made of $m$ down steps followed by one up step.}
\label{fig:genrefone}
\end{figure}

\begin{thm} \label{arbihalf}
The solution of the unrestricted $A_1$ $T$-system (i) with boundary conditions
$X_{\bk^{+}}(\bt^{+})$ \eqref{otsymii} (resp. $X_{\bk^{-}}(\bt^{-})$ \eqref{otsymiii} )
restricts to that 
of the right (resp. left) half-plane $A_1$ $T$-system (ii) (resp. (iii))
with boundary conditions $X_\bm^{+}(\bu)$ (resp. $X_\bm^-(\bu)$), where $\bu,\bm$ 
are the restrictions of $\bt^{+},\bk^{+}$ (resp. $\bt^{-},\bk^{-}$) to the range $j\geq 1$ (resp. $j\leq \ell$).
As such, the half-plane solutions are positive Laurent polynomials of their initial values.
\end{thm}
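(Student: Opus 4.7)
The plan is to follow the template of the proofs of Theorems \ref{leftbounone} and \ref{rightbounone}, with $\bk^{+}$ (respectively $\bk^{-}$) replacing the flat path $\bk_0$. The crucial observation is that the three key algebraic inputs---the regularization $\lim_{\epsilon\to 0} V(-1,\epsilon)U(\epsilon,1)=P$, the analogous regularization at $j=\ell+2$, and the collapse identities \eqref{colla}---depend only on the local symmetries $t_{-j-2}=-t_j$, $k_{-j-2}=k_j$, $k_{-1}=k_0-1$, $t_0=1$, $t_{-1}=0$ defining $\bt^{+}$ and $\bk^{+}$ (and the analogous ones for the left half-plane), not on the specific shape of $\bk$ on $[1,\infty)$ (or $(-\infty,\ell]$). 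I focus on the right half-plane case; the left half-plane case is entirely parallel.

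For the restriction statement, by uniqueness of the half-plane solution it suffices to verify $T_{0,k}=1$ for all appropriate $k$. Reducing to $k>k_0$ via Lemmas \ref{refk} and \ref{transla}, I would apply Theorem \ref{unrestaone} with $j=0$. Regularizing $t_{-1}=\epsilon\to 0$, the two central factors $M_{-2}(t_{-2},\epsilon)M_{-1}(\epsilon,t_0) = V(-1,\epsilon)U(\epsilon,1)$ tend to $P$. The symmetry $k_{-m-3}=k_{m+1},\ k_{-m-2}=k_m$ ensures that for each $m\geq 0$ the pair $M_{-m-3}$ and $M_m$ are of complementary $U/V$ types, while the value symmetry $t_{-m-3}=-t_{m+1},\ t_{-m-2}=-t_m$ places their arguments in exactly the form required by one of the identities \eqref{colla}. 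Iterating the collapse outward from the center eliminates matched pairs, reducing the product to $P$ times the residual unmatched matrices on one side; a direct evaluation, parallel to that in the proof of Theorem \ref{leftbounone}, then shows the $(1,1)$-entry times $t_{j_1}$ equals $1$.

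The positivity statement is handled similarly via Theorem \ref{unrestaone}: if the left endpoint $j_0$ of the projection satisfies $j_0\geq 0$, the formula coincides with the unrestricted one and positivity is immediate. If $j_0\leq -2$, the same regularize-and-collapse procedure reduces the formula to an expression of the shape $\left(P\cdot \prod_{j=\bar{j}_0}^{j_1-1} M_j(t_j,t_{j+1})\right)_{1,1}\cdot t_{j_1}$, with $\bar{j}_0\geq 0$ the image under reflection of the unmatched leftmost index. All the surviving matrices have entries that are non-negative Laurent monomials in the half-plane initial data $(t_j)_{j\geq 1}$, so the expression is a manifestly positive Laurent polynomial. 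The argument for the left half-plane case $X_{\bk^{-}}(\bt^{-})$ is symmetric, with the reflection occurring at $j=\ell+2$ and the relevant regularization $\lim_{\epsilon\to 0} V(1,\epsilon)U(\epsilon,-1) = -P$ from the proof of Theorem \ref{rightbounone}.

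The main technical obstacle is combinatorial: for a generic $\bk$, the projection segment $[j_0,j_1]$ of a point on $\bk^{+}$ need not be balanced around $j=-1$ the way it automatically is for the flat path $\bk_0$, since the dip $k_{-1}=k_0-1$ generically produces an asymmetric count of steps on either side of the reflection line. One must therefore allow a residual product of one or two unmatched matrices after the collapse terminates, and verify---by a short case analysis depending on the local $U$/$V$ type of $M_{j_1-1}$ and the parity of the reflected index $\bar{j}_0$---that the $(1,1)$-entry of this residual, multiplied by $t_{j_1}$, produces $1$ in the case of $T_{0,k}$ and a positive Laurent polynomial otherwise.
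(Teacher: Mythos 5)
Your overall strategy coincides with the paper's: use the general-path network formula \eqref{soloneUV}, regularize $t_{-1}=\epsilon\to 0$ to produce the central factor $P$ (resp.\ $-P$ at $j=\ell+2$), and collapse matched pairs $M_{-m-3,-m-2}\,P\,M_{m,m+1}=P$ outward from the center; this is exactly the content of the collapse relations displayed in the paper's proof, and your verification that the complementary $U/V$ types and the sign-reflected arguments line up is correct. The positivity half of your argument is also the paper's: after reflection the solution becomes $\bigl(P\,N({\bar j}_0,j_1)\bigr)_{1,1}t_{j_1}$ (resp.\ $\bigl(N(j_0,{\bar j}_1)\,P\bigr)_{1,1}t_{{\bar j}_1}$), which is manifestly positive.

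The gap is in your treatment of what survives the collapse when verifying $T_{0,k}=1$. You describe the leftover as ``a residual product of one or two unmatched matrices'' to be handled by a case analysis on the type of $M_{j_1-1,j_1}$ and the parity of ${\bar j}_0$. For a general path $\bk$ this is false: the collapse removes exactly ${\bar j}_0=-j_0-2$ pairs and leaves the full product $\prod_{j={\bar j}_0}^{j_1-1}M_{j,j+1}$, which has $j_1+j_0+2=m+1$ factors and can be arbitrarily long (a path descending steeply to the right of $j=0$ makes the projection of $(0,k)$ very asymmetric, so $m$ is large). A local case analysis on the last factor cannot evaluate such a product. The missing idea --- the one genuinely new ingredient in the paper's proof relative to the flat-surface case --- is a structural fact about the projection: since $k_{{\bar j}_0}=k_{j_0}=k+j_0$ and $k_{j_1}=k-j_1$, the segment of $\bk$ from ${\bar j}_0$ to $j_1$ must consist of $m=j_1+j_0+1$ down-steps followed by a single up-step, so the residual is $\bigl(\prod_{j={\bar j}_0}^{j_1-2}V(t_j,t_{j+1})\bigr)U(t_{j_1-1},t_{j_1})$, whose $(2,1)$ entry telescopes to $1/t_{j_1}$ and yields $T_{0,k}=1$. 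With that observation inserted, your argument closes; without it, the key step is unproved.
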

\begin{proof}
The theorem is proved by showing that the reflection conditions on $\bk$ and $t_j$
imply that $T_{0,k}=1$ (resp. $T_{\ell+1,k}=1$).
As before, this is proved by use of the formula \eqref{soloneUV}, 
upon noting that $\lim_{\epsilon\to 0}V(-1,\epsilon)U(\epsilon,1)=P$ 
and $\lim_{\epsilon\to 0}V(1,\epsilon)U(\epsilon,-1)=-P$. 
We note also that, with the definition \eqref{defM} and the symmetry properties
of $\bk$ and $t_j$, we have the following collapse relations:
\begin{eqnarray*}
M_{-j-3,-j-2}(t_{-j-3},t_{-j-2}) \! \! \! \! \! \! \! \! &P& \! \! \! \! \! \! \! \! M_{j,j+1}(t_j,t_{j+1})\\
&=&M_{-j-3,-j-2}(-t_{j+1},-t_j)P M_{j,j+1}(t_j,t_{j+1})=P\\
M_{j,j+1}(t_{j},t_{j+1}) \! \! \! \! \!  &(-P)& \! \! \! \! \!  M_{2(\ell+2)-j,2(\ell+2)-j+1}(t_{2(\ell+2)-j},t_{2(\ell+2)-j+1})\\
&=&M_{j,j+1}(t_{j},t_{j+1}) 
(-P) M_{2(\ell+2)-j,2(\ell+2)-j+1}(-t_{j+1},-t_{j})=-P
\end{eqnarray*}
For the right half-plane case, we have for $k\geq k_0$, $k$ odd:
\begin{equation*}
T_{0,k}=\Big( \prod_{j=j_0}^{-3} M_{j,j+1}(t_j,t_{j+1}) P \prod_{j=0}^{j_1-1} M_{j,j+1}(t_j,t_{j+1})\Big)_{1,1}t_{j_1} 
=\Big( P \prod_{j={\bar j}_0}^{j_1-1} M_{j,j+1}(t_j,t_{j+1})\Big)_{1,1}t_k ,
\end{equation*}
where $j_0,j_1$ denote the minimum and maximum of the projection of $(0,k)$ onto $\bk$.
Note that for $j$ between ${\bar j}_0=-j_0-2$ and $j_1$ 
the path $\bk$ must be made of a number $m=j_1+j_0+1$ of down steps,
followed by one up step (see Fig. \ref{fig:genrefone} for an example). This leads to:
$$T_{0,k}=\left[\Big(\prod_{j={\bar j}_0}^{j_1-2} V(t_j,t_{j+1})\Big) \, U(t_{j_1-1},t_{j_1})\right]_{2,1}t_{j_1}
={1\over t_{j_1}}\times t_{j_1}=1$$
The argument is similar for the left half-plane solution. 

To prove positivity, let us consider a point $(j,k)$  above the 
path $\bk$, namely with $k\geq k_j$. Then if $j_0\geq 0$ (resp. $j_1\leq \ell+1$), 
the solution $T_{j,k}$ is identical to that of
the full plane, and positivity is granted. Otherwise, note that the remark \ref{reflerem} extends to the present cases: 
the collapse relations above have the effect of reflecting the cone of projection against the line $j=-1$ 
(resp. $j=\ell+2$), as indicated in Fig. \ref{fig:genrefone}. 
This gives the following expressions for ${\bar j}_0=-j_0-2$ and ${\bar j}_1=2(\ell+2)-j_1$:
\begin{eqnarray*}
&& \! \!\!\!\! {\rm right}\, {\rm half-plane}: \ \ \ \ T_{j,k}=\Big( P\, N({\bar j}_0,j_1)\big)_{1,1} t_{j_1}\\
&& \! \!\!\!\! {\rm left}\, {\rm half-plane}: \ \ \ \ \ \ T_{j,k}=\Big( N(j_0,{\bar j}_1)(-P)\big)_{1,1} t_{j_1}=
\Big( N(j_0,{\bar j}_1)\, P\big)_{1,1} t_{{\bar j}_1}
\end{eqnarray*}
which are both manifestly positive Laurent polynomials of the initial data.
\end{proof}

To prove Theorem \ref{twos}, we now superimpose the symmetry conditions for the two half-plane cases as 
described in Theorem \ref{arbihalf}. Let us show that the solution $T_{j,k}$ with arbitrary path
initial data for $j\in [0,\ell+1]$ has the positive Laurent property.

The half-periodicity holds in general, so we may restrict ourselves to the case of a general path $\bk$
and a point $(j,k)$ above it such that $0\leq k-k_j \leq \frac{N}{2}$. In this case there is at most one reflection
of the cone of projection of $(j,k)$ against each of the lines $j=-1$ and $j=\ell+2$.
For no reflection at all, the solution is the same as that of the unrestricted $A_1$ $T$-system case, 
which is manifestly positive.
For one reflection against one of the lines, the solution is the same solution as that of the half-plane 
$A_1$ $T$-system case,
where positivity was established above.
Finally for two reflections, we have:
$$T_{j,k}=\Big( P N({\bar j}_0,{\bar j}_1)(-P) \Big)_{1,1} t_{j_1}=\Big( N({\bar j}_0,{\bar j}_1)\Big)_{2,2} t_{{\bar j}_1} $$
which is manifestly positive. This completes the proof of Theorem \ref{twos}.

\section{$\ell$-restricted T-system: the $A_r$ case}\label{ellsec}

Throughout this section, we study solutions of the $A_r$ $T$-system (\ref{tsys},\ref{arboundary}) 
with several types of boundary conditions,
and with  initial conditions of the form $X(\bt):=X_{\bk_0}(\bt)$  \eqref{initcondar} or appropriate subsets thereof.

Let $S$ be a subset of $\Z$. We have the $T$-system
$$
T_{i,j,k+1}T_{i,j,k-1}=T_{i,j+1,k}T_{i,j-1,k}+T_{i+1,j,k}T_{i-1,j,k} \qquad (i\in[1,r],k\in \Z, j\in S)
$$ 
with boundary conditions 
\begin{equation}\label{artsysi}
T_{0,j,k}=T_{r+1,j,k}=1\qquad (j\in S;k\in\Z) 
\end{equation}
with possibly additional boundary conditions depending on $S$, and an initial condition $X^{S}(\bt)$, which is an assignment of  formal variables $\bt=(t_{i,j})_{i\in[1,r],j\in S}$ to the points on the surface
$$
\bk_0(S)= \{(i,j,k^{(0)}(i,j)): i\in [1,r], j\in S\}.
$$
We consider the following four cases:

\begin{enumerate}[(i)]
\item
Unrestricted $A_r$ $T$-system: $S=\Z$, there are no additional boundary conditions, 
and the initial condition $X(\bt)$ is an assignment of values to the variables on points of $\bk_0$:
\begin{equation}\label{initcondi}
X(\bt): \left\{ T_{i,j,k_{i,j}^{(0)}}=t_{i,j} \quad (i\in[1,r];j\in \Z) \right\} .
\end{equation}
\item
Right half-space $A_r$ $T$-system: $S=\N$,
the additional
boundary conditions are
\begin{equation}
 T_{i,0,k}=1 \qquad  (i\in[1,r];k\in\Z) \label{artsysii}
\end{equation}
and initial condition $X^+(\bt)$ is the assignment
\begin{equation}\label{initcondii}
X^+(\bt): \left\{ T_{i,j,k_{i,j}^{(0)}}=t_{i,j} \quad (i\in[1,r];j \in S) \right\} .
\end{equation}
\item
Left half-space $A_r$ $T$-system: $S=(-\infty,\ell]$,
the additional boundary conditions are
\begin{equation}
T_{i,\ell+1,k}=1 \qquad  (i\in[1,r];k\in\Z)\label{artsysiii}
\end{equation}
and the initial conditions $X^-(\bt)$ are
\begin{equation}\label{initcondiii}
X^-(\bt): \left\{ T_{i,j,k_{i,j}^{(0)}}=t_{i,j} \quad (i\in[1,r];j\in S )\right\} .
\end{equation}
\item
$\ell$-restricted $A_r$ $T$-system: $S=[1,\ell]$, the additional boundary conditions are
\begin{equation}
T_{i,0,k}=T_{i,\ell+1,k}=1 \qquad  (i\in[1,r];k\in\Z)\label{artsysiv}
\end{equation}
and the initial conditions are $X^{[1,\ell]}(\bt)$
\begin{equation}\label{initcondiv}
X^{[1,\ell]}(\bt): \left\{ T_{i,j,k_{i,j}^{(0)}}=t_{i,j} \quad (i\in[1,r];j\in S) \right\} .
\end{equation}
\end{enumerate}

\begin{remark}\label{uniqueness}
In all the above cases, due to the form of the $T$-system as a three-term recursion,
the solution of the system is uniquely determined by its initial conditions.
\end{remark}

We will also consider the unrestricted $A_r$ $T$-system (case (i))
with initial conditions $X(\bt)$ \eqref{initcondi}, 
where we impose certain relations on the variables $\bt=(t_{i,j})_{i\in [1,r], j\in \Z}$:
\begin{itemize}
\item{$\bt^{+}$} is $\bt$ modulo the relations
\begin{eqnarray}
&t_{r+1-i,-r-1-j}=(-1)^{ri} \ t_{i,j},\qquad &(i\in [1,r];j\geq 0),\label{tsymii}\\
&t_{i,0}=1, \
t_{i,-j}=0, \qquad& (i,j\in[1,r]). \label{tvanii}
\end{eqnarray}
\item{$\bt^{-}$} is $\bt$ modulo the relations
\begin{eqnarray}& t_{r+1-i,-r-\ell-2+j}=(-1)^{ri} \ t_{i,\ell+1-j},\qquad &(i\in [1,r];j\geq 0)\label{tsymiii}\\
 &t_{i,\ell+1}=1,\ 
t_{i,\ell+1+j},\qquad &(i,j\in [1,r]).\label{tvaniii}
\end{eqnarray}
\item{$\bt^{[1,\ell]}$} is $\bt$ modulo the relations
\begin{eqnarray} 
&t_{r+1-i,-r-1-j}=(-1)^{ri} \ t_{i,j},\qquad &(i\in [1,r];j\geq0)\label{symone} ,\\
&t_{i,2(r+\ell+2)+j}=t_{i,j},\qquad &(i\in [1,r];j\in \Z),\label{symtwo} \\
&t_{i,0}=t_{i,\ell+1}=1,\qquad&(i\in [1,r]), \label{bcone} \\
&t_{i,-j}=0, \qquad &(i,j\in [1,r]).\label{vani}
\end{eqnarray}
\end{itemize}
\begin{remark}\label{simultarem}
The relations satisfied by $\bt^{[1,\ell]}$ correspond to simultaneously imposing  the relations of $\bt^{+}$
and $\bt^{-}$.
\end{remark}
\begin{example}\label{a3exsimple}
Initial data of type $\bt^+$ for the case $r=3$ has the form 
(with the $i$ direction is from bottom to top,
and $j$ direction is from left to right):
$$\small \begin{array}{cccccccccccccccccccccc}
\cdots &-t_{1,5} &-t_{1,4}& -t_{1,3}&-t_{1,2}& -t_{1,1}&-1 & 0 & 0 &0 & 1 &  t_{3,1} & t_{3,2} & t_{3,3} & t_{3,4} & t_{3,5} & \cdots\\
\cdots &\ \ t_{2,5} &\ \  t_{2,4}&\ \ t_{2,3}&\ \ t_{2,2}&\ \ t_{2,1}&\ \  1 & 0 & 0  &0 & 1 &  t_{2,1} & t_{2,2} & t_{2,3} & t_{2,4} & t_{2,5} & \cdots\\
\cdots &-t_{3,5} &-t_{3,4}& -t_{3,3}&-t_{3,2}&-t_{3,1}& -1 & 0 & 0 &0 & 1 &  t_{1,1} & t_{1,2} & t_{1,3} & t_{1,4} & t_{1,5} & \cdots
\end{array}$$
\end{example}

\begin{example}\label{a3ex}
Initial data of the type $\bt^{[1,\ell]}$
for the case $r=3$, $\ell=3$ has the form
$$\small \begin{array}{cccccc|ccccc|ccccccccccc}
& -t_{1,1}&-1 & 0 & 0 &0 & 1 &  t_{3,1} & t_{3,2} & t_{3,3} & 1 & 0 & 0 &0 & -1 & - t_{1,3} & -t_{1,2} & -t_{1,1} & -1& 0 & 0 &\\
\cdots & \ \ t_{2,1}& \ \ 1 & 0 & 0  &0 & 1 &  t_{2,1} & t_{2,2} & t_{2,3} & 1 & 0 & 0 &0 & \ \ 1  & \ \  t_{2,3}  
&\ \  t_{2,2}   &\ \  t_{2,1}  &\ \  1& 0 &  0&
\cdots   \\
&-t_{3,1}& -1 & 0 & 0 &0 & 1 &  t_{1,1} & t_{1,2} & t_{1,3} & 1 & 0 & 0 &0 & -1 &  -t_{3,3} & -t_{3,2} & -t_{3,1}& -1 & 0 & 0&
\end{array}$$
This array has period $2(\ell+r+2)=16$ along the horizontal ($j$-)direction. 
The vertical bars indicate the domain corresponding
to the $\ell$-restricted $A_r$ $T$-system (iv) initial data.
\end{example}

As in the $A_1$ case, the aim of this section is to use the known network solution for the unrestricted
system (i)  to obtain that for the other boundary conditions (ii,iii,iv). 

\subsection{Equivalent initial data and main theorems}\label{arone}

Here, we give the line of argument used to prove the periodicity
theorems \ref{priociT} and  \ref{positiT}. 

\begin{lemma}\label{suffiT}
The solutions of the $T$-system (i) with initial conditions $X(\bt^+)$ 
satisfy
\begin{equation}
T_{1,0,k}=1 \quad (k\in 2\Z+1), \qquad T_{1,-j,k}=0,\quad (j\in [1,r];k\in 2\Z+j+1).
\end{equation}
\end{lemma}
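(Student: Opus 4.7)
The plan is to prove both identities by applying the network solution of Theorem~\ref{soluT} to $X(\bt^+)$ and exploiting the reflection symmetry \eqref{tsymii} built into $\bt^+$. On the initial surface $\bk_0$, both claims reduce to the defining conditions of $\bt^+$: since $k^{(0)}_{1,0}=1$, we have $T_{1,0,1}=t_{1,0}=1$, and since $k^{(0)}_{1,-j}=(1+j)\bmod 2\equiv j+1\pmod 2$, we have $T_{1,-j,k^{(0)}_{1,-j}}=t_{1,-j}=0$ for $j\in[1,r]$ by \eqref{tvanii}. This already verifies the statements at the initial $k$-levels, so it remains to propagate them.

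For the general case, Theorem~\ref{soluT} gives $T_{1,j,k}=[N(j_0,j_1)]_{1,1}\,t_{1,j_1}$, where $[j_0,j_1]$ is the projection of $(1,j,k)$ onto $\bk_0$. Specializing to $(1,0,k)$ with odd $k$ yields $[j_0,j_1]=[1-k,k-1]$. The factored structure $N(1-k,k-1)=N(1-k,-r-1)\cdot N(-r-1,0)\cdot N(0,k-1)$ separates the network into a reflected block (where the data is determined by \eqref{tsymii}), a central wall consisting of the zeros $t_{i,-j}$ from \eqref{tvanii} surrounded by the row of ones $t_{i,0}=1$, and a free block depending only on the data at $j\in[1,k-1]$. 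The key idea, generalizing the $A_1$ proof of Theorem~\ref{leftbounone}, is to regularize by replacing the vanishing $t_{i,-j}$'s with small parameters $\epsilon$ and to take $\epsilon\to 0$: the wall of zeros together with the row of ones produces an $(r+1)\times(r+1)$ permutation-type matrix $P$ in the limit, playing the role of \eqref{limP}. The alternating signs in \eqref{tsymii} then yield collapse identities generalizing \eqref{colla} which cause the entire reflected block to be absorbed against $P$, leaving $N(1-k,k-1)=P\cdot N(0,k-1)$. Taking the $(1,1)$ entry and multiplying by $t_{1,k-1}$ produces the anticipated value $1$.

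For $T_{1,-j,k}=0$ with $j\in[1,r]$, the same network expression is used, but the projection $[j_0,j_1]$ is shifted so that, after collapsing the reflected block against $P$, the remaining factor contains an explicit zero coming from the wall: either as the prefactor $t_{1,j_1}$ when $j_1\in[-r,-1]$, or as a surviving $\epsilon$-factor in the regularization when $j_1>0$. In either subcase the product vanishes.

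The chief difficulty will be the explicit verification of the $(r+1)\times(r+1)$ collapse identity that replaces the $A_1$ relation $V(-b,-a)\,P\,U(a,b)=P$: one must keep track of the $(-1)^{ri}$ signs coming from \eqref{tsymii} and of the non-trivial way in which the elementary matrices $U_i,V_i$ interact under the reflection $(i,j)\mapsto(r+1-i,-r-1-j)$, and check that these combine into a clean permutation-type absorption across the regularized wall. This is precisely the regularization procedure announced for Sect.~\ref{artwo}.
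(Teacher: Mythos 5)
Your overall strategy --- regularize the wall of zeros, obtain a permutation-type matrix $P$ in the limit, and absorb the reflected block via collapse identities generalizing \eqref{colla} --- is exactly the paper's. But the execution contains a substantive error and several gaps. The collapse relations (the paper's Lemma \ref{collapse}) absorb a reflected matrix on the \emph{left} of $P$ together with its mirror image on the \emph{right} of $P$; they do not absorb the reflected block against $P$ alone. For $T_{1,0,k}$ with $k-1>r$ the correct reduction is therefore $N(1-k,k-1)=P\,N(k-r-2,k-1)$, not $P\,N(0,k-1)$; with your version $[P\,N(0,k-1)]_{1,1}\,t_{1,k-1}=[N(0,k-1)]_{r+1,1}\,t_{1,k-1}$ is a sum over many paths on a wide rectangle and does not equal $1$. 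Even with the correct reduction, the conclusion is not a one-line matter of ``taking the $(1,1)$ entry'': one must argue that on the $(r+1)\times(r+1)$ square network there is a \emph{unique} path from row $r+1$ to row $1$ and that its weight telescopes to $t_{r+1,k-r-2}/t_{1,k-1}=1/t_{1,k-1}$. You also omit the case $0<k-1<r+1$, where the projection only partially penetrates the wall; there the paper needs the partial matrices $P_j$ and the computation $(P_j)_{1,i}=\delta_{i,2\lfloor j/2\rfloor+1}$ of Lemma \ref{pj}, followed by another unique-path argument.

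For $T_{1,-j,k}=0$ your proposed mechanism (an explicit zero prefactor $t_{1,j_1}$, or a surviving $\epsilon$-factor) does not match how the vanishing actually occurs and would fail in most configurations. When $j_1=k-j-1>0$ the prefactor $t_{1,j_1}$ is a generic nonzero initial value, so nothing vanishes for that reason; and when $j_1\in[-r,-1]$ the prefactor does vanish, but the entries of the unregularized network matrix can diverge there, so one faces an indeterminate $0\cdot\infty$ that must be resolved through the regularization (this is exactly why Theorem \ref{polth} establishes that the regularized matrices have polynomial entries). The paper instead proves that the relevant \emph{matrix element} of the network matrix is zero, via a five-region case analysis in the $(j,k)$-plane: in each region the entry is a path partition function on a rectangle whose dimensions forbid any path between the required entry and exit rows. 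This path-counting analysis is the heart of the vanishing statement and is entirely absent from your sketch; the ``chief difficulty'' you identify (the $(r+1)\times(r+1)$ collapse identity) is real but is only one of several essential ingredients.
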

The determinant formula \eqref{wronsk} and the Lemma imply that $T_{i,0,k}=1$
for  all $i\in[1,r]$ and $k\in 2\Z+i$.
The proof of this Lemma is given in Section \ref{arthree}.

\begin{thm}\label{plusimple}
The solutions $T_{i,j,k}$ of the unrestricted $A_r$ $T$-system (i)
as a function of the initial conditions $X(\bt^{+})$ 
are equal, when $j>0$, to the solutions $T_{i,j,k}$ of the right half-space $A_r$ $T$-system (ii)
with initial conditions $X^+(\bt)$. 
\end{thm}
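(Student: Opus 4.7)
The plan is to exploit Remark \ref{uniqueness}: the solution of the right half-space $A_r$ $T$-system (ii) is uniquely determined by its boundary and initial conditions, so it suffices to verify that the unrestricted solution $T_{i,j,k}$ built from $X(\bt^{+})$, when restricted to $j\geq 0$, (a) obeys the wall condition $T_{i,0,k}=1$ for all $i\in [1,r]$ and $k$ of the correct parity, and (b) takes the prescribed initial values $t_{i,j}$ on $\bk_0$ for $j\geq 1$. Point (b) is immediate: the defining relations \eqref{tsymii}--\eqref{tvanii} of $\bt^{+}$ impose constraints only at $j\leq 0$, so the entries $t_{i,j}$ with $i\in [1,r]$ and $j\geq 1$ are free, matching $X^{+}(\bt)$ exactly.

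For point (a) I would invoke Lemma \ref{suffiT}, which already records the needed identities along the bottom row: $T_{1,0,k}=1$ for odd $k$, and $T_{1,-j,k}=0$ for $j\in [1,r]$ with $k\equiv j+1\pmod 2$. Promoting these to all $i$ proceeds through the discrete Wronskian of Lemma \ref{elimdet},
$$T_{i,0,k}=\det_{1\leq a,b\leq i}\bigl(T_{1,a-b,\,k+a+b-i-1}\bigr).$$
On the diagonal ($a=b$) the entries are $T_{1,0,k+2a-i-1}$, which equal $1$ by the first part of Lemma \ref{suffiT}. Strictly above the diagonal ($a<b$) the column index satisfies $a-b\in [-(i-1),-1]\subseteq [-r,-1]$, so those entries read $T_{1,-(b-a),k+a+b-i-1}$, and a short parity check (using that $k+i$ is even) places them in the vanishing range of the second part of Lemma \ref{suffiT}. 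Hence the matrix is lower-triangular with unit diagonal and determinant $1$, giving $T_{i,0,k}=1$ for every $i\in [1,r]$.

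Combining (a) and (b) with uniqueness (Remark \ref{uniqueness}) identifies the two solutions on $j>0$, finishing the theorem. The main obstacle is not this deduction but the deferred Lemma \ref{suffiT} itself: the vanishing columns $t_{i,-j}=0$ with $j\in [1,r]$ produce apparent singularities in the network chips of Section \ref{netsolsec} (entries of $U$ and $V$ contain $t_{\cdot,\cdot}$ in denominators), so one cannot substitute these zeros directly into the explicit $A_r$ formula. The regularization developed in Section \ref{artwo}---replacing the forbidden zeros by an auxiliary parameter $\epsilon$, evaluating with the network solution of Theorem \ref{finsol}, and letting $\epsilon\to 0$---is what makes the $A_1$-type collapse arguments of Theorems \ref{leftbounone} and \ref{rightbounone} extend to general $r$. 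Once that machinery is in place, the propagation of the $1$s along the wall $j=0$ and of the $0$s in the buffer $j\in [-r,-1]$ is forced by the symmetry \eqref{tsymii}, exactly as in the $A_1$ argument centered on the matrix identity \eqref{limP} and its collapse relations \eqref{colla}.
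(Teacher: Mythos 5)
Your proposal is correct and takes essentially the same route as the paper: it cites Lemma \ref{suffiT}, promotes $T_{1,0,k}=1$ to $T_{i,0,k}=1$ via the triangularity of the discrete Wronskian \eqref{wronsk} (exactly the observation the paper records in the sentence following the Lemma), and concludes by the uniqueness of Remark \ref{uniqueness}. Your parity check for the vanishing of the above-diagonal entries is accurate, and your treatment of Lemma \ref{suffiT} as a deferred input matches the paper's own organization.
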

\begin{proof}
Given Lemma \ref{suffiT}, 
the theorem follows from the
uniqueness of the solutions (Remark \ref{uniqueness}) of the half-plane $T$-system with initial data $X^+(\bt)$.
\end{proof}

\begin{thm}\label{corolla}
The solutions $T_{i,j,k}$ of the unrestricted $A_r$ $T$-system (i) as a function of initial conditions $X(\bt^{-})$
are equal, when $j\leq \ell$, to the solutions $T_{i,j,k}$  of the left half-space $A_r$ $T$-system (iii) 
with initial conditions $X^-(\bt)$.
\end{thm}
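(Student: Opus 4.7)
The plan is to mirror the proof of Theorem \ref{plusimple}, with the roles of the two walls exchanged. By Remark \ref{uniqueness} the left half-space $A_r$ $T$-system with initial data $X^-(\bt)$ admits a unique solution, so it suffices to verify that the unrestricted solution with initial conditions $X(\bt^-)$ satisfies the left-wall boundary condition \eqref{artsysiii}, namely $T_{i,\ell+1,k}=1$ for every $i\in[1,r]$ and every $k$ of the appropriate parity.

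The first step is to establish the mirror of Lemma \ref{suffiT}: under $X(\bt^-)$ one has
$$T_{1,\ell+1,k}=1\quad(k\in 2\Z+\ell), \qquad T_{1,\ell+1+j,k}=0\quad(j\in[1,r],\ k\in 2\Z+\ell+j).$$
This is the direct analog of Lemma \ref{suffiT}, obtained by placing the wall at $j=\ell+1$ instead of $j=0$. The conditions \eqref{tsymiii}--\eqref{tvaniii} defining $\bt^-$ create a vanishing block $t_{i,\ell+1+j}=0$ of size $r$ to the right of the wall, matched by the reflection symmetry \eqref{tsymiii} on the far right, exactly as \eqref{tsymii}--\eqref{tvanii} do on the left. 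Consequently the network and regularization machinery of Sections \ref{artwo}--\ref{arthree} applies verbatim: regularizing the zeros by small parameters and collapsing via the $U,V$-chip identities of Section \ref{UVsec} yields the asserted boundary values in the limit.

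The second step is the Wronskian bootstrap. Applying \eqref{wronsk} at $j=\ell+1$ gives
$$T_{i,\ell+1,k} \;=\; \det_{1\le a,b\le i}\bigl(T_{1,\ell+1+a-b,\,k+a+b-i-1}\bigr).$$
For $a>b$ the shift $a-b\in[1,i-1]\subseteq[1,r-1]$, and the parity $k\equiv i+\ell+1\pmod 2$ dictated by the $T$-system is precisely the one under which the first step forces vanishing; hence each subdiagonal entry is zero. The matrix is therefore upper triangular, and its determinant equals the product of diagonal entries $T_{1,\ell+1,k+2a-i-1}=1$, so $T_{i,\ell+1,k}=1$. This is \eqref{artsysiii}. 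Together with \eqref{artsysi}, already built into the unrestricted $A_r$ system, the restriction to $j\le\ell$ of the unrestricted solution then satisfies the defining equations and initial data of the left half-space system, so by Remark \ref{uniqueness} the two must coincide.

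The main obstacle is the first step. One might hope to avoid a network computation by invoking a direct reflection $j\mapsto\ell+1-j$ to reduce to Theorem \ref{plusimple}, but the symmetry relations of $\bt^+$ and $\bt^-$ have the \emph{same} mirror center $j=-(r+1)/2$ while applying on \emph{different} ranges (respectively $m\le -r-1$ and $m\ge -r-\ell-2$), so the reflection does not carry $\bt^-$ into $\bt^+$ without an extra shift of size $2(\ell+r+2)$ -- exactly the period that appears only once we impose \eqref{symtwo} and pass to $\bt^{[1,\ell]}$. Thus a genuine parallel argument at the level of networks is required, rather than a formal symmetry reduction.
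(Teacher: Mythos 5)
Your proposal would work, but it takes a long detour where the paper takes one line, and your stated reason for the detour rests on a misprint rather than a real obstruction. The paper's proof of this theorem is precisely the ``formal symmetry reduction'' you dismiss at the end: it introduces the map $\sigma(i,j,k)=(i,\ell+1-j,k)$ for $\ell$ odd and $\sigma(i,j,k)=(i,\ell+1-j,1-k)$ for $\ell$ even, checks that $\sigma$ preserves the octahedron relation and fixes $\bk_0$ (the twist $k\mapsto 1-k$ for even $\ell$ is exactly what restores the parity of $i+j+k$ and of $k^{(0)}_{i,j}$), and observes that $\sigma$ carries data of type $\bt^{+}$ to data of type $\bt^{-}$; the statement is then an immediate corollary of Theorem \ref{plusimple} and uniqueness. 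Your objection that $\bt^{+}$ and $\bt^{-}$ share the mirror center $j=-(r+1)/2$ comes from reading \eqref{tsymiii} literally: as printed, $t_{r+1-i,-r-\ell-2+j}$ is a typo for $t_{r+1-i,r+\ell+2+j}$, which is forced by Example \ref{a3ex}, by the reflected maximum ${\bar j}_1=2\ell+r+3-j_1$ used in the proof of Lemma \ref{leftCor}, and by Remark \ref{simultarem}; the literal version would even impose internal constraints among the supposedly free values $t_{i,j}$, $j\le\ell$. With the corrected relation, $\bt^{-}$ is exactly the image of $\bt^{+}$ under $j\mapsto\ell+1-j$ and no extra shift by $2(\ell+r+2)$ is needed. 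As for your alternative route: the mirrored Lemma \ref{suffiT} followed by the Wronskian triangularity argument is sound in outline (the subdiagonal entries $T_{1,\ell+1+(a-b),\cdot}$ do vanish in the correct parity class, so the determinant collapses to a product of $1$'s), but the claim that the regularization machinery applies ``verbatim'' glosses over genuine adaptations that the paper only carries out later, for the positivity proofs: near the right wall the roles of $U$ and $V$ depend on the parity of $\ell$, and the reflected corner matrix limits to $(-1)^{r}P$ rather than $P$ (Lemmas \ref{otherP} and \ref{otherPlim}). If you insist on this route, you must actually perform that case analysis rather than assert it.
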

\begin{proof}
Let $\sigma$ be the following endomorphism of $[1,r]\times \Z \times \Z $:
$$
\sigma(i,j,k) = \left\{ \begin{array}{ll} (i,\ell+1-j,k),& \hbox{$\ell$ odd};\\
(i,\ell+1-j,1-k),&  \hbox{$\ell$ even}.\end{array}\right.
$$
Then $\sigma(\bk_0)=\bk_0$ and $\sigma$ is also a symmetry of the unrestricted $A_r$ $T$-system (i).
It acts on $\bt$ in the natural way,  $\sigma(t_{i,j})=t_{i,\ell+1-j}$, and takes initial data of the form $\bt^+$ to data of the form $\bt^-$. The Theorem follows from application of $\sigma$ to the result of Theorem \ref{corolla}.
\end{proof}

Using the map $\sigma$ together with Lemma \ref{suffiT} we see that for all $k$ of appropriate parity,
\begin{equation}\label{suffiTb}
T_{i,\ell+1,k}=1 , \qquad T_{i,\ell+1+j,k}=0\quad (j\in [1,r]).
\end{equation}
Lemma \ref{suffiT} and its reflected version \eqref{suffiTb} imply the following result for $\ell$-restricted $A_r$ $T$-system solutions:
\begin{thm}\label{inithm}
The solutions $T_{i,j,k}$ of the unrestricted $A_r$ $T$-system (i) as a function of initial conditions $X(\bt^{[1,\ell]})$ 
are equal, when $j\in[1,\ell]$, to the solutions of the $\ell$-restricted $A_r$ $T$-system (iv)
with initial conditions $X^{[1,\ell]}(\bt)$. 
\end{thm}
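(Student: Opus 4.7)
The plan is to piece Theorem \ref{inithm} together directly from the earlier half-space results, combined with the uniqueness of the $T$-system regarded as a three-term recursion in $k$ (Remark \ref{uniqueness}). The key observation (Remark \ref{simultarem}) is that the relations defining $\bt^{[1,\ell]}$ are nothing but the union of the relations defining $\bt^+$ and $\bt^-$, supplemented by the global periodicity \eqref{symtwo}; the latter is the minimal periodicity that makes the two reflections $j\mapsto -1-j$ and $j\mapsto 2\ell+1-j$ mutually compatible, so the three families \eqref{symone}--\eqref{vani} are a consistent assignment on all of $[1,r]\times\Z$.

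First I would apply Theorem \ref{plusimple} to the initial data $X(\bt^{[1,\ell]})$, viewed as initial data of type $X(\bt^+)$ (since the $\bt^+$ relations are a subset of those defining $\bt^{[1,\ell]}$). This gives, via Lemma \ref{suffiT}, that $T_{1,0,k}=1$ and $T_{1,-j,k}=0$ for $j\in[1,r]$ and appropriate parities of $k$. The discrete Wronskian formula \eqref{wronsk} of Lemma \ref{elimdet} then lifts these identities from $i=1$ to all $i\in[1,r]$: the matrix whose determinant is $T_{i,0,k}$ has zero entries $T_{1,-j,\cdot}$ in all positions strictly below the anti-diagonal and ones $T_{1,0,\cdot}$ on the anti-diagonal, yielding $T_{i,0,k}=1$ for $i\in[1,r]$ and $k\equiv i\,(\bmod\,2)$. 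Exactly the same argument, applied via Theorem \ref{corolla} and the reflected statement \eqref{suffiTb}, produces $T_{i,\ell+1,k}=1$ for $i\in[1,r]$ on the other wall.

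With both walls established, the collection $\{T_{i,j,k}:i\in[1,r],\,j\in[0,\ell+1],\,k\in\Z\}$ coming from the unrestricted $A_r$ system with initial data $X(\bt^{[1,\ell]})$ simultaneously satisfies: the octahedron relation \eqref{tsys} at every interior point $j\in[1,\ell]$; the wall conditions \eqref{artsysiv}; and the prescribed initial values $t_{i,j}$ on $\bk_0$ restricted to $i\in[1,r],\,j\in[1,\ell]$. These are precisely the defining data of the $\ell$-restricted $A_r$ $T$-system with initial condition $X^{[1,\ell]}(\bt)$, and by Remark \ref{uniqueness} the two systems admit a unique solution given this data. Hence the two solutions agree on $i\in[1,r],\,j\in[1,\ell]$.

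The most delicate point is the Wronskian lifting in the second paragraph: one must verify that Lemma \ref{suffiT} produces zeros at precisely the right off-diagonal positions and ones on exactly the relevant anti-diagonal of the matrix in \eqref{wronsk}, with the correct parity of $k$ and the correct indexing to cover every $(i,k)$ with $k\equiv i\,(\bmod\,2)$. Once that bookkeeping is checked, the remainder of the argument is purely formal, resting on the already-established Theorems \ref{plusimple} and \ref{corolla} and on the uniqueness principle.
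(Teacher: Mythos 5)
Your proposal follows essentially the same route as the paper: Remark \ref{simultarem} identifies $\bt^{[1,\ell]}$ as the superposition of $\bt^{+}$ and $\bt^{-}$, Lemma \ref{suffiT} and its reflected version \eqref{suffiTb} (lifted from $i=1$ to all $i\in[1,r]$ via the Wronskian \eqref{wronsk}) supply the two walls, and uniqueness of the three-term recursion in $k$ concludes. The only slip is in your description of the Wronskian matrix: for $j=0$ the entries $T_{1,a-b,\cdot}$ vanish when $a<b$ and equal $1$ when $a=b$, so the matrix is lower triangular with ones on the \emph{main} diagonal (not the anti-diagonal), which gives $T_{i,0,k}=1$ directly.
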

\begin{proof}
This follows from Remark \ref{simultarem} and the uniqueness of the solutions.
\end{proof}

We will also prove certain positivity results for the solutions of the $A_r$
$T$-systems of types (ii)--(iv), using the explicit network solution of (i):
\begin{thm}\label{haposi}
The solutions $T_{i,j,k}$ of $A_r$ $T$-system of type (ii) and (iii) 
with initial conditions $X^+(\bt)$ or resp. $X^-(\bt)$, are Laurent polynomials of the
initial data $\bt$, with non-negative integer coefficients.
\end{thm}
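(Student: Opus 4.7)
The approach will follow the pattern established in the $A_1$ case (Theorems \ref{leftbounone} and \ref{rightbounone}), but carried out in the $(r+1)\times(r+1)$ network setting. By Theorem \ref{plusimple}, the solution of the half-space system of type (ii) with initial data $X^+(\bt)$ coincides, for $j>0$, with the solution of the unrestricted $A_r$ system (i) with the symmetrized initial data $X(\bt^+)$; the analogous statement for type (iii) follows from Theorem \ref{corolla}. Hence, by Theorem \ref{arbibound}, $T_{i,j,k}$ may be written explicitly as a principal minor of a network matrix built from the $U_i$ and $V_i$ chips attached to $\bk_0$, multiplied by a monomial in the initial values. If the chip entries were Laurent monomials in $\bt$ with non-negative integer coefficients (as in the unrestricted case), the Lindstr\"om--Gessel--Viennot interpretation of the minor as a sum over non-intersecting weighted path families would give positivity at once.

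The obstruction is that the symmetrized data contain vanishing entries, namely $t_{i,-j}=0$ for $(i,j)\in[1,r]^2$ in the case $\bt^+$, and symmetrically for $\bt^-$. The chips $U_i(a,b,c)$ and $V_i(a,b,c)$ carry factors $1/b$ and $1/c$, so direct substitution produces singular entries. The plan is to regularize each vanishing $t_{i,j}$ by an independent parameter $\epsilon$ (this is the regularization described in Section \ref{artwo}), evaluate the resulting network formula, and show that the limit $\epsilon\to 0$ exists and yields a positive Laurent polynomial in the genuine initial data $\bt$.

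The crucial technical step is to identify the $A_r$ analogue of the collapse relations \eqref{limP} and \eqref{colla}. Concretely, one isolates within the shadow ${\mathcal D}_{\bk_0}(i,j,k)$ the block of chips corresponding to the ``zero square'' $(i,j)\in[1,r]\times[-r,-1]$ (and its reflected image), and proves that the product of these $O(r^2)$ chips, arranged according to the $U,V$ decomposition rules \eqref{rules}, has a well-defined limit equal to a fixed matrix $P_r\in GL_{r+1}$ (essentially a signed permutation generalizing the $2\times 2$ matrix $P$). One then checks that $P_r$ intertwines the products of chips coming from the reflected and unreflected halves of $\bt^+$ in the appropriate way, so that the symmetry relations \eqref{tsymii}--\eqref{tvanii} cause the overall sign factors $(-1)^{ri}$ to cancel and reduce the network matrix to one whose entries are non-negative Laurent monomials in the unreflected data $(t_{i,j})_{j>0}$. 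The argument for $\bt^-$ is obtained by applying the involution $\sigma$ of Theorem \ref{corolla}.

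Once the collapsed network is in hand, Lindstr\"om--Gessel--Viennot applies verbatim: the relevant principal minor is a sum, over families of non-intersecting paths on a weighted planar network, of products of non-negative Laurent monomials in $\bt$, which is manifestly a Laurent polynomial with non-negative integer coefficients. The remaining prefactor from Theorem \ref{arbibound} is itself a positive monomial in $\bt$, so positivity is preserved. The main obstacle will be verifying the existence of the limit and the explicit form of $P_r$: in rank one these are elementary $2\times 2$ calculations, whereas for general $r$ the $U,V$ decomposition of the zero square has a nontrivial triangular structure, and confirming that all diverging factors in $\epsilon$ cancel cleanly against the prefactors coming from the boundary of the shadow requires careful bookkeeping of the chip arrangement and of the signs inherited from $(-1)^{ri}$ in \eqref{tsymii} and \eqref{tsymiii}.
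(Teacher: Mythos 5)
Your proposal follows essentially the same route as the paper: reduce to the unrestricted system via the symmetrized data $\bt^{\pm}$ (Theorems \ref{plusimple} and \ref{corolla}), regularize the vanishing square of initial values, show that its chip product collapses in the limit to the signed permutation matrix $P$ of \eqref{Pmat} (Theorem \ref{Prplus}, Corollary \ref{corP}, and Lemmas \ref{pj} and \ref{collapse} for the partial and intertwining versions), and conclude by Lindstr\"om--Gessel--Viennot on the surviving positive part of the network. One caution on a point you describe loosely: the regularizing values cannot be taken as independent parameters --- the paper takes them to form a $T$-system-compatible array (Definition \ref{defa}), and it is precisely the relation $a_{i-1,-j}a_{i+1,-j}+a_{i,-j-1}a_{i,-j+1}=0$, which kills one corner entry of each $UV$ or $VU$ diamond and feeds the parity argument in Theorem \ref{polth}, that guarantees the regularized network matrix is polynomial in the $a_i$'s and hence that the limit exists.
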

From the network solution with the two half-plane boundaries superimposed, this implies
the positivity Theorem \ref{positiT}. The proof appears in Section \ref{arfive}.

\subsection{A regularized network matrix}\label{artwo}
Initial data of the form $\bt^+$ contains zeros. In order to define network matrices depending on this initial data, the matrices $U$ and $V$ cannot be used directly.
To define the matrices $P_j:=N(-j,0)$ (with $j\in[0,r+1]$) depending on $\bt^+$,
we use a limiting procedure as in the case of $A_1$.
First, define regularized initial data by replacing 
the vanishing conditions of Theorem \ref{plusimple} by non-zero values forming an array $(a_{i,j})$
compatible with the $T$-system. 
The regularized network matrices $P_j(\{a\})$ have a well-defined limit
when $a_{i,j}\to 0$.

\subsubsection{Regularized initial data}

\begin{defn}\label{defa}
We consider the array $(a_{i,-j})_{i,j\in[0,r+1]}$ such that:
\begin{eqnarray*}
&& a_{i,0}=a_{0,-j}=a_{r+1,-j}=1, \qquad i,j\in [0,r+1]\\
&& a_{i-1,-j}a_{i+1,-j}+a_{i,-j-1}a_{i,-j+1}=0,\qquad i,j\in [1,r]. 
\end{eqnarray*}
\end{defn}
The values of $a_{i,j}$ are determined recursively from the column with $j=-1$. Define $a_i=a_{i,-1}$
for $i\in [1,r]$. Then

\begin{equation}\label{avalue}
a_{i,j}=\epsilon_{i,j} \prod_{\ell=0}^{{\rm Min}(i,-j,r+1-i,r+1+j)-1} a_{|i+j|+1+2\ell} \end{equation}
where $\epsilon_{i,j}\in \{ -1,1\}$ is the solution to the recursion relations
$\epsilon_{i,j-1}=-\epsilon_{i-1,j}\epsilon_{i+1,j}/\epsilon_{i,j+1}$,
while $\epsilon_{i,0}=\epsilon_{i,-1}=\epsilon_{0,j}=\epsilon_{r+1,j}=1$ for all $i\in[0,r+1]$ and $j\in [-r-1,0]$.
In particular,  $a_{i,-r-1}=\epsilon_{i,-r-1}=(-1)^{ri}$. 

\begin{example}
For the case $r=3$, we have the following array (represented with index $i$ from bottom to top and $j$ from left to right):
\begin{equation}\label{initex}(a_{i,-j})_{0\leq i,j\leq 4}=\begin{pmatrix}
1  & 1 & 1 & 1 & 1 \\
-1  &a_1 &-a_2 & a_3 & 1\\
1  &-a_2 &-a_1a_3 & a_2 & 1\\
-1    &a_3 &-a_2 & a_1 & 1\\
1  & 1 & 1 & 1 & 1 
\end{pmatrix} ,\end{equation}
and for the case $r=4$, 
\begin{equation}\label{initextwo}(a_{i,-j})_{0\leq i,j\leq 5}=\begin{pmatrix}
1  & 1 & 1 & 1 & 1 & 1 \\
1 & -a_1 & a_2 & -a_3 & a_4 & 1\\
1  &-a_2 & -a_1a_3 &  -a_2a_4& a_3 & 1\\
1  &-a_3 & -a_2a_4&  -a_1a_3 & a_2 & 1\\
1  &-a_4 & a_3 & -a_2 & a_1 & 1\\
1  & 1 & 1 & 1 & 1  & 1
\end{pmatrix} \end{equation}
\end{example}

\begin{remark}\label{remone}
As apparent from the formula \eqref{avalue},
the expression for $a_{i,j}$ involves only $a_k$'s with a fixed parity of $k$, namely
$k=i+j+1$ mod 2.
\end{remark}

We define the regularlized initial data $\bt^+(\ba)$ as follows. We keep the symmetry
requirements \eqref{tsymii} but replace the zeros in \eqref{tvanii} with arrays satisfying
Definition \ref{defa}:
\begin{equation}\label{eani}
t_{i,j}=a_{i,j}  \qquad \qquad (i\in [1,r];j\in [-r-1,0])
\end{equation}

\subsubsection{Regularized network matrices}\label{primwork}

For each $j\in[0,r+1]$, define
$P_j(\{a\})=N(-j,0)(\{a\})$ corresponding to the network with initial values $\bt^+(\ba)$ as follows.
Let 
\begin{equation}\label{defm}
N_{i,j}(\{a\})=\left\{ \begin{array}{ll}U_i(a_{i,j-1},a_{i,j},a_{i+1,j-1}) & {\rm if}\ i+j=1\ {\rm mod} \ 2; \\
V_{i}(a_{i-1,j},a_{i,j-1},a_{i,j}) & {\rm otherwise},
\end{array}\right. \qquad (i,-j\in[1,r]).
\end{equation}
The regularized network matrix is the product of matrices
\begin{equation}\label{defpjofa}
P_j(\{a\})=\prod_{k=-j+1}^{0} \prod_{i=1}^r N_{i,k}(\{a\}) ,\qquad j\in[0,r+1]
\end{equation}
taken with the indicated order. With this definition, the matrix corresponding to the lower right corner of the network
is $U_{1}(a_1,1,a_2)$, as it corresponds to $i=1$ and $j=0$ in \eqref{defm}.

\begin{lemma}\label{squarelem}
Within the domain $j\in [-r-1,0]$ of the regularized network, each ``diamond" of the form $U_i(a,b,v)V_i(u,b,c)$
or $V_i(u,a,b)U_i(b,c,v)$, with $ac+uv=0$, has elements in $\Z[b,u,v,c^{-1}]$. 
In particular, only $c$ may occur as a denominator.
\end{lemma}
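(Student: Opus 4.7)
The plan is to prove this lemma by direct $2\times 2$ matrix multiplication in the only block where $U_i$ and $V_i$ act non-trivially; outside that block both matrices reduce to the identity, so those entries are already in $\Z$ and need no further analysis. The two cases of the lemma are entirely symmetric, and I would handle them in parallel.

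For the first diamond, substituting the explicit definitions of $U$ and $V$ from Section \ref{UVsec} gives
\[
U_i(a,b,v)\,V_i(u,b,c)=\begin{pmatrix} 1 & 0 \\ v/b & a/b \end{pmatrix}\begin{pmatrix} b/c & u/c \\ 0 & 1 \end{pmatrix}=\begin{pmatrix} b/c & u/c \\ v/c & uv/(bc)+a/b \end{pmatrix}
\]
in the relevant $2\times 2$ block. The $(2,1)$ entry simplifies to $v/c$ because the factor $b$ in the numerator of $V$ cancels the $b$ in the denominator of $U$, while the $(2,2)$ entry equals $(uv+ac)/(bc)$, which vanishes by the hypothesis $ac+uv=0$. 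Hence the diamond reduces to $\bigl(\begin{smallmatrix} b/c & u/c \\ v/c & 0 \end{smallmatrix}\bigr)$, whose entries manifestly lie in $\Z[b,u,v,c^{-1}]$. An entirely analogous computation for the second type of diamond yields $V_i(u,a,b)\,U_i(b,c,v)=\bigl(\begin{smallmatrix} 0 & u/c \\ v/c & b/c \end{smallmatrix}\bigr)$, again with entries in $\Z[b,u,v,c^{-1}]$: here it is the $(1,1)$ entry $a/b+uv/(bc)=(ac+uv)/(bc)$ that vanishes on the constraint, while the off-diagonal $b$'s cancel as before.

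The connection with the regularized domain is that, within $j\in[-r-1,0]$, the relation $ac+uv=0$ is automatic: by the chip decomposition encoded in \eqref{rules}, the labels $a$, $c$, $u$, $v$ of a diamond correspond to the four axial neighbors of the central face-label $b$ in the array $\{a_{i,j}\}$, and the second identity of Definition \ref{defa} asserts precisely that their crossed products sum to zero. Hence the lemma applies to every internal diamond of the regularized network. There is no serious obstacle: the only delicate point is noticing that the potential denominator $b$ disappears everywhere, cancelling in the $(2,1)$ entry because $V$ contributes $b$ in the numerator of its top row while $U$ contributes $b^{-1}$ in its bottom row, and dropping out of the diagonal entry precisely by virtue of the constraint. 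This is exactly what singles out $c$ as the unique surviving denominator, as claimed.
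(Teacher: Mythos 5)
Your proposal is correct and follows essentially the same route as the paper: a direct $2\times 2$ computation of each diamond product, observing that the $(2,2)$ (respectively $(1,1)$) entry equals $(uv+ac)/(bc)$ and vanishes by hypothesis, while the remaining entries are $b/c$, $u/c$, $v/c$. Your additional remark tying the constraint $ac+uv=0$ to the second identity of Definition \ref{defa} is consistent with how the paper uses the lemma, though the paper treats that constraint simply as a hypothesis.
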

\begin{proof}
We compute the $UV$ diamond matrix:
\begin{equation}\label{UVm}
U(a,b,v)V(u,b,c)=\raisebox{-1.1cm}{\hbox{\epsfxsize=2.5cm \epsfbox{UVm.eps}}}=
\begin{pmatrix}
{b\over c} & {u\over c} \\
{v\over c} & 0 
\end{pmatrix}\end{equation}
where the $(2,2)$ matrix element vanishes, due to $ac+uv=0$. Analogously,
\begin{equation}\label{VUm}V(u,a,b)U(b,c,v)=\raisebox{-1.1cm}{\hbox{\epsfxsize=2.5cm \epsfbox{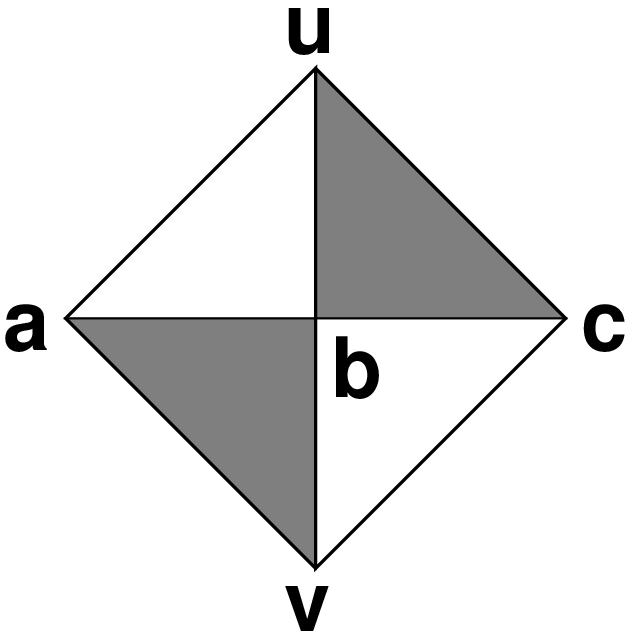}}}=
\begin{pmatrix}
0 & {u\over c} \\
{v\over c} & {b\over c}
\end{pmatrix}\end{equation}
where the $(1,1)$ matrix element vanishes, due to $ac+uv=0$.
\end{proof}

\begin{thm}\label{polth}
The entries of the matrices $P_j(\{a\})$, $j\in[0,r+1]$, are polynomials of the $a_i$'s, $i\in[1,r]$. Therefore,
the matrices
$$P_j:=\lim_{a_1,a_2,...,a_r\to 0} \ P_j(\{a\})$$
are well defined.
\end{thm}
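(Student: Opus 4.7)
The goal is to show that the rational entries of $P_j(\{a\})$, which a priori lie in $\mathbb{Q}(a_1,\ldots,a_r)$, are in fact polynomials in the $a_i$'s; the existence of the limit $a_i\to 0$ then follows immediately. From the definition \eqref{defm}, each chip $N_{i,k}(\{a\})$ carries a single face label in the denominator, namely $a_{i,k}$ (its middle face). Since $a_{i,0}=1$ by Definition \ref{defa}, the chips in the column $k=0$ contribute no denominator, and the overall denominator of $P_j(\{a\})$ therefore divides $\prod_{k=-j+1}^{-1}\prod_{i=1}^{r}a_{i,k}$, a polynomial in the $a_i$'s that we must show cancels completely in the matrix product.

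The cancellation mechanism combines Lemma \ref{squarelem} with the defining identity of Definition \ref{defa}. Using the commutations \eqref{propuv} and \eqref{propvu}, which let chips with non-adjacent row indices commute freely, I would reorganize $P_j(\{a\})$ so that same-$i$ chips $(N_{i,k-1},N_{i,k})$ are brought into juxtaposition as $V_iU_i$ or $U_iV_i$ diamonds. For the relevant parity of $i+k$, the identity $a_{i-1,k-1}a_{i+1,k-1}+a_{i,k-2}a_{i,k}=0$ of Definition \ref{defa} is precisely the hypothesis $ac+uv=0$ of Lemma \ref{squarelem}, so each such diamond collapses into a matrix with a single common denominator $a_{i,k}$ and a forced zero in position $(2,2)$ or $(1,1)$.

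A telescoping argument then eliminates the remaining denominators: each interior face label $a_{i,k}$ appears as the ``$c$'' of one diamond and, through the shared face of adjacent pieces, in the numerator of its neighbours, while every boundary face label is a unit ($a_{i,0}=a_{0,k}=a_{r+1,k}=1$ and $a_{i,-r-1}=(-1)^{ri}=\pm 1$ by Definition \ref{defa} and \eqref{avalue}). The main obstacle is the combinatorial bookkeeping of this telescoping: the flat surface induces a checkerboard pattern of $U$'s and $V$'s, so not every adjacent pair sits in diamond form in the natural product order, and careful tracking of the sign prefactors $\epsilon_{i,j}$ in \eqref{avalue} is required. In practice I would carry out the argument inductively on $j$, with base cases $P_0=\mathbb{I}$ and $P_1=\prod_i N_{i,0}(\{a\})$ (manifestly polynomial since $a_{i,0}=1$), showing at each step that multiplication by the new column $\prod_i N_{i,-j}(\{a\})$ preserves polynomiality thanks to the diamond cancellations, and concluding that the limit $P_j=\lim_{a_i\to 0}P_j(\{a\})$ is well-defined.
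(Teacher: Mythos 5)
Your setup is on the right track: you correctly identify the diamond decomposition, Lemma \ref{squarelem}, and the fact that the relation $a_{i-1,k}a_{i+1,k}+a_{i,k-1}a_{i,k+1}=0$ of Definition \ref{defa} is exactly the hypothesis $ac+uv=0$ that kills one matrix entry of each diamond and leaves a single face label $c$ as the only possible denominator. But the step you defer as ``combinatorial bookkeeping'' is precisely the heart of the theorem, and the telescoping you propose does not close the gap. After decomposing into diamonds, each diamond still contributes its own face label as a genuine denominator (Lemma \ref{squarelem} only says the entries lie in $\Z[b,u,v,c^{-1}]$, not that $c$ cancels), and a matrix entry of the product is a sum over many up--down lattice paths; individual path weights do retain intermediate face labels in their denominators, and there is no evident telescoping of the \emph{sum}. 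Your proposed induction on columns would therefore stall at exactly the point where you need to show the new denominators disappear.

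The paper's proof resolves this with an idea absent from your proposal: the network admits \emph{two} distinct diamond decompositions (pairing each chip with its left neighbour versus its right neighbour, i.e.\ $UV$-diamonds versus $VU$-diamonds). In the first, the only possible denominators are the face labels at positions with $i+j=1 \bmod 2$; in the second, only those with $i+j=0\bmod 2$. By Remark \ref{remone} and formula \eqref{avalue}, the former are monomials in the even-indexed $a_{2i}$'s and the latter are monomials in the odd-indexed $a_{2i+1}$'s. Since both expressions compute the same matrix $P_j(\{a\})$, its entries lie in the intersection of two localizations of $\Z[a_1,\dots,a_r]$ at multiplicative sets generated by disjoint sets of variables, hence are polynomials. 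If you want to salvage your single-decomposition approach you would need to exhibit the cancellation explicitly, which is considerably harder than invoking this two-decomposition comparison.
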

\begin{proof}
We concentrate on the portion $[-j,0]$ of the regularized network. It may be decomposed 
into two types ($UV$ or $VU$) of diamonds as follows:
\begin{equation}\label{squarePj}
P_j(\{ a\})= \raisebox{-2.8cm}{\hbox{\epsfxsize=4.5cm \epsfbox{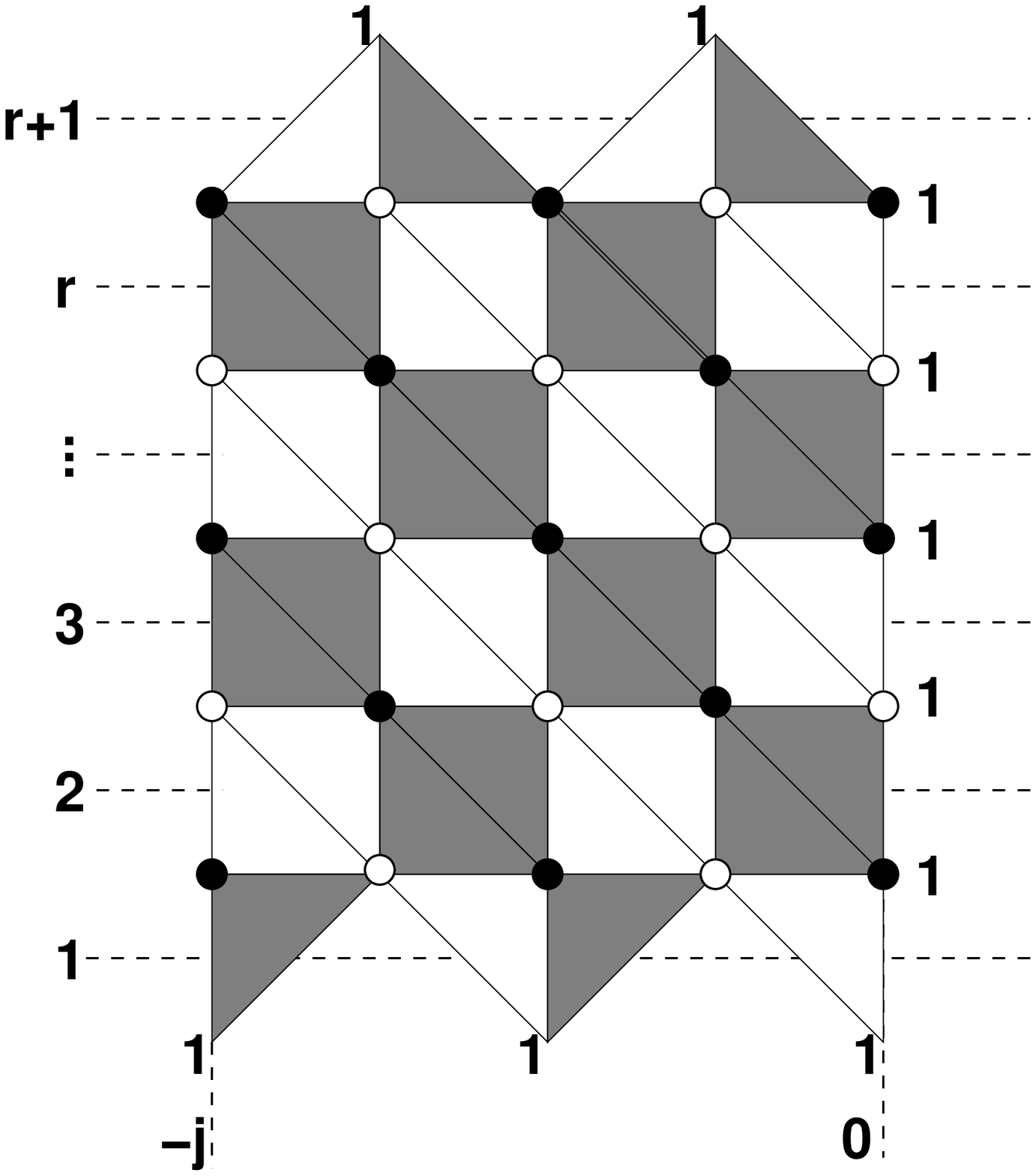}}}
=\raisebox{-2.8cm}{\hbox{\epsfxsize=4.3cm \epsfbox{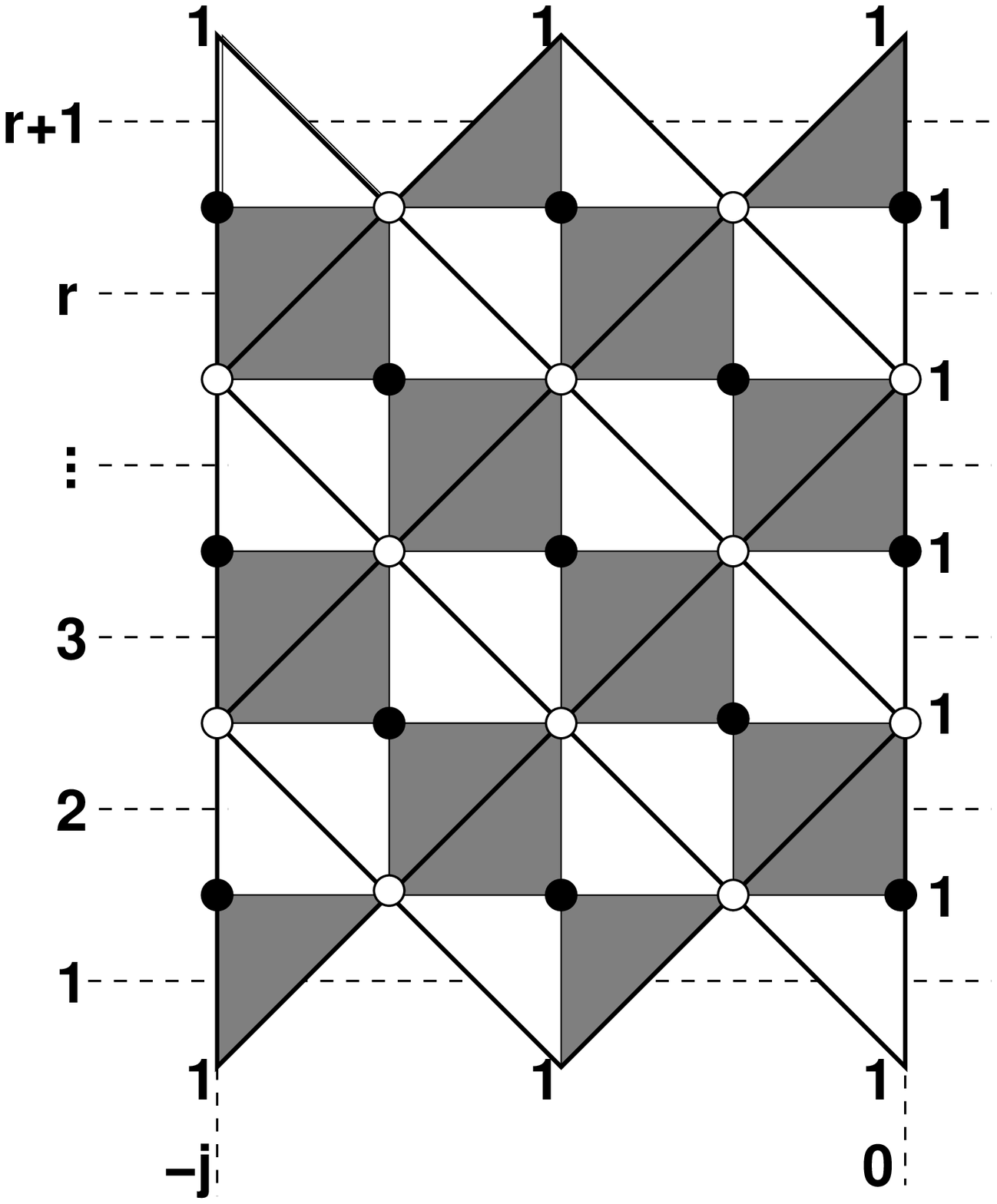}}}
=\raisebox{-2.8cm}{\hbox{\epsfxsize=4.3cm \epsfbox{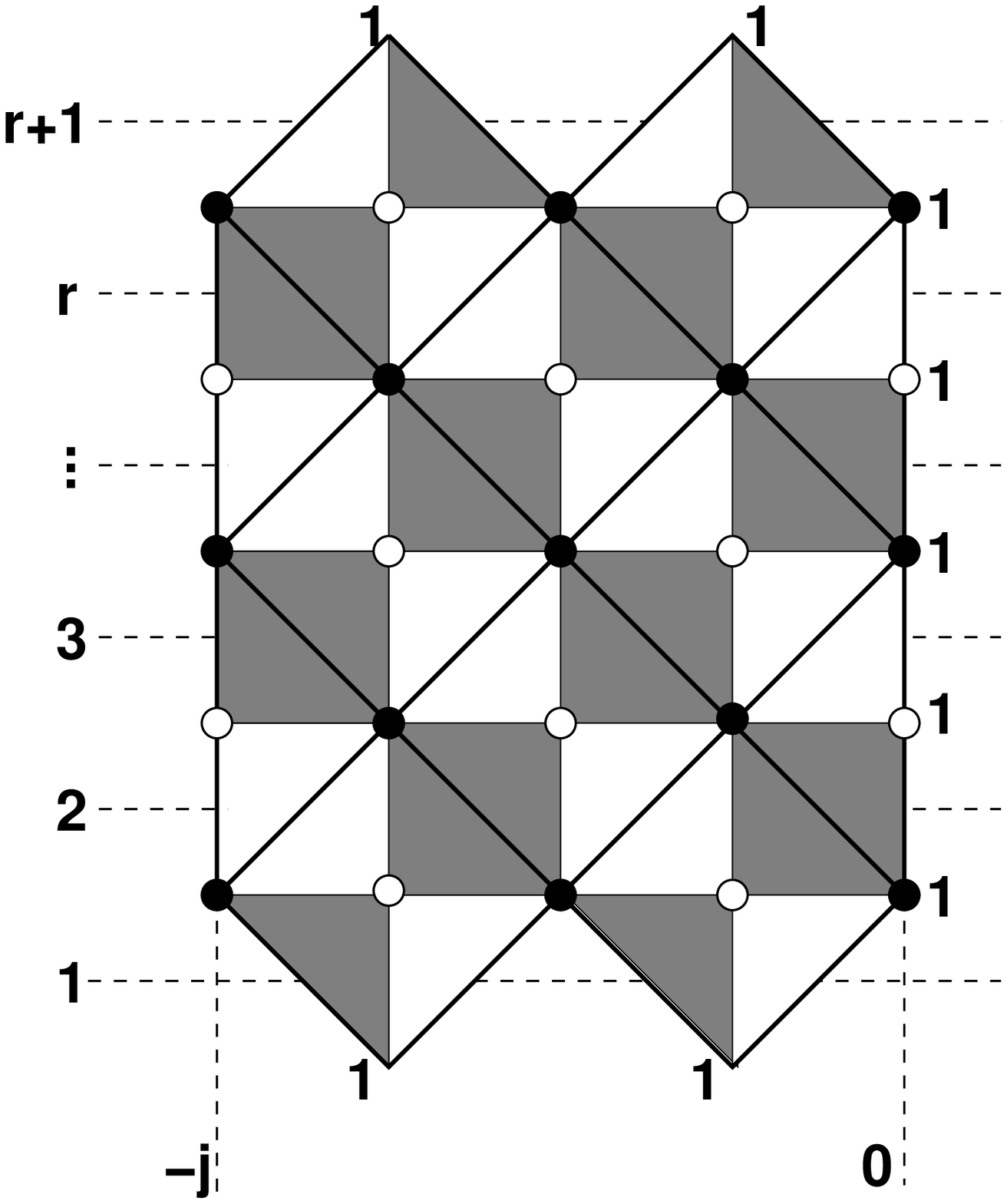}}} 
\end{equation}
Due to  Lemma \ref{squarelem}, 
the first decomposition gives rise to matrix elements with denominators corresponding
to values of $a_{i,j}$ at white circles (with $i+j=1$ mod 2), while in the second the only possible denominators
correspond to values of $a_{i,j}$ at black circles (with $i+j=0$ mod 2). 
The unpaired column of $U$ and $V$ matrices on the right has only $1$ as possible denominator,
due to the boundary condition along the $j=0$ column.

From \eqref{avalue} and
Remark \ref{remone}, the matrix elements for the first expression for $P(\{a\})$ may only have denominators 
that are monomials
of the $a_{2i}$'s, while the second expression may only  have denominators that are monomials of the $a_{2i+1}$'s.  
We conclude that none of these denominators may occur in $P_j(\{ a\})$, which is therefore a polynomial of the $a$'s,
and the theorem follows.
\end{proof}

Let $P$ denote the  $(r+1)\times (r+1)$ matrix with entries:
\begin{equation}\label{Pmat}
[P]_{i,j}= (-1)^{(r-1)(i-1)} \delta_{i+j,r+2}.
\end{equation}
Clearly, $P^2={\mathbb I}$, and when $r=1$ the above definition agrees with \eqref{limP}.

\begin{lemma}\label{deterP}
$$\det(P_{r+1}(\{a\})= \det(P)=(-1)^{r(r+1)(r+2)\over 2}$$
\end{lemma}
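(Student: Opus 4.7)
The plan is a direct computation: since $\det$ is multiplicative, I can compute $\det P_{r+1}(\{a\})$ block by block, and the product should telescope in a convenient way thanks to the boundary condition $a_{i,0}=1$.

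First I would compute the determinants of the elementary building blocks. From the definitions of $U$ and $V$ in Section \ref{UVsec}, one has $\det U(a,b,c) = a/b$ and $\det V(a,b,c) = b/c$, and since the embeddings $U_i$ and $V_i$ into $GL_{r+1}$ only alter a $2\times 2$ block, the same formulas hold for $\det U_i$ and $\det V_i$. Applied to \eqref{defm}, this gives in \emph{both} cases
\[
\det N_{i,j}(\{a\}) \;=\; \frac{a_{i,j-1}}{a_{i,j}} \qquad (i\in[1,r],\ -j\in[0,r]).
\]
Multiplying these across the product \eqref{defpjofa} (order is irrelevant for the determinant) gives
\[
\det P_j(\{a\}) \;=\; \prod_{i=1}^{r}\prod_{k=-j+1}^{0}\frac{a_{i,k-1}}{a_{i,k}}
\;=\; \prod_{i=1}^{r}\frac{a_{i,-j}}{a_{i,0}} \;=\; \prod_{i=1}^{r} a_{i,-j},
\]
using $a_{i,0}=1$ from Definition \ref{defa}. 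In particular, taking $j=r+1$ and invoking the value $a_{i,-r-1}=\epsilon_{i,-r-1}=(-1)^{ri}$ recorded just after \eqref{avalue},
\[
\det P_{r+1}(\{a\}) \;=\; \prod_{i=1}^{r}(-1)^{ri} \;=\; (-1)^{r\cdot\frac{r(r+1)}{2}}.
\]

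Next I would compute $\det P$ directly from \eqref{Pmat}. Since $P$ has support only on the antidiagonal $i+j=r+2$, its determinant is the sign of the reverse permutation on $r+1$ letters, times the product of its nonzero entries:
\[
\det P \;=\; (-1)^{\binom{r+1}{2}}\prod_{i=1}^{r+1}(-1)^{(r-1)(i-1)}
\;=\;(-1)^{\frac{r(r+1)}{2}}(-1)^{(r-1)\cdot\frac{r(r+1)}{2}}
\;=\;(-1)^{r\cdot\frac{r(r+1)}{2}}.
\]
Thus $\det P_{r+1}(\{a\}) = \det P$. To match the stated exponent $r(r+1)(r+2)/2$, note that
\[
\frac{r(r+1)(r+2)}{2}-r\cdot\frac{r(r+1)}{2} \;=\; \frac{r(r+1)}{2}\cdot\bigl((r+2)-r\bigr) \;=\; r(r+1),
\]
which is always even, so the two exponents have the same parity.

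There is no real obstacle here; the only minor point to be careful about is that the formula for $\det N_{i,j}$ takes the \emph{same} form $a_{i,j-1}/a_{i,j}$ in both the $U$ and $V$ cases, which is what makes the product telescope cleanly in $j$. This also shows that the determinant of $P_{r+1}(\{a\})$ is constant in the regularization parameters $a_1,\dots,a_r$, consistent with the existence of the $a_i\to 0$ limit from Theorem \ref{polth}.
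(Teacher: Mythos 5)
Your proof is correct and follows essentially the same route as the paper's: both compute $\det N_{i,j}(\{a\})=a_{i,j-1}/a_{i,j}$ for each elementary block (the paper phrases this as the product of horizontal edge weights), telescope the product in $j$, and use $a_{i,0}=1$ and $a_{i,-r-1}=(-1)^{ri}$ to get $(-1)^{r\cdot r(r+1)/2}$. Your explicit verification of $\det P$ and of the parity agreement with the stated exponent $r(r+1)(r+2)/2$ is a welcome addition that the paper leaves implicit.
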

\begin{proof}
The determinants of  the $U$ and $V$ matrices are equal to the product of weights of the horizontal edges.
Therefore
$\det(P_{r+1}(\{a\})$ is a product over the weights of all horizontal edges of the regularized network, each of which is equal to $1$ or
$a_{i,j-1}/a_{i,j}$ where $i\in [1,r]$ and $j\in [-r,0]$. Therefore, 
$$\det(P_{r+1}(\{a\})=\prod_{i-1}^r\prod_{j=-r}^0 {a_{i,j-1}\over a_{i,j}}=\prod_{i=1}^r {a_{i,-r-1}\over a_{i,0}}=
\prod_{i=1}^r (-1)^{ri},$$
and the lemma follows.
\end{proof}

We also note the following useful properties of $U,V$ 
matrices:
\begin{equation}\label{proj} U(a,b,c)=U(\lambda a,\lambda b,\lambda c)\qquad V(a,b,c)=V(\lambda a,\lambda b,\lambda c)
\end{equation}
\begin{equation}\label{invUV} U(a,b,c)^{-1}= U(b,a,-c) \qquad V(a,b,c)^{-1}= V(-a,c,b) 
\end{equation}
We give below a pictorial proof of the following formula for $P_{r+1}(\{a\})$: 
\begin{thm}\label{Prplus}
Given $r=2s+\epsilon$ with $\epsilon\in\{0,1\}$, the matrix $P_{r+1}(\{a\})$ is
$$ P_{r+1}(\{a\})=\left( \prod_{i=1-\epsilon}^s U_{2i}(1,1,(-1)^{\epsilon+1}a_{2(s-i)+1}) \right) P \left(  \prod_{i=1}^s U_{2i}(1,1,a_{2i}) \right)$$
\end{thm}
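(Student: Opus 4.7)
My plan is to prove Theorem \ref{Prplus} by a direct collapse argument on the regularized network, exploiting the fact that the defining relation for the array $\{a_{i,j}\}$ in Definition \ref{defa} is precisely the condition needed to activate the diamond-collapse identities of Lemma \ref{squarelem}. First, I would decompose $P_{r+1}(\{a\})$ as in \eqref{squarePj} into a grid of $UV$- or $VU$-diamonds, plus an unpaired rightmost column of $U_i(1,1,\cdot)$-type matrices on the boundary $j=0$ (where $a_{i,0}=a_{0,-j}=a_{r+1,-j}=1$). By Definition \ref{defa}, each interior quadruple $(a_{i-1,-j}, a_{i,-j-1}, a_{i,-j+1}, a_{i+1,-j})$ satisfies $ac+uv=0$, so by Lemma \ref{squarelem} each interior diamond collapses to one of
\[
\begin{pmatrix} b/c & u/c \\ v/c & 0 \end{pmatrix}, \qquad
\begin{pmatrix} 0 & u/c \\ v/c & b/c \end{pmatrix}.
\]

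Second, I would analyze the $a_i\to 0$ limit. Because of the parity constraint in Remark \ref{remone}, the two possible decompositions in \eqref{squarePj} let one arrange that each matrix entry contains denominators of only one parity among $\{a_{2i},a_{2i+1}\}$, so no genuine divergences arise (this is the content of Theorem \ref{polth}). The zeros in the collapsed diamonds propagate: in the limit, the only paths contributing to a given matrix entry are those forced by the zero pattern to travel diagonally through the bulk of the network. These forced diagonal paths implement a reflection $i \mapsto r+2-i$, which is exactly the action of $P$ in \eqref{Pmat}. Verifying that the surviving sign along each such path equals $(-1)^{(r-1)(i-1)}$ will require tracking the $\epsilon_{i,j}$ factors from \eqref{avalue}; this is the combinatorial core of the argument.

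Third, I would identify the residual boundary contributions with the $U_{2i}$ products in the statement. On the right side of the network (near $j=0$), the unpaired column consists of matrices $U_{i}(a_{i,-1},1,a_{i+1,-1})$; using the rescaling identity \eqref{proj} and the explicit $a_{i,-1}=a_i$, the even-indexed factors survive in the limit as $U_{2i}(1,1,a_{2i})$ while the odd-indexed factors collapse to identities against $P$. On the left side (near $j=-r-1$), where $a_{i,-r-1}=(-1)^{ri}$, a symmetric analysis produces $U_{2i}(1,1,(-1)^{\epsilon+1}a_{2(s-i)+1})$, with the range $i\in[1-\epsilon,s]$ reflecting whether $r$ is even or odd (i.e.\ whether the leftmost column begins with a $U$ or a $V$ block).

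The main obstacle will be the bookkeeping of signs and indices, specifically verifying that the cumulative sign from the $\epsilon_{i,j}$'s reproduces both the $(-1)^{(r-1)(i-1)}$ in $P$ and the $(-1)^{\epsilon+1}$ in the left product, uniformly across the parities of $r$. A cleaner alternative, which I would pursue if the direct computation becomes unwieldy, is induction on $s$: peel off the outermost ring of diamonds from $P_{r+1}(\{a\})$ using the moves \eqref{repnetUVVU}--\eqref{mutnetVUUV}, identify the inner block as $P_{r-1}(\{a'\})$ for an appropriately shifted regularization $\{a'\}$, apply the inductive hypothesis, and reassemble the boundary $U_{2i}$ factors together with the determinant check of Lemma \ref{deterP} as a consistency cross-check.
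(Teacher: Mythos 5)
There is a genuine gap, and it is structural rather than a matter of bookkeeping. Theorem \ref{Prplus} is an \emph{exact} identity valid for generic values of $a_1,\dots,a_r$: the right-hand side depends nontrivially on the $a_i$'s through the boundary factors $U_{2i}(1,1,a_{2i})$ and $U_{2i}(1,1,(-1)^{\epsilon+1}a_{2(s-i)+1})$. Your argument, however, is organized around the limit $a_i\to 0$, in which all of those factors degenerate to the identity (since $U_i(1,1,0)=\mathbb I$). What an analysis of the limit can establish is only $\lim_{a\to 0}P_{r+1}(\{a\})=P$, i.e.\ Corollary \ref{corP}, which is strictly weaker than the theorem. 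Note also that the structural zeros you invoke in the diamonds come from the relation $ac+uv=0$ of Definition \ref{defa} and hence hold for \emph{all} values of the $a$'s, not just in the limit; so the forced anti-diagonal path pattern is not a limit phenomenon, while the boundary $a$-dependence you are trying to capture is exactly what the limit destroys. (A smaller inaccuracy: by \eqref{defm} with $j=0$ and $a_{i,0}=1$, the unpaired rightmost column alternates between $U_i(a_i,1,a_{i+1})$ for odd $i$ and $V_i(1,a_i,1)$ for even $i$; it does not consist of $U$-chips only.)

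The paper's route avoids this by working at generic $a$ throughout: it multiplies $P_{r+1}(\{a\})$ on both sides by the \emph{inverses} of the claimed boundary factors (using \eqref{invUV} and \eqref{proj}) to form an augmented network matrix $\Pi_{r+1}(\{a\})$, and proves $\Pi_{r+1}(\{a\})=P$ identically. That proof rests on three ingredients, of which your plan contains only the third: (i) polynomiality of the entries of $\Pi_{r+1}(\{a\})$ in the $a$'s (Theorem \ref{polth} plus the explicit form of $U(1,1,x)$); (ii) the determinant evaluation $\det\Pi_{r+1}(\{a\})=\det P_{r+1}(\{a\})=(-1)^{r(r+1)(r+2)/2}$ from Lemma \ref{deterP}; and (iii) the two complementary diamond decompositions \eqref{UVnetrep}--\eqref{VUnetrep}, which show that paths from $i$ reach only $i'\leq r+2-i$ in one decomposition and only $i'\geq r+2-i$ in the other, so $\Pi_{r+1}(\{a\})$ is supported on the anti-diagonal with each entry the monomial weight of a unique path. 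It is the comparison of (ii) with the product of these monomials that forces every anti-diagonal entry to equal $\pm1$ --- in particular to be independent of the $a$'s --- after which the telescoping of face weights fixes the signs. You relegate the determinant identity to a ``consistency cross-check'' in your fallback inductive scheme, but without it (or an equally sharp substitute) nothing in your argument rules out the anti-diagonal entries of the bulk being nontrivial monomials in the $a$'s, and the claimed factorization does not follow.
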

\begin{proof}
In the case $r=2s$, using the formula for the inverse of $U$ \eqref{invUV}, 
the statement of the theorem will follow if we prove that
$$\Pi_{r+1}(\{a\})=\left( \prod_{i=1}^s U_{2i}(1,1,a_{2(s-i)+1}) \right)P_{r+1}(\{a\})\left(  \prod_{i=1}^s U_{2i}(1,1,-a_{2i}) \right)$$
is equal to $P$, independently of the $a$'s. Analogously, when $r=2s+1$, using also the projectivity property \eqref{proj}
with $\lambda=-1$, the theorem boils down to proving that
$$\Pi_{r+1}(\{a\})=\left(\prod_{i=0}^s U_{2i+1}(-1,-1,a_{2(s-i)+1})\right) P_{r+1}(\{a\})\left(  \prod_{i=1}^s U_{2i}(1,1,-a_{2i}) \right)$$
is equal to $P$, independently of the $a$'s. 
The matrix $\Pi_{r+1}(\{a\})$ corresponds in both cases to an augmented network matrix. We illustrate the network
below for the cases $r=7, 8$:
$$
 \raisebox{-3.cm}{\hbox{\epsfxsize=12.cm \epsfbox{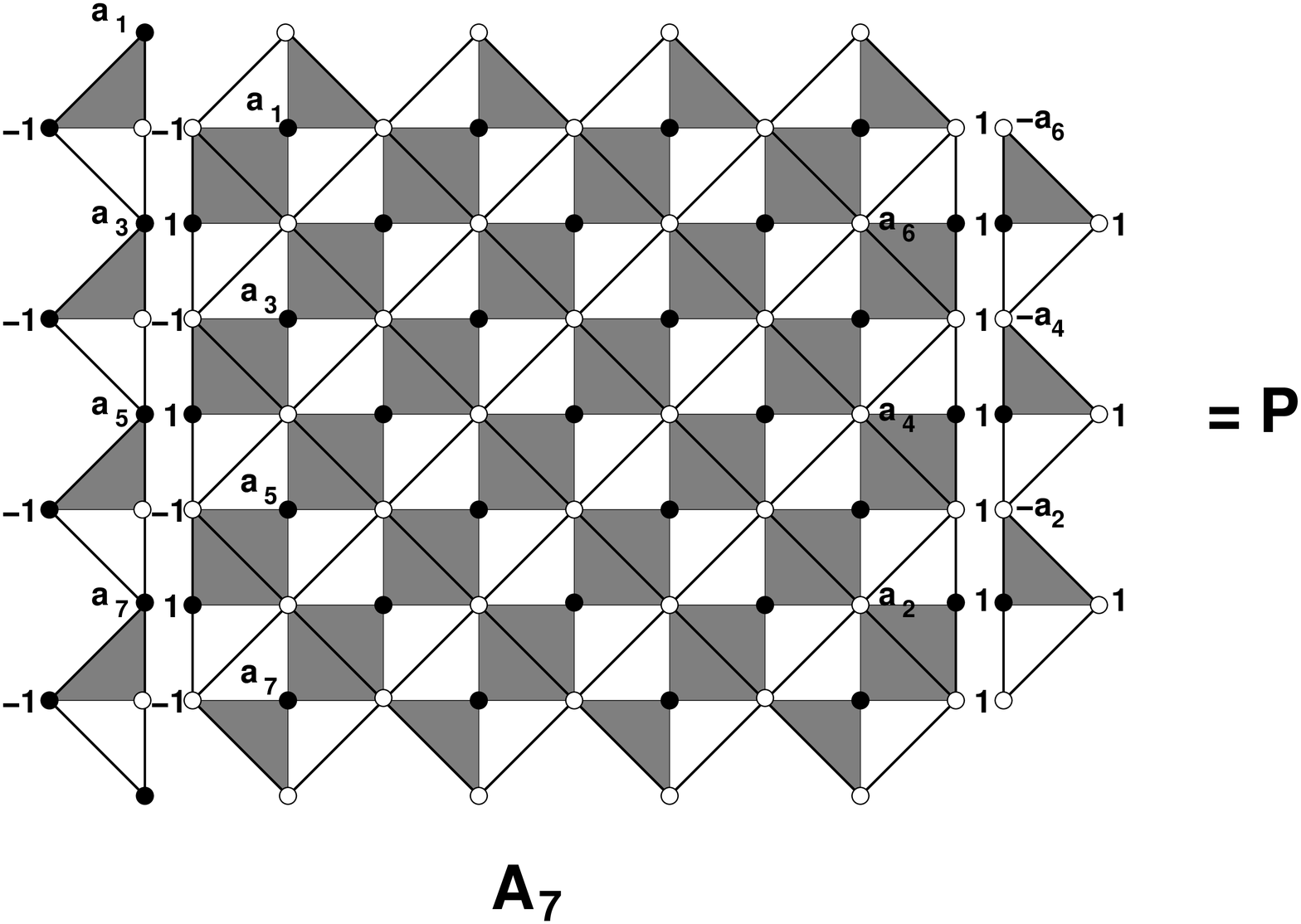}}}
$$
$$
\raisebox{-3.cm}{\hbox{\epsfxsize=12.cm \epsfbox{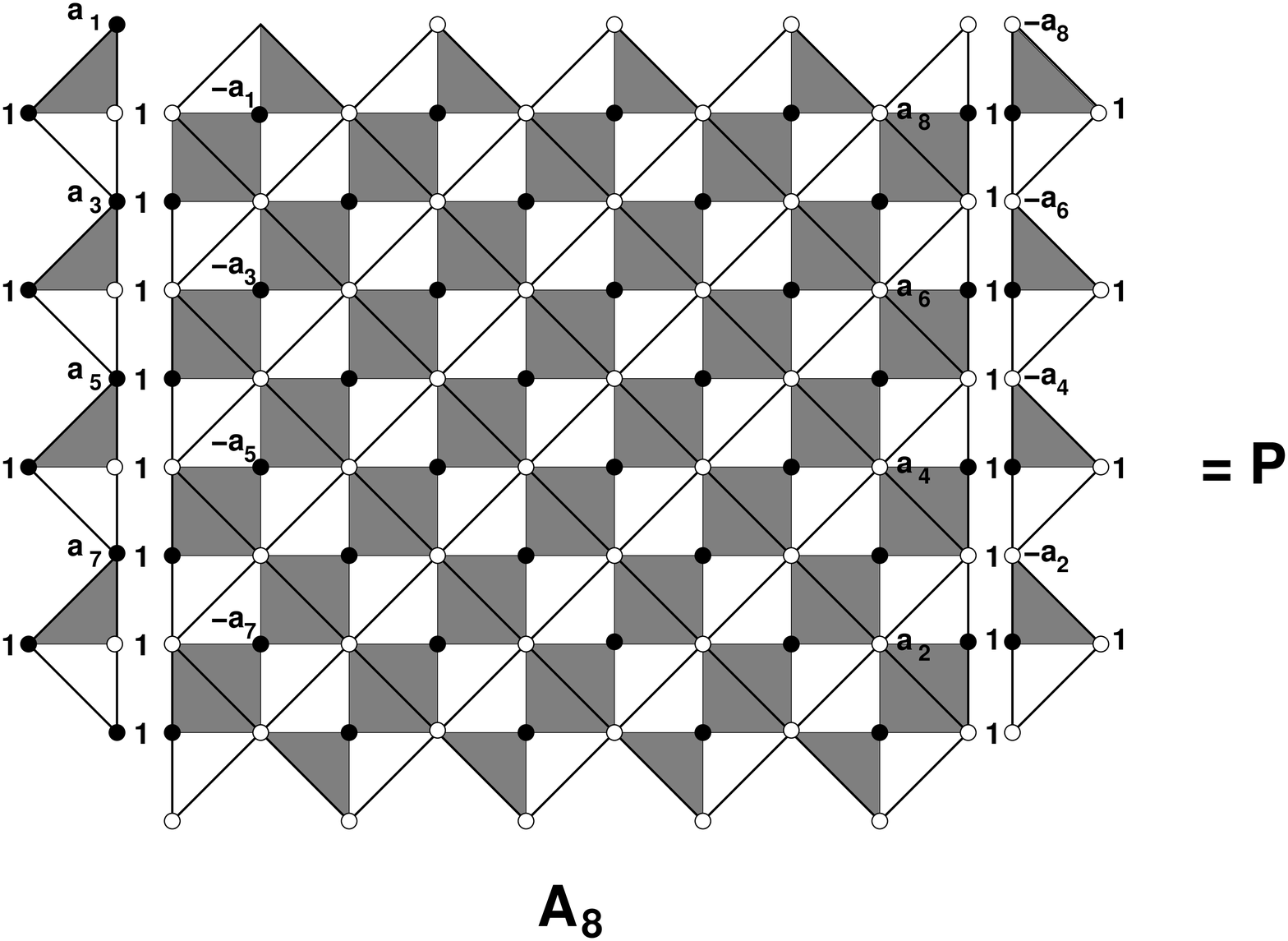}}}
$$
(Note that the actual values of the added pieces are compatible with both $UV$ and $VU$ diamond 
decompositions of Theorem \ref{polth}.).
The matrix elements of $\Pi_{r+1}(\{a\})$ are all polynomials of the $a$'s. This is readily seen from Theorem \ref{polth} 
for $k=r+1$,
together with the explicit form of 
$U(1,1,x)=U(-1,-1,-x)=\begin{pmatrix}1 & 0\\ x & 1\end{pmatrix}$ which has only polynomial entries of $x$.
This also implies that 
\begin{equation}\label{detpi}
\det(\Pi_{r+1}(\{a\}))=\det(P_{r+1}(\{a\})=(-1)^{r(r+1)(r+2)\over 2}\end{equation}
by Lemma \ref{deterP}.

To compute $\Pi_{r+1}(\{a\})$, we use the pictorial representation II
\eqref{rep2UV} for the the non-zero matrix elements of the $U,V$ matrices, 
and we note that the network chips for the $UV$ and $VU$ diamonds \eqref{UVm} and \eqref{VUm} may be represented as:
\begin{eqnarray}
U(a,b,v)V(u,b,c)&=&\raisebox{-1.1cm}{\hbox{\epsfxsize=2.5cm \epsfbox{UVm.eps}}}
=\raisebox{-1.1cm}{\hbox{\epsfxsize=2.5cm \epsfbox{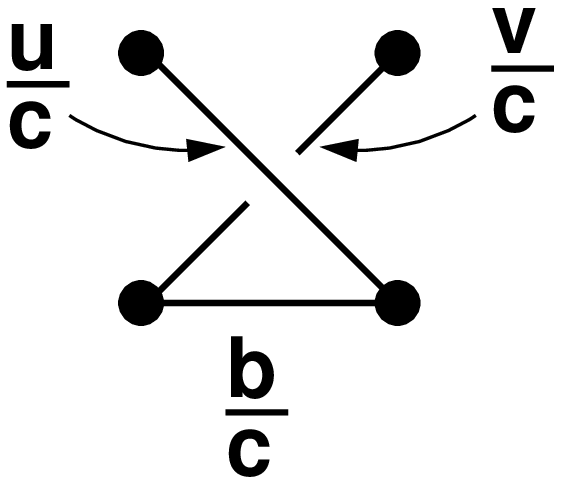}}} \label{UVnetrep} \\
V(u,a,b)U(b,c,v)&=&\raisebox{-1.1cm}{\hbox{\epsfxsize=2.5cm \epsfbox{VUmp.eps}}}
=\raisebox{-1.1cm}{\hbox{\epsfxsize=2.5cm \epsfbox{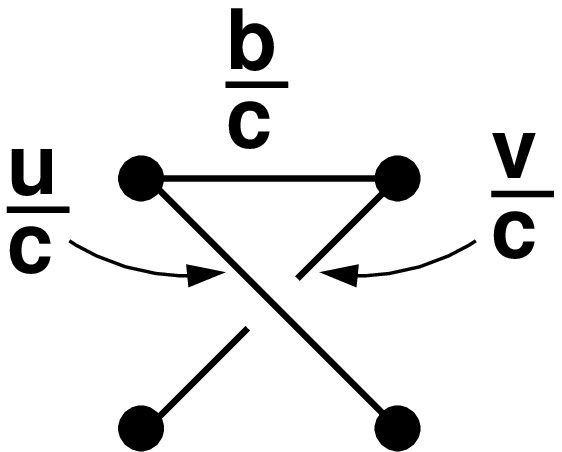}}} \label{VUnetrep} 
\end{eqnarray}
where the missing horizontal edge on the regularized network is due to vanishing condition $uv+ac=0$.
The two different ($UV$ or $VU$) diamond decompositions of $\Pi_{r+1}(\{a\})$ in pictorial representation II, in the case
$r=8$ are:
$$ \raisebox{-2.cm}{\hbox{\epsfxsize=8.cm \epsfbox{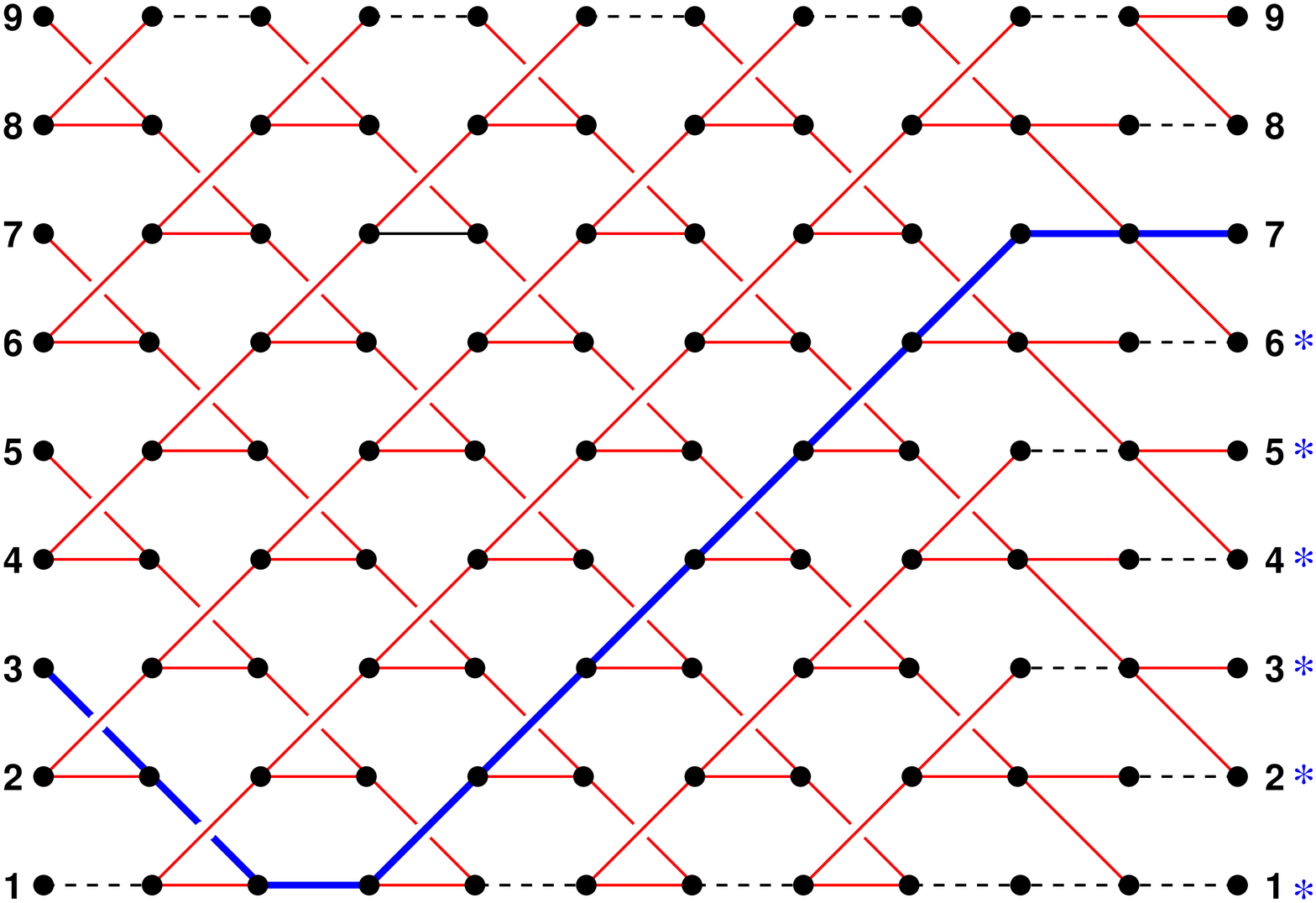}}}=\raisebox{-1.95cm}{\hbox{\epsfxsize=8.cm \epsfbox{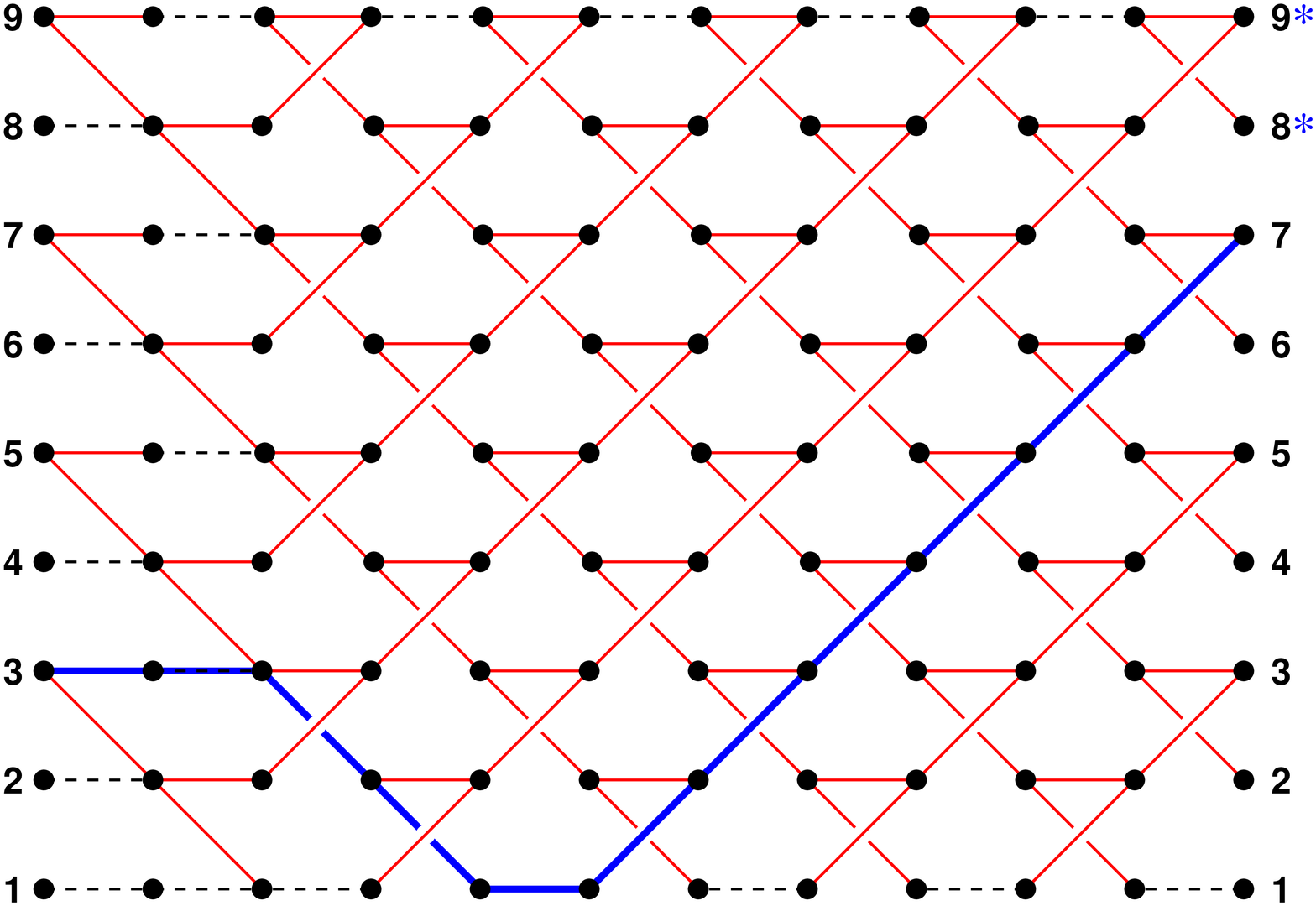}}}$$
where each edge is weighted with a Laurent monomial of the $a$'s. 
The left diagram shows that there are paths from vertex $i$ on the left to vertex $i'$ on the right only if 
$i'\leq r+2-i$, whereas the right diagram shows that there are paths from $i$ to $i'$ only if $i'\geq r+2-i$, for each $i$.
Therefore there are non-zero weighted paths only from each vertex $i$ to $r+2-i$. The corresponding path is unique.  Equivalently, 
$\Pi_{r+1}(\{a\})_{i,j}=0$ unless $j=r+2-i$.
This is illustrated in the above picture by highlighting in thick solid blue line the unique contributing path $3\to 7$,
while the other attainable points via paths starting at vertex $3$ are indicated by blue asterisks. 

Moreover, the total weight of the single contributing
path $i\to r+2-i$, equal to the matrix element $\big[\Pi_{r+1}(\{a\})\big]_{i,r+2-i}$,
is a monomial of the $a$'s (with only non-negative powers, as the entries of $\Pi_{r+1}(\{a\})$ are all polynomials). 
The determinant of $\Pi_{r+1}(\{a\})$ reads:
$$ \det(\Pi_{r+1}(\{a\})) =(-1)^{r(r+1)/2}\, \prod_{i=1}^{r+1} \Pi_{r+1}(\{a\})_{i,r+2-i} $$
Comparing this with \eqref{detpi}, we see that none of the matrix elements $\Pi_{r+1}(\{a\})_{i,r+2-i}$
vanish, and each of them has value $\pm 1$. 
To conclude, we note that the face weights cancel out along the paths as the product over step weights is telescopic,
leaving us with only the ratio: (leftmost face variable)$/$(rightmost face variable).
Inspecting the signs from the boundary faces carefully, we finally conclude that $\Pi_{r+1}(\{a\})=P$.
\end{proof}

\begin{cor}\label{corP}
We have
$$\lim_{a_1,a_2,...,a_r\to 0} \ P_{r+1}(\{a\})=P$$
\end{cor}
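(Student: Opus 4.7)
The plan is to obtain the corollary as an immediate consequence of Theorem \ref{Prplus} by taking the limit inside the explicit product formula. Since Theorem \ref{polth} already guarantees that the entries of $P_{r+1}(\{a\})$ are polynomials in the $a_i$'s, the simultaneous limit $a_1,\dots,a_r\to 0$ is unambiguous and can be evaluated factor by factor in the product representation of Theorem \ref{Prplus}.

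The first step is to observe, directly from the definition of the elementary $U$ matrix, that
$$U(1,1,x)=\begin{pmatrix} 1 & 0 \\ x & 1 \end{pmatrix},$$
so that $U_i(1,1,x)$ is the $(r{+}1)\times(r{+}1)$ identity matrix perturbed only in the $(i{+}1,i)$ entry by $x$. Consequently, for every $i$,
$$\lim_{x\to 0}\,U_i(1,1,x)={\mathbb I}_{r+1}.$$
The same holds for $U_i(-1,-1,x)$ after using the projectivity property \eqref{proj} with $\lambda=-1$, which gives $U(-1,-1,x)=U(1,1,-x)$, so its $x\to 0$ limit is also ${\mathbb I}_{r+1}$.

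The second step is to insert this into the formula of Theorem \ref{Prplus}. Writing $r=2s+\epsilon$ with $\epsilon\in\{0,1\}$, we have
$$P_{r+1}(\{a\})=\Bigl( \prod_{i=1-\epsilon}^s U_{2i}(1,1,(-1)^{\epsilon+1}a_{2(s-i)+1}) \Bigr)\, P \,\Bigl(\prod_{i=1}^s U_{2i}(1,1,a_{2i}) \Bigr).$$
Each of the $U$ factors on either side of $P$ has its third argument proportional to a single $a_k$, so each factor individually tends to the identity matrix as $a_1,\dots,a_r\to 0$. Since there are only finitely many such factors and matrix multiplication is continuous (and all entries are polynomial in the $a_i$'s, so there is no subtlety with limits versus products), the two flanking products both converge to ${\mathbb I}_{r+1}$, leaving exactly $P$ in the middle. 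This yields the claimed identity $\lim_{a_1,\dots,a_r\to 0}P_{r+1}(\{a\})=P$.

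There is essentially no obstacle beyond the bookkeeping of the exponent $\epsilon+1$ and the sign conventions in the indexing; the polynomiality result of Theorem \ref{polth} is what makes the limit meaningful to begin with, and Theorem \ref{Prplus} already does all of the nontrivial pictorial/combinatorial work by reducing $P_{r+1}(\{a\})$ to a product of lower-triangular elementary matrices conjugating $P$. Taking the limit is then the shortest of post-processing steps.
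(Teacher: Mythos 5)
Your argument is correct and is essentially identical to the paper's own proof, which likewise invokes Theorem \ref{Prplus} and the observation that $U_i(1,1,0)=U_i(-1,-1,0)=\mathbb I$. The extra remarks about polynomiality and continuity are harmless elaborations of the same one-line idea.
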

\begin{proof}
We use the expressions of Theorem \ref{Prplus}, and note that $U_i(1,1,0)=U(-1,-1,0)=\mathbb I$ for all $i\in [1,r]$.
\end{proof}

\begin{lemma}\label{pj}
For each $j\in[1,r]$, 
$$(P_j)_{1,i}=\lim_{a_1,a_2,...,a_r\to 0} \left(P_j(\{ a\})\right)_{1,i}=\delta_{i,2\lfloor {j\over 2}\rfloor+1 }.$$ 
\end{lemma}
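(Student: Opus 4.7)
I would proceed by induction on $j \in [1, r]$ using the factorization $P_j(\{a\}) = N_{-j+1}(\{a\}) \cdot P_{j-1}(\{a\})$ that follows directly from \eqref{defpjofa}, with the convention $P_0(\{a\}) = \mathbb{I}$. The base case $j = 1$ is immediate: $P_1(\{a\}) = N_0(\{a\})$, and because each $N_{i, 0}(\{a\})$ for $i \geq 2$ embeds as the identity on the first two rows, the top row of $N_0(\{a\})$ comes entirely from the top chip $N_{1, 0}(\{a\}) = U_1(a_1, 1, a_2)$ and equals $e_1^T = (1, 0, \ldots, 0)$, matching $\delta_{i, 2 \lfloor 1/2 \rfloor + 1} = \delta_{i, 1}$.

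For the inductive step, I first analyze the top row of a ``column matrix'' $N_k(\{a\}) := \prod_{i=1}^r N_{i, k}(\{a\})$: by the same embedding argument it coincides with the top row of $N_{1, k}(\{a\})$. From \eqref{defm} together with the boundary conditions $a_{0, k} = a_{i, 0} = 1$, this top chip is $U_1$ when $k$ is even (whose first row is $e_1^T$) and $V_1(1, a_{1, k-1}, a_{1, k})$ when $k$ is odd (whose first row is $(a_{1, k-1}/a_{1, k}, \, 1/a_{1, k}, \, 0, \ldots, 0)$). When $j$ is odd, $-j+1$ is even and hence $(P_j)_{1, \cdot}(\{a\}) = (P_{j-1})_{1, \cdot}(\{a\})$; since $\lfloor j/2 \rfloor = \lfloor (j-1)/2 \rfloor$ in that case, the inductive statement is preserved without further work.

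When $j$ is even, the recursion gives
$$(P_j)_{1, i}(\{a\}) = \frac{a_{1, -j}\,(P_{j-1})_{1, i}(\{a\}) + (P_{j-1})_{2, i}(\{a\})}{a_{1, -j+1}},$$
and Theorem \ref{polth} guarantees the numerator is divisible by $a_{1, -j+1}$. To extract the $\ba \to 0$ limit I would strengthen the induction hypothesis to simultaneously track the top two rows of $P_{j-1}(\{a\})$, claiming that the numerator equals $a_{1, -j+1}\, \delta_{i, 2\lfloor j/2 \rfloor + 1}$ modulo terms that vanish after division by $a_{1, -j+1}$ and the limit. The essential cancellation of the leading $\delta_{i, 2\lfloor (j-1)/2 \rfloor + 1}$ contribution coming from $a_{1, -j}(P_{j-1})_{1, \cdot}(\{a\})$ against the corresponding piece of $(P_{j-1})_{2, \cdot}(\{a\})$ is produced by the relation $a_{i-1, -j} a_{i+1, -j} + a_{i, -j-1} a_{i, -j+1} = 0$ from Definition \ref{defa}, combined with the explicit product structure \eqref{avalue}.

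The main obstacle is carrying this strengthened hypothesis on the second row cleanly through the recursion, since each even step mixes rows $1$ and $2$ while each odd step may propagate information further down. A more combinatorial alternative would be to work directly with the Lindstr\"om--Gessel--Viennot interpretation: $(P_j(\{a\}))_{1, i}$ is the signed weighted sum of paths from source $1$ to sink $i$ in the regularized network, and in the limit $\ba \to 0$ only paths whose total weight has a nonzero constant term survive. A pictorial argument in the spirit of the proof of Theorem \ref{Prplus} would then identify the unique family of surviving paths as those hugging the top boundary $i = 0$ (where $a_{0, j} = 1$) in a prescribed zig-zag terminating at sink $2\lfloor j/2 \rfloor + 1$, with contributions to all other sinks organizing themselves into polynomials vanishing at $\ba = 0$.
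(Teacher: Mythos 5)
Your inductive setup is sound as far as it goes: the factorization $P_j(\{a\})=N_{-j+1}(\{a\})P_{j-1}(\{a\})$, the identification of the top row of a column matrix with that of its top chip, and the trivial odd-$j$ step are all correct. But the entire content of the lemma sits in the even-$j$ step, and there you stop at the acknowledged obstacle: evaluating $\lim_{\ba\to 0}\bigl(a_{1,-j}(P_{j-1})_{1,i}+(P_{j-1})_{2,i}\bigr)/a_{1,-j+1}$ requires the second row of $P_{j-1}(\{a\})$ to leading order in the $a$'s, and your induction does not supply it. Worse, tracking row $2$ through the recursion drags in row $3$ at the next stage, and so on down the matrix, so the "strengthened hypothesis" you propose does not obviously close. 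As written, the proof is incomplete precisely where the cancellation must happen.

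The paper's actual argument is the combinatorial alternative you gesture at, but the decisive ingredient you are missing is the choice of the $VU$-diamond decomposition of $P_j(\{a\})$ from \eqref{squarePj} and Theorem \ref{polth}: by \eqref{VUm} and \eqref{VUnetrep}, the vanishing condition $uv+ac=0$ kills the $(1,1)$ entry of each $VU$ diamond, i.e.\ deletes its top horizontal edge, so a path entering a diamond at its upper input is \emph{forced} to descend. This forces exactly two paths out of source $1$ in the whole network, ending at sinks $j$ and $j+1$ (for $j$ even) or $j-1$ and $j$ (for $j$ odd), identical except for their final step; the one ending with a horizontal step carries a factor $a_j$ (resp.\ $a_{j-1}$) and dies in the limit, while the surviving one has telescoping weights of the form $v/c$ whose product collapses to (leftmost face label)/(rightmost face label) $=1$. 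In particular your guess that the surviving paths "hug the top boundary $i=0$" is not correct: the unique surviving path descends diagonally from source $1$ to sink $2\lfloor j/2\rfloor+1$. Without the forced-descent observation and the two-path count, the claim that "contributions to all other sinks organize themselves into polynomials vanishing at $\ba=0$" remains an unproven assertion.
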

\begin{proof} We give a pictorial proof.
Use the diamond decomposition of the networks \eqref{squarePj}
in pictorial representation II with chips \eqref{VUnetrep}. For even and odd $j$'s, we get respectively (here $j=4,5$):
$$ \raisebox{-3.cm}{\hbox{\epsfxsize=6.5cm \epsfbox{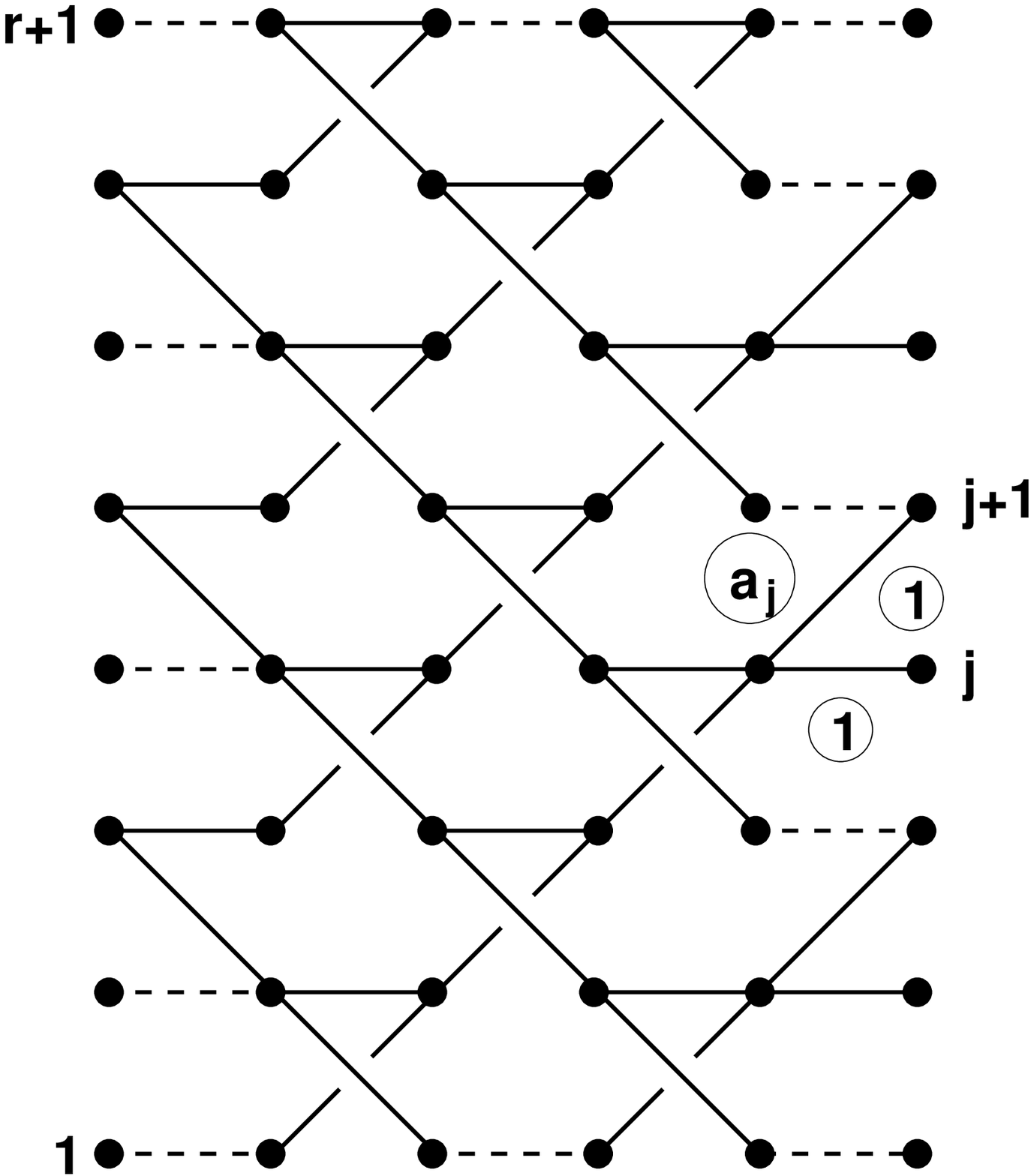}}}
\qquad \raisebox{-3.cm}{\hbox{\epsfxsize=7.8cm \epsfbox{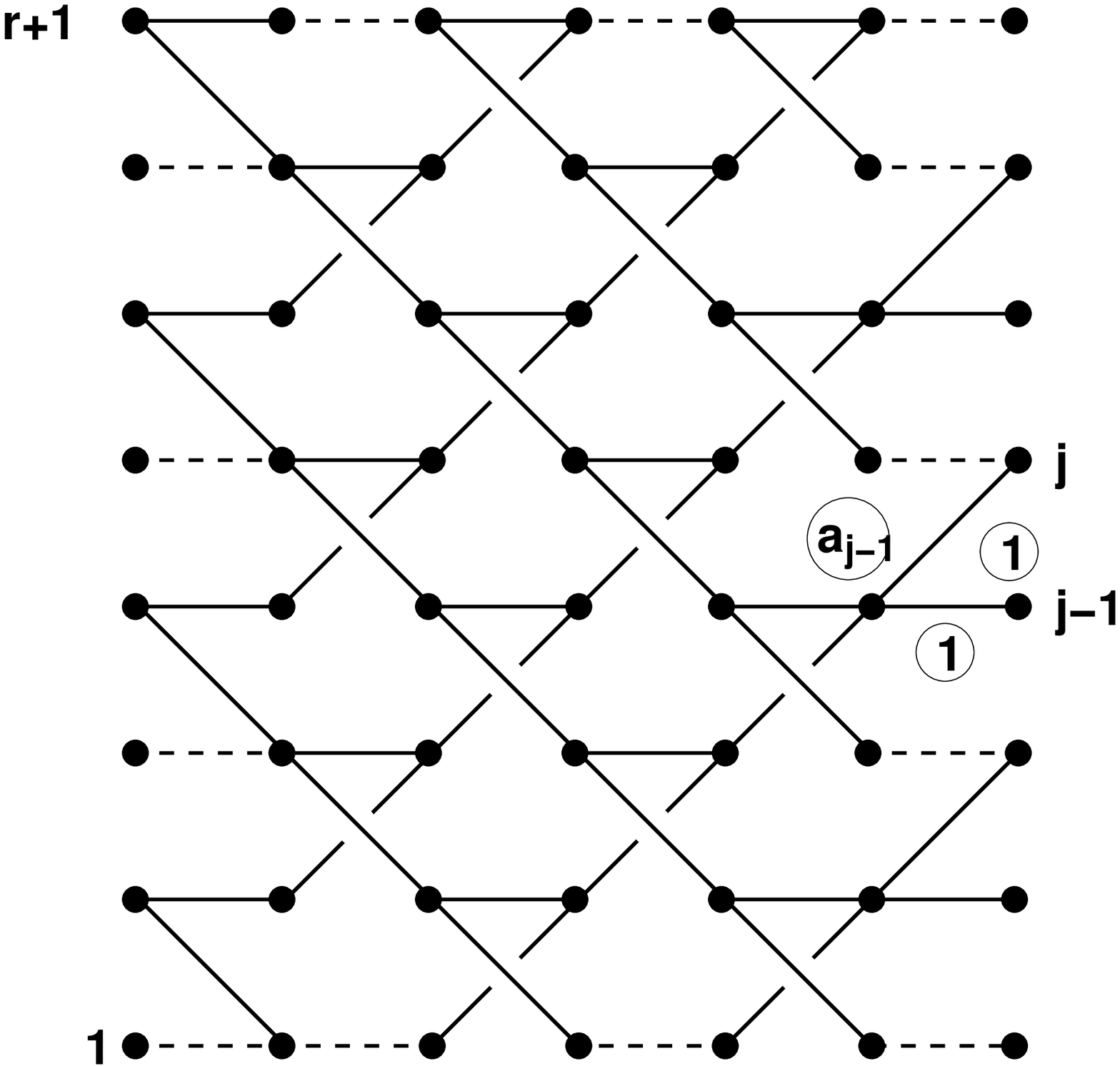}}}   $$
There are exactly two paths from $1\to k$, with $k=j,j+1$ if $j$ is even, and $k=j-1,j$ is $j$ is odd.
The face labels of the last chip are represented inside circles. The weights of the two
paths in the case of even (resp. odd) $j$
are identical except for their last step, weighted respectively by: $1$ if the last step is diagonal
and $a_{j}$ (resp. $a_{j-1}$) if it is horizontal. Therefore only the path ending with a diagonal step
contributes in the limit $a_i\to 0$. Moreover, the weights along this remaining
path, due to \eqref{VUnetrep}, are of the form $v/c$ and therefore their product is telescopic and leaves
us with (leftmost face label)$/$ (rightmost face label)$=1$. This proves the lemma.
\end{proof}

\subsection{The reflected network matrix}

We can give a similar definition of the
regularized network matrix for $N(\ell+1,\ell+r+2)$ of the form $N(\ell+1,\ell+r+2)(\{b\})$ for a compatible
array $(b_{i,j})$. 
In order to satisfy the boundary conditions in the initial data $\bt^{-}$, choose
$N(\ell+1,\ell+r+2)(\{b\})$ to be the regularized network matrix with $b_{i,j}$ an array defined as follows:
$$
b_{i,j} = (-1)^{r i} a_{i,j-(\ell+r+2)}, \qquad i\in [0,r+1], j\in[\ell+1,\ell+r+2].
$$
where $a_{i,j}$ is given by Definition \ref{defa}. This ensures that
$b_{i,\ell+1}=1$ and $b_{i,\ell+r+2}=(-1)^i$ when $r$ is odd.

Let $S$ be the matrix with entries:
\begin{equation}\label{Smat}
[S]_{i,j}=(-1)^{i-1}\delta_{i,j}  \qquad (i,j\in [1,r+1]).
\end{equation}
Clearly, 
\begin{equation}\label{Srela} S^2={\mathbb I} \qquad S\, P=(-1)^r P\, S.
\end{equation}
Moreover, 
\begin{eqnarray} S \, U_i(a,b,c) \, S &=&U_i(a,b,-c)=U_i(-a,-b,c) \label{SUS}, \\
 S \, V_i(a,b,c) \, S &=&V_i(a,-b,-c)=V_i(-a,b,c).\label{SVS}
\end{eqnarray}

\begin{lemma}\label{otherP}
The regularized network matrix ${\tilde P}_{r+1}(\{b\})=N(\ell+1,\ell+r+2)(\{b\})$ defined above satisfies:
\begin{equation} \lim_{b_1,...,b_r\to 0} {\tilde P}_{r+1}(\{b\}) =(-1)^r\, P.
\end{equation}
\end{lemma}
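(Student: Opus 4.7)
The plan is to relate $\tilde P_{r+1}(\{b\})$ to $P_{r+1}(\{a\})$ directly via the sign transformation $b_{i,j}=(-1)^{ri}a_{i,j-(\ell+r+2)}$, then invoke Corollary \ref{corP} and the commutation \eqref{Srela} to identify the limit with $(-1)^r P$.

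First, I would dispose of the translation. The shift $j\mapsto j-(\ell+r+2)$ carries the window $[\ell+1,\ell+r+2]$ onto $[-r-1,0]$, and under it the stepped surface $\bk_0$ maps onto itself (possibly after a harmless parity adjustment handled via Lemma \ref{refk}/\ref{transla}), so that the intermediate array $a'_{i,j}:=a_{i,j-(\ell+r+2)}$ yields exactly $P_{r+1}(\{a\})$: we have $N(\ell+1,\ell+r+2)(\{a'\})=P_{r+1}(\{a\})$ by the combinatorial definition \eqref{defpjofa}.

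Second, handle the sign prefactor $(-1)^{ri}$, which is trivial when $r$ is even, so that $\tilde P_{r+1}(\{b\})=P_{r+1}(\{a\})\to P=(-1)^r P$ by Corollary \ref{corP}, as required.

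Third, for $r$ odd, the prefactor is $(-1)^i$, depending only on $i$. Apply the projectivity \eqref{proj} locally to each factor \eqref{defm}: with $\lambda=(-1)^i$ one finds
$$U_i((-1)^i a_{i,j-1},(-1)^i a_{i,j},(-1)^{i+1}a_{i+1,j-1})=U_i(a_{i,j-1},a_{i,j},-a_{i+1,j-1}),$$
and analogously $V_i$ transforms into $V_i$ with the first argument negated. By the sign-conjugation identities \eqref{SUS}--\eqref{SVS}, each such replacement is exactly $M\mapsto SMS$. Since $S^2={\mathbb I}$, the intermediate $S$'s telescope through the product, yielding
$$\tilde P_{r+1}(\{b\})=S\,P_{r+1}(\{a\})\,S.$$
Taking the limit $a_1,\ldots,a_r\to 0$ via Corollary \ref{corP} and applying $SPS=(-1)^r P\,S^2=(-1)^r P$ from \eqref{Srela} delivers the claim.

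The main obstacle is the bookkeeping in the first step: one must check that translating the window by $\ell+r+2$ in $j$ preserves the alternation between $U$- and $V$-chips dictated by the parity $i+j\bmod 2$ of the flat stepped surface; when $\ell+r$ is odd, the pattern flips and one must compensate either by the reflection $k\mapsto-k$ from Lemma \ref{refk} or by noting that the regularized array $(a_{i,-j})$ is itself symmetric under $j\mapsto r+1-j$ up to the sign $(-1)^{ri}$ (visible in \eqref{initex}--\eqref{initextwo}), which absorbs the parity shift. Once this identification is made, Steps 2--3 are mechanical consequences of the matrix identities already proved in Section \ref{UVsec} and Section \ref{artwo}.
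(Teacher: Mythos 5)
Your proposal is correct and follows essentially the same route as the paper: the even-$r$ case is immediate from translation invariance and Corollary \ref{corP}, and the odd-$r$ case is handled by recognizing the row-dependent sign $(-1)^i$ as conjugation by $S$ via \eqref{SUS}--\eqref{SVS}, telescoping to ${\tilde P}_{r+1}(\{b\})=S\,P_{r+1}(\{a\})\,S$ and concluding with $SPS=(-1)^rP$ from \eqref{Srela}. Your extra attention to the parity of the shifted window is a reasonable elaboration of a point the paper leaves implicit, but it does not change the argument.
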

\begin{proof}
The lemma is clear for even $r$, as $N(\ell+1,\ell+r+2)(\{b\})=P_{r+1}(\{a\})$ with $b_{i,j}=a_{i,j-(\ell+r+2)}$.
For odd $r$, we have:
$$ S P_{r+1}(\{a\}) S = N(\ell+1,\ell+r+2)(\{b\}) \qquad {\rm where} \qquad  b_{i,j}=(-1)^i a_{i,j-(\ell+r+2)},$$
with $S$ as in \eqref{Pmat}.  Indeed, 
eqns. \eqref{SUS} and \eqref{SVS} allow us to interpret the conjugate action of $S$ as flipping the sign
of all array entries along every other row, say $i=1,3,...r$. Taking the $a\to 0$ limit in both cases
leads respectively to $P_{r+1}(\{0\})=P$ for even $r$ and $SPS=-P$ for odd $r$ by \eqref{Srela}, and the lemma follows.
\end{proof}

It will also be useful to have the corresponding version of Lemma \ref{pj}. Define the 
family of regularized network matrices 
${\tilde P}_j(\{b\})= N(\ell+1,\ell+1+j)(\{b\})$, $j=0,1,2,...,r$, with ${\tilde P}_0(\{b\})={\mathbb I}$.
Each $b_{i,j}$ is a signed monomial of the variables $\{b_k:=b_{k,\ell+2}\}$. In particular,
$b_{1,\ell+1+j}=(-1)^{j-1} b_j$.

\begin{lemma} \label{otherPlim}
The limit $b_j\to 0$ of the regularized network matrices is
\begin{equation}
\lim_{b_1,...,b_r\to 0} \Big[{\tilde P}_j(\{b\}) \Big]_{i,1} b_{1,\ell+1+j}=\delta_{i,a_\ell(j)} 
\qquad j\in[1,r].
\end{equation}
where
\begin{equation}\label{defaell}
a_\ell(x)=\left\{ \begin{matrix} & 2\lfloor \frac{x+1}{2} \rfloor & {\rm if}\, \ell \, {\rm is}\, {\rm even} \\
& 2\lfloor \frac{x}{2} \rfloor+1 & {\rm if}\, \ell \, {\rm is}\, {\rm odd} \end{matrix} \right. 
\end{equation}
\end{lemma}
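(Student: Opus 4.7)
The plan is to follow the pictorial strategy used for Lemma \ref{pj}, adapted to the reflected network $\tilde{P}_j(\{b\}) = N(\ell+1,\ell+1+j)(\{b\})$. Here the non-vanishing boundary column sits on the \emph{left} (at $j=\ell+1$, where $b_{i,\ell+1}=1$) and the regularized columns whose data goes to zero sit on the right (at $j\in[\ell+2,\ell+1+j]$). This is mirrored relative to the $P_j(\{a\})$ situation, which is exactly why the relevant matrix element is $[\tilde P_j(\{b\})]_{i,1}$ (paths \emph{into} the distinguished vertex on the right) rather than $[P_j(\{a\})]_{1,i}$ (paths \emph{out of} the distinguished vertex on the left).

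First, I would apply a diamond decomposition of $\tilde P_j(\{b\})$ analogous to \eqref{squarePj} and use the chip representations \eqref{UVnetrep}--\eqref{VUnetrep} to express the entries as weighted path partition functions on a network whose face labels are (signed monomials in) the $b_k$. The parity of $\ell+1$ then determines which type of chip, $U$ or $V$, occupies the leftmost (boundary) column: for $\ell$ even, $\ell+1$ is odd, and for $\ell$ odd, $\ell+1$ is even. This is precisely the parity that appears in the definition \eqref{defaell} of $a_\ell(x)$.

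Second, mirroring the counting argument of Lemma \ref{pj}, I expect that in this network there are exactly two weighted paths terminating at output $1$, and that both begin at the same input vertex $i=a_\ell(j)$. The two paths differ only in whether the first (leftmost) edge is horizontal or diagonal. The horizontal-first path carries an extra factor proportional to one of the $b_k$ (hence vanishes as $b_k\to 0$), while the diagonal-first path survives. Along the surviving path the edge weights obtained from \eqref{UVnetrep}--\eqref{VUnetrep} are ratios of consecutive face labels, and their product telescopes to (rightmost face label)$/$(leftmost face label), i.e.\ to $b_{1,\ell+1+j}^{-1}$ up to a sign. Multiplying by $b_{1,\ell+1+j}$ then gives $\pm 1$, and a sign check (using $b_{i,\ell+1}=1$ on the boundary and the prescribed signs $b_{i,j}=(-1)^{ri}a_{i,j-(\ell+r+2)}$) yields $+1$.

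The main obstacle is the careful bookkeeping of parities and signs. Three separate parities enter the picture: the parity of $\ell$ (fixing the chip type at the boundary column), the parity of $j$ (fixing which of the two nearest vertices $2\lfloor j/2\rfloor$ versus $2\lfloor j/2\rfloor+1$ carries the surviving path), and the parity of $i$ through the sign factor $(-1)^{ri}$. One needs to verify that the four combinations of $(\ell \bmod 2, j\bmod 2)$ all assemble into the unified formula $a_\ell(j)=2\lfloor(j+1)/2\rfloor$ for even $\ell$ and $a_\ell(j)=2\lfloor j/2\rfloor+1$ for odd $\ell$, and that the telescoping product together with the signs $(-1)^{ri}$ contribute no net sign after multiplication by $b_{1,\ell+1+j}=(-1)^{j-1}b_j$. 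Once this case analysis is completed, the argument is structurally identical to that of Lemma \ref{pj}.
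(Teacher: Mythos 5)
Your overall strategy is the paper's: mirror the pictorial argument of Lemma \ref{pj} on the reflected network, use the diamond decomposition with the chips \eqref{UVnetrep}--\eqref{VUnetrep}, observe that the parity of $\ell$ interchanges the roles of $U$ and $V$ (producing the two cases in \eqref{defaell}), and conclude by telescoping the surviving path's weight to $1/b_{1,\ell+1+j}$, which the prefactor cancels. All of this matches the paper's (very brief) proof.

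However, your description of the path structure contains a genuine error. You assert that the two contributing paths ``both begin at the same input vertex $i=a_\ell(j)$'' and ``differ only in whether the first (leftmost) edge is horizontal or diagonal.'' These two claims are incompatible: two paths issuing from the same vertex whose first edges are of different types occupy different vertices after one step and therefore cannot coincide from the second step onward. The correct mirror of Lemma \ref{pj} (where the two paths share their source $1$ but terminate at two \emph{adjacent} vertices, differing only in their last step) is that here the two paths share their target $1$ but originate at two \emph{adjacent} input vertices and merge after their first step; one can check this explicitly already for $j=1$, where ${\tilde P}_1(\{b\})$ reduces to a single column of chips and the two nonzero entries of its first column sit in rows $1$ and $2$. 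This is not cosmetic: the whole content of $\delta_{i,a_\ell(j)}$ at the second admissible starting vertex is that $[{\tilde P}_j(\{b\})]_{i,1}$ is \emph{nonzero} for generic $b$ and only dies after multiplication by $b_{1,\ell+1+j}$ and passage to the limit, whereas in your picture that entry would be identically zero and there would be nothing to prove, while at $i=a_\ell(j)$ you would be summing two contributions instead of one. Relatedly, the suppression mechanism is not that the losing path ``vanishes as $b_k\to 0$'' on its own: near the boundary column the chip entries carry the small parameters in their \emph{denominators} (the right-hand face labels $b_{m,\ell+2}=\pm b_m$), so the comparison is only meaningful after the full telescoping, where the surviving path yields $\pm 1/b_{1,\ell+1+j}$ and the other yields a quantity whose product with the prefactor is proportional to some $b_m$ and hence tends to $0$. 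With the two starting vertices correctly identified, the rest of your plan (the parity bookkeeping and the sign check via $b_{i,\ell+1}=1$) goes through as in the paper.
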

\begin{proof}
The proof is very similar to that of Lemma \ref{pj}
The difference is that
one must distinguish between odd $\ell$ (the actual reflection of the case of Lemma \ref{pj}) and
even $\ell$, in which $U$ and $V$ matrices are interchanged. The telescopic products of weights
for the remaining path ending at $1$ is $1/b_{1,\ell+1+j}$, where and the denominator is cancelled by the
prefactor above.
\end{proof}

\subsection{Collapse relations}

The following relations may be verified by direct calculation.
\begin{eqnarray} P \, U_i(a,b,c) \, P &=&V_{r+1-i}((-1)^{r-1} c,a,b) \label{PUPa}, \\
 P \, V_i(a,b,c) \, P &=&U_{r+1-i}(b,c,(-1)^{r-1}a) \label{PUPb}.
\end{eqnarray}

\begin{lemma}\label{collapse}
Let $i\in [1,r]$ and $j\geq 1$. Given initial data of the form $\bt^+$,
\begin{eqnarray*}
V_{r+1-i}(t_{r-i,-r-j},t_{r+1-i,-r-1-j},t_{r+1-i,-r-j})PU_i(t_{i,j-1},t_{i,j},t_{i+1,j-1})&=& P,\\
U_{r+1-i}(t_{r+1-i,-r-1-j},t_{r+1-i,-r-j},t_{r+2-i,-r-1-j})PV_i(t_{i-1,j},t_{i,j-1},t_{i,j})&=& P.
\end{eqnarray*}
\end{lemma}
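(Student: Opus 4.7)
The plan is to reduce each collapse identity to a conjugation statement and then match the resulting matrix arguments using the reflection symmetry of $\bt^{+}$, cleaning up the remaining scalar factors with the projectivity property.

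For the first identity, I would multiply by $P$ on the left and use $P^2=\mathbb{I}$ to rephrase the claim as
\begin{equation*}
P\,V_{r+1-i}(t_{r-i,-r-j},t_{r+1-i,-r-1-j},t_{r+1-i,-r-j})\,P \;=\; U_i(t_{i,j-1},t_{i,j},t_{i+1,j-1})^{-1}.
\end{equation*}
The right-hand side is computed directly from the inverse formula \eqref{invUV}, giving $U_i(t_{i,j},t_{i,j-1},-t_{i+1,j-1})$. The left-hand side I would transform by the conjugation identity \eqref{PUPb}, which rewrites it as $U_i\bigl(t_{r+1-i,-r-1-j},\,t_{r+1-i,-r-j},\,(-1)^{r-1}t_{r-i,-r-j}\bigr)$. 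The remaining task is then to translate these three ``reflected'' initial values back to $(i,j)$-indexed data using \eqref{tsymii}. Writing $t_{r+1-i,-r-j}=t_{r+1-i,-r-1-(j-1)}$ and $t_{r-i,-r-j}=t_{r+1-(i+1),-r-1-(j-1)}$ so that the symmetry $t_{r+1-a,-r-1-b}=(-1)^{ra}t_{a,b}$ applies, the matrix becomes
\begin{equation*}
U_i\bigl((-1)^{ri}t_{i,j},\,(-1)^{ri}t_{i,j-1},\,(-1)^{r-1+r(i+1)}t_{i+1,j-1}\bigr).
\end{equation*}
Since $(-1)^{r-1+r(i+1)}=-(-1)^{ri}$, a projective rescaling by $\lambda=(-1)^{ri}$ via \eqref{proj} produces exactly $U_i(t_{i,j},t_{i,j-1},-t_{i+1,j-1})$, matching the right-hand side.

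The second identity follows by the same strategy: reduce to $P\,U_{r+1-i}(\cdot)\,P=V_i(\cdot)^{-1}$, apply \eqref{PUPa} in the form $PU_{r+1-i}(a,b,c)P=V_i((-1)^{r-1}c,a,b)$, use \eqref{invUV} for $V^{-1}$, and invoke \eqref{tsymii} after rewriting $t_{r+2-i,-r-1-j}=t_{r+1-(i-1),-r-1-j}$. A parallel sign bookkeeping yields an overall factor $(-1)^{ri}$ that is again absorbed by a single application of projectivity. The main obstacle, and really the only subtlety, is precisely this sign bookkeeping: one has to check that the $(-1)^{r-1}$ from \eqref{PUPa}--\eqref{PUPb}, the $(-1)^{ra}$ factors from \eqref{tsymii} applied with shifted indices, and the sign in \eqref{invUV} combine so that a single scalar $\lambda=\pm 1$ reconciles both sides via \eqref{proj}. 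Once this is verified, each identity reduces to a finite composition of previously established matrix relations together with the defining symmetries of $\bt^{+}$.
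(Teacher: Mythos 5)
Your proposal is correct and follows essentially the same route as the paper: both arguments combine the $P$-conjugation identities \eqref{PUPa}--\eqref{PUPb} with the inverse formulas \eqref{invUV}, the reflection symmetry \eqref{tsymii}, and a projective rescaling by $\lambda=(-1)^{ri}$, the only cosmetic difference being that you rearrange the claim into the form $PVP=U^{-1}$ whereas the paper left-multiplies the conjugation identity by the appropriate inverse to get $V_{r+1-i}((-1)^r c,b,a)\,P\,U_i(a,b,c)=P$ before matching arguments. The sign bookkeeping you flag does check out in both cases, since $(-1)^{r-1+r(i\pm 1)}=-(-1)^{ri}$.
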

\begin{proof}
Multiplying the relation \eqref{PUPa} from the left by the inverse of $V_{r+1-i}((-1)^{r-1}c,b,a)$ 
and similarly \eqref{PUPb} by the inverse of $U_{r+1-i}(b, c, (-1)^{r-1}a)$ using 
\eqref{invUV} gives:
$$V_{r+1-i}((-1)^rc,b,a)P U_i(a,b,c)=P,\qquad U_{r+1-i}(c,b,(-1)^ra)P V_i(a,b,c)=P.$$
The reflection symmetry on $\bt^+$ \eqref{tsymii} means that
\begin{equation}\begin{matrix}
t_{r-i,-r-j}=(-1)^{r(i+1)}\, t_{i+1,j-1}, & t_{r+1-i,-r-1-j}=(-1)^{ri}\, t_{i,j}, \\
 t_{r+1-i,-r-j}=(-1)^{ri}\, t_{i,j-1}, &
t_{r+2-i,-r-1-j}=(-1)^{r(i-1)}\, t_{i-1,j}.\end{matrix}\end{equation}
The Lemma follows from the projective property \eqref{proj} with $\lambda =(-1)^{ri}$.
\end{proof}

\subsection{Proof of Lemma \ref{suffiT}}\label{arthree}
We prove the two statements in the Lemma.
\begin{lemma} The solutions of the unrestricted $A_r$ $T$-system of type (i) 
with initial conditions $X(\bt^+)$ have the property that $T_{1,0,k}=1$ for all odd $k$.
\end{lemma}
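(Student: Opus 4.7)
The plan is to compute $T_{1,0,k}$ via the network formula of Theorem \ref{soluT} and exploit the reflection symmetry of $\bt^+$ through the collapse relations of Lemma \ref{collapse} so that everything telescopes to $1$. Because $\bt^+$ contains zeros in the columns $j\in[-r,-1]$, one cannot substitute directly; instead, I would replace these zeros by the regularized array $(a_{i,j})$ of Definition \ref{defa}, work with the regularized initial data $\bt^+(\ba)$, and send $a_1,\dots,a_r\to 0$ at the end, using the fact that the relevant limits of the network matrices are well defined by Theorem \ref{polth}.

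For odd $k\geq 1$, the projection of $(1,0,k)$ onto $\bk_0$ has endpoints $j_0=1-k$ and $j_1=k-1$, and Theorem \ref{soluT} gives
\[
T_{1,0,k}=\bigl[N(1-k,k-1)(\{a\})\bigr]_{1,1}\,t_{1,k-1}.
\]
I would split this matrix so as to isolate the regularized square $j\in[-r-1,0]$. When $k\ge r+2$, this split reads
\[
N(1-k,k-1)=N(1-k,-r-1)\cdot P_{r+1}(\{a\})\cdot N(0,k-1),
\]
and by Corollary \ref{corP} the middle factor tends to $P$ in the limit. When $k\le r+1$, the left factor is empty and the split becomes $N(1-k,k-1)=P_{k-1}(\{a\})\cdot N(0,k-1)$, in which case Lemma \ref{pj} controls the limit and selects the $(k,1)$-entry of $N(0,k-1)$ (noting that $2\lfloor(k-1)/2\rfloor+1=k$ for odd $k$).

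In the first case, the symmetry $t_{r+1-i,-r-1-j}=(-1)^{ri}t_{i,j}$ rewrites every elementary factor of $N(1-k,-r-1)$ in terms of initial data at $j\ge 0$, and Lemma \ref{collapse} identifies, for each relevant $(i,j)$, a pairing $N_{r+1-i,-r-j}\cdot P\cdot N_{i,j}=P$. Applied iteratively from the innermost pair outward, this consumes the entire left factor and yields
\[
T_{1,0,k}=\bigl[P\cdot N(k-r-2,k-1)\bigr]_{1,1}\,t_{1,k-1}=\bigl[N(k-r-2,k-1)\bigr]_{r+1,1}\,t_{1,k-1},
\]
using the explicit form \eqref{Pmat} of $P$. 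A final direct path-counting step on the remaining $(r+1)$-wide network identifies the surviving contribution as a unique path hugging the boundary row $r+1$ and stepping down to row $1$ at column $k-1$; the edge weights along it telescope to $1/t_{1,k-1}$, so the overall product is $1$. The second case $k\le r+1$ is handled analogously by combining Lemma \ref{pj} with the same final telescoping step.

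The main obstacle is bookkeeping the internal order of the $U_i$ and $V_i$ factors inside each $N_j$: Lemma \ref{collapse} is a statement about a single pair of matrices sandwiching $P$, while each $N_j$ is a compound product over $i\in[1,r]$ whose ordering is dictated by the $U,V$ decomposition of $\bk_0$. One must verify that this ordering is compatible with the involution $(i,j)\mapsto(r+1-i,-r-1-j)$ and that non-adjacent $U$ and $V$ factors commute through each other in the right sequence so that collapses can be performed from the outside in; the choices enforced by Theorem \ref{Prplus} and the parity analysis of $U/V$ assignments are what make this work.
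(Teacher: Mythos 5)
Your proposal follows essentially the same route as the paper's proof: the same split isolating the regularized square (with $P_{r+1}(\{a\})\to P$ via Corollary \ref{corP}, the collapse relations of Lemma \ref{collapse} consuming the left factor, and Lemma \ref{pj} for the small-$k$ case), ending with the same unique-path telescoping argument on the residual network. One small inaccuracy: the unique contributing path on $N(k-r-2,k-1)$ is a diagonal staircase descending one level per column (a descent from height $i+1$ to $i$ requires the $U_i$ chip, and the parity of the flat surface forbids two consecutive descents within one column), not a path hugging row $r+1$ that drops to row $1$ in the last column -- but this does not affect the telescoped value $t_{r+1,k-r-2}/t_{1,k-1}=1/t_{1,k-1}$, so the conclusion stands.
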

\begin{proof}
By reflection symmetry, it is only necessary to consider $k>0$.
The projection of the point $(1,0,k)$ onto $\bk_0$ 
is the portion with $j\in [-k+1,k-1]$: 
$$T_{1,0,k}=\left[N(-k+1,k-1)\right]_{1,1}t_{1,k-1}.$$ There are two cases to consider.

\noindent{\em  case 1:}
$k-1>r$. In this case,
\begin{eqnarray*}
N(-k+1,k-1)&=&N(-k+1,-r-1)N(-r-1,0)N(0,k-r-2)N(k-r-2,k-1)\\
&=&N(-k+1,-r-1)PN(0,k-r-2)N(k-r-2,k-1)
\end{eqnarray*}
using Corollary \ref{corP}. Lemma \ref{collapse} implies $N(-k+1,-r-1)PN(0,k-r-2)=P$ for the initial data $\bt^+$.
We deduce that $T_{1,0,k}=\left[N(k-r-2,k-1)\right]_{r+1,1}t_{1,k-1}$. Let us examine the network
corresponding to $N(k-r-2,k-1)$. As before, let us decompose it into diamonds of the form
$$U(a,b,v)V(u,b,c)=\raisebox{-1.1cm}{\hbox{\epsfxsize=2.5cm \epsfbox{UVm.eps}}}=
\begin{pmatrix}
{b\over c} & {u\over c} \\
{v\over c} & {uv+ac\over bc}
\end{pmatrix} =\raisebox{-1.1cm}{\hbox{\epsfxsize=2.5cm \epsfbox{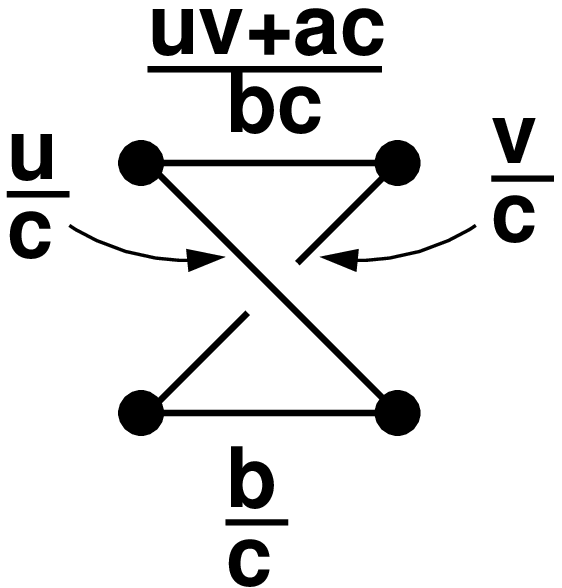}}}  $$
Note that as the arguments are generic, the quantity $uv+ac$ does not vanish like in the
$UV$ diamond of \eqref{UVnetrep}.
As the network for $N(k-r-2,k-1)$ is a square, we have a decomposition of the form (say for $r$ even):
$$\raisebox{-1.5cm}{\hbox{\epsfxsize=4.cm \epsfbox{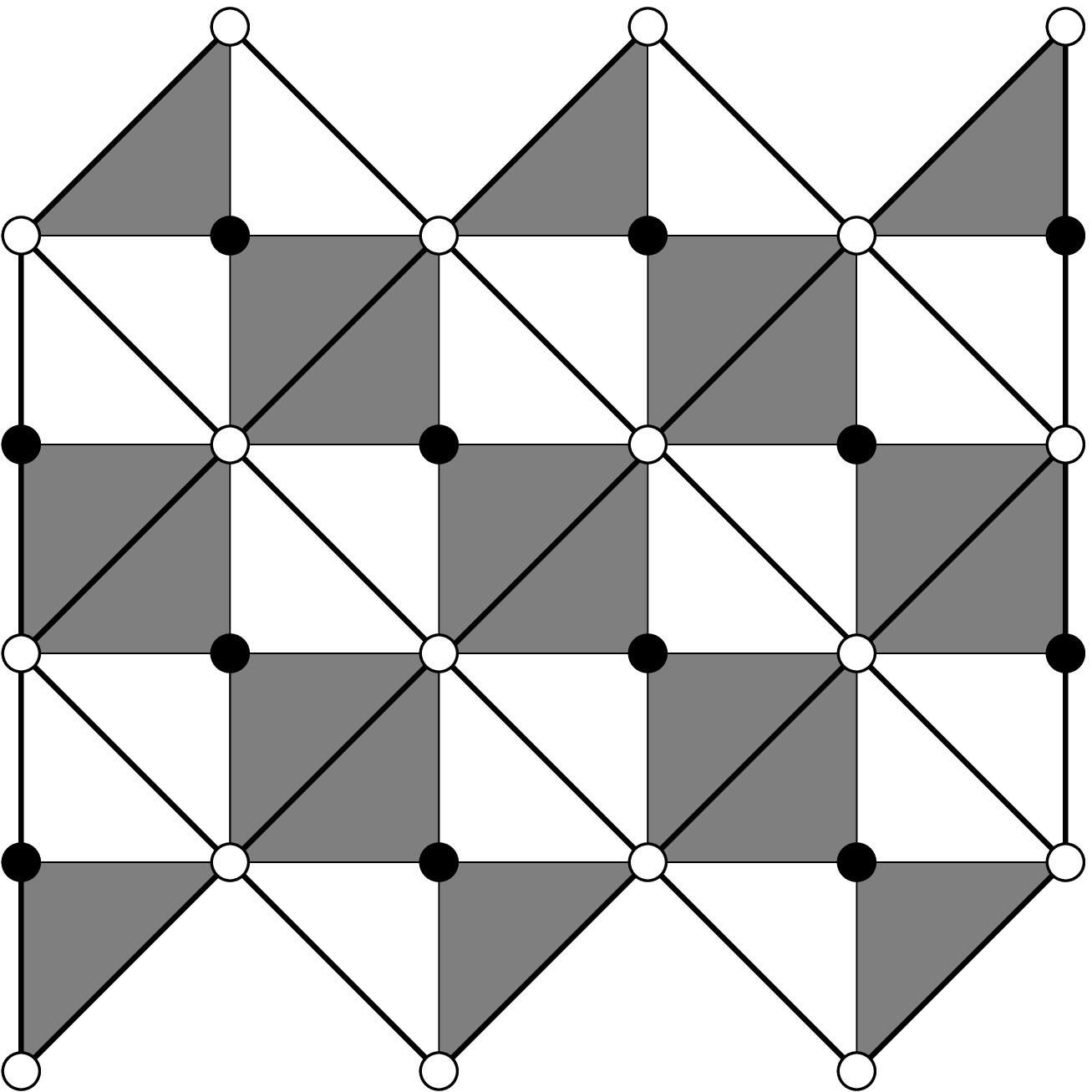}}} \quad 
=\quad \raisebox{-1.1cm}{\hbox{\epsfxsize=5.5cm \epsfbox{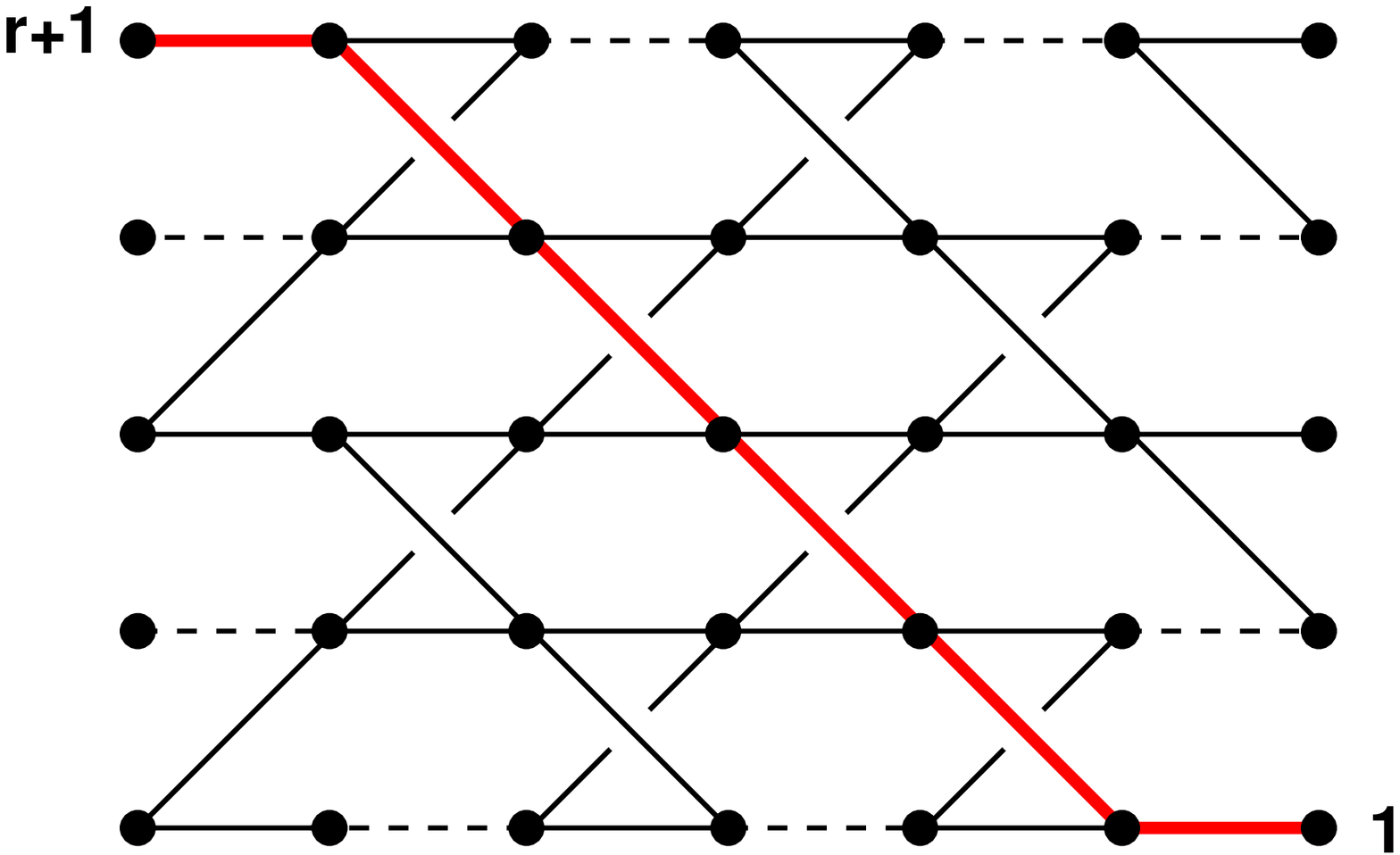}}}$$
The matrix element $(r+1,1)$ corresponds to the unique path from $r+1\to 1$, highlighted in red.
The product of weights along the path is telescopic, and leaves us only with (leftmost face label)$/$(rightmost face label)
$=t_{r+1,k-r-2}/t_{1,k-1}=1/t_{1,k-1}$, as we have $t_{r+1,j}=1$ for all $j$. We conclude that $T_{1,0,k}=1$.

\noindent{\em Case 2:} $0<k-1< r+1$. In this case,
$N(-k+1,k-1)=N(-k+1,0)N(0,k-1)=P_{k-1}N(0,k-1)$. Then
$$T_{1,0,k}=\left[N(-k+1,k-1)\right]_{1,1}t_{1,k-1}
=\left[N(0,k-1)\right]_{2\lfloor{k-1\over 2}\rfloor+1,1}t_{1,k-1}$$
by Lemma \ref{pj}. Noting that $2\lfloor{k-1\over 2}\rfloor+1=k$, it is easy to see that, again, a unique path contributes to this,
as the paths $k\to 1$ only ``see"  the lower triangle part of the network, with vertices $(i,j)=(1,0),(1,k-1),(k,0)$,
represented below:
$$ \raisebox{-1.5cm}{\hbox{\epsfxsize=5.cm \epsfbox{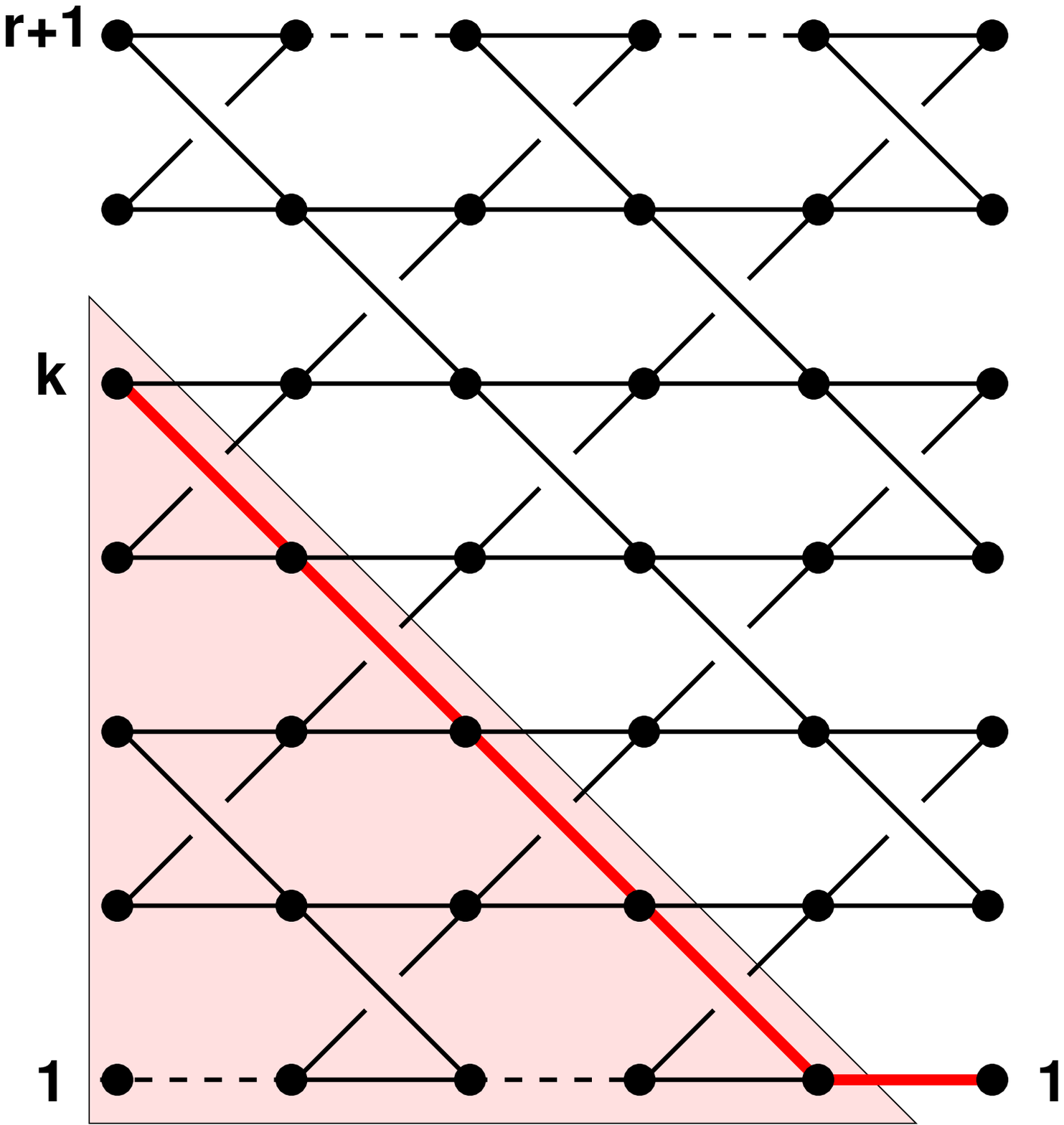}}}  $$
The total weight of this path is equal to (leftmost face label)$/$(rightmost face label)$= t_{0,k}/t_{1,k-1}=1/t_{1,k-1}$,
which implies $T_{1,0,k}=1$.
\end{proof}

\begin{lemma} The solutions of the unrestricted $A_r$ $T$-system of type (i) with initial conditions $X(\bt^+)$ have the property that $T_{1,-j,k}=0$ for all $j\in[1,r]$
\end{lemma}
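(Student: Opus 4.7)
The proof should parallel the preceding lemma almost verbatim: apply Theorem~\ref{soluT} to the regularized data $\bt^{+}(\ba)$, collapse via Lemma~\ref{collapse} and Corollary~\ref{corP}/Lemma~\ref{pj}, and reduce the question to the existence or non-existence of a single path in a residual network. By reflection invariance (Lemma~\ref{refk}) I may assume $k\geq k^{(0)}_{1,-j}$, and if $k=k^{(0)}_{1,-j}$ the statement is the trivial initial condition $T_{1,-j,k}=t_{1,-j}=0$.

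The projection of $(1,-j,k)$ onto $\bk_0$ has endpoints $j_0=1-(k+j)$ and $j_1=k-j-1$ (both even, with $k^{(0)}_{1,j_0}=k^{(0)}_{1,j_1}=1$), so that
\begin{equation*}
T_{1,-j,k}=\lim_{\ba\to 0}\bigl[N(j_0,j_1)(\{a\})\bigr]_{1,1}\,t_{1,j_1}(\{a\}).
\end{equation*}
In the ``large-$k$'' regime where $j_0<-r-1$ and $j_1\geq 0$, the central block $N(-r-1,0)(\{a\})=P_{r+1}(\{a\})$ tends to $P$ (Corollary~\ref{corP}), and Lemma~\ref{collapse} allows me to fully absorb matching columns on either side of $P$. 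Depending on which side is shorter, the product reduces either to $P\cdot N(k+j-r-2,\,j_1)$ (when $j\leq (r+1)/2$) or to $N(j_0,\,-r-1-j_1)\cdot P$ (when $j\geq(r+1)/2$). The desired $(1,1)$-entry then becomes respectively $[N(k+j-r-2,j_1)]_{r+1,1}$ or $[N(j_0,-r-1-j_1)]_{1,r+1}$. Each asks for a monotone path dropping through all $r$ rows (resp.\ rising through all $r$ rows) across only $r-2j+1$ (resp.\ $2j-r-1$) columns; since $1\leq j\leq r$ these column counts are strictly less than $r$ in both cases, so no such path exists and the entry vanishes identically.

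In the ``small-$k$'' regime where $-r-1\leq j_0$ and $j_1\geq 0$, I instead write $N(j_0,j_1)(\{a\})=P_{-j_0}(\{a\})\,N(0,j_1)$, and Lemma~\ref{pj} (or Corollary~\ref{corP} when $-j_0=r+1$) yields $(P_{-j_0})_{1,i}=\delta_{i,k+j}$ in the limit after a parity check using $-j_0=k+j-1$. The expression becomes $[N(0,j_1)]_{k+j,1}\,t_{1,j_1}$, and this vanishes because a path would need $k+j-1$ drops within only $j_1=k-j-1<k+j-1$ columns.

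The delicate case is $j_1<0$ (equivalently $k^{(0)}_{1,-j}<k\leq j-1$, which can occur when $j\geq 3$), where the projection sits entirely inside the regularized hole. Neither the $P$-collapse nor Lemma~\ref{pj} applies directly; the plan here is to invoke the polynomiality argument of Theorem~\ref{polth} (via the diamond decompositions of Lemma~\ref{squarelem}) to see that $[N(j_0,j_1)(\{a\})]_{1,1}$ remains a polynomial in the $a_i$'s, while the prefactor $t_{1,j_1}(\{a\})=a_{1,j_1}$ is a single monomial that vanishes in the limit. A telescoping along the unique surviving path inside the sub-square then shows the product tends to zero, completing the proof. This last case is the main technical obstacle, since the structural shortcuts used in the other regimes are unavailable and one must work directly inside the regularized block.
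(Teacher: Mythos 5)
Your handling of the regimes with $j_1\geq 0$ is correct and is essentially the paper's own argument: the paper splits these into three regions (its Regions 1, 2 and 5) and, exactly as you do, collapses against $P$ via Lemma~\ref{collapse} and Corollary~\ref{corP}, or reduces via Lemma~\ref{pj}, and then observes that the required path would have to drop (or rise) through more rows than there are columns available, since each column of the network changes the row index by at most one.

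The gap is in your ``delicate case'' $j_1<0$, which the paper treats as two separate regions. First, your description of it is inaccurate: when $r+1-j\leq k\leq j$ one has $j_0=1-k-j$, which can be strictly smaller than $-r-1$ (take $j=r$, $k=r-1$ with $r\geq 4$), so the projection does \emph{not} sit entirely inside the regularized square; part of the network lies to its left and carries the generic reflected data, and your argument says nothing about how that piece combines with the regularized block. Second, and more seriously, the proposed mechanism --- that $[N(j_0,j_1)(\{a\})]_{1,1}$ is a polynomial in the $a_i$'s, hence bounded, while the prefactor $a_{1,j_1}\to 0$ --- rests on a polynomiality claim that Theorem~\ref{polth} does not give you. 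That theorem applies only to the blocks $P_j(\{a\})=N(-j,0)(\{a\})$, whose unpaired column of chips sits against the $j=0$ column of $1$'s; for a sub-block with $j_1<0$ the unpaired column contributes genuine denominators and the disjoint-parity cancellation between the two diamond decompositions breaks down. Concretely, for $r=1$ the one-column block $N(-2,-1)(\{a\})=V_1(1,-1,a_1)$ has $(1,1)$-entry $-1/a_1$, so sub-blocks of the regularized square are not polynomial in general. The paper's actual argument here is the missing idea: by Lemma~\ref{squarelem} the relation $uv+ac=0$ deletes one edge from every diamond inside the square (the $(1,1)$ entry of each $VU$ diamond vanishes), and a direct inspection of the resulting triangular reachable region shows there is literally no path from vertex $1$ back to vertex $1$, so $[N(j_0,j_1)(\{a\})]_{1,1}$ vanishes \emph{identically}, for all values of the $a_i$'s, not merely in the limit. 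Your closing remark about ``telescoping along the unique surviving path'' cannot substitute for this: if the entry really were a bounded polynomial there would be nothing left to telescope, and if it is not, the single vanishing factor $a_{1,j_1}$ cannot be relied upon to cancel a $1/a$ divergence.
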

\begin{proof}
Writing
$$T_{1,-j,k}=\left[N(-j-k+1,k-j-1)\right]_{1,1}t_{1,k-j-1},$$
there are five regions
for the point $(-j,k)$, which are depicted in Fig.\ref{fig:regions}.

\begin{figure}
\centering
\includegraphics[width=7.cm]{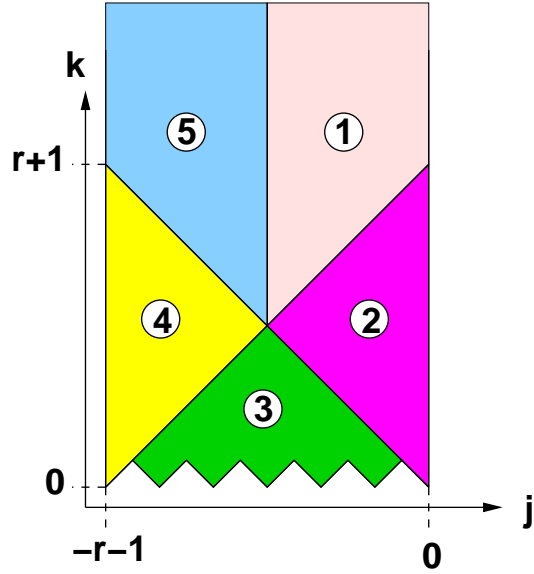}
\caption{The five regions in $(j,k)$ plane for the proof of $T_{1,-j,k}=0$.}\label{fig:regions}
\end{figure}

\noindent{\em Region 1:} $1\leq j\leq {r+1\over 2}$
and  $k\geq r+1-j$. In this case,
$$N(-j-k+1,k-j-1)=N(-j-k+1,-r-1)N(-r-1,0)N(0,k-j-1)=PN(k+j-r-2,k-j-1),$$
where we have used Lemma \ref{collapse}.
This yields $$T_{1,-j,k}=\left[N(k+j-r-2,k-j-1)\right]_{r+1,1}t_{1,k-j-1}.$$ 
The network for $N(k+j-r-2,k-j-1)$,
once decomposed into $VU$ diamonds as above, looks like:
$$  \raisebox{-1.5cm}{\hbox{\epsfxsize=9.cm \epsfbox{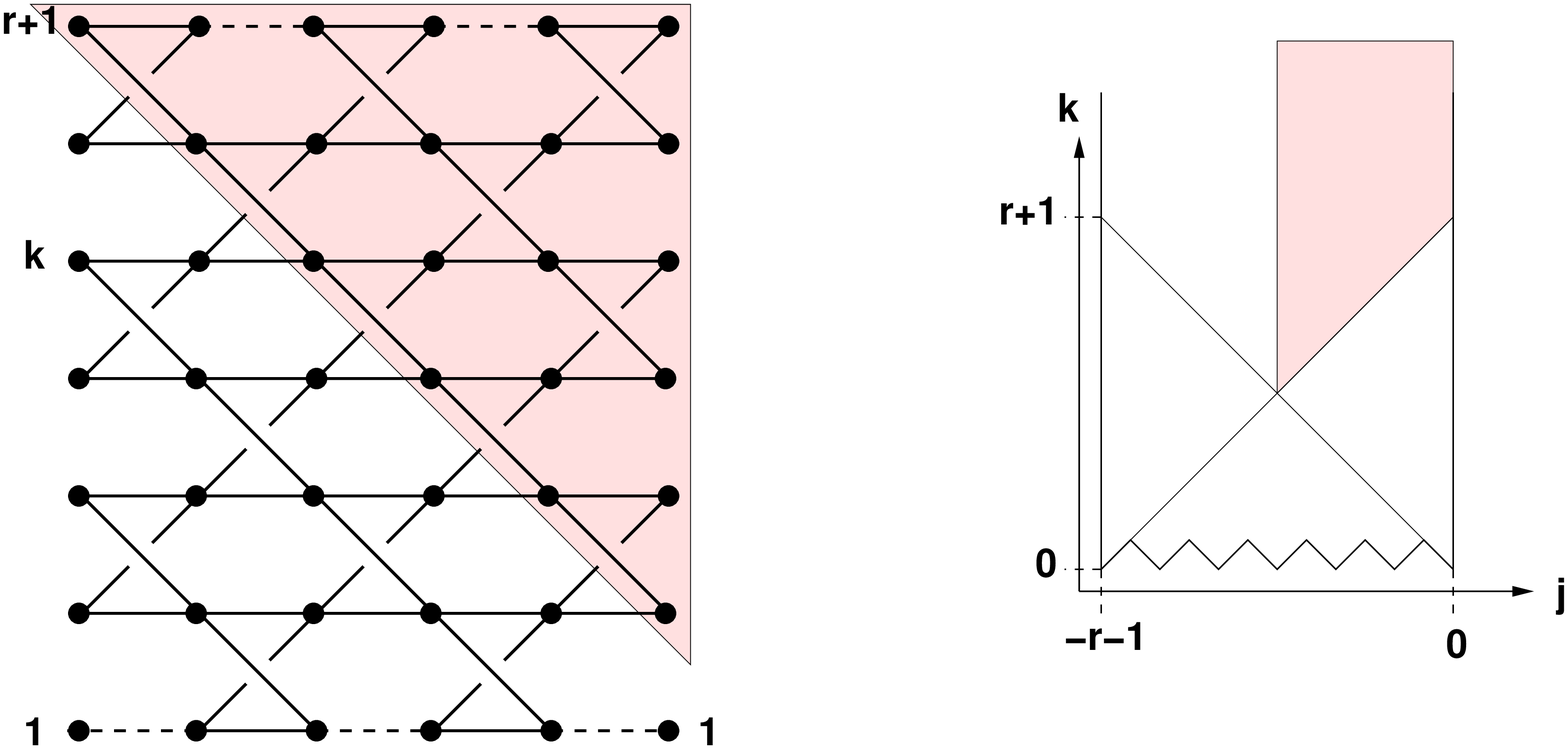}}}    $$
where we have represented a typical network in the diamond chip representation, and the corresponding region $1$
in $(j,k)$ space.
We have shaded the range of the paths from $r+1$.
The network  is in  a rectangle of width strictly smaller than its height (size $(r+1-2j)\times (r+1)$), hence there are
no path joins $r+1\to 1$. Therefore $T_{1,-j,k}=0$.

\noindent{\em Region 2:} $r+1-j>k\geq j$. Since
$$N(-j-k+1,k-j-1)=N(-j-k+1,0)N(0,k-j-1)=P_{j+k-1}N(0,k-j-1),$$ we have 
$$T_{1,-j,k}=\left[N(0,k-j-1)\right]_{2\lfloor{j+k-1\over 2}\rfloor+1,1}t_{1,k-j-1}.$$ 
The width of the network is
$k-j-1<2\lfloor{j+k+1\over 2}\rfloor+1=k+j$,
hence no path contributes, and $T_{1,-j,k}=0$.

\noindent{\em Region 3:} $0\leq k<j$
and  $k<r+1-j$. As $-r-1<-j-k+1<k-j-1<0$, we may use the matrix $N(-k-j+1,k-j-1)(\{a\})$
of the regularized network, with labels \eqref{eani}. The paths $1\to 1$ only see the triangle shaded
in the typical configuration below:
$$ \raisebox{-1.5cm}{\hbox{\epsfxsize=10.cm \epsfbox{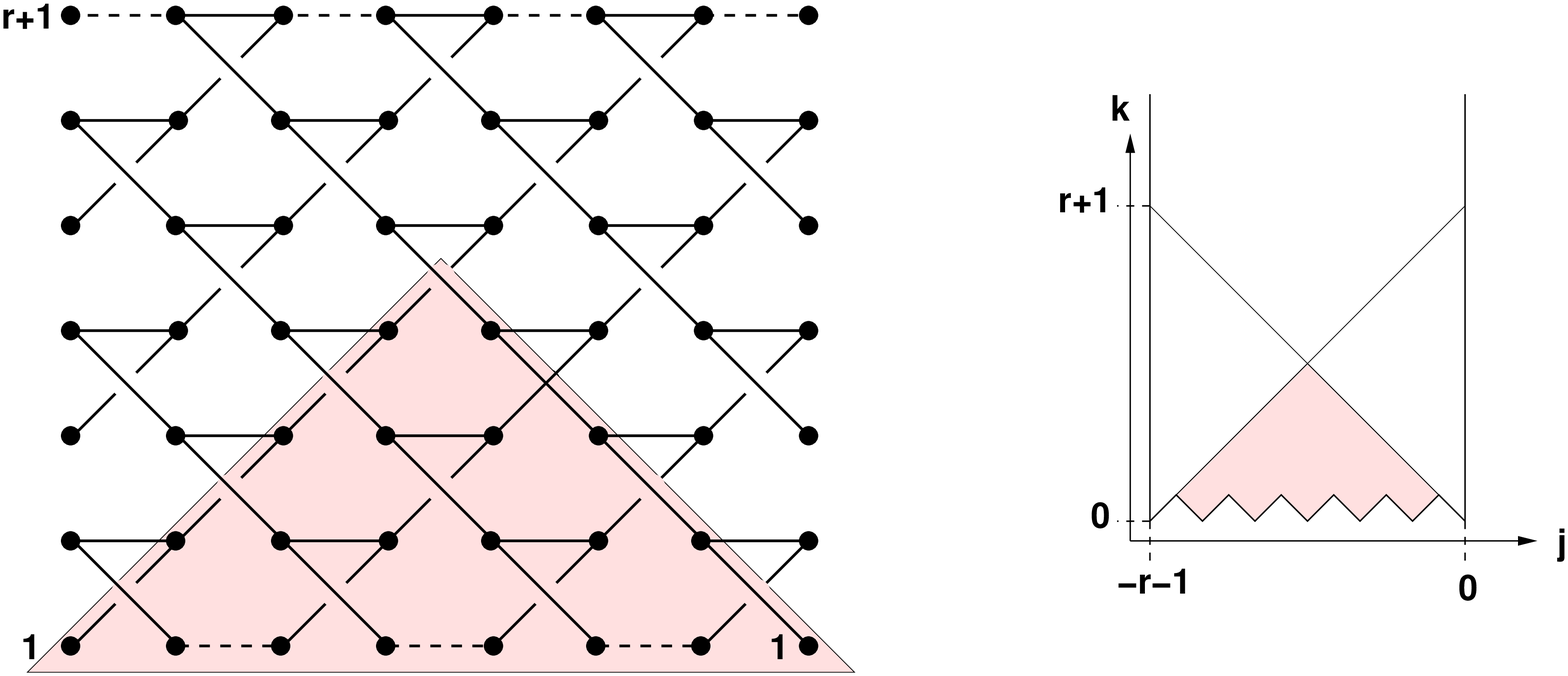}}}    $$
and it is clear that no path can go from $1\to 1$, hence 
$$T_{1,-j,k}=\left[N(-k-j+1,k-j-1)\right]_{1,1} t_{0,k-j-1}=0.$$

\noindent{\em Region 4:} $r+1-j\leq k\leq j$. We have the decomposition $N(-k-j+1,k-j-1)=N(-k-j+1,-r-1)N(-r-1,k-j-1)$.
As before, we may use the matrix $N(-r-1,k-j-1)(\{a\})$
of the regularized network, with labels \eqref{eani}. In the square decomposition, the complete network 
looks like:
$$  \raisebox{-1.5cm}{\hbox{\epsfxsize=12.cm \epsfbox{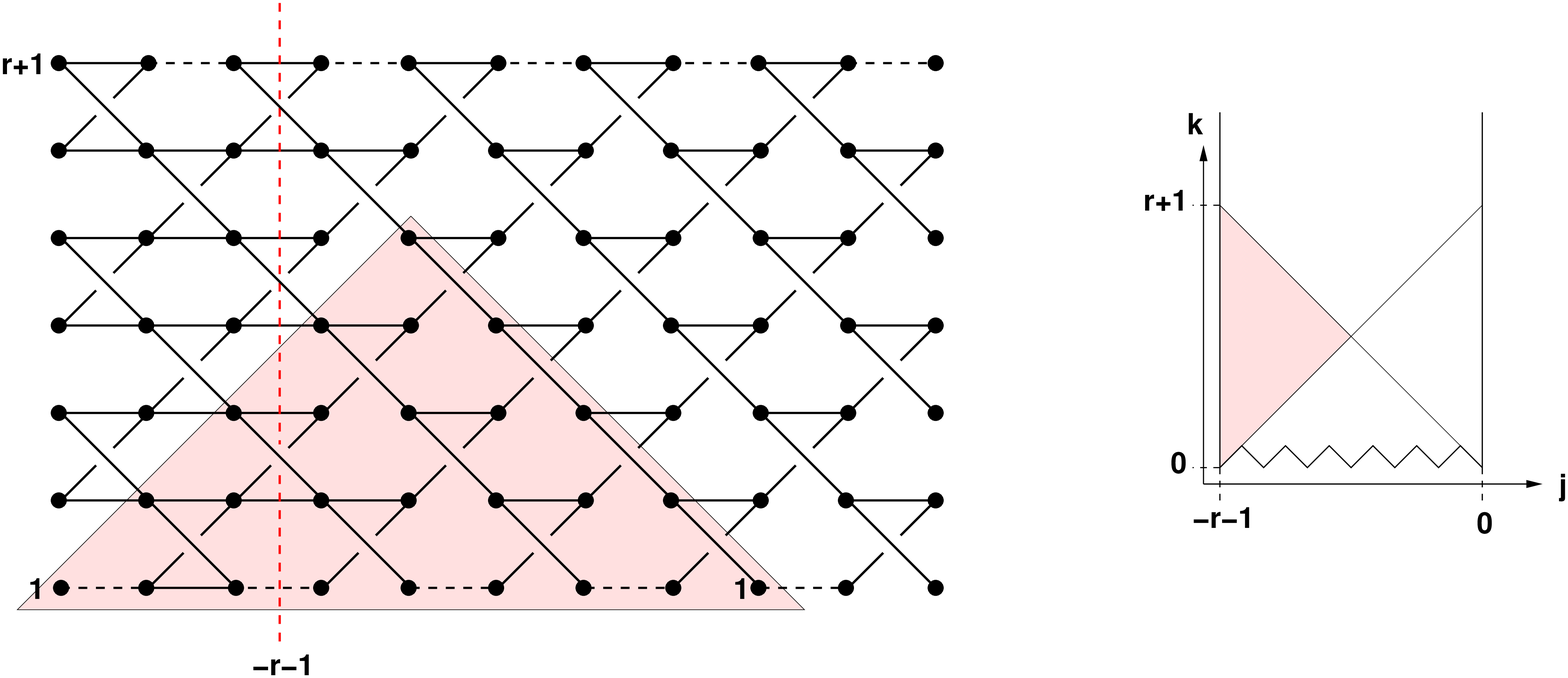}}}    $$
and there are no paths $1\to 1$, as in region 3.

\noindent{\em Region 5:} $r+1>j>{r+1\over 2}$ and $k>j$.
We have
$N(-j-k+1,k-j-1)=N(-j-k+1,-r-1)PN(0,k-j-1)=N(-j-k+1,j-k-r)P$, so that $T_{1,-j,k}=\left[N(-j-k+1,j-k-r)\right]_{1,r+1}$. Again,
the width of the network is $2j-r-1\leq r-1$, hence there is no path from $1\to r+1$, and
$T_{1,-j,k}=0$. 
\end{proof}

\subsection{Proof of the periodicity Theorem \ref{priociT}}\label{arfour}

Let $T_{i,j,k}$ be the solution of the $\ell$-restricted $A_r$ $T$-system (iv).
Using Theorem \ref{inithm}, it is equal to the solution $T_{i,j,k}$ of
the unrestricted $A_r$ $T$-system (i) subject to initial conditions $X(\bt^{[1,\ell]})$ on $\bk_0$.
We can also use the initial conditions on 
any integer translate $\bk_0+2m$, $m\in \Z$, of this surface.
Let $N=2(\ell+r+2)$.
Due to the determinant formula \eqref{wronsk} it is sufficient to consider $i=1$.
As in the $A_1$ case, we prove the more general half-periodicity theorem.

\begin{figure}
\centering
\includegraphics[width=16.cm]{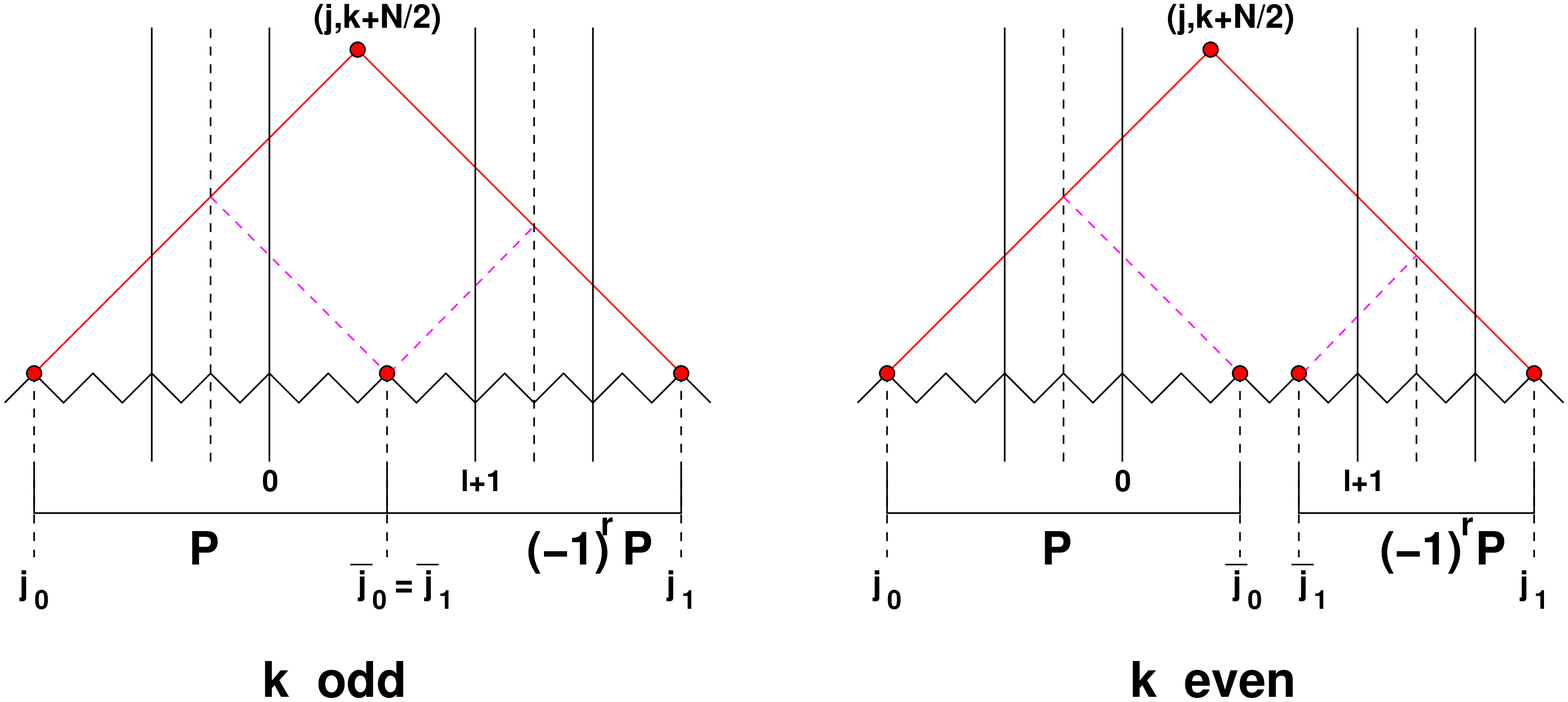}
\caption{The projection $[j_0,j_1]$ of $(1,j,k+N/2)$ onto $\bk$
is shown for $k$ odd and even respectively. For $k$ odd, $j_0=k-N/2$, $j_1=k+N/2$
and the reflections ${\bar j}_0={\bar j}_1=\ell+1-k$ coincide. For $k$ even, we have
$j_0=k-N/2+1$, $j_1=k+N/2-1$
and the reflections are ${\bar j}_0=\ell-k$ and ${\bar j}_1=\ell+2-k$. In both cases, we have indicated
the network matrices corresponding to the various segments.}\label{fig:fourposs}
\end{figure}

\begin{thm}\label{haper}
The solution of the $\ell$-restricted $T$-system satisfies $T_{1,j,k+\frac{N}{2}}=T_{r,\ell+1-j,k}$ 
for all $k\in \Z$, $j\in [1,\ell]$ such that $j+k+\ell+r$ is odd.
\end{thm}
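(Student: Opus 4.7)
The plan is to exploit Theorem \ref{inithm} to convert the problem into one about the \emph{unrestricted} $A_r$ $T$-system on the flat surface $\bk_0$ with initial data $X(\bt^{[1,\ell]})$, which enjoys period $N$ in $j$ together with the two reflection symmetries at the walls $j=0$ and $j=\ell+1$. Both $T_{1,j,k+\frac{N}{2}}$ and $T_{r,\ell+1-j,k}$ can then be expressed by Theorem \ref{soluT}, combined if needed with the Wronskian formula of Lemma \ref{elimdet} for the $T_r$ side, as $(1,1)$-entries of network matrices of the form $N(j_0,j_1)\,t_{1,j_1}$ built from the initial data along $\bk_0$. Thus the task reduces to showing that, after the symmetries of $\bt^{[1,\ell]}$ are taken into account, the long network matrix governing $T_{1,j,k+N/2}$ collapses to the short one governing $T_{r,\ell+1-j,k}$.

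To perform this collapse, I would first locate the projection endpoints $j_0,j_1$ of $(1,j,k+\frac{N}{2})$ onto $\bk_0$, as displayed in Figure \ref{fig:fourposs}, and split $N(j_0,j_1)$ at the walls $j=mN$ and $j=mN+\ell+1$ ($m\in\Z$). Regularizing the zero initial values as in Section \ref{artwo}, the left-wall block $N(-r-1,0)$ limits to $P$ by Corollary \ref{corP}, while the right-wall block $N(\ell+1,\ell+r+2)$ limits to $(-1)^r P$ by Lemma \ref{otherP}. The same identifications hold, by the periodicity \eqref{symtwo}, at all $N$-translates of these intervals.

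Next I would iteratively apply the collapse relations of Lemma \ref{collapse} at the left wall, together with their mirror-image versions at the right wall, obtained by combining \eqref{PUPa}--\eqref{PUPb} with the right-wall symmetry \eqref{tsymiii}. Each application consumes one $U$ or $V$ matrix from each side of a $P$ factor, exactly as in the $A_1$ argument behind Theorem \ref{halfperio}. For $k$ odd, the two reflected endpoints coincide at ${\bar j}_0={\bar j}_1=\ell+1-k$ and the collapses exhaust the entire interior of $N(j_0,j_1)$; what remains is a short product of $P$ and $(-1)^rP$ factors whose $(1,1)$-entry carries precisely the antidiagonal index reflection $1\mapsto r+1$ that transmutes the formula for $T_1$ into the network expression for $T_r$ at $(\ell+1-j,k)$ provided by Theorem \ref{finsol}. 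For $k$ even, the reflections ${\bar j}_0=\ell-k$ and ${\bar j}_1=\ell+2-k$ differ by $2$, leaving a single central column of $U$/$V$ matrices whose horizontal weights telescope to replace the prefactor $t_{1,j_1}$ by $t_{1,\ell+1-k}$, again matching the $T_r$ side.

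The main obstacle is the careful bookkeeping of signs. One must verify that the combined contribution of the $(-1)^r$ from the right-wall $P$, the entries $(-1)^{(r-1)(i-1)}$ of $P$ itself, the symmetry signs $(-1)^{ri}$ inherited from \eqref{symone} via reflection of labels across $j=-1$ and $j=\ell+2$, and the explicit $(-1)^r$ in the commutation $S\,P=(-1)^r P\,S$ of \eqref{Srela}, all conspire to $+1$. A secondary subtlety is identifying the collapsed $(1,1)$-entry with a $T_r$ value rather than a $T_1$ value: since $P$ sends the top-left entry of a matrix to its bottom-right entry up to a sign, the final expression naturally involves the index $r+1-1=r$, so that the Lindstr\"om--Gessel--Viennot reading of the remaining determinant yields the Wronskian representation of $T_{r,\ell+1-j,k}$ rather than of $T_{1,\ell+1-j,k}$.
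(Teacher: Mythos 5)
Your overall skeleton --- pass to the unrestricted system with data $\bt^{[1,\ell]}$ via Theorem \ref{inithm}, split the network matrix at the two walls, send the wall blocks to $P$ and $(-1)^rP$ by Corollary \ref{corP} and Lemma \ref{otherP}, and consume the remaining $U,V$ factors with the collapse relations of Lemma \ref{collapse}, treating $k$ odd and $k$ even separately --- is exactly the paper's. The genuine gap is in the endgame: how the collapsed expression is identified with $T_{r,\ell+1-j,k}$. After the collapses the left-hand side is a \emph{single scalar}: for $k$ odd it is $\bigl(P\cdot(-1)^rP\bigr)_{1,1}\,t_{1,j+\frac{N}{2}}=(-1)^r\,t_{1,j+\frac{N}{2}}$, and for $k$ even it is $(-1)^r\bigl(N(\ell-j,\ell+2-j)\bigr)_{r+1,r+1}\,t_{1,j+\frac{N}{2}-1}$. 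There is no $r\times r$ determinant left to read off via Lindstr\"om--Gessel--Viennot, so your plan to recover ``the Wronskian representation of $T_{r,\ell+1-j,k}$'' from the residue cannot be carried out as stated. Likewise, for $k$ odd the two $P$'s multiply to $(-1)^r\mathbb I$, so no antidiagonal index reflection survives to ``transmute $T_1$ into $T_r$''; for $k$ even the reflection merely selects the $(r+1,r+1)$ corner of the short central block, whose value is the ratio of initial data $t_{r,\ell+1-j}/t_{r,\ell+2-j}$, not a path expression for $T_r$ in the sense of Theorem \ref{finsol}.

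The missing ingredient is a choice of initial-data surface that turns the right-hand side into an initial datum. The paper takes $\bk=2m+\bk_0$, the translate of the flat surface passing through $(r,\ell+1-j,k)$, so that $T_{r,\ell+1-j,k}=t_{r,\ell+1-j}$ by definition; then combining the reflection \eqref{symone} with the $N$-periodicity \eqref{symtwo} gives $t_{1,j+\frac{N}{2}}=(-1)^r t_{r,\ell+1-j}$, and this sign cancels the $(-1)^r$ from the right wall, closing the case $k$ odd. For $k$ even one further computes $\bigl(N(\ell-j,\ell+2-j)\bigr)_{r+1,r+1}=t_{r,\ell+1-j}/t_{r,\ell+2-j}$ from the top $VU$ diamond of the surviving central column, which telescopes against the prefactor $t_{1,j+\frac{N}{2}-1}=(-1)^r t_{r,\ell+2-j}$. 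Working over $\bk_0$ itself, as you propose, leaves $T_{r,\ell+1-j,k}$ as a genuinely evolved quantity that would have to be evaluated independently as an $r$-path partition function and matched against a one-path expression --- a comparison your sketch does not supply and which the translation trick is designed to avoid.
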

\begin{proof}
We use the network solution of Theorem \ref{soluT}.
Choose the surface $\bk=2m+\bk_0$ to be the unique translation of $\bk_0$
passing through the point $(r,\ell+1-j,k)$. The integer $m$ is fixed by requiring $k-2m=k^{(0)}_{r,\ell+1-j}$, and
$$
k_{x,y} = \left\{ \begin{array}{ll}k-(x+y+1\, {\rm mod}\, 2),& \hbox{$k$ odd;}\\
k+1-(x+y+1\, {\rm mod}\, 2), &\hbox{$k$ even}.\end{array}\right.
$$
The corresponding initial conditions are $T_{x,y,k_{x,y}}=t_{x,y}$,
where $t_{x,y}$ are initial conditions of type $\bt^{[1,\ell]}$.

Figure \ref{fig:fourposs} shows the projection of the point $(1,j,k+\frac{N}{2})$ onto $\bk$ in the cases when $k$ is even and odd. Let $\epsilon=1-k \mod 2$. Then
\begin{eqnarray*}
T_{1,j,k+\frac{N}{2}}=
 N\left(j-\frac{N}{2}+\epsilon,j+\frac{N}{2}-\epsilon\right)_{1,1} \, t_{1,j+\frac{N}{2}-\epsilon} .
\end{eqnarray*}

When $k$ is odd, we write
\begin{eqnarray*}
N\left(j-\frac{N}{2},j+\frac{N}{2}\right) &=&N\left(j-\frac{N}{2},\ell+1-j\right)N\left(\ell+1-j,j+\frac{N}{2}\right)\\
&=& N(j-(\ell+r+2),-r-1)N(-r-1,0)N(0,\ell+1-j) \\
&& \ \ \ \times N(\ell+1-j,\ell+1)N(\ell+1,\ell+r+2)N(\ell+r+2,j+\ell+r+2)\\
&=& P \times (-1)^r P=(-1)^r\,  {\mathbb I}
\end{eqnarray*}
where we have used Lemma \ref{collapse}.
Using 
$t_{1,j+\frac{N}{2}}=(-1)^r t_{r,\ell+1-j}$, 
$$T_{1,j,k+\frac{N}{2}} =(-1)^r\,  {\mathbb I}_{1,1} (-1)^r t_{r,\ell+1-j}=T_{r,\ell+1-j,k}.$$

When $k$ even, the splitting yields analogously:
\begin{eqnarray*}
&&\!\!\!\!\!\!\!\!\!\!\!\!\!\!\!\!\!\!\!\!\!  N\Big(j-\frac{N}{2}+1,j+\frac{N}{2}-1\Big)\\ 
&=&N\Big(j-\frac{N}{2}+1,\ell-j\Big)N(\ell-j,\ell+2-j)N\Big(\ell+2-j,j+\frac{N}{2}-1\Big)\\
&=& N(j-(\ell+r+1),-r-1)N(-r-1,0)N(0,\ell-j)N(\ell-j,\ell+2-j)  \\
&&\times N(\ell+2-j,\ell+1)N(\ell+1,\ell+r+2)N(\ell+r+2,j+\ell+r+1)\\
&=& P \, N(\ell-j,\ell+2-j) \, (-1)^r P
\end{eqnarray*}
and we get
$$T_{1,j,k+\frac{N}{2}} =(-1)^r\, N(\ell-j,\ell+2-j)_{r+1,r+1} \, t_{1,j+r+\ell+1}=N(\ell-j,\ell+2-j)_{r+1,r+1} \, t_{r,\ell+2-j}. $$
As $k$ is even, we have $\ell-j=r+1$ mod 2, and  the partition function 
$N(\ell-j,\ell+2-j)_{r+1,r+1}$ only depends on the top part of the network matrix, namely the $VU$ diamond:
$$ V_{r}(v,a,b)U_{r}(b,c,u)=\, \raisebox{-1.5cm}{\hbox{\epsfxsize=3.cm \epsfbox{VUmp.eps}}}\, =\, 
\raisebox{-1.5cm}{\hbox{\epsfxsize=3.cm \epsfbox{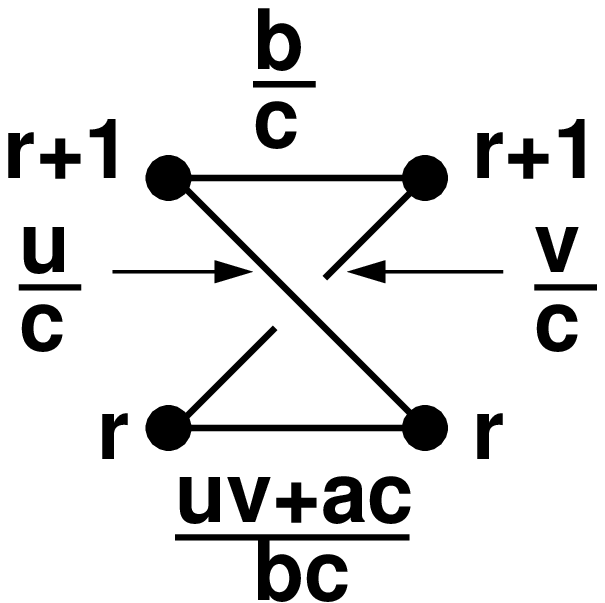}}} $$ 
with $b=t_{r,\ell+1-j}$ and $c=t_{r,\ell+2-j}$. 
Therefore,
$N(\ell-j,\ell+2-j)_{r+1,r+1}=t_{r,\ell+1-j}/t_{r,\ell+2-j}$, and
$$T_{1,j,k+\frac{N}{2}} = t_{r,\ell+1-j}/t_{r,\ell+2-j} \times t_{r,\ell+2-j} =T_{1,r,\ell+1-j}$$
\end{proof}

\begin{cor}
The solution of the $\ell$-restricted $A_r$ $T$-system satisfies the following half-periodicity relation:
\begin{equation}\label{otherperiod}
T_{i,j,k+\frac{N}{2}}= T_{r+1-i,\ell+1-j,k} \qquad (i\in [1,r],j\in [1,\ell],k\in \Z) 
\end{equation}
with $N=2(\ell+r+2)$.
\end{cor}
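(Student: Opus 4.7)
The plan is to bootstrap the half-periodicity from the $i=1$ case (Theorem~\ref{haper}) to arbitrary $i\in[1,r]$ by means of the discrete Wronskian formula of Lemma~\ref{elimdet}. The key observation is that the $A_r$ $T$-system, together with its boundary conditions \eqref{arboundary} and its $\ell$-restricted boundary conditions \eqref{restrictell}, is manifestly invariant under the involution $i\mapsto r+1-i$, so the Wronskian formula can be applied from either end of the $A_r$ strip.

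First, I apply Lemma~\ref{elimdet} to expand
\begin{equation*}
T_{i,j,k+N/2} \;=\; \det_{1\leq a,b\leq i}\bigl(T_{1,\,j+a-b,\,k+N/2+a+b-i-1}\bigr).
\end{equation*}
Each entry has first index $1$, so Theorem~\ref{haper} applies (a short parity check on $j+a-b+k+a+b-i-1+\ell+r$ confirms the required oddness, since $i+j+k$ is constrained to have fixed parity on the $T$-system lattice). This substitution yields
\begin{equation*}
T_{i,j,k+N/2} \;=\; \det_{1\leq a,b\leq i}\bigl(T_{r,\,(\ell+1-j)+(b-a),\,k+a+b-i-1}\bigr),
\end{equation*}
and, since a determinant equals its transpose, swapping the roles of $a$ and $b$ gives the equivalent form
\begin{equation*}
T_{i,j,k+N/2} \;=\; \det_{1\leq a,b\leq i}\bigl(T_{r,\,(\ell+1-j)+(a-b),\,k+a+b-i-1}\bigr).
\end{equation*}

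Next, I use the symmetry $i\leftrightarrow r+1-i$. Defining $T'_{i,j,k}:=T_{r+1-i,j,k}$, the map $T\mapsto T'$ sends solutions of the $A_r$ (or $\ell$-restricted $A_r$) $T$-system to solutions with the same boundary conditions, because the roles of the walls $i=0$ and $i=r+1$ are simply interchanged. Applying Lemma~\ref{elimdet} to $T'$ and relabelling gives the ``dual'' Wronskian identity
\begin{equation*}
T_{r+1-i,\,j,\,k} \;=\; \det_{1\leq a,b\leq i}\bigl(T_{r,\,j+a-b,\,k+a+b-i-1}\bigr).
\end{equation*}
Specialising this identity at $(j,k)\mapsto(\ell+1-j,k)$ yields exactly the determinant obtained for $T_{i,j,k+N/2}$, completing the proof.

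The only real subtlety is recognising that the dual Wronskian formula is legitimately available: this rests on the combined $i\leftrightarrow r+1-i$ symmetry of the recursion and of the boundary conditions at $i=0,\,r+1$ and at $j=0,\,\ell+1$. Everything else reduces to the parity bookkeeping for Theorem~\ref{haper} and the elementary transposition trick inside the determinant.
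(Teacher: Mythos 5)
Your proof is correct and follows essentially the same route as the paper: expand $T_{i,j,k+N/2}$ via the discrete Wronskian of Lemma~\ref{elimdet}, apply Theorem~\ref{haper} entrywise, transpose, and identify the result with the dual Wronskian expressing $T_{r+1-i,\ell+1-j,k}$ in terms of the $T_{r,j',k'}$'s. The only (cosmetic) difference is that you justify the dual Wronskian by applying Lemma~\ref{elimdet} to the reflected solution $T'_{i,j,k}=T_{r+1-i,j,k}$, whereas the paper rederives it directly from the Desnanot--Jacobi identity; both amount to running the same induction from the $i=r+1$ wall.
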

\begin{proof}
 Lemma \ref{wronsk} gives
\begin{equation}\label{dejac}
T_{i,j,k+\frac{N}{2}}=\det_{1\leq a,b \leq i}(T_{1,j+a-b,k+\frac{N}{2}+a+b-i-1})
=\det_{1\leq a,b \leq i}(T_{r,\ell+1-j-a+b,k+a+b-i-1}).
\end{equation}
Using the Desnanot-Jacobi identity \eqref{desnajac} it is possible to write solutions $T_{i,j,k}$
 with $i<r$ in terms of $(r+1-i)\times (r+1-i)$ determinants of the $T_{r,j',k'}$'s,
$$ T_{i,j,k}=\det_{1\leq a,b \leq r+1-i}(T_{r,j+a-b,k+a+b-r-2+i}) .$$
Threfore,
$$T_{r+1-i,\ell+1-j,k}=\det_{1\leq a,b \leq i}(T_{r,\ell+1-j+a-b,k+a+b-i-1}) ,$$
Comparing this with \eqref{dejac}, and noting that the transposed matrix has the same determinant
yields \eqref{otherperiod}.
\end{proof}

In particular, we have that $T_{r,j,k+\frac{N}{2}}=T_{1,\ell+1-j,k}$. Combining this with Theorem \ref{haper},
we deduce that $T_{1,j,k+N}=T_{1,j,k}$, and therefore $T_{i,j,k+N}=T_{i,j,k}$.
This completes the proof of the periodicity.

\subsection{Positivity: Proof of Theorems \ref{haposi} and \ref{positiT}}\label{arfive}

Theorem \ref{haposi} is the claim that solutions of the $A_r$ $T$-system of type (ii) are Laurent polynomials with non-negative integer coefficients of the initial data $\bt$.

\begin{lemma}\label{posibar} 
The solutions $T_{1,j,k}$ of the half-space $A_r$ $T$-system of type (ii) with initial conditions $X^+(\bt)$ 
\eqref{initcondii}
on the surface $\bk_0=\{(i,j,i+j\mod 2):i\in[1,r], j>0\}$ are Laurent polynomials of $\{t_{i,j}: i\in[1,r],j>0\}$ with non-negative integer coefficients.
\end{lemma}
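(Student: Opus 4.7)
The plan is to combine Theorem \ref{plusimple} with the network formula of Theorem \ref{soluT}, using the regularization machinery of Section \ref{artwo} to handle the zeros and signs appearing in $\bt^+$. By Theorem \ref{plusimple}, the half-space solution with data $X^+(\bt)$ agrees, for $j>0$, with the unrestricted $A_r$ solution with data $X(\bt^+)$, and Theorem \ref{soluT} expresses it as
$$T_{1,j,k}=\bigl[N(j_0,j_1)\bigr]_{1,1}\,t_{1,j_1},$$
where $[j_0,j_1]$ is the projection of $(1,j,k)$ onto $\bk_0$. Since $j>0$ we automatically have $j_1\geq j>0$, so the prefactor $t_{1,j_1}$ and the rightmost columns of $N(j_0,j_1)$ already live in the positive (unsigned, nonzero) sector of $\bt^+$; the only potential obstruction to positivity comes from the left portion of the network, where $j\leq 0$.

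We perform a case analysis on $j_0$. If $j_0\geq 1$, there is nothing to do, as $N(j_0,j_1)$ already involves only $\{t_{i,j}:j>0\}$. If $-r-1\leq j_0\leq 0$, we factor $N(j_0,j_1)=P_{-j_0}(\{a\})\,N(0,j_1)$ using the regularized matrices of Section \ref{artwo}, and take $a_i\to 0$; Lemma \ref{pj} shows the first row of $P_{-j_0}$ collapses to the basis vector $e_{2\lfloor -j_0/2\rfloor+1}^{T}$, so the $(1,1)$ entry reduces to the single matrix element $[N(0,j_1)]_{2\lfloor -j_0/2\rfloor+1,\,1}$ of a network over positive data. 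If $j_0=-r-1-m$ with $m\geq 1$, we split $N(j_0,j_1)=N(j_0,-r-1)\,N(-r-1,0)\,N(0,j_1)$; Corollary \ref{corP} replaces the middle factor by $P$ in the limit, and iterating Lemma \ref{collapse} pairs each reflected column $N_{-r-1-j'}$ on the left with the positive-side column $N_{j'}$ on the right across $P$, absorbing all $m$ outer columns on each side into a single $P$. The net effect is
$$\bigl[N(j_0,j_1)\bigr]_{1,1}=\bigl[P\,N(m,j_1)\bigr]_{1,1}=\bigl[N(m,j_1)\bigr]_{r+1,1}.$$

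In every case the reduced expression is a single entry of a network matrix $N(j',j_1)$ with $j'\geq 0$, whose edge weights are Laurent monomials with non-negative integer coefficients in $\{t_{i,j}:i\in[1,r],\,j>0\}$. The Lindstr\"om--Gessel--Viennot theorem \cite{LGV1,LGV2} then realizes this entry as the generating function of non-intersecting weighted path families on the underlying planar acyclic network, hence a Laurent polynomial with non-negative integer coefficients. Multiplying by $t_{1,j_1}$ preserves positivity and proves the lemma.

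The main technical obstacle is the iterated collapse in the third case. Lemma \ref{collapse} is only a pairwise identity for one $U$--$V$ pair, so collapsing a full column through $P$ requires applying it $r$ times (once for each $i\in[1,r]$), and then iterating across all $m$ consecutive columns. The mechanism works because the reflection symmetry \eqref{tsymii} of $\bt^+$ matches, up to the signs absorbed by \eqref{proj}, each $V_{r+1-i}$-factor on the reflected side with a $U_i$-factor on the positive side, while the parity structure of the flat surface $\bk_0$ produces exactly the required alternation of $U$'s and $V$'s at $j=-r-1-j'$ versus $j=j'$. Theorem \ref{polth} keeps all regularized intermediate matrices polynomial in $\ba$, so the limit $a_i\to 0$ commutes with the matrix manipulations and all denominators introduced by the regularization cancel.
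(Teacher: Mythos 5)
Your argument is essentially the paper's own proof: the same three-way split on the position of the projection minimum $j_0$, reduction via Lemma \ref{pj}, Corollary \ref{corP} and the collapse relations of Lemma \ref{collapse} to a single entry of a network matrix over the positive part of $\bt^{+}$, followed by Lindstr\"om--Gessel--Viennot and the determinant formula for positivity. The one discrepancy is the boundary value $j_0=-r-1$: you place it in the middle case and invoke Lemma \ref{pj}, which is stated only for $j\in[1,r]$ (and your index $2\lfloor -j_0/2\rfloor+1=r+2$ overflows the matrix size when $r$ is odd), whereas the paper assigns it to the case $j_0\leq -r-1$, where Corollary \ref{corP} gives $P_{r+1}\to P$ and hence the starting point $(r+1,0)$.
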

\begin{proof} First, use Theorem \ref{plusimple} to identify $T_{1,j,k}$ as the solution of the $A_r$
$T$-system of type (i) with initial data $\bt^+$. Consider the projection of $(1,j,k)$ onto $\bk_0$,
with minimum $j_0$ and maximum $j_1$. We will show that the partition function for paths starting at $(1,j_0)$
and ending at $(1,j_1)$ on the network $N(j_0,j_1)$, and
with weights $\bt^+$, is equal to the partition function from $(i_0,\bar j_0)$ to $(1,j_1)$ on the network $N(\bar j_0,j_1)$,
for some $i_0\in [1,r+1]$ and $\bar j_0 \in [0,j_1]$. 
The latter portion $N(\bar j_0,j_1)$ of the network has only positive weights from the set $\bt^+$, hence positivity follows. 

The formula for $(i_0,\bar j_0)$ depends on the value of $j_0$. Three cases may occur:

\begin{itemize}
\item $j_0\geq 0$: $(i_0,\bar j_0)=(1,j_0)$. The solution is identical to that of the unrestricted $A_r$
$T$-system of type (i),  and positivity follows from Theorem \ref{unposiT}.

\item $j_0\leq -r-1$: $(i_0,\bar j_0)=(r+1,-r-1-j_0)$. This is a consequence of the collapse relations 
on the network solution,
$$ T_{1,j,k}=\big[ N(j_0,j_1)\big]_{1,1}\, t_{1,j_1} 
=\big[ PN({\bar j}_0,j_1)\big]_{1,1}\, t_{1,j_1}
=\big[ N({\bar j}_0,j_1)\big]_{r+1,1}\, t_{1,j_1},$$
where ${\bar j}_0=-r-1-j_0$ (see Figure \ref{fig:bouquetfinal} for an illustration). 

\begin{figure}
\centering
\includegraphics[width=15.cm]{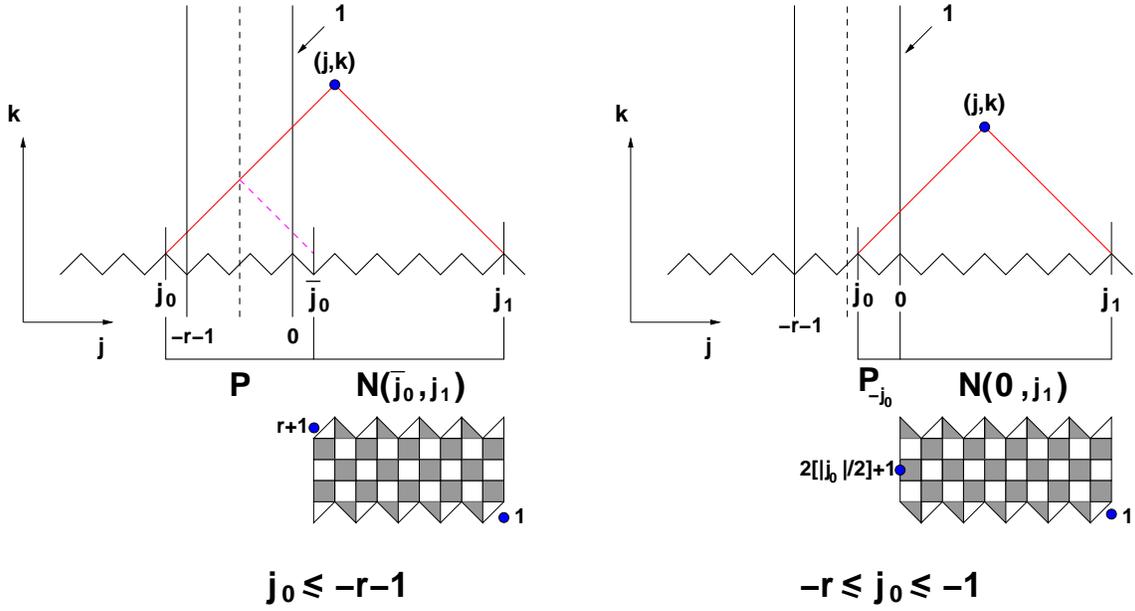}
\caption{The cases (ii) $j_0\leq -r-1$ and (iii) $-r\leq j_0 \leq -1$ for the position of the minimum
of the projection of $(1,j,k)$ onto $\bk_0$. We have represented in both cases the resulting network
and the position of the starting and ending point of the paths whose partition function
produces $T_{1,j,k}/T_{1,j_1,k_{j_1}^{(0)}}$.}\label{fig:bouquetfinal}
\end{figure}

\item $-r\leq j_0 \leq -1$: $(i_0,\bar j_0)=(2\lfloor\frac{|j_0|}{2}\rfloor+1,0)$. This follows by applying Lemma \ref{pj}
to the network solution:
\begin{equation*} 
T_{1,j,k}=\big[ N(j_0,j_1)\big]_{1,1}\, t_{1,j_1} =\big[ P_{-j_0}N(0,j_1)\big]_{1,1}\, t_{1,j_1}
=\big[N(0,j_1)\big]_{2\lfloor\frac{|j_0|}{2}\rfloor+1,1}\, t_{1,j_1}.\end{equation*}
(see the right of Figure \ref{fig:bouquetfinal} for an illustration).
\end{itemize}
\end{proof}

\begin{cor}\label{barposi}
 The solutions $T_{i,j,k}$ of the half-space $A_r$ $T$-system of type (ii) are non-negative Laurent polynomials of the initial data $\{t_{i,j}: i\in[1,r],j>0\}$, assigned at points on $\bk_0$ with $j>0$.
\end{cor}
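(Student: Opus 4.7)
The strategy is to combine the Wronskian formula of Lemma~\ref{elimdet} with Lemma~\ref{posibar} and apply the Lindström--Gessel--Viennot theorem on a positively weighted network.

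First, Theorem~\ref{plusimple} identifies $T_{i,j,k}$ (for $j>0$) with the solution of the unrestricted $A_r$ $T$-system with initial data $\bt^+$ along $\bk_0$, and Theorem~\ref{finsol} then expresses
$$
T_{i,j,k}\;=\;Z_{j_0(1),\ldots,j_0(i)}^{j_1(1),\ldots,j_1(i)}(j_0,j_1)\,\prod_{a=1}^{i}t_{1,j_1(a)},
$$
where $Z$ is the weighted partition function of $i$ non-intersecting paths on the network $N(j_0,j_1)$ built from $\bt^+$. All the endpoints $j_1(a)$ are positive (since $j>0$), so the prefactors are honest initial data from $\bt$. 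The only obstruction to positivity is that when $j_0<0$ some portion of $N(j_0,j_1)$ carries sign-flipped data $-t$ or the regularized entries $a_{i,j}$ that are taken to $0$.

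Second, I would collapse the ``negative-$j$'' portion of the network exactly as in the proof of Lemma~\ref{posibar}, but carried out simultaneously on the full non-intersecting family. The identity $\lim N(-r-1,0)=P$ (Corollary~\ref{corP}) together with iterated applications of Lemma~\ref{collapse} and Lemma~\ref{pj} produces a factorization of the form
$$
N(j_0,j_1)\;=\;P\cdot N(\bar\jmath_0,j_1)\qquad\text{or}\qquad N(j_0,j_1)\;=\;P_{-j_0}\cdot N(0,j_1),
$$
depending on which of the three regimes $j_0\leq -r-1$, $-r\leq j_0\leq -1$, or $j_0\geq 0$ applies. In every regime, the surviving factor is a network whose edge weights are non-negative Laurent monomials in $\bt$. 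The action of the (near-)permutation factor on the family of paths is to translate the $i$ starting points $(1,j_0(b))$ to new starting points $(i_0(b),\bar\jmath_0(b))$ lying on either the left boundary (case $j_0\leq -r-1$, with starting row $r+1$) or on the column $j=0$ (case $-r\leq j_0\leq -1$, with starting row $2\lfloor|j_0(b)|/2\rfloor+1$) of the reduced positively-weighted network.

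Third, by the Lindström--Gessel--Viennot theorem applied to the reduced network with the translated starting points and the original endpoints $(1,j_1(a))$, the quantity $Z$ equals the partition function of $i$ non-intersecting paths with non-negative Laurent monomial weights in $\bt$. Multiplying by the manifestly positive prefactor $\prod_a t_{1,j_1(a)}$ yields Theorem's claim. The main obstacle is to verify that the translated starting points respect the planar ordering required by LGV, so that the resulting signed determinant really coincides with the non-intersecting path sum without cancellation, and that the three reflection regimes are compatible when the $j_0(b)$ span more than one regime. Since the $j_0(b)$ form an arithmetic progression of step~$2$ and the reflection maps of Lemma~\ref{posibar} send ordered starting positions to ordered positions on the boundary of the reduced network, this compatibility reduces to a finite check at the interfaces $j_0=-r-1$ and $j_0=0$, which mirrors the checks already performed in the five-region analysis preceding Lemma~\ref{posibar}.
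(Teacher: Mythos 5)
Your proposal is correct and follows essentially the same route as the paper: the Wronskian determinant of Lemma \ref{elimdet}, the reduction of each entry $T_{1,j+a-b,k+a+b-i-1}/t_{1,j_1(a)}$ to a single-path partition function with reflected starting point $(i_0(b),\bar\jmath_0(b))$ via the three regimes of Lemma \ref{posibar}, and then the Lindstr\"om--Gessel--Viennot theorem on the positively weighted network $N(0,j_1)$. The compatibility of starting points across regimes that you flag as the main obstacle is indeed left implicit in the paper, but your observation that the $j_0(b)$ form an arithmetic progression and that the reflections preserve the ordering is the correct resolution.
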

\begin{proof}
We use the determinant formula \eqref{wronsk} for $T_{i,j,k}$.
Let $j_0(b)=j-k+i-2(b-1)$, $b=1,2,\ldots ,i$ and $j_1(a)=j+k-i+2(a-1)$,
$a=1,2,\ldots ,i$ be respectively the minima and the maxima of the projections
of the $i^2$ points $(1,j+a-b,k+a+b-i-1)$ involved in the formula.
From the proof of the previous lemma,
the quantity $Z_{a,b}=T_{1,j+a-b,k+a+b-i-1}/t_{1,j_1(a)}$
is the partition function for paths on $N(0,j_1)$, ending at  position $(1,j_1(a))$
and starting at position
$(i_0(b),\bar j_0(b))$, defined as the pair $(i_0,\bar j_0)$ of Lemma \ref{posibar} for $j_0=j_0(b)$.
The Lindstr\"om-Gessel-Viennot Theorem \cite{LGV1,LGV2} gives an interpretation of the 
determinant 
$T_{i,j,k}/\prod_{a=1}^i t_{1,j_1(a)}
=\det_{1\leq a,b\leq i} (Z_{a,b})$, as the partition
function of $i$ non-intersecting paths on the network of $N(0,j_1)$ 
with $i$ starting points $(i_0(b),\bar j_0(b))$ ($b\in[1,i]$),
and with $i$ ending points at positions $(1,j_1(a))$ ($a\in[1,i]$).
We deduce the positive Laurent property from the positivity of weights.
\end{proof}
We have the analogous result for the left half-space $T$-system:
\begin{lemma}\label{leftCor}
The solutions of the left half-space $A_r$ $T$-system (iii) are Laurent polynomials with non-negative integer 
coefficients of the initial data $\{t_{i,j}: i\in[1,r], j\leq \ell\}$ assigned at the points of the surface $\bk_0$ with $j\leq \ell$.
\end{lemma}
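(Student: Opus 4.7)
The plan is to deduce Lemma \ref{leftCor} from Corollary \ref{barposi} using the reflection symmetry $\sigma$ introduced in the proof of Theorem \ref{corolla}. Recall that $\sigma$ is defined by $\sigma(i,j,k)=(i,\ell+1-j,k)$ for odd $\ell$ and $\sigma(i,j,k)=(i,\ell+1-j,1-k)$ for even $\ell$; it preserves the flat surface $\bk_0$, is an automorphism of the unrestricted octahedron relation, and acts on initial values by $t_{i,j}\mapsto t_{i,\ell+1-j}$.

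First, I would observe that $\sigma$ exchanges the left half-space boundary condition $T_{i,\ell+1,k}=1$ with the right half-space boundary condition $T_{i,0,k}=1$, and carries the admissible range $\{j\leq\ell\}$ bijectively onto $\{j\geq 1\}$. Therefore, if $T_{i,j,k}$ denotes the solution of the left half-space $A_r$ $T$-system (iii) with initial values $\bt=\{t_{i,j}\}_{i\in[1,r],j\leq\ell}$ on the restricted flat surface, then the pulled-back function $\widetilde{T}_{i,j,k}:=T_{\sigma^{-1}(i,j,k)}$ solves the right half-space $A_r$ $T$-system (ii) with initial values $\widetilde{t}_{i,j}:=t_{i,\ell+1-j}$, $j\geq 1$, on $\bk_0$. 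By uniqueness of the half-space solution (Remark \ref{uniqueness}), the two solutions are related by this substitution throughout their respective domains.

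Applying Corollary \ref{barposi} to $\widetilde{T}_{i,j,k}$, it is a Laurent polynomial with non-negative integer coefficients in the $\widetilde{t}_{i,j}$'s. Substituting back $\widetilde{t}_{i,j}=t_{i,\ell+1-j}$ expresses $T_{i,j,k}$ as a Laurent polynomial with non-negative integer coefficients in the original initial values $\{t_{i,j}\}_{j\leq\ell}$, which is exactly the claim of the lemma.

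The only delicate point — and essentially where the main obstacle would lie if one tried a direct combinatorial argument — is checking that $\sigma$ truly preserves the stepped-surface parity constraint $i+j+k\equiv 0\pmod 2$ defining $\bk_0$. This is precisely why the $k\mapsto 1-k$ flip is inserted in the definition of $\sigma$ when $\ell$ is even, and it is already handled by the identity $\sigma(\bk_0)=\bk_0$ recorded in the proof of Theorem \ref{corolla}, so no extra bookkeeping is required. As a backup, one could instead mirror the proof of Lemma \ref{posibar} verbatim: use Theorem \ref{corolla} to convert to the unrestricted problem with initial data $\bt^-$, and then invoke the collapse relations on the right side via the reflected network matrix $\widetilde{P}_{r+1}$ of Lemma \ref{otherP} together with Lemma \ref{otherPlim}, dividing into three cases according to whether $j_1\leq\ell+1$, $\ell+2\leq j_1\leq\ell+r+1$, or $j_1\geq\ell+r+2$; each case produces a partition function of weighted non-intersecting paths with non-negative weights by Lindström--Gessel--Viennot, and the extension from $i=1$ to general $i$ follows from the Wronskian formula \eqref{wronsk} exactly as in Corollary \ref{barposi}.
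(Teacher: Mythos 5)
Your primary argument is correct and takes a genuinely different route from the paper. The paper proves Lemma \ref{leftCor} by redoing the network analysis of Lemma \ref{posibar} on the right end of the projection: it invokes Theorem \ref{corolla} to pass to the unrestricted system with data $\bt^-$, then splits into the three cases $j_1\leq\ell+1$, $j_1\geq\ell+r+2$, and $\ell+1<j_1<\ell+r+2$, using the reflected collapse matrix of Lemma \ref{otherP} and Lemma \ref{otherPlim} to relocate the endpoint to $(i_1,\bar j_1)$ with $\bar j_1\leq\ell+1$, and finally applies Lindstr\"om--Gessel--Viennot through the Wronskian \eqref{wronsk} --- exactly your ``backup'' argument. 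Your main route instead conjugates by the reflection $\sigma$ and quotes Corollary \ref{barposi}; the verifications you need ($\sigma$ preserves the octahedron relation and $\bk_0$, swaps the two wall conditions and the two half-space ranges, and acts on the data by $t_{i,j}\mapsto t_{i,\ell+1-j}$) all check out, including the parity bookkeeping that forces the $k\mapsto 1-k$ flip for even $\ell$, so by uniqueness (Remark \ref{uniqueness}) the two solutions match under the substitution and positivity transports. This is shorter and avoids Lemmas \ref{otherP} and \ref{otherPlim} entirely. The one thing the symmetry argument does not deliver is the explicit endpoint data $(\lambda_1,i_1,\bar j_1)$ in the three cases, which the paper's proof records and then reuses verbatim in the proof of Theorem \ref{positiT} for the two-wall geometry (where left and right reflections must be combined on a single network and cannot both be removed by a global $\sigma$); so if you adopt the symmetry proof you would still need your backup computation, or an equivalent, before tackling the $\ell$-restricted case.
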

\begin{proof}
The proof is identical to that of Lemma \ref{posibar} and Corollary \ref{barposi}. We start from Theorem \ref{corolla}
to identify $T_{1,j,k}$ as the solution of the $A_r$
$T$-system of type (i) with initial data $\bt^-$. Consider the projection of $(1,j,k)$ onto $\bk_0$,
with minimum $j_0$ and maximum $j_1$. We will show that the product of $t_{1,j_1}$
with the partition function for paths starting at $(1,j_0)$
and ending at $(1,j_1)$ on the network $N(j_0,j_1)$, and
with weights $\bt^-$, is equal to the product of $t_{\lambda_1,\bar j_1}$ with the 
partition function from $(1,j_0)$ to $(i_1,\bar j_1)$ 
on the network $N(j_0,\bar j_1)$,
for some $\lambda_1\in [1,r]$, $i_1\in [1,r+1]$ and $\bar j_1 \in [j_0,\ell+1]$. 
The latter portion $N(j_0,\bar j_1)$ of the network has only positive weights from the set $\bt^-$, hence positivity follows. 

The formula for $(\lambda_1,i_1,\bar j_1)$ reads as follows. Three cases may occur:

\begin{itemize}
\item $j_1\leq \ell+1$: $(\lambda_1,i_1,\bar j_1)=(1,1,j_1)$. The solution is identical to that of the unrestricted 
$A_r$ $T$-system of type (i), 
and positivity follows from Theorem \ref{unposiT}.

\item $j_1\geq \ell+r+2$: $(\lambda_1,i_1,\bar j_1)=(r,r+1,2\ell+r+3-j_1)$. This is a consequence of the collapse relations 
on the network solution, and of the symmetries of $\bt^-$:
$$ T_{1,j,k}=\big[ N(j_0,j_1)\big]_{1,1}\, t_{1,j_1} 
=\big[ N(j_0,{\bar j}_1) (-1)^r P\big]_{1,1}\, (-1)^rt_{r,{\bar j}_1}
=\big[ N(j_0,{\bar j}_1)\big]_{r+1,1}\, t_{r,{\bar j}_1},$$
where we have used Lemma \ref{otherP} and ${\bar j}_1=2\ell+r+3-j_1$. 

\item $\ell+1<j_1<\ell+r+2$: $(\lambda_1,i_1,\bar j_1)=(1,a_\ell(j_1-\ell-1),\ell+1)$, with $a_\ell(x)$ is as in \eqref{defaell}.
This follows by applying Lemma \ref{otherPlim} to the (regularized) network solution:
\begin{equation*} 
T_{1,j,k}=\lim_{b_1,...,b_r\to 0}
\big[ N(j_0,\ell+1){\tilde P}_{j_1-\ell-1}(\{b\}) \big]_{1,1}\, b_{1,j_1}
=\big[N(j_0,\ell+1)\big]_{1,a_\ell(j_1-\ell-1)},\end{equation*}
and noting that $1=t_{1,\ell+1}$.
\end{itemize}
The equivalent of Corollary \ref{barposi} follows from interpreting \`a la Gessel-Viennot the quantity
$T_{i,j,k}/\prod_{a=1}^i t_{\lambda_1(a),\bar j_1(a)}$, as the partition function for $i$ non-intersecting paths on the network
that start at $(1,j_0(b))$ ($b\in [1,i]$) and end at $(i_1(a),\bar j_1(a))$ ($a\in [1,i]$). Positivity follows.
\end{proof}

Theorem \ref{positiT} follows from:
\begin{lemma}
  The solutions of the $A_r$ $T$-system with initial data of type
  $\bt^{[1,\ell]}$ are Laurent polynomials with non-negative integer
  coefficients of the variables $\{t_{i,j}: i\in[1,r],
  j\in[1,\ell]\}$, assigned along the points of $\bk_0$ with $0<j\leq
  \ell$.
\end{lemma}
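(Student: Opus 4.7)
The plan is to mimic the $A_1$ argument of Theorem \ref{twos}, combining the two half-space reductions (Lemma \ref{posibar}/Corollary \ref{barposi} and Lemma \ref{leftCor}) into a single positive network expression. First, by Theorem \ref{inithm}, the solution of the $\ell$-restricted $A_r$ $T$-system with initial conditions $X^{[1,\ell]}(\bt)$ coincides, for $j\in [1,\ell]$, with the solution of the unrestricted $A_r$ $T$-system with initial conditions $X(\bt^{[1,\ell]})$ on $\bk_0$. By the half-periodicity (Theorem \ref{haper} and its corollary) I may assume without loss of generality that $0\le k-k_{1,j}^{(0)}\le N/2$, so that the projection $[j_0,j_1]$ of $(1,j,k)$ onto $\bk_0$ is reflected at most once against each of the walls $j=-1$ and $j=\ell+2$. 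As usual, the case $i>1$ will follow from $i=1$ via the discrete Wronskian formula \eqref{wronsk} together with Lindstr\"om--Gessel--Viennot.

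For $i=1$, I will split into four cases according to the positions of $j_0$ and $j_1$. If $j_0\ge 0$ and $j_1\le \ell+1$, the solution coincides with that of the unrestricted $A_r$ $T$-system and positivity is provided by Theorem \ref{unposiT}. If only $j_0<0$ (but $j_1\le \ell+1$), the right half-space positivity of Lemma \ref{posibar} applies directly; symmetrically, if only $j_1>\ell+1$, Lemma \ref{leftCor} applies. The main case is the double-reflection case, when both $j_0<0$ and $j_1>\ell+1$.

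In this double-reflection case I would carry out the collapse from both sides simultaneously. Writing
\[
N(j_0,j_1)=N(j_0,-r-1)\cdot N(-r-1,0)\cdot N(0,\ell+1)\cdot N(\ell+1,\ell+r+2)\cdot N(\ell+r+2,j_1),
\]
Corollary \ref{corP} and Lemma \ref{otherP} give $N(-r-1,0)=P$ and $N(\ell+1,\ell+r+2)=(-1)^r P$ in the limit of vanishing regulator, and the symmetries \eqref{symone}--\eqref{vani} defining $\bt^{[1,\ell]}$ are precisely what the collapse relations of Lemma \ref{collapse} (applied iteratively on each side) require to absorb the outer factors $N(j_0,-r-1)P$ and $(-1)^r P\, N(\ell+r+2,j_1)$. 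The result is to replace the boundary segments by the corresponding index shifts on the indices of the $(1,1)$ matrix element, exactly as in Lemma \ref{posibar} and Lemma \ref{leftCor}, yielding
\[
T_{1,j,k}=\bigl[N(\bar j_0,\bar j_1)\bigr]_{i_0,i_1}\,t_{\lambda_1,\bar j_1},
\]
for appropriate $\bar j_0\in[0,\ell+1]$, $\bar j_1\in[0,\ell+1]$, and indices $(i_0,i_1)\in[1,r+1]^2$ and $\lambda_1\in[1,r]$, read off from the three sub-cases of each of Lemmas \ref{posibar} and \ref{leftCor} (unreflected/$\ge r+1$ reflection/intermediate regularized reflection). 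If either $\bar j_0=0$ or $\bar j_1=\ell+1$ falls in the regularized range, I will use Lemma \ref{pj} or Lemma \ref{otherPlim} respectively to extract the right entry, and the factor $t_{\lambda_1,\bar j_1}$ lies in $\{t_{i,j}: i\in [1,r],j\in[1,\ell]\}\cup \{1\}$.

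The network $N(\bar j_0,\bar j_1)$ has all its faces carrying genuine initial values $\bt$ inside the strip $[1,\ell]$, so its entries are Laurent monomials with non-negative integer coefficients in $\{t_{i,j}: i\in[1,r],j\in[1,\ell]\}$. Lindstr\"om--Gessel--Viennot interprets $[N(\bar j_0,\bar j_1)]_{i_0,i_1}$ as a partition function of weighted paths, hence a positive Laurent polynomial. Finally, for $i>1$ the Wronskian \eqref{wronsk} is rewritten, via Lindstr\"om--Gessel--Viennot, as a partition function of $i$ non-intersecting paths on the same underlying network (with starting and ending points obtained by running the above case analysis for each $(j+a-b,k+a+b-i-1)$), and positivity is preserved. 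The main obstacle is the case analysis and bookkeeping: one must verify that in the double-reflection case the two collapses can be performed consistently on the \emph{same} initial data $\bt^{[1,\ell]}$, using the fact (Remark \ref{simultarem}) that $\bt^{[1,\ell]}$ simultaneously satisfies both symmetries so that Lemma \ref{collapse} applies on each side without the two limits interfering.
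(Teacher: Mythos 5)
Your proposal is correct and follows essentially the same route as the paper's own proof: reduction via Theorem \ref{inithm} and the half-periodicity of Theorem \ref{haper}, a case analysis on where the projection endpoints $j_0,j_1$ land (the paper names the regions $A,B,C,D,E$ and treats the four double-reflection combinations $B\times D$, $B\times E$, $C\times D$, $C\times E$ exactly as your ``three sub-cases on each side''), collapse via Lemma \ref{collapse} together with Lemmas \ref{pj} and \ref{otherPlim} for the regularized ranges, and finally Lindstr\"om--Gessel--Viennot applied to the Wronskian of Lemma \ref{elimdet} for $i>1$. The only cosmetic difference is that your displayed five-factor decomposition of $N(j_0,j_1)$ is literally valid only in the $B\times D$ case, but you correctly flag that the other combinations require $P_{-j_0}$ or $\tilde P_{j_1-\ell-1}$ instead.
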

\begin{proof}
We start by proving the property for $T_{1,j,k}$.
The half-periodicity property of Theorem \ref{haper} allows us to
restrict to points $(1,j,k)$ with $0\leq k\leq \frac{N}{2}$.  

Let $j_0$ and $j_1$ be the minimum and maximum of the projection of $(1,j,k)$ onto $\bk_0$. Using the definitions
of $i_0,\bar j_0,\lambda_1,i_1,\bar j_1$ given in the proofs of Lemmas \ref{posibar} and \ref{leftCor},
we will show that the network partition function for paths from $(1,j_0)$ to $(1,j_1)$, multiplied by $t_{1,j_1}$ is equal to
$t_{\lambda_1,\bar j_1}$ times the partition function of paths from $(i_0, \bar j_0)$ to $(i_1,\bar j_1)$
where $0\leq \bar j_0\leq \bar j_1\leq \ell+1$. The weights in this region of $\bt^{[1,\ell]}$ are all positive, hence so is the partition
function. Define the following subsets of $\Z$:
\begin{eqnarray*}
&A=[0,\ell+1],\quad  B=[-\ell-r-2,-r-1],\quad  C=[-r,-1], &\\ 
& D=[r+\ell+2,r+2\ell+3],\quad E=[\ell+2,r+\ell+1].&
\end{eqnarray*}
When $j_0$ or $j_1\in A$, the solution is identical to that of Theorem
\ref{unposiT}, Lemma
\ref{haposi} or Corollary \ref{leftCor}, in which positivity has been proven.
There are four remaining cases.

\begin{itemize}
\item $(j_0,j_1)\in B\times D$: We use collapse relations on both sides:
$$ T_{1,j,k}=\big[ P N({\bar j}_0,{\bar j}_1) (-1)^rP\big]_{1,1} t_{1,j_1}=
\big[N({\bar j}_0,{\bar j}_1)\big]_{r+1,r+1} t_{r,{\bar j}_1}, $$ 

\item $(j_0,j_1)\in B\times E$: We use collapse relations on the left, and Lemma \ref{otherPlim} on the right:
$$ T_{1,j,k}=\lim_{b_1,...,b_r\to 0}\big[ P N({\bar j}_0,\ell+1) {\tilde P}(\{b\})_{j_1-\ell-1} \big]_{1,1} b_{1,j_1}=
\big[ N({\bar j}_0,\ell+1)\big]_{r+1,a_\ell(j_1-\ell-1)}$$

\item $(j_0,j_1)\in C\times D$: We use collapse relations on the right, and Lemma \ref{pj} on the left:
$$ T_{1,j,k}=\big[ P_{-j_0} N(0,{\bar j}_1) (-1)^rP \big]_{1,1} t_{1,j_1}=
\big[ N(0,{\bar j}_1)\big]_{a_1(|j_0|),r+1} t_{r,{\bar j}_1}$$

\item $(j_0,j_1)\in C\times E$: We use Lemma \ref{pj} on the left and Lemma \ref{otherPlim} on the right:
$$ T_{1,j,k}=\lim_{b_1,...,b_r\to 0}\big[ P_{-j_0} N(0,\ell+1) {\tilde P}_{j_1-\ell-1}(\{b\}) \big]_{1,1} b_{1,j_1}=
\big[ N(0,\ell+1)\big]_{a_1(|j_0|),a_\ell(j_1-\ell-1)} $$
\end{itemize}

To summarize, in all cases $T_{1,j,k}$ is expressed in terms of the
partition function for paths on the {\it same} network but with
different starting and ending positions, depending on the values of
$j$ and $k$. We may now apply the
Lindstr\"om-Gessel-Viennot theorem \cite{LGV1,LGV2} to the determinant
expression of Lemma \ref{elimdet}. Let
$j_0(b)=j-k+i-2(b-1)$ and $j_1(a)=j+k-i+2(a-1)$ be the minima and maxima of the projections
of the points $(1,j',k')$ involved in the determinant.  We interpret the quantity
$T_{i,j,k}/\prod_{a=1}^i t_{\lambda_1(a),\bar j_1(a)}$, as the partition function for $i$
non-intersecting paths on the network, starting at $(i_0(b),\bar j_0(b))$ ($b\in [1,i]$) and ending
at $(i_1(a),\bar j_1(a))$ ($ a\in [1,i]$).
This proves positivity, as all the path weights are
positive Laurent monomials of the initial data $t_{i,j}$ in the positive part of $\bt^{[1,\ell]}$.

\begin{figure}
\centering
\includegraphics[width=15.cm]{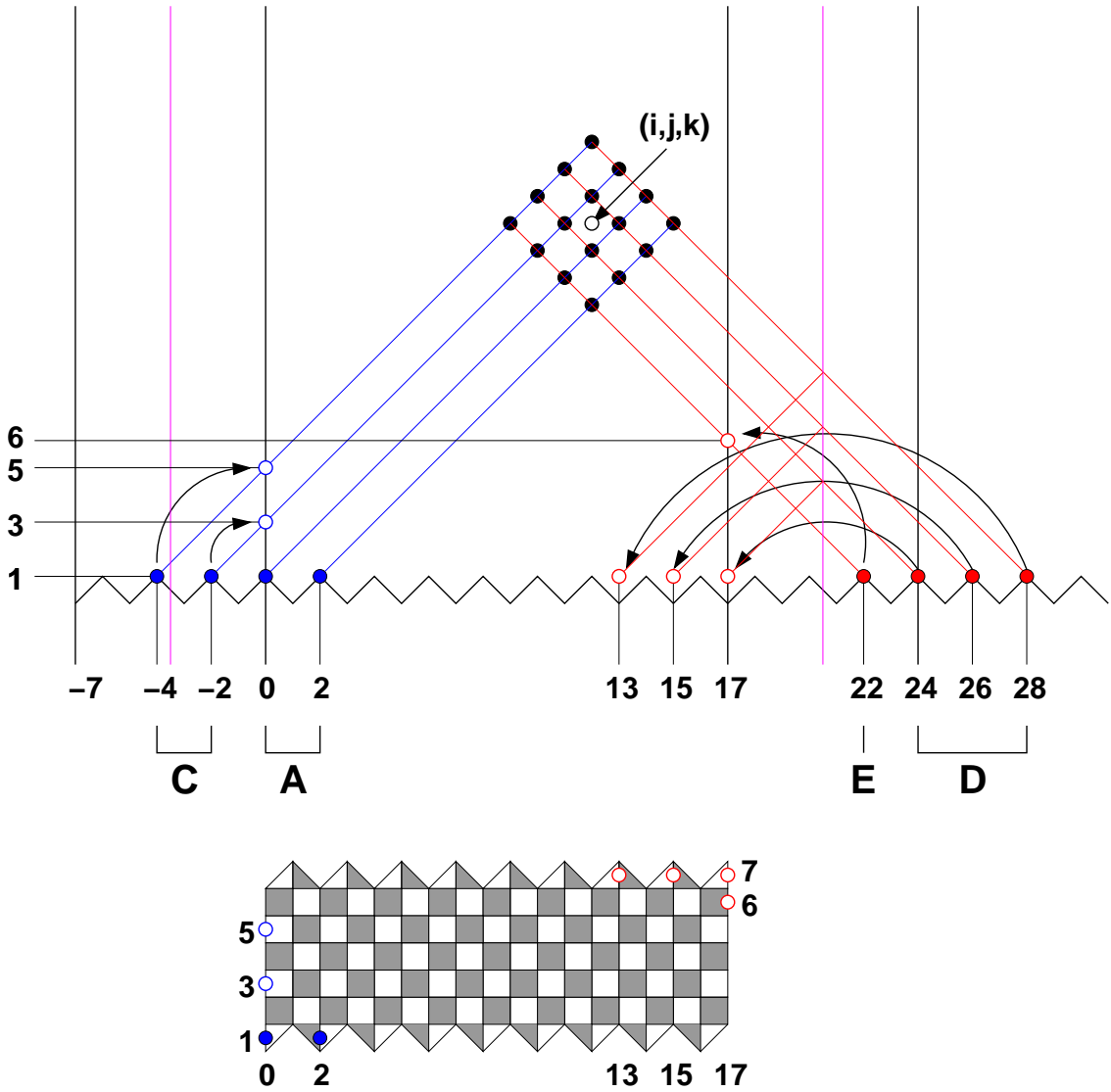}
\caption{A typical example of expression of $T_{i,j,k}$ as non-intersecting path partition function.
Here $(i,j,k)=(4,12,14)$, $r=6$ and $\ell=16$. We have indicated the $16$ 
terms involved in the determinant of Lemma \ref{elimdet} by black dots, and by blue (resp. red dots) their
projection minima (resp. maxima) onto $\bk_0$. The corresponding starting and endpoints are indicated
on the associated network picture in representation I.}\label{fig:parlutfin}
\end{figure}

Typically, depending on $i,j,k$, and as $i\leq r$, we may have at worst some of the 
minima in $A$ and the rest in $C$, 
or some in $B$ and the rest in $C$, and similarly for the maxima, either in $A\cup D$ or in $A\cup E$. 
We illustrate this in Fig. \ref{fig:parlutfin} for $r=6$ and $\ell=16$, and $(i,j,k)=(4,12,14)$. 
In this case, $A=[0,17]$, $B=[-24,-7]$, $C=[-6,-1]$, $D=[24,41]$, $E=[18,23]$.
The minima
$j_0(1)=-4,j_0(2)=-2$ are both in $C$ and give rise to respective starting points $(5,0),(3,0)$, while
$j_0(3)=0,j_0(4)=2$ are both in $A$ and give rise to starting points $(1,0),(1,2)$.
The maxima are $j_1(1)=22$ in $E$ giving rise to the endpoint $(a_\ell(j_1(1)-\ell-1),\ell+1)=(6,17)$,
and $j_1(2)=24,j_1(3)=26,j_1(4)=28$, all in $D$, giving rise to the endpoints 
$(7,17),(7,15),(7,13)$. The network partition function for these $4$ non-intersecting
paths is equal to $T_{4,12,14}/(t_{6,13}t_{6,15})$.

\begin{figure}
\centering
\includegraphics[width=15.cm]{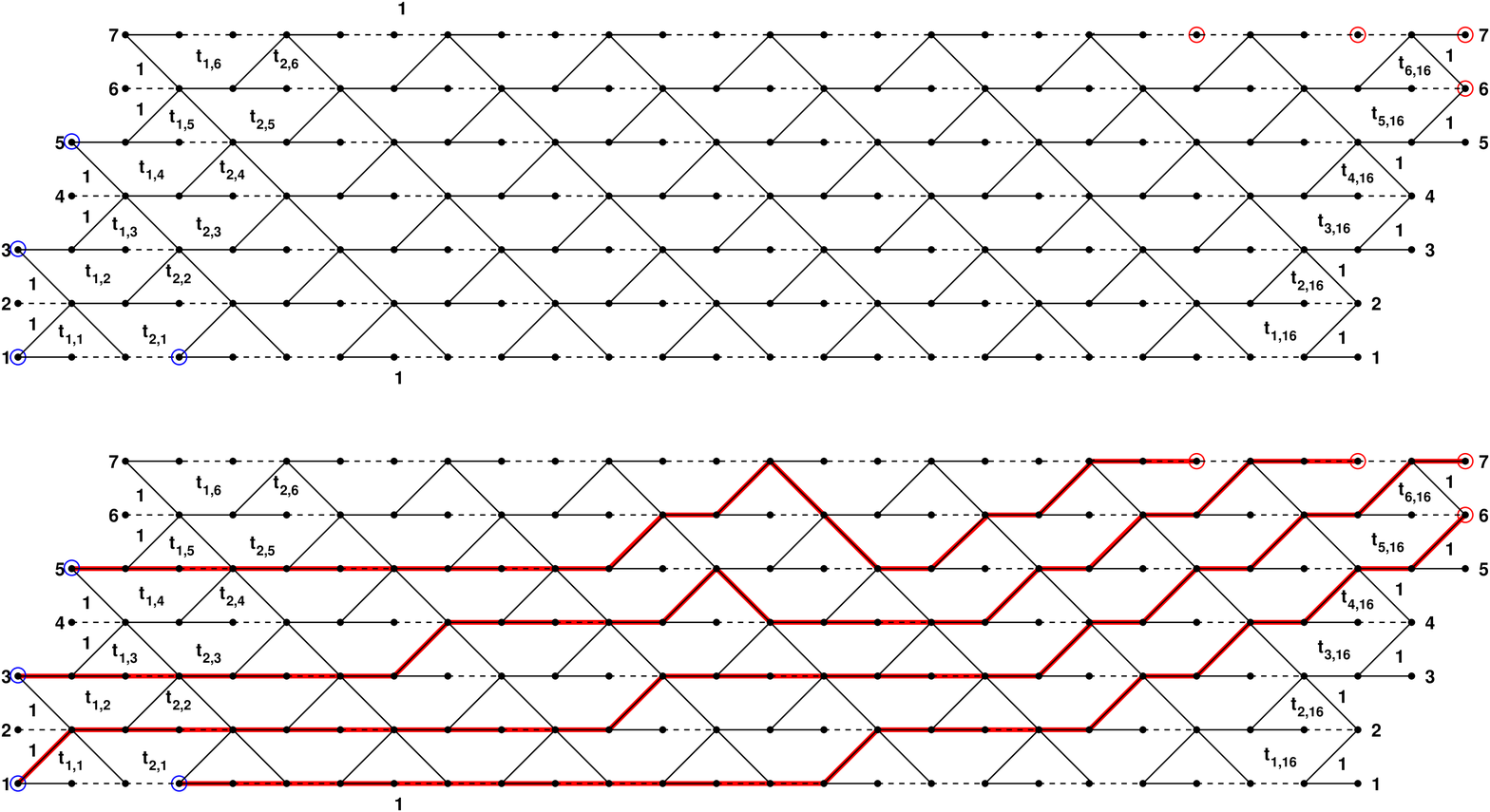}
\caption{The pictorial representation II of the network of Fig.\ref{fig:parlutfin}, with starting points circled in blue
and endpoints circled in red. We have also represented a typical configuration of non-intersecting
paths on the network that contribute to $T_{4,12,14}$.}\label{fig:luttefinale}
\end{figure}

We have represented the corresponding network in the pictorial representation II in Fig.\ref{fig:luttefinale},
together with a typical configuration of four non-intersecting paths that contributes to $T_{4,12,14}$.
\end{proof}

\section{Other boundary conditions}
\label{other}

So far, we have used the network solution of the $T$ systems of type $A$
to find expressions for their solutions for wall-type boundary conditions. 
In particular, in the $\ell$-restricted case, this gives a combinatorial 
proof of Zamolodchikov's periodicity conjecture.

There are other interesting types of boundary conditions on the $T$-system, and we comment on some of them below.

\subsection{Friezes}

The so-called $SL_2$ frieze patterns \cite{Cox,FRISES} are known to obey the $A_1$ $T$-system relation.
In \cite{FRISES}, special boundaries were considered,
coded by affine Dynkin diagrams. In particular,
the case of the affine Dynkin diagram $\tilde A_\ell$ with an acyclic orientation 
corresponds to the $A_1$ $T$-system
with
a periodic initial data path $\bk$, such that $k_{j+\ell}=k_j+m$ for some fixed integer $m$ and  for all $j\in \Z$, and 
periodic initial values $\bt$ along this path, with $t_{j+\ell}=t_j$ for all $j\in \Z$. The 
Laurent positivity for this case follows immediately from that of the unrestricted system.
Note that the case of the ordinary Dynkin diagram $A_\ell$ corresponds 
to the $\ell$-restricted boundaries. 

In the context of higher rank $T$-systems, boundary conditions coded by pairs $(G,G')$ of Dynkin diagrams
lead to the most general periodicity conjecture of Zamolodchikov, proved in \cite{Keller}. 
In that context, the first Dynkin diagram
codes the type of $T$-system ($A_r$ throughout this paper), while the second codes the particular
boundary conditions ($A_\ell$ for the $\ell$-restricted boundaries for instance). 

In a way similar to frieze patterns, we may consider the case $(A_r,{\tilde A}_{\ell-1})$, for even integers $\ell\geq 2$, 
where the $A_r$ $T$-system solutions are $\ell$-periodic in the $j$ direction, with $T_{i,j+\ell,k}=T_{i,j,k}$.
This is guaranteed by imposing that both the stepped surface $\bk$ 
and the attached initial data $t_{i,j}$ of \eqref{initcondar} be $\ell$-periodic in the $j$ direction, i.e. 
$k_{i,j+\ell}=k_{i,j}+m_i$
for some fixed $m_i$ compatible with the stepped surface conditions of Def.\ref{steppeddef} and
$t_{i,j+\ell}=t_{i,j}$ for all $i\in [1,r]$ and $j\in\Z$. The positivity of the corresponding $T$-system 
follows from Theorem \ref{unposiT}.

We may also consider the case $({\tilde A}_{r-1},{\tilde A}_{\ell-1})$ for even $r,\ell\geq 2$, 
in which the $T$-system is wrapped on a torus, by imposing that the $T$-system solutions be doubly periodic, with
$T_{i+r,j,k}=T_{i,j+\ell,k}=T_{i,j,k}$ for all $i,j,k\in \Z$. The solutions
of the corresponding system are obtained from those of the unrestricted $A_\infty$ $T$-system by imposing that 
both the stepped surface and the initial data of \eqref{initcond} be doubly periodic as well. Positivity then follows from
Theorem \ref{unposiT}.

\subsection{Higher pentagram maps as $T$-system tori}

The pentagram map has been shown to relate to cluster algebra, and its solution
was expressed in \cite{GLICK} in terms of some particular $T$-system solution. 
Higher versions of this map were considered by \cite{GEKH}. 
In all cases, we note that these correspond to quivers that are quotients of the
$T$-system quiver by a torus, defined as follows.

Let us consider
the solutions of the unrestricted $A_\infty$
$T$-system, with initial data $\bt$ along the stepped surface $\bk_0$ satisfying
a {\it toric periodicity} property. Let us
fix $\vec a=(a_1,a_2)$, $\vec b=(b_1,b_2)$ two non-collinear vectors in $\Z^2$
and such that $a_1+a_2$ and $b_1+b_2$ are even. 
We impose the double periodicity property:
$$\Theta_{\vec a,\vec b}:\ \  t_{i+a_1,j+a_2}=t_{i,j} \qquad t_{i+b_1,j+b_2}=t_{i,j} $$
This is a generalization of the rectangular torus case $(\tilde A_{r-1}, \tilde A_{\ell-1})$ 
described in the previous section, corresponding to ${\vec a}=(r,0)$ and ${\vec b}=(0,\ell)$. 

In the cluster algebra identification for the $T$-system, the seed of the cluster
algebra is made of a cluster and an exchange matrix, both of infinite size, as the rank is infinite.
The cluster is the set of initial values $\{t_{i,j}\}$ along the stepped surface $\bk_0$, and gets mutated into other 
initial data. 
The exchange matrix
is coded by the quiver $Q_0$ with vertices $(j,i)\in \Z^2$ and oriented edges $(j,i)\to (j\pm 1,i)$,
and $(j,i\pm 1)\to (j,i)$ for all $i,j\in \Z^2$ with $i+j$ even. Note that the edge configurations around
even vertices ($(j,i)$ with $i+j$ even) are opposite to those around odd ones ($(j,i)$ with $i+j$ odd).
The parity conditions ($a_1+a_2$ and $b_1+b_2$ even) guarantee that only vertices of the same parity
are identified. By taking a quotient of $\Z^2$ by the lattice $\Z\vec a+\Z\vec b$,
this allows to fold the corresponding infinite quiver $Q_0$ into a {\it finite} one ${\tilde Q}_0$. The example below illustrates
the case $\vec a=(0,2)$ and $\vec b=(3,1)$:
$$  \raisebox{-1.cm}{\hbox{\epsfxsize=9.cm \epsfbox{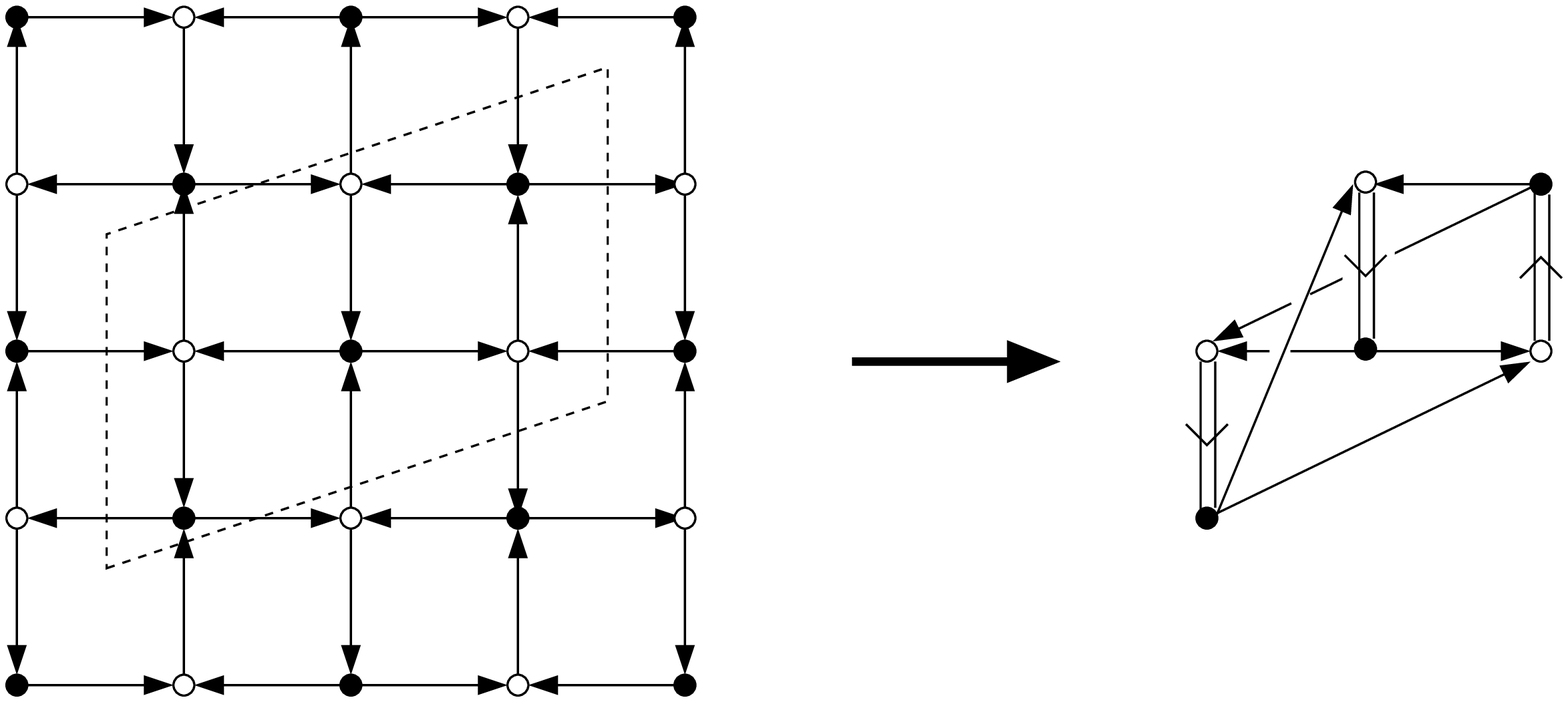}}} $$
The folded system obtained by considering torus-periodic initial values and performing a quotient by $\Z\vec a+\Z\vec b$
is also part of a cluster algebra. Its mutations $\mu_{i,j}$ correspond to considering infinite compound 
(mutually commuting) mutations $\prod_{m,n\in \Z} \mu_{i+na_1,mb_1,j+na_2+mb_2}$ and passing to the quotient.

The quiver of the initial seed of the cluster algebra underlying the pentagram map of \cite{GLICK} 
was generalized to a quiver $Q_{k,n}$ for higher pentagram maps in \cite{GEKH} 
(the former case corresponding to $k=3$). The quiver
$Q_{k,n}$ has two (even and odd) sets of vertices denoted by $p_i,q_i$, $i\in\Z$ with periodic
identifications $p_{i+n}=p_i$ and $q_{i+n}=q_i$ which makes the quiver finite, with $2n$ vertices.
It is easy to rewrite the (infinite) quiver before identifications as that, $Q_0$, of the unrestricted $A_\infty$ $T$-system  with
vertices $(j,i)\in\Z^2$ as follows:
$$ \raisebox{-1.cm}{\hbox{\epsfxsize=7.cm \epsfbox{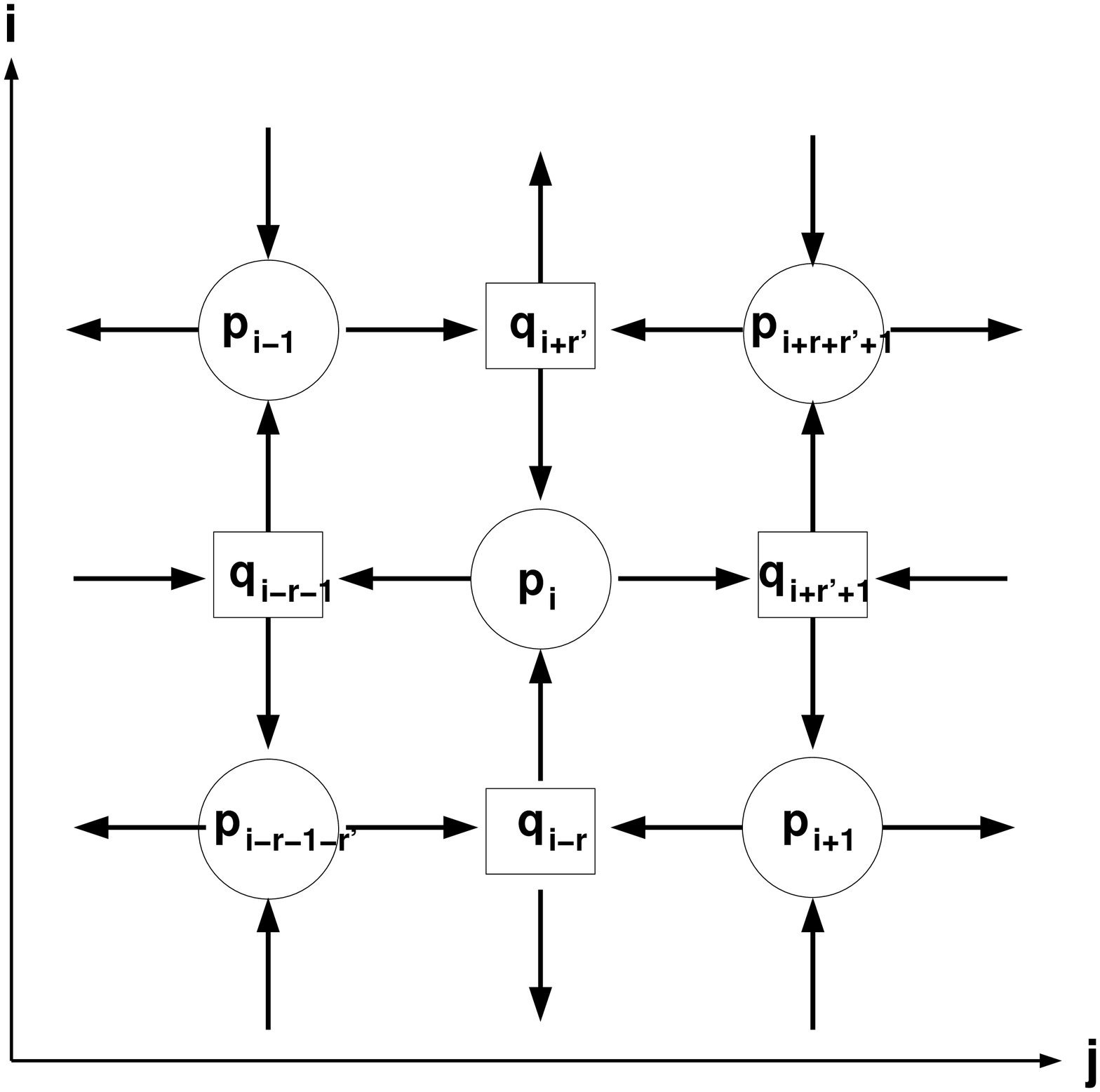}}} $$
where $r,r'$ are two non-negative integers such that $r+r'=k-2$ and $r'=r$ if $k$ is even, and $r'=r+1$ otherwise,
and where we have represented by circles (resp. squares) the vertices $(j,i)\in \Z^2$ 
with $i+j$ even (resp. odd). 
Note that horizontal arrows correspond to a shift $+r'+1$ (resp. $-r-1$)  in the indices when the arrow points
to the right (resp. left), while vertical arrows correspond to a shift $+r$  (resp. $-r'$)  in the indices when the arrow points
up (resp. down).
The identifications
$p_i\equiv p_{i+n}$ and $q_i=q_{i+n}$ of the two types of vertices are therefore equivalent to a double
translational invariance. A simple calculation shows that
it corresponds to the torus $\Z^2/\Z \vec{a}+\Z\vec{b}$, with $\vec a=(-(k-2),k)$ and
$\vec b=(-n,n)$ with a fundamental domain under these translations
containing $|\vec{a}\wedge \vec{b}|=2n$ vertices. 
In all cases, the finite quiver $Q_{k,n}$ with $2n$ vertices is the corresponding folding of $Q_0$.

\subsection{Cuts}

We may consider solutions of the $T$-system with {\it cuts} defined as follows. 
We consider the unrestricted $A_\infty$ $T$-system, 
and pick a set $S$ of tetrahedra (called singular)
along which the $T$-system relation is not imposed.  
A natural question for this system is: for which choices of $S$ and of  initial conditions does the
positive Laurent property hold?

In the $A_1$ case, $S$ is a set of
diamonds of the form $(W_i,S_i,E_i,N_i)=\big((j_i-1,k_i),(j_i,k_i-1),(j_i+1,k_i),(j_i,k_i+1)\big)$ for $i=1,...,|S|$.
The particular case of a single diamond $|S|=1$ (which we refer to as ``puncture") 
may be solved by the techniques of the present paper. Consider an initial data surface of the form:
$$ \raisebox{-1.cm}{\hbox{\epsfxsize=10.cm \epsfbox{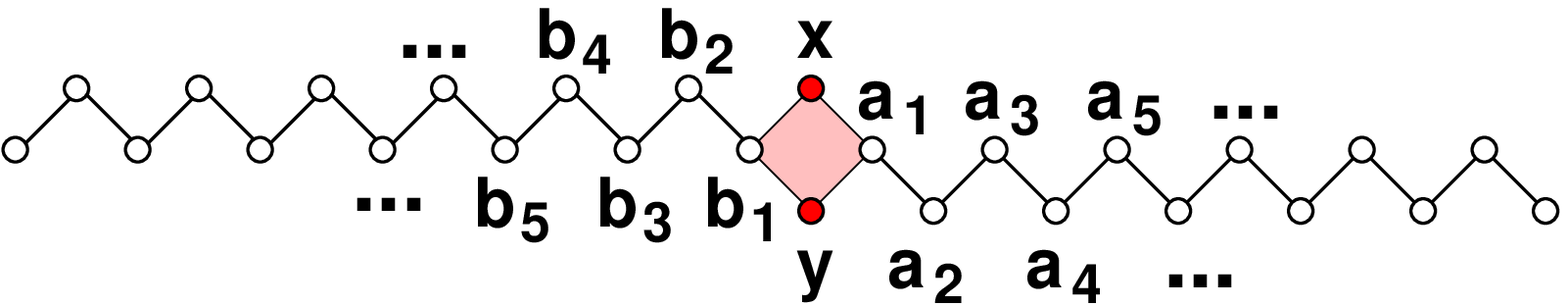}}}$$
with the following initial conditions. Let $\epsilon_j=0$ if $j$ is odd, $\epsilon_j=-1$ if $j$ is even positive
and $\epsilon_j=1$ if $j$ is even negative. Then the initial data assignment for the system with a puncture
at $((-1,0),(0,-1),(1,0),(0,1))$ is:
\begin{eqnarray*}
T_{j,\epsilon_j}&=&\left\{ \begin{matrix} a_j & {\rm if}\ \ j>0\\
b_j & {\rm if}\ \ j<0 \end{matrix} \right. \\
T_{0,1}&=& x \\
T_{0,-1}&=& y \end{eqnarray*}
for $x,y$ some formal invertible variables.
The Laurent property of the solutions is a direct consequence of Theorem \ref{unrestaone}. In fact,
for any point above the initial data paths, the solution is a Laurent polynomial of the $a$'s, $b$'s and
of $x$, while for any point below the initial data paths, the solution is a Laurent polynomial of the $a$'s, 
$b$'s and of $y$. More interestingly, we may consider different initial data paths containing the edges of
the puncture, such as the example below:
$$  \raisebox{-1.cm}{\hbox{\epsfxsize=9.cm \epsfbox{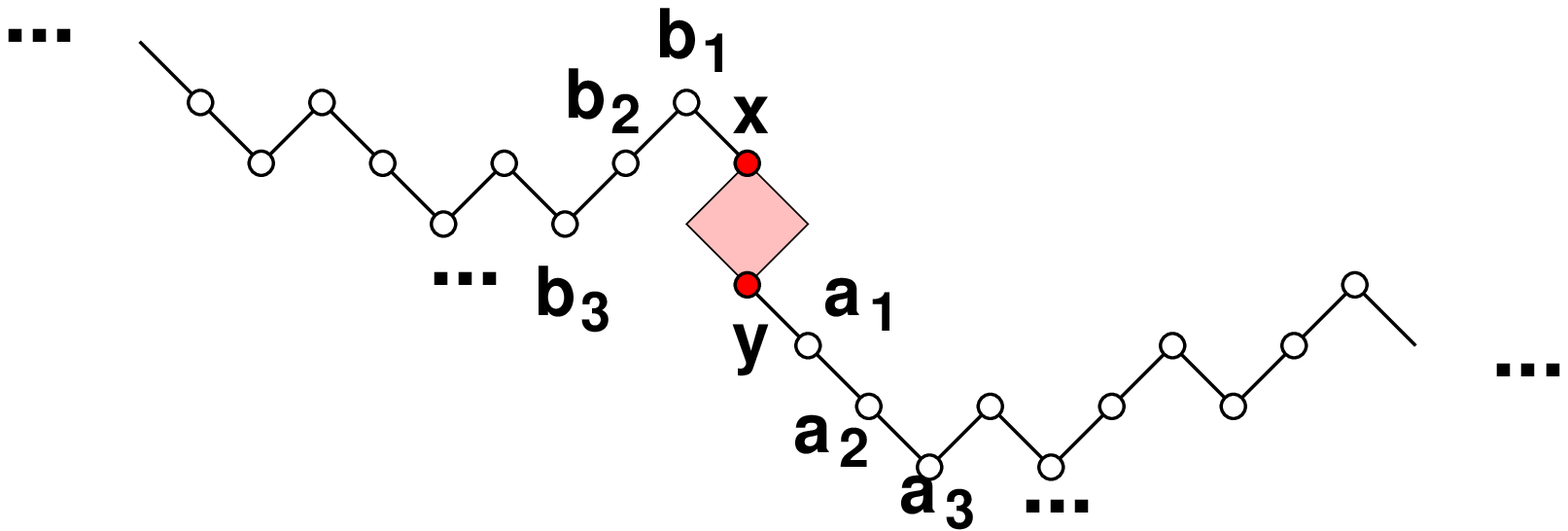}}} $$
where the two uncircled vertices of the puncture bear no assignment.
It is easy to show that a necessary condition for the Laurent property to still hold is that $xy=1$.
Conversely, we have the following:

\begin{thm}
The solution of the $A_1$ $T$-system with a puncture, an arbitrary path of initial conditions (passing by the puncture)
as above,  and with $xy=1$, is a positive Laurent polynomial of the initial data.
\end{thm}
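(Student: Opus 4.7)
The plan is to extend the network formula of Theorem \ref{unrestaone} to the punctured setting by inserting a suitable $2\times 2$ ``puncture matrix'' into the matrix product, and to show that the relation $xy=1$ is exactly the condition that makes this extension consistent and positive.

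First I would separate points $(j,k)$ above the initial data path into two cases according to whether the cone of projection of $(j,k)$ onto $\bk$ avoids the puncture or encloses it. If the cone avoids the puncture, the relevant portion of the initial data path can be traversed by a single sequence of $U,V$ chips and $T_{j,k}$ is given by the standard product \eqref{soloneUV}. All matrices have entries that are positive Laurent monomials in the $a_j$'s, $b_j$'s and in whichever of $x,y$ is involved, so the positive Laurent property follows directly from Theorem \ref{unrestaone} together with $y=x^{-1}$.

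The non-trivial case is when the cone of projection encloses the puncture, so the computation must cross it. I would introduce the matrix
$$Q(x)\;=\;\begin{pmatrix}0 & x\\ x^{-1} & 0\end{pmatrix},$$
whose entries are positive Laurent monomials in $x$, and propose the formula $T_{j,k}=\bigl(N_L\cdot Q(x)\cdot N_R\bigr)_{1,1}\,t_{j_1}$, where $N_L$ and $N_R$ are the network matrix products obtained from the portions of the path lying, respectively, to the left and right of the puncture (each path prolonged by the nearest puncture vertex, namely $(-1,0)$ on the left and $(1,0)$ on the right). One must then verify that this candidate satisfies the octahedron relation at every non-puncture lattice point. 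By the recursive structure of the $T$-system, this reduces to checking the relation at the vertices immediately adjacent to the puncture, which in turn reduces to local matrix identities of the form $Q(x)\,M = M'\,Q(x)$ for the $U,V$ chips $M,M'$ meeting the puncture; a direct computation shows that these identities hold precisely because $xy=1$.

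Positivity is then immediate from the formula: $N_L$ and $N_R$ consist of $U,V$ chips whose entries are positive Laurent monomials in the initial data (including $x$ and $y=x^{-1}$), $Q(x)$ has positive Laurent entries, and hence $(N_L\,Q(x)\,N_R)_{1,1}$ is a sum of products of such monomials. The main obstacle I foresee is the bookkeeping of configurations: depending on the local shape of the initial data path near the puncture, the chips $M,M'$ entering from either side can be of $U$ or $V$ type in several combinations, so one must verify the local identity $Q(x)\,M=M'\,Q(x)$ for each configuration and check that the resulting $Q(x)$ is consistent across these cases. This is essentially the same ``collapse'' mechanism as in Lemmas \ref{collapse} and the $A_1$ identities \eqref{colla}, specialized to the puncture geometry with $xy=1$ playing the role of the vanishing condition $uv+ac=0$.
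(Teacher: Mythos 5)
There is a genuine gap: the central formula $T_{j,k}=\bigl(N_L\,Q(x)\,N_R\bigr)_{1,1}t_{j_1}$ is neither derived nor correct, and the mechanism you propose for verifying it does not exist. First, a concrete bookkeeping problem: you prolong $N_L$ and $N_R$ to the puncture vertices $(-1,0)$ and $(1,0)$, but these are exactly the two \emph{unassigned} vertices (the $v,w$ of the actual proof), so the chips entering and leaving them carry weights that are not initial data; your expression therefore depends on $v,w$, and you would still have to prove that this dependence cancels. Second, the intertwining identities $Q(x)M=M'Q(x)$ do not do what you need. A direct computation gives $Q(x)\,U(a,b)=V(x^{-2}a,x^{-2}b)\,Q(x)$, which (a) holds for \emph{every} $x$, so it cannot be where $xy=1$ enters, and (b) swaps $U\leftrightarrow V$ and rescales all face values by $x^{-2}$, so pushing $Q(x)$ through $N_R$ replaces the right half of the network by a reflected, rescaled one bearing no relation to the actual initial data on that side (there is no reflection symmetry here, unlike the wall case where the collapse relations \eqref{colla} exploit exactly such a symmetry). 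The condition $xy=1$ in fact enters as a vanishing condition: with $y=x^{-1}$, the connection word read along the puncture edges, e.g.\ $V(x,w)U(x^{-1},w)^{-1}=\left(\begin{smallmatrix}0&x\\-x&xw\end{smallmatrix}\right)$, acquires a vanishing $(1,1)$ entry --- but it still contains a minus sign and the unassigned $w$, so no fixed local insertion can finish the argument.

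The correct proof is global where yours is local. One rewrites the mutation \eqref{muta} as a flatness condition $V(a,b')U(b',c)V(b,c)^{-1}U(a,b)^{-1}=\mathbb{I}$ on every non-singular diamond, so that the product of connection matrices around the boundary of any domain avoiding the puncture is the identity. Equating the product along the cone of projection of $(j,k)$ (whose $(1,1)$ entry times $t_{j_1}$ is $T_{j,k}$) with the product along the initial-data path wrapping the puncture, and then applying flatness once more to the fan of $n$ diamonds adjacent to the puncture, one deforms the offending segment into the manifestly positive products \eqref{prodone}--\eqref{prodtwo}, whose face values $a_m',b_m'$ are \emph{recursively modified} positive Laurent polynomials of the true initial data; this deformation simultaneously eliminates the unassigned $v,w$ and the negative entries coming from the inverse chips. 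That path deformation with modified weights is the missing idea, and it cannot be reproduced by inserting a fixed $2\times 2$ matrix and commuting it past unmodified chips.
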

\begin{proof}
We start by reinterpreting the fundamental relation \eqref{muta} as a flatness condition of the form
$V(a,b') U(b',c)V(b,c)^{-1}U(a,b)^{-1}=\mathbb I $, around each diamond where the $T$-system
relation holds. We see that the matrices $V,U,V^{-1},U^{-1}$ corresponding to the 4 possible oriented
edges along the vectors $(1,-1),(1,1),(-1,1),(-1,-1)$ respectively form a flat connection on any domain
made of diamonds on which the $T$-system relation holds.
As a consequence, the product of corresponding matrices along the boundary of
any connected domain made of diamonds where the $T$-system relation holds is the identity matrix.
Due to the presence of the puncture, we may only consider domains that do not contain it.
Let $(j,k)$ be a point above the initial data paths, and $j_0,j_1$ the minimum and maximum of its
projection as usual. We consider the domain $D$ shaded below:
$$  \raisebox{-1.cm}{\hbox{\epsfxsize=8.cm \epsfbox{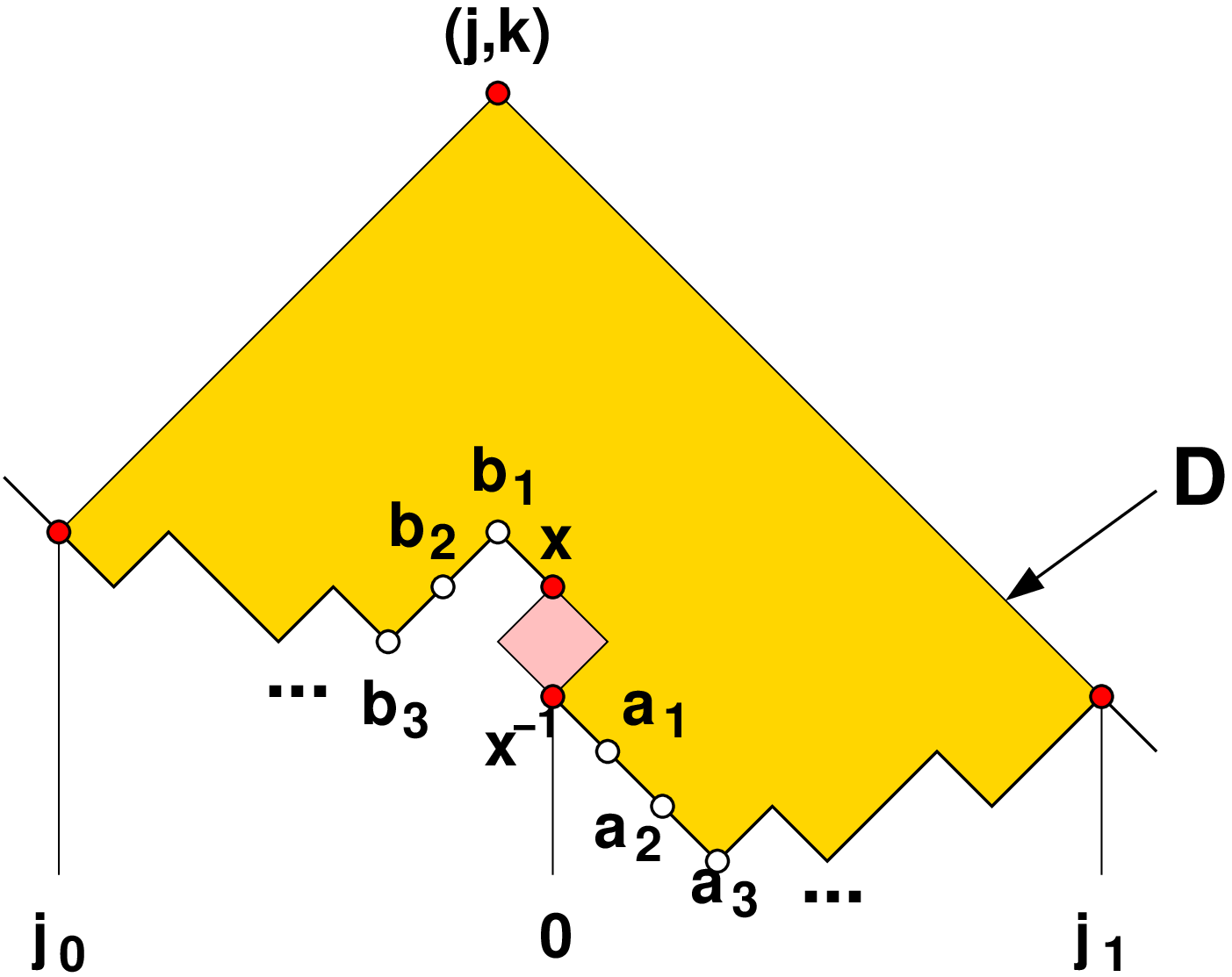}}}$$
In particular, the two (topmost and bottommost) paths joining the minimum and maximum contribute the
same matrix $M$, satisfying $M_{1,1} T_{j_1,k_{j_1}}=T_{j,k}$, clear from the topmost path expression
of the form $UU..UVV...V$. 
The contribution from
the bottommost path is the product along the initial data path (including edges of the puncture)
of the corresponding connection matrices. These have non-negative entries that are
Laurent monomials of the initial data,
except for edges of the puncture 
namely $V(x,w)U(x^{-1},w)^{-1}$ or $V(v,x^{-1})^{-1}U(v,x)$. The following local situations may occur in general:
$$  \raisebox{-1.cm}{\hbox{\epsfxsize=10.cm \epsfbox{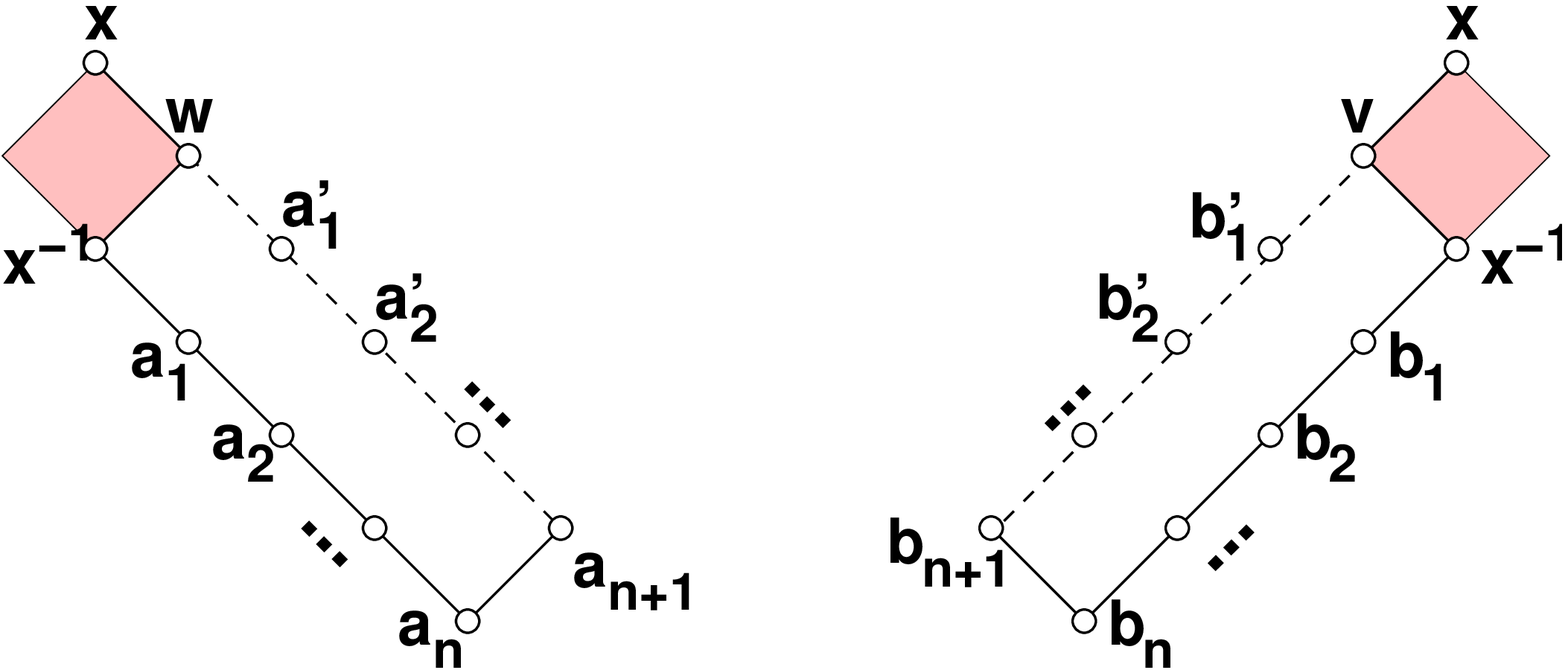}}}$$
leading to matrix products along the bottommost path of the form $...B_{L/R}...$, where $B_{L/R}$ is the only term
containing possibly negative or non-Laurent entries for the situation on the left/right, with respectively:
\begin{eqnarray*}
B_L&=& V(x,w)U(x^{-1},w)^{-1} V(x^{-1},a_1)V(a_1,a_2)...V(a_{n-1},a_n)U(a_n,a_{n+1})\\
B_R&=& V(b_{n+1},b_n)U(b_n,b_{n-1})U(b_{n-1},b_{n-2})...U(b_1,x^{-1})V(x^{-1},v)^{-1}U(v,x)
\end{eqnarray*}
Applying the flat connection condition to the domain made of the $n$ diamonds, we may express respectively:
\begin{eqnarray}
B_L&=& V(x,w)V(w,a_1')V(a_1',a_2')...V(a_{n-1}',a_{n+1})\label{prodone} \\
B_R&=& U(b_{n+1},b_{n-1}')U(b_{n-1}',b_{n-2}')...U(b_1',v)U(v,x)\label{prodtwo}
\end{eqnarray}
where for $a_n'=a_{n+1}$, $a_0'=w$, $a_0=x^{-1}$ and $b_n'=b_{n+1}$, $b_0'=v$, $b_0=x^{-1}$
we have the following descending recursion relations:
\begin{eqnarray*}
a_m'&=& \frac{a_{m+1}'a_m+1}{a_{m+1}} \qquad (m=n-1,n-2,...,1,0)\\
b_m'&=& \frac{b_{m+1}'b_m+1}{b_{m+1}} \qquad (m=n-1,n-2,...,1,0)
\end{eqnarray*}
It is straightforward to see that the $a_m',b_m'$ $m=1,2,...,n-1$, are all positive Laurent polynomials
of respectively $(x^{-1},a_1,a_2,...,a_{n+1})$ and $(x^{-1},b_1,b_2,...,b_{n+1})$. 
Moreover possible denominators involving $v$ or $w$ (which are not initial data) are suppressed in the products
\eqref{prodone} and \eqref{prodtwo}.
As a consequence
both products \eqref{prodone} and \eqref{prodtwo} have entries that are non-negative Laurent polynomials
of the initial data. As the remainder of the matrix products along the bottommost path only involve
matrices with non-negative Laurent monomial entries, the positivity follows.
\end{proof}

We may consider more general cases where $S$ is made of possibly several chains of diamonds
attached by their north/south vertices. We refer to such chains as ``cuts".  We observed that to guarantee the Laurent
property, the N/S vertex assignments of diamonds along these chains must alternate between a value
and its inverse $x,x^{-1},x,x^{-1},...$. Let us consider a general situation as depicted below of arbitrary paths
of initial data (empty circles) separated by chains as above, with initial data $x=x^{-1}=1$ (filled circles):
$$  \raisebox{-1.cm}{\hbox{\epsfxsize=16.cm \epsfbox{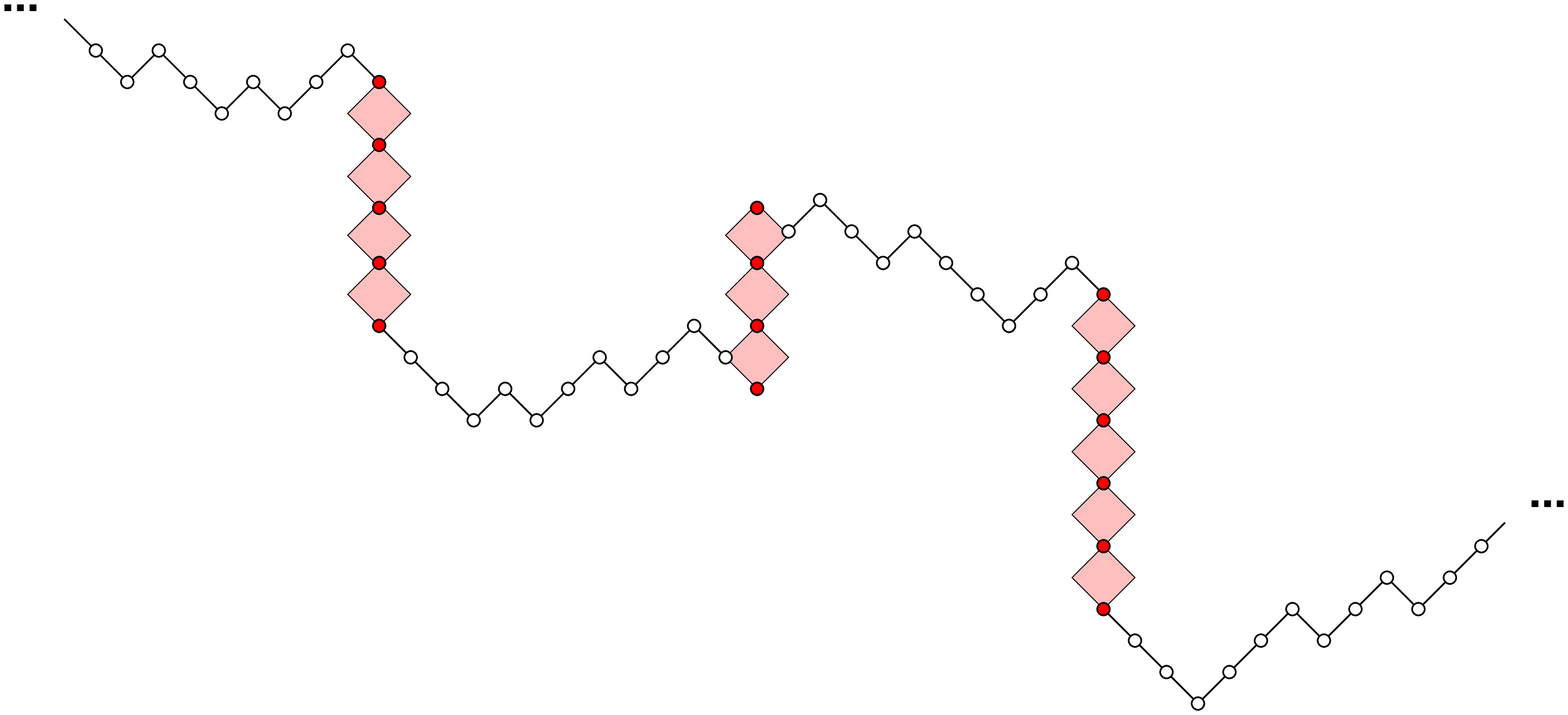}}}  $$
Then we conjecture that the solution to the $A_1$ $T$-system is a positive Laurent polynomial of the initial 
data at the empty circles.

In the $A_r$ case, we expect the above to generalize analogously, namely that a general situation with pieces
of stepped surfaces
separated by pieces of walls of singular octahedra attached by their vertices $(i,j,k)$ with the same 
value of $j$, at which the
assigned value of $T_{i,j,k}$ is $1$, leads to a solution that is a positive Laurent polynomial of the initial data along
the stepped surfaces.

This generalizes the $\ell$-restricted situation of the present paper, in which we consider two infinite parallel walls of singular tetrahedra
and a finite stepped surface in-between.

\medskip
\noindent{\bf Acknowledgments.} The authors acknowledge support 
by the CNRS PICS program.
PDF received partial support from the ANR Grant GranMa. RK is supported by NSF grant DMS-1100929. 
PDF would like to thank the Mathematical Science Research Institute in Berkeley, CA 
and the organizers of the semester ``Random Spatial Processes" for hospitality
while this work was completed.

\end{document}